\documentclass[a4paper,12pt]{amsart}

\usepackage{fullpage}

\usepackage[a4paper, twoside=false, margin=1.1in]{geometry}
\usepackage{mathrsfs}
\usepackage[utf8]{inputenc}
\usepackage[T1]{fontenc}
\usepackage[english]{babel}
\usepackage{amsfonts}
\usepackage{bm}
\usepackage{amsthm}
\usepackage{amssymb}
\usepackage{mathcomp}
\usepackage{mathrsfs}
\usepackage[bb=boondox]{mathalfa}
\usepackage{graphicx}
\usepackage{epstopdf}
\usepackage[centertags]{amsmath}
\usepackage[pdftex]{hyperref}
\usepackage[capitalize]{cleveref}
\usepackage{stmaryrd}

\makeatletter
\def\section{\@startsection{section}{1}{\z@}%
  {25pt}
  {15pt}
  {\normalfont\scshape\centering}}
\makeatother

\usepackage{tikz}
\usepackage{tikz-cd}
\usepackage{dsfont}
\usepackage{cases}
\usepackage{mathtools}
\usepackage{shuffle}
\usepackage{adjustbox}
\usepackage[all]{xy}
\newcommand{\antishriek}{!}
\usepackage{ebgaramond}
\usepackage{euscript}
\renewcommand{\mathcal}{\EuScript}

\usepackage{enumitem}

\allowdisplaybreaks
\definecolor{corail}{rgb}{0.9882,0.4627,0.4157} 
\definecolor{viola}{RGB}{166,146,186}
\definecolor{mocha}{RGB}{164,120,100}
\definecolor{peachfuzz}{rgb}{0.8, 0.5451, 0.3961}
\definecolor{clouddancer}{RGB}{240, 238, 233}
\definecolor{bluefusion}{RGB}{83, 104, 124}

\hypersetup{
    colorlinks,
    linkcolor= bluefusion ,
    citecolor= bluefusion ,
    urlcolor= bluefusion ,
}

\title{Unifying Koszul dualities via point-set models}
\author{Dan Petersen, \quad Victor Roca i Lucio, \quad Sinan Yalin}

\address{Dan Petersen, Stockholm University}
\email{\href{mailto:dan.petersen@math.su.se}{dan.petersen@math.su.se}}

\address{Victor Roca i Lucio, Université Paris Cité and Sorbonne Université, CNRS, IMJ-PRG, F-75013 Paris, France}
\email{\href{mailto:rocalucio@imj-prg.fr}{rocalucio@imj-prg.fr}}

\address{Sinan Yalin, Université d'Angers}
\email{\href{mailto:sinan.yalin@univ-angers.fr}{sinan.yalin@univ-angers.fr}}

\date{\today}

\thanks{The first named author was supported by the National Science Foundation
under Grant No.~DMS-2424441, and a Wallenberg Scholar fellowship. The second named author was supported by the ANR project SHoCos ANR-22-CE40-0008. The third named author acknowledges the support of the ANR projects LieDG ANR-24-CE40-5367 and KAsH ANR-25-CE40-2861.}
\keywords{}

\subjclass[2020]{18N70, 18M60, 18N40, 18N60, 55P48, 55P65, 55U15, 55U35, 16T15}

\newtheorem{theoremintro}{Theorem}

\theoremstyle{plain}
\newtheorem{theorem}{Theorem}[section]
\newtheorem{lemma}[theorem]{Lemma}
\newtheorem{proposition}[theorem]{Proposition}
\newtheorem{assumption}{Assumption}
\newtheorem{corollary}[theorem]{Corollary}

\theoremstyle{definition}
\newtheorem{definition}[theorem]{Definition}

\theoremstyle{remark}
\newtheorem{remark}[theorem]{\sc Remark}
\newtheorem{example}[theorem]{\sc Example}
\newtheorem{notation}[theorem]{\sc Notation}
\newtheorem{counterexample}[theorem]{\sc Counter-Example}

\newcommand{\qi}{\xrightarrow{ \,\smash{\raisebox{-0.65ex}{\ensuremath{\scriptstyle\sim}}}\,}}

\newcommand{\operad}[1]{\mathcal{#1}}

\newcommand{\kk}{\Bbbk}
\newcommand{\PP}{\mathcal{P}}
\newcommand{\C}{\mathcal{C}}

\DeclareMathOperator*{\colim}{colim}

\newcommand{\poubelle}[1]{}

\begin{document}
\pagecolor{clouddancer}

\newcommand{\Victor}[1]{\textcolor{red}{#1}}
\newcommand{\Sinan}[1]{\textcolor{blue}{#1}}
\newcommand{\Dan}[1]{\textcolor{orange}{#1}}
\newcommand{\Z}{\mathbb{Z}}

\begin{abstract}
The classical bar-cobar adjunction between dg algebras and dg coalgebras goes back to the origins of \emph{differential homological algebra} as developed by Cartan, Eilenberg, Moore, and many others, and is part of the broader framework of \emph{Koszul duality}. In recent years, several $\infty$-categorical analogues of this adjunction have been developed, notably by Lurie, Francis--Gaitsgory, and Heuts. However, there is no comparison in the literature between the classical chain-level constructions and their higher-categorical counterparts, and in fact the two constructions are not quite compatible.

In this paper we provide a unified framework relating these different forms of Koszul duality in the differential graded setting. We construct a commutative square of adjunctions---called the inclusion-restriction square---intertwining the classical operadic bar-cobar adjunction with its completed variant due to Le Grignou--Lejay. We show that this square induces an $\infty$-categorical adjunction between algebras and their Koszul dual coalgebras, recovering in particular the differential graded case of Lurie's bar-cobar adjunction, and explain precisely how our constructions relate to those of Francis--Gaitsgory and Heuts.
\end{abstract}

\maketitle

\tableofcontents

\section*{Introduction}

\subsection{Bar-cobar duality}
A fundamental object of study in homological and homotopical algebra is the \emph{bar-cobar adjunction} \cite{CartanEilenberg56, adams}:
\begin{equation}\label{algebraic bar/cobar}
	\begin{tikzcd}
		(\text{augmented dg algebras}) \arrow[r,shift right=1ex,"\mathrm{B}"']&\arrow[l, shift right=1ex,"\Omega"'](\text{conilpotent coaugmented dg coalgebras}). \arrow[phantom, from=1-1, to=1-2, "{\scriptstyle\perp}"]
	\end{tikzcd}
\end{equation} 
	These functors have their origin in algebraic topology --- they are algebraic shadows of the formation of \emph{classifying spaces} and \emph{loop spaces}. If $M$ is a topological monoid, then its chains $C_*(M;\Z)$ is an augmented dg algebra under Pontryagin product, and there is a quasi-isomorphism of dg coalgebras \cite{stasheff2,mayfibrations}:
\[ \mathrm{B} C_*(M;\Z) \simeq C_*(\mathrm{B} M;\Z). \]
Similarly if $X$ is a connected based space, then $C_*(X;\Z)$ is a coaugmented dg coalgebra, and there is a quasi-isomorphism of dg algebras \cite{adams,riverazeinalian}:
\[ \Omega C_*(X;\Z) \simeq C_*(\Omega X;\Z).\]
These theorems justify the notations $\mathrm B$ and $\Omega$ for the bar and cobar functors. The topological analogue of \eqref{algebraic bar/cobar} is the adjunction
\begin{equation}\label{topological bar/cobar}
	\begin{tikzcd}
		(\text{topological monoids}) \arrow[r,shift left=1ex,"\mathrm{B}"]&\arrow[l, shift left=1ex,"\Omega"](\text{based spaces}). \arrow[phantom, from=1-1, to=1-2, "{\scriptstyle\perp}"]
	\end{tikzcd}
\end{equation}	
Up to homotopy, \eqref{topological bar/cobar} is nearly an equivalence: it restricts to an equivalence of $\infty$-categories between group-like topological monoids and connected based spaces. Its algebraic shadow \eqref{algebraic bar/cobar} is likewise an equivalence up to quasi-isomorphism, if one restricts to suitably connective objects. This almost-equivalence between the homotopy theories of algebras and coalgebras,
	 and a number of closely related algebraic dualities of a similar flavor, are often collectively referred to as \emph{Koszul duality}. 

\medskip

	In fact, there are a bewildering number of notions\footnote{To add to the confusion, some authors reserve the phrase ``Koszul duality'' only for a more restrictive notion involving \emph{small resolutions}. This is also the only part of the story which relates to the work of Koszul, via the \emph{Koszul complex} of \cite{Priddy70}. A quadratic algebra $A$ has a \emph{quadratic dual} coalgebra $A^\antishriek$ and a canonical map $A^\antishriek \to \mathrm BA $, and one says that $A$ is a \emph{Koszul algebra} if this canonical map is a quasi-isomorphism. In this case, its adjoint $\Omega A^\antishriek \to A$ is also a quasi-isomorphism, and a minimal resolution of $A$.
    
    While there are several generalizations of this version of Koszul duality to other algebraic objects, one always constructs first a suitable bar-cobar adjunction that establishes a \textit{homotopical} Koszul duality between algebras and coalgebras, with respect to which the story about minimal resolutions becomes meaningful. The present paper is concerned only with the underlying homotopy-theoretic duality.  }
    %
    in the literature which by now are called Koszul duality. A full historical account is beyond the scope of this introduction, but we wish to mention \cite{Priddy70,EM,HMS,quillenrationalhomotopytheory,sullivaninfinitesimal,BGG,ginzburgkapranov,BGS,hinichdgcoalgebras,LodayVallette}.

\medskip

A more recent, less classical approach to the bar-cobar adjunction is due to Jacob Lurie \cite[Section 5.2.2]{HigherAlgebra}. Let $\mathbf C$ be a monoidal $\infty$-category which admits geometric realizations of simplicial objects and totalizations of cosimplicial objects. Under these assumptions, Lurie constructs an adjunction
\begin{equation}\label{lurie bar/cobar}
	\begin{tikzcd}
		(\text{augmented $\mathbb E_1$-algebras in $\mathbf C$}) \arrow[r,shift left=1ex,"\mathbf{bar}"]&\arrow[l, shift left=1ex,"\mathbf{cobar}"](\text{coaugmented $\mathbb E_1$-coalgebras in $\mathbf C$}). \arrow[phantom, from=1-1, to=1-2, "{\scriptstyle\perp}"]
	\end{tikzcd}
\end{equation}	
When we specialize $\mathbf C$ to the $\infty$-category of spaces $\mathbf{Spc}$, the adjunction \eqref{lurie bar/cobar} specializes to \eqref{topological bar/cobar}. Indeed Lurie proves that $\mathbf{bar}(A)$ can be computed as the realization of a simplicial object
\begin{equation}\label{simplicial description of bar}
\begin{tikzcd}[row sep=huge, column sep=huge]
  \dots\hspace{3ex} A \otimes A
	\arrow[r, shift left=3]
	\arrow[r]
	\arrow[r, shift right=3]
	& 
	A
	\arrow[r, shift left=1.5]
	\arrow[r,  shift right=1.5]
	\arrow[l, shift left=1.5]
	\arrow[l, shift right=1.5]
	& \mathbf 1 \arrow[l]
\end{tikzcd}
\end{equation}which gives back the familiar formula for the classifying space of a topological monoid. Note that every space is an $\mathbb E_1$-coalgebra in a unique way (with comultiplication the diagonal), and a coaugmentation is just a basepoint; also, every $\mathbb E_1$-algebra in spaces is automatically augmented, and can be rectified to a topological monoid. 

\medskip

Let us take $\mathbf C=\mathbf D(\kk)$, the derived category of a ring $\kk$. It would then be natural to think that for this choice of $\mathbf C$, the adjunction \eqref{lurie bar/cobar} specializes to \eqref{algebraic bar/cobar}. This is \emph{not} the case. In fact, there are several immediate problems:
\begin{enumerate}[label=(\roman*)]
	\item Lurie's bar functor is \emph{left} adjoint; the algebraic bar functor is \emph{right} adjoint.

	\item Lurie does not impose conilpotence anywhere; algebraically, conilpotence is essential in order for $\mathrm{B}$ and $\Omega$ to be adjoints.

	\item \label{itemiii} The category of dg coalgebras localized at quasi-isomorphisms does not present the $\infty$-category of $\mathbb E_1$-coalgebras in the derived $\infty$-category of $\kk$, no matter what ring $\kk$ we choose.
\end{enumerate}

Nevertheless, \eqref{simplicial description of bar} shows that Lurie's $\mathbf{bar}(A)$ is quasi-isomorphic to the classical algebraic bar construction $\mathrm{B}A$ for any augmented dg algebra $A$. So, when $\mathbf C=\mathbf D(\kk)$, if we consider the adjunction \eqref{lurie bar/cobar} in this case, there are a number of natural questions:

\begin{enumerate}[label=(\alph*)]
	\item \label{itema} Can we construct explicit chain-level constructions of {the functors} $\mathbf{bar}$ and $\mathbf{cobar}$?

	\item \label{itemc}What, if any, is the relationship between these chain-level constructions and the adjunction \eqref{algebraic bar/cobar}?

	\item \label{itemb}Can we present \eqref{lurie bar/cobar} by a Quillen adjunction?

\end{enumerate} 

The goal of this paper is in particular to answer all of these questions, when $\kk$ is a field.

\subsection{The associative case} Let $\kk$ be a field.  Before we can even meaningfully consider question \ref{itema}, we need to find suitable point-set models for $\mathbb{E}_1$-coalgebras in $\mathbf{D}(\kk)$, which was the main goal of \cite{rectification}. From now on we shall find it convenient to restrict our attention to non-unital algebras and coalgebras, rather than keeping track of augmentations and coaugmentations. In \cite[Theorem A]{rectification}, we showed that there exists an equivalence 
 \[ 
 \mathbf{Coalg}_{\mathbb{E}_1}^{\text{non-unital}}(\mathbf D(\kk)) \simeq \mathcal{A}_\infty \mbox{-}\mathsf{coalg}~[\mathsf{Q.iso}^{-1}]
\]
between the $\infty$-categories of non-unital $\mathbb{E}_1$-coalgebras in $\mathbf{D}(\kk)$, and (not necessarily conilpotent) $\mathcal{A}_\infty$-coalgebras up to quasi-isomorph\-ism. Informally, this says that the reason for problem \ref{itemiii} is that coassociative coalgebras are \emph{too strict} to exhibit homotopically correct behavior, and need to be replaced by a flabbier notion.

\medskip

Before proceeding further, let us dwell on the definitions of $\mathcal{A}_\infty$-algebra and $\mathcal{A}_\infty$-coalgebra. Usually, an $\mathcal A_\infty$-algebra is defined as a graded vector space $A$ with a family of maps
\[ \mu_n : A^{\otimes n} \longrightarrow A\]
of degree $n-2$, satisfying an infinite hierarchy of quadratic relations: these express that $\mu_1^2=0$, so that $(A,\mu_1)$ is a chain complex; $\mu_2$ is a multiplication satisfying the Leibniz rule with respect to the differential $\mu_1$; $\mu_3$ is a homotopy with respect to $\mu_1$ between $\mu_2 \circ (\mu_2 \otimes \mathrm{id})$ and $\mu_2 \circ (\mathrm{id} \otimes \mu_2)$, correcting the failure of $\mu_2$ to be strictly associative, and so on. An efficient way to package all this data (and the signs involved) is to observe that the maps $\mu_i$ determine, and are determined by, a coderivation $\boldsymbol\mu$ of degree $-1$ of the (reduced) tensor coalgebra $\bar{T}^c(sA)$ on the suspension of $A$, since such a coderivation is freely determined by its composition with the projection to the cogenerators:
\begin{equation}\label{a-infinity structure as coderivation}
    \bigoplus_{n \geq 1} (sA)^{\otimes n} = \bar{T}^c(sA) \stackrel{\boldsymbol{\mu}}\longrightarrow \bar{T}^c(sA) \twoheadrightarrow sA.
\end{equation} 
In these terms, the infinite hierarchy of quadratic relations becomes $\boldsymbol{\mu}^2=0$. An advantage of this definition is that it becomes tautologically obvious how to extend the bar construction $\mathrm B$ from associative algebras to $\mathcal A_\infty$-algebras: if $A$ is an $\mathcal A_\infty$-algebra, then $\mathrm BA$ is the dg coalgebra given by $\bar{T}^c(sA)$, with the differential determined by $\boldsymbol\mu$. 

\medskip

An $\mathcal A_\infty$-coalgebra can likewise be defined as a graded vector space $C$ and a family of maps 
\[ \delta_n : C \longrightarrow C^{\otimes n}\]
of degree $n-2$, satisfying a dual hierarchy of quadratic relations. The dual statement of \eqref{a-infinity structure as coderivation} is that any family of maps $\delta_n : C \longrightarrow C^{\otimes n}$ uniquely determines a degree $-1$ derivation $\boldsymbol{\delta}$ 
\begin{equation}\label{a-infinity structure as derivation}
    s^{-1}C \hookrightarrow \bar{T}^{\wedge}(s^{-1}C) \stackrel {\boldsymbol{\delta}}\longrightarrow \bar{T}^{\wedge}(s^{-1} C) = \prod_{n \geq 1} (s^{-1}C)^{\otimes n}
\end{equation}
of the \emph{completion} of the tensor algebra on the desuspension of $C$, since such a derivation is freely determined by where it sends the generators. This description makes it clear that the classical cobar functor $\Omega$ for coassociative coalgebras (defined as in \eqref{a-infinity structure as derivation}, but with a direct sum) does not extend to a well-defined functor on $\mathcal A_\infty$-coalgebras, but the \emph{completed cobar functor}, which we shall denote $\widehat\Omega$, \emph{does} extend in a tautological manner. 

\medskip
In fact, it will turn out that Lurie's functor $\mathbf{cobar}$ is given (on the chain level) by $\widehat{\Omega}$, the completion of the classical cobar functor. There are further reasons that $\smash{\widehat\Omega}$ may be more natural than $\Omega$:
\begin{itemize}
    \item The classical cobar functor $\Omega$ does not preserve quasi-isomorphisms, unless the coalgebras involved are suitable connective or coconnective. The reason is that $\Omega$ sends quasi-isomorphisms to filtered quasi-isomorphisms\footnote{A filtered quasi-isomorphism is not in general a quasi-isomorphism!}, but only for a filtration which is generally not complete. Passing to the completion repairs this defect, and $\smash{\widehat\Omega}$ always preserves quasi-isomorphisms.
    \item Lurie's $\mathbf{bar}$ and $\mathbf{cobar}$ are interchanged under switching $\mathbf C$ and $\mathbf C^{\mathrm{op}}$. Then it is natural to expect that $\mathbf{cobar}$ should be given by a formula dual to that for $\mathbf{bar}$, which entails switching $\bigoplus$ for $\prod$. 
\end{itemize}
The drawback of $\widehat{\Omega}$ compared to $\Omega$ is that it is in no sense an adjoint of the bar construction $\mathrm B$, working 1-categorically. This makes the following theorem surprising.

\begin{theoremintro}\label{theorem a}Let $\kk$ be a field of any characteristic. Let $\mathsf{dg}~\mathsf{assoc}~\mathsf{alg}$ and $\mathcal{A}_\infty \mbox{-}\mathsf{coalg} $ denote the relative categories of dg associative algebras, and $\mathcal A_\infty$-coalgebras, respectively, with weak equivalences given by quasi-isomorphisms. The functors 
\[ \begin{tikzcd}[column sep= 3pc]
	\mathsf{dg}~\mathsf{assoc}~\mathsf{alg} \arrow[r,shift left=.5ex,"\mathrm{B}"]&\arrow[l, shift left=.5ex,"\widehat{\Omega}"]\mathcal{A}_\infty \mbox{-}\mathsf{coalg} \arrow[phantom, from=1-1, to=1-2, ""]
	\end{tikzcd}\]
are both relative functors, and hence induce functors between the respective localizations, which we continue to denote by the same name. Even though $\mathrm B$ and $\smash{\widehat \Omega}$ are not adjoints 1-categorically, the localized functors 
\begin{equation}\label{theorem A adjunction}
\begin{tikzcd}[column sep=5pc,row sep=3pc]
            \mathsf{dg}~\mathsf{assoc}~\mathsf{alg}~[\mathsf{Q.iso}^{-1}] \arrow[r,"\mathrm{B}"{name=F},shift left=1.1ex ] & \mathcal{A}_\infty \mbox{-}\mathsf{coalg}~[\mathsf{Q.iso}^{-1}] \arrow[l,"\widehat{\Omega}"{name=U},shift left=1.1ex ]
            \arrow[phantom, from=F, to=U, , "\dashv" rotate=-90]
\end{tikzcd}
    \end{equation}
are adjoints, and \eqref{theorem A adjunction} is equivalent to Lurie's adjunction \eqref{lurie bar/cobar}.
\end{theoremintro}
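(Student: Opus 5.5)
The plan is to factor the desired adjunction through the two honest 1-categorical bar--cobar adjunctions that exist in this setting, and compare them by inclusion and restriction functors. On one side is the classical adjunction $\Omega \dashv \mathrm{B}$ between dg associative algebras and \emph{conilpotent} $\mathcal{A}_\infty$-coalgebras (equivalently, conilpotent dg coalgebras up to the usual $\infty$-morphism rectification). With the Vallette/Hinich-type model structures it is a Quillen equivalence, with weak equivalences on the coalgebra side created by $\Omega$. On the other side is the completed adjunction $\widehat{\Omega} \dashv \widehat{\mathrm{B}}$ of Le Grignou--Lejay, between \emph{complete} (curved, or pro-nilpotent) $\mathcal{A}_\infty$-algebras and all $\mathcal{A}_\infty$-coalgebras, where $\widehat{\mathrm{B}}$ is given by a universal measuring-type construction. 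Both are Quillen adjunctions for appropriate transferred model structures. The inclusion of conilpotent coalgebras into all coalgebras and the inclusion of algebras into complete algebras then form a square with these two adjunctions.

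The first step is to show that $\mathrm{B}$ and $\widehat{\Omega}$ preserve quasi-isomorphisms. For $\mathrm{B}$ this is the usual filtration by tensor length, which is exhaustive and bounded below, so a spectral sequence argument applies. For $\widehat{\Omega}$, the tensor-length filtration on $\prod_n (s^{-1}C)^{\otimes n}$ is complete, so the Eilenberg--Moore comparison theorem applies. This uses that $\kk$ is a field, so tensor powers preserve quasi-isomorphisms.

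The second step is to produce the unit and counit at the level of localizations. There is a natural map $A \to \widehat{\Omega}\mathrm{B}A$, namely the composite $A \xleftarrow{\sim} \Omega\mathrm{B}A \to \widehat{\Omega}\mathrm{B}A$. The second map is a quasi-isomorphism because for $\mathrm{B}A$ the completion does not change the homotopy type: $\mathrm{B}A$ is conilpotent and a weight argument applies. So the unit is an equivalence onto the image of $\mathrm{B}$.

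For a general $\mathcal{A}_\infty$-coalgebra $C$, I would compute $\mathrm{Map}(\mathrm{B}A, C)$ in $\mathcal{A}_\infty\text{-}\mathsf{coalg}[\mathsf{Q.iso}^{-1}]$ via the Quillen adjunction restricting along the inclusion of conilpotent coalgebras. Its right adjoint is a "conilpotent coradical" $C \mapsto C^{\mathrm{conil}}$, which gives
\[
\mathrm{Map}(\mathrm{B}A, C) \simeq \mathrm{Map}_{\mathrm{conil}}(\mathrm{B}A, \mathbb{R}C^{\mathrm{conil}}) \simeq \mathrm{Map}_{\mathsf{alg}}(A, \Omega\,\mathbb{R}C^{\mathrm{conil}}).
\]
It then remains to identify $\Omega\,\mathbb{R}C^{\mathrm{conil}} \simeq \widehat{\Omega}C$. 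This is the crux and the main obstacle. I would try to prove it by commutativity of the inclusion-restriction square of derived functors: the square of right adjoints commutes up to equivalence, namely completion of algebras composed with $\widehat{\mathrm{B}}$ versus $\mathrm{B}$ composed with inclusion. Passing to left adjoints then gives $\widehat{\Omega} \simeq (\text{restriction}) \circ \Omega \circ \mathbb{R}(-)^{\mathrm{conil}}$. This reduces the claim to showing that for $\mathcal{A}_\infty$-coalgebras the derived coradical, followed by the Quillen equivalence, recovers the complete algebra $\widehat{\Omega}C$, checked on cofibrant or fibrant replacements where the completion filtration and the coradical filtration can be compared degreewise.

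Finally, equivalence with Lurie's adjunction \eqref{lurie bar/cobar} follows from \cite[Theorem A]{rectification}, which identifies the coalgebra side with $\mathbb{E}_1$-coalgebras in $\mathbf{D}(\kk)$. Since adjoints are unique, it suffices to identify one functor. I would identify Lurie's $\mathbf{bar}$ with $\mathrm{B}$ using the simplicial formula \eqref{simplicial description of bar}: its realization in $\mathbf{D}(\kk)$ is the normalized chain complex $\mathrm{B}A$. One must also check that this identification is compatible with the coalgebra structures under the equivalence of \cite{rectification}. I would do this by verifying it on free algebras $A = \bar{T}(V)$, where $\mathrm{B}A \simeq \kk \oplus sV$ is a trivial coalgebra, and then extending by sifted colimits, which both left adjoints preserve.
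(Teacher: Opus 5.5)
Your overall architecture matches the paper's: the classical and complete bar--cobar adjunctions, joined by $\mathrm{Inc}\dashv\mathrm{Sub}$ and $c\mathrm{Abs}\dashv\mathrm{Res}$. Your filtration arguments for the relative-functor claim are fine. Your mapping-space computation also correctly shows that $\mathrm{Inc}\circ\mathrm{B}$ has right adjoint $\Omega\circ\mathbb{R}\mathrm{Sub}$, since $\mathbb{R}\mathrm{Sub}(C)$ is fibrant, hence local, for the quasi-isomorphism model structure on conilpotent coalgebras.

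\textbf{The unit is not an equivalence.} The claim in your second step is false. The map $\Omega\mathrm{B}A\to\widehat{\Omega}\mathrm{B}A$ is a filtered quasi-isomorphism, but the filtration is not exhaustive on the product, and the map is a quasi-isomorphism exactly when $A$ is homotopy complete. For the cofibrant algebra $A=\bar{T}(\kk x)=x\kk[x]$ one has $\widehat{\Omega}\mathrm{B}A\simeq c\mathrm{Abs}(A)=x\kk[[x]]$, so the map is $x\kk[x]\hookrightarrow x\kk[[x]]$. The unit is the derived completion (Theorem B), and $\mathrm{B}$ is not fully faithful.

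\textbf{The crux is left unproved.} The same phenomenon shows that the crux $\Omega\,\mathbb{R}\mathrm{Sub}(C)\simeq\widehat{\Omega}C$ cannot come from comparing completion and coradical filtrations degreewise. As written you give no argument for it, yet it is the entire content of identifying the right adjoint with $\widehat{\Omega}$. The missing idea is that the square of \emph{left} adjoints already commutes strictly: $c\mathrm{Abs}\circ\Omega\cong\widehat{\Omega}\circ\mathrm{Inc}$, since both are the free complete $\mathcal{A}ss^*$-algebra on $D$ with the same differential on generators.
\begin{itemize}
\item Applied to the cofibrant resolution $\Omega\mathrm{B}A\xrightarrow{\sim}A$, this gives $\mathbb{L}c\mathrm{Abs}(A)\simeq\widehat{\Omega}\,\mathrm{Inc}\,\mathrm{B}A$. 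This is your zig-zag, correctly interpreted.
\item The natural quasi-isomorphism between $D$ and $\widehat{\mathrm{B}}\widehat{\Omega}D$ then yields $\widehat{\mathrm{B}}\circ\mathbb{L}c\mathrm{Abs}\simeq\mathrm{Inc}\circ\mathrm{B}$.
\item To pass to right adjoints you also need two facts. First, $\widehat{\mathrm{B}}$ is an equivalence only out of complete $\mathcal{A}ss^*$-algebras localized at the Le Grignou--Lejay equivalences $\mathsf{W}\subsetneq\mathsf{Q.iso}$. Second, the right Bousfield localization makes $\mathbb{L}\mathrm{Id}$, from the $\mathsf{Q.iso}$-localization to the $\mathsf{W}$-localization, a left adjoint.
\item Only then do you obtain $\mathrm{Res}\circ\widehat{\Omega}\simeq\Omega\circ\mathbb{R}\mathrm{Sub}$.
\end{itemize}
Two smaller corrections: these composites are left adjoints, not right adjoints as you wrote. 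The bottom-left corner consists of complete absolute associative algebras, which embed fully faithfully into dg associative algebras via $\mathrm{Res}$; it is not a category of curved $\mathcal{A}_\infty$-algebras receiving an inclusion.

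\textbf{The comparison with Lurie.} Identifying $\mathbf{bar}(\bar{T}V)$ and $\mathrm{B}\bar{T}V$ objectwise as trivial coalgebras on $sV$ does not let you extend by sifted colimits. Such a functor is determined by its restriction to the full subcategory of free algebras. That restriction includes its values on all algebra maps $\bar{T}V\to\bar{T}W$, with all higher coherences. These maps correspond to arbitrary chain maps $V\to\bar{T}W$. Their images are coalgebra maps between trivial coalgebras that are not determined by the underlying chain maps $sV\to sW$. Put differently, you need $F\circ\mathrm{Free}$ together with its right action of the monad, and you have matched only the underlying objects. This homotopy-coherent comparison of coalgebra structures is exactly the deep input the paper imports:
\begin{itemize}
\item the Heuts--Land theorem $\Sigma\circ\mathbf{bar}\simeq\mathbf{inc}\circ\mathbf{B}^{\mathbf{enh}}$;
\item $\mathbf{B}^{\mathbf{enh}}\simeq\mathbf{loose}\circ\mathrm{B}_\pi$, since both are the universal factorization of $\mathbf{indec}\dashv\mathbf{triv}$ through $\mathbf{S}(\mathbf{bar}(\mathcal{P}))$-coalgebras;
\item $\mathbf{Inc}\circ\mathbf{loose}\simeq\mathrm{Inc}$, from the commutative triangle of inclusions.
\end{itemize}
Without such an input, your argument recovers only the agreement of underlying chain complexes, which the paper notes was already known.
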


Let us explain how to understand the \emph{unit} of the adjunction \eqref{theorem A adjunction}. Let $A$ be an associative dg algebra. There exists a zig-zag
\[ A \stackrel \sim \longleftarrow \Omega\mathrm BA \longrightarrow \widehat\Omega \mathrm B A\]
in which the first arrow is the \emph{counit} of the classical adjunction \eqref{algebraic bar/cobar}, which is classically known to be a quasi-isomorphism for any $A$, and the second arrow comes from the fact that the ordinary cobar construction sits inside its completion. Working $\infty$-categorically, the first arrow is ``invisible'', and it looks as if we just have a map $A\to\smash{\widehat\Omega}\mathrm B A$, which explains the reversal of handedness between \eqref{algebraic bar/cobar} and \eqref{theorem A adjunction}. 

\medskip

The above description also identifies the unit with a derived functor: $A \stackrel\sim\longleftarrow \Omega \mathrm B A$ is a cofibrant replacement of $A$, so the unit is given by first cofibrantly replacing and then taking the completion, which is precisely the \emph{derived completion} (this notion has appeared in \cite{HarperHess13,CamposPetersen24,absolutealgebras}). One can completely similarly describe the counit of the adjunction \eqref{theorem A adjunction} as a zig-zag
\[ C \stackrel \sim \longrightarrow \widehat{\mathrm{B}}\widehat{\Omega} C \leftarrow \mathrm B \widehat \Omega C,\]
where $\widehat{\mathrm{B}}$ is a non-conilpotent enlargement of the classical bar construction; we shall describe it further shortly. Now the first arrow is a \emph{fibrant} replacement, and the similar consequence is that the counit of \eqref{theorem A adjunction} is the derived functor of taking the maximal conilpotent subcoalgebra. We thus obtain:

\begin{theoremintro}\label{description of unit}
    The unit of the adjunction \eqref{theorem A adjunction} is the derived completion functor, and the counit of the adjunction is the derived functor of taking the maximal conilpotent subcoalgebra.
\end{theoremintro}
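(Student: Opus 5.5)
The plan is to realize the localized adjunction \eqref{theorem A adjunction} as the composite of two Quillen adjunctions on the chain level, through an intermediate category of \emph{conilpotent} $\mathcal A_\infty$-coalgebras (equivalently, dg coalgebras with the classical model structure transferred along $\Omega$). I would use the inclusion-restriction square announced in the abstract. Its left side is the classical adjunction $\Omega \dashv \mathrm B$ between conilpotent coalgebras and dg algebras, which is a Quillen equivalence for the Hinich/Lefèvre-Hasegawa model structure. Its right side is the inclusion $\iota$ of conilpotent $\mathcal A_\infty$-coalgebras into all $\mathcal A_\infty$-coalgebras, with right adjoint $R$ taking the maximal conilpotent subcoalgebra. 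Its bottom row is the complete bar-cobar adjunction $\widehat\Omega \dashv \widehat{\mathrm B}$ of Le Grignou--Lejay type, relating $\mathcal A_\infty$-coalgebras to complete algebras.

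\textbf{Step 1: identify unit and counit as derived versions of the 1-categorical ones.} On localizations, $\mathrm B$ becomes equivalent to $\iota\circ\mathrm B$. The unit at $A$ is then represented by
\[ A \xleftarrow{\sim} \Omega\mathrm BA \to \widehat\Omega\mathrm BA. \]
Here the first map is the classical counit, which is a quasi-isomorphism and a cofibrant replacement. The second map is the canonical map from $\Omega$ into its completion. I would check this is the derived unit by verifying the triangle identities up to homotopy. This only needs compatibility of the natural maps $\Omega \to \widehat\Omega$ and $\mathrm B\to\widehat{\mathrm B}$ with the respective (co)units, which is a direct computation on the tensor (co)algebra formulas. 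Since $\widehat\Omega\,\mathrm B A$ is the completion of the cofibrant algebra $\Omega\mathrm BA$, and completion of cofibrant objects computes its left derived functor, the unit is the derived completion $A\to \mathbb L\widehat{(-)}(A)$.

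\textbf{Step 2: the counit.} Dually, the counit at $C$ is represented by
\[ C \xrightarrow{\sim} \widehat{\mathrm B}\widehat\Omega C \hookleftarrow \mathrm B\widehat\Omega C. \]
I need three facts here. First, the unit $C\to\widehat{\mathrm B}\widehat\Omega C$ of the complete adjunction is a quasi-isomorphism, and indeed a fibrant replacement in the model structure on $\mathcal A_\infty$-coalgebras where all objects are cofibrant. Second, $\mathrm B\widehat\Omega C$ is exactly the maximal conilpotent subcoalgebra $R(\widehat{\mathrm B}\widehat\Omega C)$. This holds because $\widehat{\mathrm B}$ is built on the non-conilpotent cofree coalgebra, whose conilpotent part is the ordinary tensor coalgebra. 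Third, $R$ is right Quillen, so applying it to a fibrant replacement computes $\mathbb R R$. Together these give that the counit is $\mathbb R R(C)\to C$.

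\textbf{Main obstacle.} The hard step is justifying that these zig-zags really are the unit and counit of an adjunction between the localizations, rather than merely natural transformations. In other words, I must establish that $\mathrm B \dashv \widehat\Omega$ on $\infty$-categories is the composite of the derived adjunctions $\mathbb L\Omega\dashv\mathbb R\mathrm B$ (an equivalence) and $\iota\dashv\mathbb RR$, with $\widehat\Omega\simeq \mathbb L\Omega\circ\mathbb RR$ after localization. I would first show that the square of right adjoints $R\circ\widehat{\mathrm B}=\mathrm B\circ(\text{forget completeness})$ commutes on the nose. Then I would use the fact that $\widehat\Omega$ preserves all quasi-isomorphisms, so it is already derived, to identify $\widehat\Omega$ with $\mathbb L\Omega\circ\mathbb RR$ via the mate of that square. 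The counit and unit of a composite of derived Quillen adjunctions are then computed by the standard cofibrant/fibrant replacement formulas, which yields the zig-zags in Steps 1 and 2.
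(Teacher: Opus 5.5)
Your proposal is correct and follows essentially the paper's argument. Both use the 1-categorical inclusion--restriction square: your right-adjoint form $\mathrm{Sub}\circ\widehat{\mathrm{B}}_\iota\cong\mathrm{B}_\pi\circ\mathrm{Res}$ is equivalent, by uniqueness of adjoints, to the paper's $c\mathrm{Abs}\circ\Omega_\pi\cong\widehat{\Omega}_\iota\circ\mathrm{Inc}$. Both use the Quillen adjunctions on its sides, and the replacements $\Omega_\pi\mathrm{B}_\pi A\xrightarrow{\sim}A$ and $C\xrightarrow{\sim}\widehat{\mathrm{B}}_\iota\widehat{\Omega}_\iota C$, to read off the unit as derived completion and the counit as $\mathrm{Inc}\circ\mathbb{R}\mathrm{Sub}(C)\to C$. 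The only difference is cosmetic: you compose $\mathrm{Inc}\dashv\mathrm{Sub}$ directly with the Lef\`evre-Hasegawa model structure on conilpotent coalgebras, instead of passing through the explicit left Bousfield localization in the paper's rectangle, and this yields the same derived functors.
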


\cref{description of unit} is a special case of a more general theorem of Heuts \cite{heuts2024}, but we see it here very explicitly. If we define \textit{homotopy complete} dg algebras as those for which the unit of \eqref{theorem A adjunction} is a quasi-isomorphism, and \textit{homotopy cocomplete} $\mathcal A_\infty$-coalgebras as those for which the counit of \eqref{theorem A adjunction} is a quasi-isomorphism, then \eqref{theorem A adjunction} restricts tautologically to an equivalence of $\infty$-categories between homotopy complete associative algebras and homotopy cocomplete $\mathcal A_\infty$-coalgebras. 
 
\subsection{The inclusion-restriction square}

Our \cref{theorem a} answers, in a sense, both questions \ref{itema} and \ref{itemc}, but not \ref{itemb}. In fact, relative categories are an unwieldy model of homotopy theory, and it would be much preferable to have a description of Lurie's bar-cobar adjunction as a Quillen adjunction. What we shall do is something almost as good: we will present it as a composite of \emph{two} Quillen adjunctions, with opposite handedness. 

\medskip

To carry this out, we need to realize that there another 1-categorical bar-cobar adjunction, dual to (\ref{algebraic bar/cobar}), which relates (non-necessarily conilpotent) coalgebras and a new type of algebraic objects called \textit{absolute algebras}. These are associative algebras where infinite sums of structural operations are well-defined \textit{by definition}, without assuming an underlying topology. Absolute algebras are closely related to complete algebras, but subtly different. The free absolute algebra on a module $M$ is however isomorphic to the completion of the free algebra on $M$. We refer to \cite{linearcoalgebras,absolutealgebras} for more details on these structures. There is a \textit{complete bar-cobar adjunction}  \cite{linearcoalgebras}:
\begin{equation}\label{complete algebraic bar/cobar}
\begin{tikzcd}[column sep= 3pc]
	\mathsf{abs}~\mathsf{dg}~\mathsf{assoc}~\mathsf{alg} \arrow[r,shift right=1ex,"\widehat{\mathrm{B}}"']&\arrow[l, shift right=1ex,"\widehat{\Omega}"']\mathcal{A}_\infty \mbox{-}\mathsf{coalg} \arrow[phantom, from=1-1, to=1-2, "{\scriptstyle\perp}"]
	\end{tikzcd}
\end{equation} 
between (non-necessarily conilpotent) $\mathcal{A}_\infty$-coalgebras and dg absolute associative algebras. Here the complete cobar functor $\smash{\widehat{\Omega}}$ is defined as before. The complete bar functor  is informally defined  by replacing the tensor coalgebra used in the functor $\mathrm B$ with  the (genuine, non-conilpotent) cofree $\mathcal{A}_\infty$-coalgebra. 

\medskip

The final step is to combine the bar-cobar adjunction (\ref{algebraic bar/cobar}) with its dual completed version (\ref{complete algebraic bar/cobar}) to get the following \textit{inclusion-restriction} square of adjunctions
\[
\begin{tikzcd}[column sep=5pc,row sep=4pc]
\mathsf{dg}~\mathsf{assoc}~\mathsf{alg} \arrow[r,"\mathrm{B}"{name=B},shift left=1.1ex] \arrow[d,"c\mathrm{Abs}"{name=SD},shift left=1.1ex ]
&\mathcal{A}_\infty \mbox{-}\mathsf{coalg}^{\mathsf{conil}}, \arrow[d,"\mathrm{Inc}"{name=LDC},shift left=1.1ex ] \arrow[l,"\Omega"{name=C},,shift left=1.1ex]  \\
\mathsf{abs}~\mathsf{dg}~\mathsf{assoc}~\mathsf{alg}\arrow[r,"\widehat{\mathrm{B}}"{name=CC},shift left=1.1ex]  \arrow[u,"\mathrm{Res}"{name=LD},shift left=1.1ex ]
&\mathcal{A}_\infty \mbox{-}\mathsf{coalg}~. \arrow[l,"\widehat{\Omega}"{name=CB},shift left=1.1ex] \arrow[u,"\mathrm{Sub}"{name=TD},shift left=1.1ex] \arrow[phantom, from=SD, to=LD, , "\dashv" rotate=180] \arrow[phantom, from=C, to=B, , "\dashv" rotate=90]\arrow[phantom, from=TD, to=LDC, , "\dashv" rotate=180] \arrow[phantom, from=CC, to=CB, , "\dashv" rotate=90]
\end{tikzcd}
\]
Here the functor $\mathrm{Res}$ restricts the structure maps of any absolute dg associative algebra to finite sums of operations, and thus gives its underlying associative algebra; it admits a left adjoint $c\mathrm{Abs}$.  The functor $\mathrm{Inc}$ includes conilpotent $\mathcal{A}_\infty$-coalgebras into all coalgebras, and it has a right adjoint $\mathrm{Sub}$. 

\begin{theoremintro}\label{thoorem c}
    The projective model structure on chain complexes may be right-transferred to a model structure on $\mathsf{dg}~\mathsf{assoc}~\mathsf{alg}$, and then further left-transferred along $\Omega$ to a model structure on the category $\mathcal{A}_\infty \mbox{-}\mathsf{coalg}^{\mathsf{conil}}$. Dually, the injective model structure on chain complexes may be left-transferred to a model structure on $\mathcal{A}_\infty \mbox{-}\mathsf{coalg}$, and then further right-transferred to a model structure on the category $\mathsf{abs}~\mathsf{dg}~\mathsf{assoc}~\mathsf{alg}$. With respect to these model structures, all the above adjunctions are Quillen adjunctions. The functor $\mathrm{B} \circ \mathrm{Inc}$ is the functor denoted $\mathrm B$ in \cref{theorem a}, and the functor $\mathrm{Res} \circ \widehat\Omega$ is the functor denoted $\widehat\Omega$ in \cref{theorem a}. 
\end{theoremintro}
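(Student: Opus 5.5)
The proof has four parts: the two sides of the square are constructed separately (each by two transfers), the vertical adjunctions are then checked to be Quillen, and finally the two functor identifications are read off from the definitions.

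\textbf{Top row.} The right-transferred model structure on $\mathsf{dg}~\mathsf{assoc}~\mathsf{alg}$ is classical (Hinich). The free--forgetful adjunction satisfies the transfer criterion over a field because every dg associative algebra is fibrant, and chain complexes admit functorial path objects. For the left transfer along $\Omega$ to $\mathcal{A}_\infty \mbox{-}\mathsf{coalg}^{\mathsf{conil}}$, I would use the acyclicity criterion for left-induced model structures (Bayeh--Hess--Karpova--K\k{e}dziorek--Riehl--Shipley, or Hess--K\k{e}dziorek--Riehl--Shipley). Since every conilpotent $\mathcal A_\infty$-coalgebra $C$ has $\Omega C$ cofibrant, it suffices to produce a functorial good cylinder. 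I would build it as $\mathrm{B}$ applied to a path object, or directly via the tensor with a coalgebra model of the interval. The condition to check is that maps with the right lifting property against cofibrations are $\Omega$-weak equivalences. I would verify it by showing that $\mathrm{B}\Omega C \leftarrow C$ gives a fibrant replacement. This is the filtered argument in the style of Vallette and Lef\`evre-Hasegawa, using that conilpotence makes the coradical filtration exhaustive.

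\textbf{Bottom row.} I would dualize. Left-transfer the injective structure on chain complexes to $\mathcal{A}_\infty \mbox{-}\mathsf{coalg}$ along the forgetful functor, whose right adjoint is the genuine cofree non-conilpotent $\mathcal A_\infty$-coalgebra. Local presentability of $\mathcal{A}_\infty \mbox{-}\mathsf{coalg}$ follows from its being comonadic over a presentable category with an accessible comonad. For acyclicity, every coalgebra is cofibrant, and I would construct a cylinder $C\otimes I$ using a finite-dimensional model of the interval, which is possible over a field. Then right-transfer along $\widehat{\Omega}\dashv\widehat{\mathrm B}$ to $\mathsf{abs}~\mathsf{dg}~\mathsf{assoc}~\mathsf{alg}$, declaring $f$ a weak equivalence or fibration iff $\widehat{\mathrm B}f$ is one. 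Here the key input is the dual of the classical fact: for every $\mathcal A_\infty$-coalgebra $C$, the unit $C\to \widehat{\mathrm B}\widehat\Omega C$ is a quasi-isomorphism. I would prove this by a complete-filtration argument, where completeness now works in our favour, exactly as $\widehat\Omega$ preserving quasi-isomorphisms was explained above. Combined with path objects built from $\widehat\Omega$ of a cylinder, this gives the right-transfer condition.

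\textbf{Vertical Quillen adjunctions and identifications.} For $c\mathrm{Abs}\dashv \mathrm{Res}$, I would show $\mathrm{Res}$ preserves fibrations and acyclic fibrations. Using $\widehat{\mathrm B}$, one identifies $\mathrm{B}\,\mathrm{Res}$ with $\mathrm{Sub}\,\widehat{\mathrm B}$; since the square commutes, this holds via the right adjoints. The alternative is to check on generating cofibrations that $c\mathrm{Abs}$ sends free maps to free absolute maps, which are completed free algebras. For $\mathrm{Inc}\dashv\mathrm{Sub}$, $\mathrm{Inc}$ preserves cofibrations, which are the degreewise injections on both sides; for weak equivalences I need $\Omega$-equivalences between conilpotent coalgebras to be quasi-isomorphisms. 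This follows from the counit $\Omega\mathrm{B}A\to A$ being a quasi-isomorphism together with the fibrant replacement above. The final sentence of the theorem is then immediate: $\mathrm{B}\circ\mathrm{Inc}$ is the bar construction with its coalgebra viewed in $\mathcal{A}_\infty \mbox{-}\mathsf{coalg}$, and $\mathrm{Res}\circ\widehat\Omega$ is the completed cobar construction viewed as a plain dg algebra.

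I expect the main obstacle to be the acyclicity and weak-equivalence comparisons in the non-conilpotent setting: showing $C\to\widehat{\mathrm B}\widehat\Omega C$ is a quasi-isomorphism for arbitrary $\mathcal A_\infty$-coalgebras, and showing $\Omega$-equivalences are quasi-isomorphisms. Here I would work with the coradical filtration, which is not exhaustive for non-conilpotent coalgebras, dually paired with completeness of the $\prod$-filtration on $\widehat\Omega$. I would first reduce to finite-dimensional coalgebras using the fundamental theorem of coalgebras (every $\mathcal A_\infty$-coalgebra is a filtered colimit of finite-dimensional ones), then use linear duality to turn the statement into the known one for complete algebras.
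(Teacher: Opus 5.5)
\textbf{Verdict: there is a genuine gap, in the bottom-right transfer.} Your proposed proof that $C\to\widehat{\mathrm B}\widehat\Omega C$ is a quasi-isomorphism for every $\mathcal A_\infty$-coalgebra, which is the step you flag as the crux, cannot work. The fundamental theorem of coalgebras relies on strict coassociativity. It fails for $\mathcal A_\infty$-coalgebras, whose structure maps land in a product over all arities. For example, take $C$ with basis $c$ in degree $2$ and $x_n$ in degree $1$ for $n\geq 2$, with zero differential, $\delta_n(c)=x_n^{\otimes n}$ and $\delta_n(x_i)=0$. Every $\mathcal A_\infty$-relation holds trivially, since each term applies some $\delta_s$ to an $x_i$. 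A subcoalgebra containing $c$ must contain every $x_n$. Images of coalgebra maps are subcoalgebras, and filtered colimits of coalgebras are computed in chain complexes, so $C$ is not a filtered colimit of finite-dimensional $\mathcal A_\infty$-coalgebras. The duality step fails as well. For finite-dimensional $C$ one has $\widehat\Omega C\cong(\mathrm{B}C^*)^*$, but $\widehat{\mathrm B}$ is built from the non-conilpotent cofree coalgebra on the infinite-dimensional complex $\widehat\Omega C$, which is not dual to any free construction. Moreover $\widehat{\mathrm B}$, which involves products of tensor powers, does not commute with filtered colimits, so a finite-dimensional case would not propagate in any case.

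The transfer itself is also not set up. $\widehat\Omega$ is a left adjoint and turns cylinders into cylinders, not path objects. With fibrations defined through $\widehat{\mathrm B}$, the path-object criterion would also need every $\widehat{\mathrm B}A$ to be fibrant, which you do not address. Symmetrically, on the top row $\mathrm B$ of a path object is a path object, not a cylinder; there, your alternative of tensoring with a coalgebra model of the interval does work.

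The paper proves none of the four model structures; it imports them:
\begin{itemize}
\item Hinich for dg algebras;
\item Vallette, Drummond-Cole--Hirsh and Le Grignou--Roca i Lucio for conilpotent coalgebras and their quasi-isomorphism localization;
\item coadmissibility of cofibrant operads for the left transfer to $\mathcal A_\infty$-coalgebras;
\item Le Grignou--Lejay for complete $\mathcal P^*$-algebras, with fibrations the surjections and $\widehat\Omega_\iota\dashv\widehat{\mathrm B}_\iota$ a Quillen equivalence. This already contains the unit statement you are trying to prove.
\end{itemize}
Its only verification is that $\mathrm{Inc}$ preserves monomorphisms and quasi-isomorphisms, and that $\mathrm{Res}$ preserves surjections and quasi-isomorphisms.

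Your treatment of the vertical adjunctions is sound and slightly sharper. Using $\mathrm B\circ\mathrm{Res}\cong\mathrm{Sub}\circ\widehat{\mathrm B}$, obtained from the commuting square of left adjoints, it works even for the unlocalized transferred structures. The final sentence of the statement is indeed definitional, reading $\mathrm B\circ\mathrm{Inc}$ as $\mathrm{Inc}\circ\mathrm B$. The repair is to cite Le Grignou--Lejay for the bottom-right transfer instead of attempting the finite-dimensional reduction.
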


\subsection{Main results} The situation explained before is just an example of more general phenomena in the particular case of associative algebras. In the general context, we will work with a dg operad $\mathcal{P}$. We assume: \begin{itemize}
    \item $\mathcal{P}$ is reduced,
    \item $\mathcal P$ is non-symmetric or $\operatorname{char}(\kk)=0$,
    \item each $\mathcal{P}(n)$ is a dualizable chain complex, that is, a complex of total finite dimension.
\end{itemize}Given this data, we construct a general inclusion-restriction square
\begin{equation}\label{general inclusion-restriction square}
\begin{tikzcd}[column sep=5pc,row sep=4pc]
\mathcal{P}\mbox{-}\mathsf{alg}\arrow[r,"\mathrm{B}_\pi"{name=B},shift left=1.1ex] \arrow[d,"c\mathrm{Abs}"{name=SD},shift left=1.1ex ]
&\mathrm{B}\mathcal{P}\mbox{-}\mathsf{coalg}, \arrow[d,"\mathrm{Inc}"{name=LDC},shift left=1.1ex ] \arrow[l,"\Omega_\pi"{name=C},,shift left=1.1ex]  \\
\mathcal{P}^*\mbox{-}\mathsf{alg}^{\mathsf{comp}} \arrow[r,"\widehat{\mathrm{B}}_\iota"{name=CC},shift left=1.1ex]  \arrow[u,"\mathrm{Res}"{name=LD},shift left=1.1ex ]
&\Omega\mathcal{P}^*\mbox{-}\mathsf{coalg} ~. \arrow[l,"\widehat{\Omega}_\iota"{name=CB},shift left=1.1ex] \arrow[u,"\mathrm{Sub}"{name=TD},shift left=1.1ex] \arrow[phantom, from=SD, to=LD, , "\dashv" rotate=180] \arrow[phantom, from=C, to=B, , "\dashv" rotate=90]\arrow[phantom, from=TD, to=LDC, , "\dashv" rotate=180] \arrow[phantom, from=CC, to=CB, , "\dashv" rotate=90]
\end{tikzcd}
\end{equation}
The operadic bar-cobar adjunction $\mathrm{B}_\pi \vdash \Omega_\pi$, generalizing \eqref{algebraic bar/cobar}, is the one studied in \cite{LodayVallette}; its dual completed version $\smash{\widehat{\mathrm{B}}_\iota \vdash \widehat \Omega_\iota}$, generalizing \eqref{complete algebraic bar/cobar}, is studied in \cite{linearcoalgebras}.

\begin{theoremintro}
The categories in the square \eqref{general inclusion-restriction square} may be given transferred model structure in the same way as in \cref{thoorem c}, making all adjunctions Quillen adjunctions. If we set
\[
\mathbf{bar}_{\mathcal{P}} \coloneqq \mathrm{Inc} \circ \mathrm{B}_\pi~, \quad \mathbf{cobar}_{\mathcal{P}} \coloneqq \mathrm{Res} \circ \widehat{\Omega}_\iota~, 
\]then there is an adjunction of $\infty$-categories 
\begin{equation}\label{theorem d adjunction}
\begin{tikzcd}[column sep=5pc,row sep=3pc]
            \mathcal{P}\mbox{-}\mathsf{alg}~[\mathsf{Q.iso}^{-1}] \arrow[r,"\mathbf{bar}_{\mathcal{P}}"{name=F},shift left=1.1ex ] & \Omega\mathcal{P}^*\mbox{-}\mathsf{coalg}~[\mathsf{Q.iso}^{-1}] \arrow[l,"\mathbf{cobar}_{\mathcal{P}}"{name=U},shift left=1.1ex ]
            \arrow[phantom, from=F, to=U, , "\dashv" rotate=-90]
\end{tikzcd}
\end{equation}
between the $\infty$-category of dg $\mathcal{P}$-algebras localized at quasi-isomorphisms and the $\infty$-category of dg $\Omega\mathcal{P}^*$-coalgebras localized at quasi-isomorphisms. The resulting adjunction \eqref{theorem d adjunction} coincides with the differential graded case of the adjunction considered by Heuts in \cite{heuts2024}.
\end{theoremintro}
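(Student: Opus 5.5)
The plan is to assemble the result in three stages: first the four model structures, then the $\infty$-adjunction obtained by splicing two Quillen adjunctions of opposite handedness, and finally the comparison with Heuts.

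\textbf{Model structures.} On $\mathcal{P}\mbox{-}\mathsf{alg}$ I right-transfer the projective model structure along the free–forgetful adjunction. Since $\mathcal{P}$ is reduced and either non-symmetric or $\operatorname{char}(\kk)=0$, this is the standard Hinich transfer. I then left-transfer along $\Omega_\pi$ to $\mathrm{B}\mathcal{P}\mbox{-}\mathsf{coalg}$, following the Vallette / Le Grignou argument. Here the weak equivalences are the maps sent by $\Omega_\pi$ to quasi-isomorphisms, and the cofibrations are the degreewise monomorphisms.

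Dually, on $\Omega\mathcal{P}^*\mbox{-}\mathsf{coalg}$ I left-transfer the injective structure. Its existence uses that $\kk$ is a field, so every complex is fibrant and injective, and that the cofree coalgebra functor is representable because each $\mathcal{P}(n)$ is finite-dimensional, which makes $\mathcal{P}^*$ an honest operad of finite type. I then right-transfer along $\widehat{\Omega}_\iota \dashv \widehat{\mathrm{B}}_\iota$ to $\mathcal{P}^*\mbox{-}\mathsf{alg}^{\mathsf{comp}}$, as in \cite{linearcoalgebras}.

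\textbf{Quillen adjunctions.} With these structures, $\mathrm{B}_\pi \dashv \Omega_\pi$ and $\widehat{\Omega}_\iota \dashv \widehat{\mathrm{B}}_\iota$ are Quillen adjunctions by construction. For $\mathrm{Inc}\dashv\mathrm{Sub}$, I check that $\mathrm{Inc}$ preserves monomorphisms and that it sends weak equivalences of conilpotent coalgebras to quasi-isomorphisms. The latter holds because every transferred weak equivalence of $\mathrm{B}\mathcal{P}$-coalgebras is in particular a quasi-isomorphism: one uses the bar–cobar resolution $\mathrm{B}_\pi\Omega_\pi C \simeq C$ and the fact that $\mathrm{B}_\pi$ preserves quasi-isomorphisms. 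For $c\mathrm{Abs}\dashv\mathrm{Res}$, I check that $\mathrm{Res}$ preserves fibrations and weak equivalences. Fibrations on both sides are detected on underlying complexes, where they are surjections, so this reduces to showing that weak equivalences of complete algebras are underlying quasi-isomorphisms. That follows dually, because $\widehat{\Omega}_\iota$ preserves quasi-isomorphisms: its filtration is complete.

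\textbf{The $\infty$-adjunction.} Derive $\mathrm{Inc}$ and $\mathrm{B}_\pi$. Since $\mathrm{B}_\pi$ and $\mathrm{Inc}$ preserve all quasi-isomorphisms, and $\mathrm{Res}$ and $\widehat\Omega_\iota$ do too, the composites $\mathbf{bar}_{\mathcal{P}}$ and $\mathbf{cobar}_{\mathcal{P}}$ descend to the localizations at quasi-isomorphisms. This identification of localizations relies on the weak equivalences in the transferred structures on $\mathcal{P}\mbox{-}\mathsf{alg}$ and $\Omega\mathcal{P}^*\mbox{-}\mathsf{coalg}$ being exactly the quasi-isomorphisms.

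The key step is to produce the adjunction itself. I write $\mathbf{L}\mathrm{Inc}\dashv\mathbf{R}\mathrm{Sub}$ and $\mathbf{R}\mathrm{Res}\circ\mathbf{R}\widehat\Omega_\iota$, and observe that $\mathbb{R}\Omega_\pi$ and $\mathbf L\mathrm B_\pi$ are inverse equivalences (Quillen equivalence, the classical result), and dually $\widehat{\Omega}_\iota \dashv \widehat{\mathrm{B}}_\iota$ is a Quillen equivalence. Then
\[
\mathrm{Map}(\mathbf{bar}\,A, C)\simeq \mathrm{Map}(\mathrm{Inc}\,\mathrm{B}_\pi A, C)\simeq \mathrm{Map}(\mathrm{B}_\pi A,\mathbf{R}\mathrm{Sub}\,C)\simeq\mathrm{Map}(A,\Omega_\pi\mathbf{R}\mathrm{Sub}\,C),
\]
so the right adjoint is $\Omega_\pi\mathbf R\mathrm{Sub}$. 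The main obstacle is to identify this with $\mathrm{Res}\,\widehat\Omega_\iota$. For that I use commutativity of the square of right adjoints, $\mathrm{B}_\pi\circ\mathrm{Res}\cong\mathrm{Sub}\circ\widehat{\mathrm{B}}_\iota$. I expect this to be checked directly from the cofree/tensor descriptions. It gives
\[
\Omega_\pi\mathbf{R}\mathrm{Sub}(C)\simeq\Omega_\pi\mathrm{Sub}\,\widehat{\mathrm B}_\iota\widehat\Omega_\iota C\cong\Omega_\pi\mathrm B_\pi\mathrm{Res}\,\widehat\Omega_\iota C\simeq\mathrm{Res}\,\widehat\Omega_\iota C,
\]
where the first step uses that $C\to\widehat{\mathrm B}_\iota\widehat\Omega_\iota C$ is a fibrant replacement, and the last uses the bar–cobar counit quasi-isomorphism. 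I then verify that these equivalences are natural, for instance by working in relative categories with functorial zig-zags, as described for the unit before \cref{description of unit}.

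\textbf{Comparison with Heuts.} Heuts's adjunction is characterized by its left adjoint, namely the derived indecomposables/bar construction followed by the forgetful comparison to $\Omega\mathcal{P}^*$-coalgebras. Using \cite[Theorem A]{rectification}-type rectification for $\Omega\mathcal{P}^*$-coalgebras, I would identify $\mathbf{bar}_{\mathcal{P}}$ with the simplicial bar construction, whose realization is $\mathrm{B}_\pi$. By uniqueness of adjoints, the right adjoints then agree as well.
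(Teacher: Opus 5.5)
Your construction of the adjunction is correct, and it is essentially the paper's argument run on the other side of the square.

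The paper (Theorem~\ref{thm: full rectangle infinity categorical}) shows that the two \emph{left} adjoint composites agree, $\widehat{\mathrm{B}}_\iota\circ\mathbb{L}c\mathrm{Abs}\simeq\mathrm{Inc}\circ\mathrm{B}_\pi$. It uses the cofibrant replacement $\Omega_\pi\mathrm{B}_\pi A$, the isomorphism $c\mathrm{Abs}\circ\Omega_\pi\cong\widehat{\Omega}_\iota\circ\mathrm{Inc}$ and the quasi-isomorphism $C\to\widehat{\mathrm{B}}_\iota\widehat{\Omega}_\iota C$. It then reads off $\mathbf{cobar}_{\mathcal P}$ as the right adjoint of the other path.

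You instead compute the right adjoint of $\mathbf{bar}_{\mathcal P}$ as $\Omega_\pi\mathbb{R}\mathrm{Sub}$ and identify it with $\mathrm{Res}\,\widehat{\Omega}_\iota$. For this you use the fibrant replacement $\widehat{\mathrm{B}}_\iota\widehat{\Omega}_\iota C$, the square of right adjoints, and the counit $\Omega_\pi\mathrm{B}_\pi\to\mathrm{id}$. The square of right adjoints needs no separate check: $\mathrm{B}_\pi\circ\mathrm{Res}\cong\mathrm{Sub}\circ\widehat{\mathrm{B}}_\iota$ follows from Proposition~\ref{Prop: inclusion-restriction square} by uniqueness of adjoints. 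Using the canonical transferred structures instead of the paper's Bousfield localizations is also fine, since transferred weak equivalences are quasi-isomorphisms on both sides.

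Two small corrections:
\begin{itemize}
\item The point-set adjunction is $\Omega_\pi\dashv\mathrm{B}_\pi$, not $\mathrm{B}_\pi\dashv\Omega_\pi$; this handedness flip is exactly what the paper is about. Your mapping-space step still survives, because on $\mathrm{B}\mathcal{P}\mbox{-}\mathsf{coalg}[\mathsf{W.eq}^{-1}]$ the two functors are inverse equivalences.
\item The left transfer to $\Omega\mathcal{P}^*$-coalgebras rests on coadmissibility of the cofibrant operad $\Omega\mathcal{P}^*$, not on finite-dimensionality of the $\mathcal{P}(n)$. Not every operad is coadmissible, even in characteristic zero.
\end{itemize}

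The genuine gap is the comparison with Heuts. Identifying $\mathbf{bar}_{\mathcal P}(A)$ with the realization of a simplicial bar construction only identifies underlying complexes, $\mathbf{indec}(A)\simeq\mathrm{B}_\pi A$ in $\mathbf{D}(\kk)$. What must be proved is an equivalence of functors into a comonadic $\infty$-category. The coalgebra structure on $\mathbf{B}^{\mathbf{enh}}(A)$ is defined through the comonad $\mathbf{indec}\circ\mathbf{triv}$, not through a simplicial diagram of coalgebras.

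Moreover, Heuts's target is his category of non-conilpotent divided powers $\mathbf{bar}(\mathcal P)$-coalgebras. This must first be identified with $\mathbf{Coalg}_{\mathbf{cobar}(\mathcal{P}^*)}(\mathbf{D}(\kk))$ via Lemma~\ref{lemma: equivalence between non-conilpotent coalgebras over a cooperad a la Heuts and coalgebras over the dual operad}. That is where finite-dimensionality and the characteristic-zero or non-symmetric hypothesis enter. Rectification (Theorem~\ref{thm: rectification of P-coalgebras}) only takes you from there to point-set $\Omega\mathcal{P}^*$-coalgebras.

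The missing idea is the Eilenberg--Moore universal property. The derived point-set $\mathrm{indec}\dashv\mathrm{triv}$ models $\mathbf{indec}\dashv\mathbf{triv}$, and $\mathrm{S}(\mathrm{B}\mathcal{P})$ models $\mathbf{S}(\mathbf{bar}(\mathcal P))$. Hence $\mathbf{B}^{\mathbf{enh}}\simeq\mathbf{loose}\circ\mathrm{B}_\pi$ (Theorem~\ref{thm: point-set models for enhanced bar-cobar}). By the same universality, $\mathbf{Inc}\circ\mathbf{loose}\simeq\mathrm{Inc}$ (Proposition~\ref{prop: commutative triangle of inclusions}). Together these give $\mathbf{Inc}\circ\mathbf{B}^{\mathbf{enh}}\simeq\mathrm{Inc}\circ\mathrm{B}_\pi$.

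You cannot shortcut this by identifying $\mathrm{B}_\pi A$ with $\mathbf{B}^{\mathbf{enh}}(A)$ inside conilpotent coalgebras. The $\infty$-category $\mathrm{B}\mathcal{P}\mbox{-}\mathsf{coalg}[\mathsf{Q.iso}^{-1}]$ does not present $\mathbf{Coalg}^{\mathbf{conil}}_{\mathbf{bar}(\mathcal P)}(\mathbf{D}(\kk))$, so the identification only holds after applying $\mathbf{Inc}$.
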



As in the discussion around \cref{description of unit}, we can define \textit{homotopy complete} $\mathcal{P}$-algebras as those which are fully faithfully included into absolute $\mathcal{P}$-algebras, that is, $\mathcal{P}$-algebras $A$ such that the derived unit $
A \longrightarrow \mathrm{Res} \circ \mathbb{L}c\mathrm{Abs}(A)
$
is a quasi-isomorphism. We can also define \textit{homotopy cocomplete} $\Omega\mathcal{P}^*$-coalgebras as those which are fully faithfully included into conilpotent $\mathrm{B}\mathcal{P}$-coalgebras, that is, $\Omega\mathcal{P}^*$-coalgebras $C$ such that the derived counit 
\(
\mathrm{Inc} \circ \mathbb{R}\mathrm{Sub}(C) \longrightarrow C
\)
is a quasi-isomorphism. As in \cref{description of unit} these are also the unit and counit of the $\infty$-categorical adjunction $\mathbf{bar}_{\mathcal{P}} \dashv \mathbf{cobar}_{\mathcal{P}}$, and we obtain an equivalence of $\infty$-categories between homotopy complete $\mathcal{P}$-algebras and homotopy cocomplete $\Omega\mathcal{P}^*$-coalgebras. This gives a new proof of  \cite[Theorem 2.1]{heuts2024} in the special case that $\mathbf C = \mathbf D(\kk)$, and moreover gives an explicit point-set description of each of these categories. 

\medskip

An important class of operad are operads \textit{with good completions}, which are operads such that trivial algebras are homotopy complete. These have been introduced and studied by Heuts in \cite{heuts2024}. In Subsection \ref{subsection: operads with good completions}, we give explicit criteria for an operad to have good completions in terms of our point-set descripions, and we also construct a first example of an operad without good completions. We also introduce the dual notion of a cofibrant operad \textit{with good cocompletions}, that is, such that trivial coalgebras over it are homotopy cocomplete, and we study examples. 

\medskip

\subsection{Francis--Gaitsgory's enhanced bar-cobar adjunction.} There is yet another bar-cobar adjunction that can be constructed at the $\infty$-categorical level \cite{FrancisGaitsgory,GaitsgoryRozenblyumVolII}, and which is different from all those mentioned before. Let $\mathscr{P}$ be an augmented $\infty$-operad enriched in $\mathbf{D}(\kk)$. The augmentation $\mathscr{P} \longrightarrow \mathbb{1}$ induces an adjunction 
\[
\begin{tikzcd}[column sep=5pc,row sep=3pc]
         \mathbf{Alg}_{\mathscr{P}}(\mathbf{D}(\kk)) \arrow[r, shift left=1.1ex, "\mathbf{indec}"{name=F}] &\mathbf{D}(\kk) \arrow[l, shift left=.75ex, "\mathbf{triv}"{name=U}]
            \arrow[phantom, from=F, to=U, , "\dashv" rotate=-90]
\end{tikzcd}
\]
between the $\infty$-category of $\mathscr{P}$-algebras and the underlying $\infty$-category of chain complexes over $\kk$. The functor $\mathbf{triv}$ endows any chain complex with a trivial $\mathscr{P}$-algebra structure, and its left adjoint $\mathbf{indec}$ is the functor of derived indecomposables of a $\mathscr{P}$-algebra. The \textit{homology} of $\mathbf{indec}$ is called the (topological) André--Quillen homology, see \cite{Basterra99, Mandell03}. When $\mathscr{P}$ is the $\mathbb{E}_\infty$-operad in $\mathbf{D}(\kk)$, the functor $\mathbf{indec}$ is called the cotangent functor. 

\medskip

The point now is that this adjunction factors universally through coalgebras over the induced comonad $\mathbf{indec} \circ \mathbf{triv}$ on $\mathbf{D}(\kk)$, and that these coalgebras can be identified with conilpotent $\mathrm{B}\mathscr{P}$-coalgebras. Therefore, we get an adjunction 
\[
\begin{tikzcd}[column sep=5pc,row sep=1pc]
         \mathbf{Alg}_{\mathscr{P}}(\mathbf{D}(\kk)) \arrow[r, shift left=1.1ex, "\mathbf{B}^{\mathbf{enh}}"{name=F}] &\mathbf{Coalg}_{\mathbf{bar}(\mathscr{P})}^{\mathbf{conil}}(\mathbf{D}(\kk))~, \arrow[l, shift left=.75ex, "\bm{\Omega}^{\mathbf{enh}}"{name=U}] \arrow[phantom, from=F, to=U, , "\dashv" rotate=-90]
\end{tikzcd}
\]
called the \textit{enhanced bar-cobar adjunction} in \cite{FrancisGaitsgory}, between $\mathscr{P}$-algebras and conilpotent divided powers $\mathbf{bar}(\mathscr{P})$-coalgebras. 

\medskip

Naively, one would like to say that the top horizontal adjunction in (\ref{general inclusion-restriction square}) — which generalizes (\ref{algebraic bar/cobar}) — induces the enhanced bar-cobar adjunction at the $\infty$-categorical level. However, as pointed out in  \cite[Theorem 5.3.1]{chen25}, this is not true in general. The main reason is that point-set conilpotent $\mathrm{B}\mathcal{P}$-coalgebras up to quasi-isomorphisms are \emph{not} equivalent to their $\infty$-categorical counterparts. What we get is the following diagram of adjunctions at the $\infty$-categorical level 
\[
\begin{tikzcd}[column sep=3pc,row sep=3.5pc]
\mathcal{P}\mbox{-}\mathsf{alg}~[\mathsf{Q.iso}^{-1}] \arrow[r, shift left=1.1ex, "\mathrm{B}_\pi"{name=A}] 
&\mathrm{B}\mathcal{P}\mbox{-}\mathsf{coalg}~[\mathsf{Q.iso}^{-1}] \arrow[l, shift left=1.1ex, "\mathbb{R}\Omega_\pi"{name=A}]
\arrow[r, shift left=1.5ex, "\mathbf{loose}"{name=A}]  \arrow[rd, shift left=0ex, shorten >=10pt, shorten <=10pt, "\mathrm{Inc}"{name=B}] 
&\mathbf{Coalg}_{\mathbf{bar}(\mathcal{P})}^{\mathbf{conil}}(\mathbf{D}(\kk)) \arrow[d, shift left=2ex, "\mathbf{Inc}"{name=F}] \arrow[l, shift left=.75ex, "\mathbf{strict}"{name=C}] \\
&
&\Omega\mathcal{P}^*\mbox{-}\mathsf{coalg}~[\mathsf{Q.iso}^{-1}]\arrow[u, shift left=1.1ex, "\mathbf{Sub}"{name=U}] \arrow[lu, shift left=2.5ex, shorten >=-5pt, shorten <=30pt, "\mathbb{R}\mathrm{Sub}"{name=D}] 
\end{tikzcd}
\]
where left adjoints are written on top. The composite horizontal adjunction does give back the enhanced bar-cobar adjunction $\mathbf{B}^{\mathbf{enh}} \dashv \bm{\Omega}^{\mathbf{enh}}$ but, surprisingly, what happens is that the adjunction $\mathbf{loose} \dashv \mathbf{strict}$ is not in general an equivalence. In \cref{subsection: derived inclusions and equivalences}, we construct counterexamples to the fully faithfulness of some of these functors, and we show however that the functor $\mathbf{loose}$ is indeed fully faithful on the essential image of homotopy complete $\mathcal{P}$-algebras by the bar construction $\mathrm{B}_\pi$, see Corollary \ref{cor: fully faithfulness of loose on homotopy complete}. This puts together several partial rectification results for conilpotent coalgebras present in the literature, for example what happens when $\mathcal{P}$ is connective and conilpotent $\mathrm{B}\mathcal{P}$-coalgebras are in degrees $\geq 1$. 

\subsection{A dual version of the enhanced bar-cobar adjunction.}
In order to make sense of the bottom horizontal adjunction in (\ref{general inclusion-restriction square}) at the $\infty$-categorical level, we construct a dual version of the enhanced bar-cobar adjunction for non-conilpotent coalgebras. Let $\mathscr{P}$ be an augmented $\infty$-operad enriched in $\mathbf{D}(\kk)$. In the same way that we can use point-set operads to encode non-conilpotent coalgebras, the same happens with $\infty$-operads. We refer to \cite[Section 3]{rectification} for more details about this. 

\medskip

Here also, the augmentation map $\mathscr{P} \longrightarrow \mathbb{1}$ induces an adjunction $\mathbf{triv} \dashv \mathbf{prim}$ between the $\infty$-category of $\mathscr{P}$-coalgebras and the derived $\infty$-category of chain complexes over $\kk$. The functor $\mathbf{triv}$ endows any chain complex with a trivial $\mathscr{P}$-coalgebra structure and its right adjoint $\mathbf{prim}$ is the functor of derived primitives of a $\mathscr{P}$-coalgebra. Its homology groups should be understood as (topological) coAndré-Quillen homology groups for types of coalgebras, of which (topological) coHochschild homology groups in the sense of \cite{HessShipley21} and \cite{BayindirPeroux23} are particular examples. This adjunction also factors universally through algebras over the induced monad $\mathbf{Q}_{\mathscr{P}} = \mathbf{prim} \circ \mathbf{triv}$ on $\mathbf{D}(\kk)$, giving the adjunction
\[
\begin{tikzcd}[column sep=5pc,row sep=3pc]
         \mathbf{Coalg}_{\mathscr{P}}(\mathbf{D}(\kk)) \arrow[r, shift left=1.1ex, "\bm{\widehat{\mathrm{B}}}^{\mathbf{enh}}"{name=F}] &\mathbf{Alg}_{\mathbf{Q}_{\mathscr{P}}}(\mathbf{D}(\kk))~, \arrow[l, shift left=.9ex, "\bm{\widehat{\Omega}}^{\mathbf{enh}}"{name=U}] \arrow[phantom, from=F, to=U, , "\dashv" rotate=90]
\end{tikzcd}
\]
which we call the \textit{complete enhanced bar-cobar adjunction}. However, unlike in \cite{FrancisGaitsgory}, it is not straightforward to identify what this monad is. In order to do so, we first give a definition (\ref{def: infinity cat C-algebra}) of algebras over an enriched $\infty$-cooperad — which are the $\infty$-categorical analogue of the absolute algebras considered in \cite{absolutealgebras}. Then, we show that point-set algebras over a conilpotent dg cooperad up to quasi-isomorphisms do present their $\infty$-categorical counterparts in Corollary \ref{cor: rectifictation of dg C algebras}. This combination of new $\infty$-categorical definitions and point-set methods allows to show the following result. 

\begin{theoremintro}[Theorems \ref{thm: identification of the infinity monad Q} and \ref{thm: point-set models for complete enhanced bar-cobar adjunctions}]
Let $\C$ be a conilpotent dg cooperad satisfying finiteness assumptions \ref{assumption 1}. The monad $\mathbf{Q}_{\Omega\mathcal{C}}=\mathbf{prim} \circ \mathbf{triv}$ induced by $\mathbf{triv} \dashv \mathbf{prim}$ 
can be indentified with the monad
\[\widehat{\mathbf{S}}^c(\C) \simeq \prod_{n \geq 0} \left[\C(n), (-)^{\otimes n}\right]^{h\mathbb{S}_n}
\] induced by the underlying enriched $\infty$-cooperad of $\C$, which encodes $\infty$-categorical $\C$-algebras. Furthermore, the $\infty$-categorical adjunction induced by the complete bar-cobar adjunction 
composed with the right Bousfield localization with respect to quasi-isomorphisms
\[
\begin{tikzcd}[column sep=4pc,row sep=3pc]
           \Omega\mathcal{C}\mbox{-}\mathsf{coalg}~[\mathsf{Q.iso}^{-1}] \arrow[r, shift left=1.1ex, "\widehat{\Omega}_{\iota}"{name=F}] 
          &\C\mbox{-}\mathsf{alg}^{\mathsf{comp}}~[\mathsf{W}^{-1}] \arrow[l, shift left=.75ex, "\widehat{\mathrm{B}}_{\iota}"{name=U}] \arrow[r, shift left=1.1ex, "\mathrm{Id}"{name=A}]
          &\C\mbox{-}\mathsf{alg}^{\mathsf{comp}}~[\mathsf{Q.iso}^{-1}] \arrow[l, shift left=.75ex, "\mathbb{L}\mathrm{Id}"{name=B}]
            \arrow[phantom, from=F, to=U, , "\simeq" rotate=0] \arrow[phantom, from=A, to=B, , "\dashv" rotate=90]
\end{tikzcd}
\]
is naturally weakly equivalent to the $\infty$-categorical complete enhanced bar-cobar adjunction. 
\end{theoremintro}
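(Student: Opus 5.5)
We plan to prove the two assertions separately: first identify the monad $\mathbf{Q}_{\Omega\C}$, then compare the point-set adjunction with the $\infty$-categorical one.

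\textbf{Step 1: the monad.} The functor $\mathbf{triv}$ sends a chain complex $V$ to $V$ with the trivial $\Omega\C$-coalgebra structure. To compute $\mathbf{prim}(\mathbf{triv}\,V)$ we will use a fibrant replacement in the left-transferred model structure on $\Omega\C\mbox{-}\mathsf{coalg}$. Since $\widehat{\Omega}_\iota \dashv \widehat{\mathrm{B}}_\iota$ is a Quillen equivalence after localization, a functorial fibrant replacement is
\[
C \stackrel{\sim}{\longrightarrow} \widehat{\mathrm{B}}_\iota\widehat{\Omega}_\iota C .
\]
Point-set primitives of a cofree coalgebra with twisted differential are computed by projecting to the cogenerators. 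Hence
\[
\mathbf{prim}(\mathbf{triv}\,V) \simeq \widehat{\Omega}_\iota(\mathrm{triv}\,V),
\]
viewed as a chain complex. For trivial $V$ the twisting differential from $\Omega\C$ reduces to the internal differential of $\C$. We will then check that the underlying graded object of $\widehat{\Omega}_\iota(\mathrm{triv}\,V)$ is
\[
\prod_{n} \operatorname{Hom}_{\mathbb{S}_n}\bigl(\C(n), V^{\otimes n}\bigr),
\]
which is the free complete $\C$-algebra on $V$. Because we work over a field in characteristic zero, or non-symmetrically, these strict invariants compute homotopy invariants. Finiteness Assumption \ref{assumption 1} ensures that the product commutes with the relevant homotopy limits. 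Together these identify the underlying endofunctor with $\widehat{\mathbf{S}}^c(\C)$.

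To promote this to an identification of monads, we will compare both monads with the monad of the point-set adjunction. The monad structure on $\mathbf{prim}\circ\mathbf{triv}$ comes from the adjunction $\mathbf{triv}\dashv\mathbf{prim}$. That adjunction is presented by the point-set Quillen adjunction $\mathrm{triv} \dashv \mathrm{Prim}$, which factors through $\C\mbox{-}\mathsf{alg}^{\mathsf{comp}}$. By Corollary \ref{cor: rectifictation of dg C algebras}, point-set $\C$-algebras present algebras over the $\infty$-monad $\widehat{\mathbf{S}}^c(\C)$ of Definition \ref{def: infinity cat C-algebra}. This yields a map of monads, and by the computation above it is an equivalence on underlying functors.

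\textbf{Step 2: the adjunction.} By Lurie's monadicity theorem, the complete enhanced adjunction is the canonical comparison $\mathbf{Coalg}_{\Omega\C}\to \mathbf{Alg}_{\mathbf{Q}}$ together with its adjoint. The composite of $\widehat{\Omega}_\iota$ with the identity into $\C\mbox{-}\mathsf{alg}^{\mathsf{comp}}[\mathsf{Q.iso}^{-1}]$ lies over $\mathbf{D}(\kk)$ through the forgetful functors. Its composite with the forgetful functor is $\mathbf{prim}$, and the induced monad is the one identified in Step 1. Universality of the comparison functor into algebras over the induced monad then identifies our point-set composite with $\widehat{\mathrm{B}}^{\mathbf{enh}}$, after the handedness conventions are matched. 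The adjoints agree by uniqueness of adjoints.

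\textbf{Main obstacle.} The hardest step is the strictification in Step 1. We must show that the point-set monad structure on complete free $\C$-algebras models the $\infty$-monad $\mathbf{prim}\circ\mathbf{triv}$ coherently, not merely objectwise. We would first try to realize both monads as arising from a single adjunction of $\infty$-categories: the localized Quillen adjunction $\mathrm{triv}\dashv \mathrm{Prim}$ factored through $\C$-algebras. We would then invoke Corollary \ref{cor: rectifictation of dg C algebras} so that the comparison reduces to an equivalence of underlying functors, which is checked after forgetting to $\mathbf{D}(\kk)$. A second difficulty is showing that the products commute with homotopy fixed points; this is where Assumption \ref{assumption 1} and the finite-dimensionality of each $\C(n)$ will be used.
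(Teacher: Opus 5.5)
Your proposal is correct and is essentially the paper's argument. You compute $\mathbb{R}\mathrm{prim}\circ\mathrm{triv}(V)$ through the fibrant replacement $\widehat{\mathrm{B}}_\iota\widehat{\Omega}_\iota$, recognize $\mathrm{prim}\circ\widehat{\mathrm{B}}_\iota\widehat{\Omega}_\iota(V)\cong\widehat{\mathrm{S}}^c(\C)(V)$ as the free complete $\C$-algebra, and conclude from Corollary \ref{cor: rectifictation of dg C algebras} and the universal property of the Eilenberg--Moore factorization of $\mathbf{triv}\dashv\mathbf{prim}$, using Lemma \ref{lemma: rectification of derived primitives} for the point-set model of that adjunction.

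The only inaccuracy is where you place Assumption \ref{assumption 1}. Products always commute with homotopy fixed points and are exact over a field, so no finiteness is needed there. Finiteness instead enters through the arity-tower argument for monadicity of complete $\C$-algebras over $\mathbf{D}(\kk)$ (Lemma \ref{lemma: identification of the truncated category} and Corollary \ref{cor: infinity monadicity of absolute algebras}).
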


\subsection{An \texorpdfstring{$\infty$}{infinity}-categorical inclusion-restriction square} Finally, we put together all of our previous constructions at the $\infty$-categorical level to obtain another inclusion-restriction square.

\begin{theoremintro}[Theorem \ref{thm: infinity categorical inclusion-restriction square}]
Let $\mathscr{P}$ be a reduced $\infty$-operad enriched in $\mathbf{D}(\kk)$, with arity-wise bounded and finite dimensional homology groups. The following diagram of adjunctions 

\[
\begin{tikzcd}[column sep=5pc,row sep=5pc]
\mathbf{Alg}_{\mathscr{P}}(\mathbf{D}(\kk)) \arrow[r,"\bm{\mathrm{B}}^{\mathbf{enh}}"{name=B},shift left=1.1ex] \arrow[d,"\mathbf{cAbs}"{name=SD},shift left=1.1ex ]
&\mathbf{Coalg}_{\mathbf{bar}(\mathscr{P})}^{\mathbf{conil}}(\mathbf{D}(\kk)), \arrow[d,"\mathbf{Inc}"{name=LDC},shift left=1.1ex ] \arrow[l,"\bm{\Omega}^{\mathbf{enh}}"{name=C},,shift left=1.1ex]  \\
\mathbf{Alg}_{\mathscr{P}^*}(\mathbf{D}(\kk)) \arrow[r,"\bm{\widehat{\mathrm{B}}}^{\mathbf{enh}}"{name=CC},shift left=1.1ex]  \arrow[u,"\mathbf{Res}"{name=LD},shift left=1.1ex ]
&\mathbf{Coalg}_{\mathbf{cobar}(\mathscr{P}^*)}(\mathbf{D}(\kk)) ~, \arrow[l,"\bm{\widehat{\Omega}}^{\mathbf{enh}}"{name=CB},shift left=1.1ex] \arrow[u,"\mathbf{Sub}"{name=TD},shift left=1.1ex] \arrow[phantom, from=SD, to=LD, , "\dashv" rotate=180] \arrow[phantom, from=C, to=B, , "\dashv" rotate=-90]\arrow[phantom, from=TD, to=LDC, , "\dashv" rotate=180] \arrow[phantom, from=CC, to=CB, , "\dashv" rotate=-90]
\end{tikzcd}
\] 
commutes in the following sense: left adjoints from the top left to the bottom right are naturally weakly equivalent. 
\end{theoremintro}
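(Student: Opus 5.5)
The plan is to reduce commutativity of the $\infty$-categorical square to the commutativity of the point-set inclusion-restriction square \eqref{general inclusion-restriction square}, and then use the identifications of each $\infty$-categorical adjunction with a localized point-set adjunction. First I would choose a point-set model: rectify $\mathscr{P}$ to a reduced dg operad $\mathcal{P}$ with each $\mathcal{P}(n)$ finite-dimensional. This is possible because the homology is arity-wise bounded and finite-dimensional, so one can take a minimal model; in characteristic zero, or in the non-symmetric case, this lies in the range of the main theorems. Then $\mathbf{Alg}_{\mathscr{P}}(\mathbf{D}(\kk)) \simeq \mathcal{P}\mbox{-}\mathsf{alg}[\mathsf{Q.iso}^{-1}]$ by standard rectification. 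Next, the bottom-left corner is $\mathcal{P}^*\mbox{-}\mathsf{alg}^{\mathsf{comp}}[\mathsf{Q.iso}^{-1}]$ by Corollary \ref{cor: rectifictation of dg C algebras}, and the bottom-right corner is $\Omega\mathcal{P}^*\mbox{-}\mathsf{coalg}[\mathsf{Q.iso}^{-1}]$ by \cite{rectification}. Under these identifications:
\begin{itemize}
\item the bottom adjunction is the complete bar-cobar adjunction composed with the Bousfield localization, by Theorem \ref{thm: point-set models for complete enhanced bar-cobar adjunctions};
\item the top adjunction is $\mathbf{loose}\circ \mathrm{B}_\pi \dashv \mathbb{R}\Omega_\pi\circ\mathbf{strict}$, by the discussion of the enhanced bar-cobar adjunction;
\item the vertical adjunctions are the derived functors of $c\mathrm{Abs}\dashv \mathrm{Res}$ and $\mathrm{Inc}\dashv \mathrm{Sub}$, with $\mathbf{Inc}\circ\mathbf{loose}\simeq \mathrm{Inc}$.
\end{itemize}

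The key step is then to compare the two composite left adjoints $\mathbf{Alg}_{\mathscr{P}} \to \mathbf{Coalg}_{\mathbf{cobar}(\mathscr{P}^*)}$. Along the top-right path this composite is $\mathbf{Inc}\circ\mathbf{B}^{\mathbf{enh}} \simeq \mathbf{Inc}\circ\mathbf{loose}\circ \mathrm{B}_\pi\simeq \mathrm{Inc}\circ \mathrm{B}_\pi$, which is the left derived functor of a left Quillen composite. Along the bottom-left path it is $\widehat{\mathbf{B}}^{\mathbf{enh}}$ applied after $\mathbf{cAbs}$. Note that in the $\infty$-categorical square $\widehat{\mathbf{B}}^{\mathbf{enh}}$ is the left adjoint (the equivalence $\widehat\Omega_\iota$ inverted) composed with $\mathbb{L}\mathrm{Id}$, so this composite is modelled by $\widehat{\mathrm{B}}_\iota\circ c\mathrm{Abs}$ evaluated on cofibrant $\mathcal{P}$-algebras.

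It then suffices to produce, for cofibrant $A$, a natural quasi-isomorphism $\mathrm{Inc}\,\mathrm{B}_\pi A \simeq \widehat{\mathrm{B}}_\iota c\mathrm{Abs}(A)$. Equivalently, by adjunction and the fact that $\widehat\Omega_\iota$ is an equivalence, one checks that the right adjoints agree: $\mathrm{Res}\circ\widehat\Omega_\iota \simeq \mathbb{R}\Omega_\pi\circ \mathbb{R}\mathrm{Sub}$ as functors out of $\Omega\mathcal{P}^*\mbox{-}\mathsf{coalg}$. Because all functors involved are right derived functors of right Quillen functors, this follows from the strict 1-categorical commutativity of right adjoints in \eqref{general inclusion-restriction square}, namely $\mathrm{Res}\,\widehat\Omega_\iota = \Omega_\pi\,\mathrm{Sub}$, up to replacement. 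One still has to verify that every object of $\Omega\mathcal{P}^*\mbox{-}\mathsf{coalg}$ is cofibrant in its left-transferred structure, so that $\widehat\Omega_\iota$ needs no derivation; since $\widehat{\mathrm{B}}_\iota$ is right Quillen, $\widehat\Omega_\iota$ is left Quillen and preserves the relevant objects. Finally, uniqueness of adjoints transports the equivalence of right adjoints to the left adjoints.

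The main obstacle I expect is making sure that the $\infty$-categorical functors $\mathbf{Inc}$, $\mathbf{cAbs}$, $\mathbf{Res}$ and $\mathbf{Sub}$, which are defined intrinsically via $\mathbf{triv}$, $\mathbf{indec}$ and $\mathbf{prim}$, really agree with the derived point-set ones under the rectification equivalences. Equivalently, the identifications of the four corners must be compatible with the forgetful functors to $\mathbf{D}(\kk)$ and with the trivial-structure functors. My first attempt would be to check this on free and cofree objects, using the monad identification of Theorem \ref{thm: identification of the infinity monad Q} and the description of $c\mathrm{Abs}$ on free algebras as completion. I would then extend by resolutions: bar resolutions for algebras, and cobar resolutions for coalgebras using totalizations, which $\mathbf{Sub}$ preserves as a right adjoint.
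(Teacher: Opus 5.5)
Most of your set-up matches the paper: the choice of a finite point-set model (your minimal model plays the role of the paper's $\Omega\mathrm{B}\mathrm{H}_*(\mathcal{Q})$), the identification of the four corners, the computation $\mathbf{Inc}\circ\mathbf{B}^{\mathbf{enh}}\simeq\mathbf{Inc}\circ\mathbf{loose}\circ\mathrm{B}_\pi\simeq\mathrm{Inc}\circ\mathrm{B}_\pi$, and the observation that the other composite is modelled by $\widehat{\mathrm{B}}_\iota\circ c\mathrm{Abs}$ on cofibrant $A$. The gap is in the key step.

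The identity $\mathrm{Res}\circ\widehat\Omega_\iota=\Omega_\pi\circ\mathrm{Sub}$ that you attribute to the point-set square is false. Moreover, neither side is a composite of right adjoints: $\widehat\Omega_\iota$ and $\Omega_\pi$ are the \emph{left} Quillen functors there, as you yourself note a sentence later. Already for a trivial conilpotent coalgebra $V$, one side is the completed free algebra $\widehat{\mathrm{S}}^c(\mathcal{P}^*)(V)\cong\prod_n\mathcal{P}(n)\otimes_{\mathbb{S}_n}V^{\otimes n}$ and the other is the free algebra $\mathrm{S}(\mathcal{P})(V)$. For $\mathcal{A}ss$ and $V=\kk$ these are not even quasi-isomorphic. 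The discrepancy is absorbed only by the fibrant replacement $\mathbb{R}\mathrm{Id}$ in the localized structure on $\mathrm{B}\mathcal{P}$-coalgebras, so ``up to replacement'' is where all the content lies, not a formality. What the point-set square actually gives is commutativity along the \emph{other} diagonal: $c\mathrm{Abs}\circ\Omega_\pi\cong\widehat\Omega_\iota\circ\mathrm{Inc}$, equivalently $\mathrm{B}_\pi\circ\mathrm{Res}\cong\mathrm{Sub}\circ\widehat{\mathrm{B}}_\iota$. As written, you therefore never produce the quasi-isomorphism $\mathrm{Inc}\,\mathrm{B}_\pi A\simeq\widehat{\mathrm{B}}_\iota\,c\mathrm{Abs}(A)$.

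The missing idea, which is how the paper argues (Theorem \ref{thm: full rectangle infinity categorical}), is to resolve $A$ by $\Omega_\pi\mathrm{B}_\pi A$. Then $\mathbb{L}c\mathrm{Abs}(A)\simeq c\mathrm{Abs}\,\Omega_\pi\mathrm{B}_\pi A\cong\widehat\Omega_\iota\,\mathrm{Inc}\,\mathrm{B}_\pi A$ by the genuine commutativity. The adjunction map of the complete bar--cobar adjunction between $\mathrm{Inc}\,\mathrm{B}_\pi A$ and $\widehat{\mathrm{B}}_\iota\widehat\Omega_\iota\,\mathrm{Inc}\,\mathrm{B}_\pi A$ is then a natural quasi-isomorphism.

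Your right-adjoint route can be repaired dually:
\begin{itemize}
\item For a coalgebra $C$, the map $C\qi\widehat{\mathrm{B}}_\iota\widehat\Omega_\iota C$ is a fibrant replacement.
\item $\mathrm{Sub}\,\widehat{\mathrm{B}}_\iota(\widehat\Omega_\iota C)\cong\mathrm{B}_\pi\mathrm{Res}(\widehat\Omega_\iota C)$ is fibrant in the localized structure, so it computes $\mathbb{R}\mathrm{Id}\,\mathbb{R}\mathrm{Sub}(C)$.
\item The counit $\Omega_\pi\mathrm{B}_\pi\mathrm{Res}\,\widehat\Omega_\iota C\to\mathrm{Res}\,\widehat\Omega_\iota C$ is a quasi-isomorphism.
\end{itemize}
Either way, the comparison must pass through the unit/counit quasi-isomorphisms of the two bar--cobar adjunctions, not a strict identity.

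The obstacle you anticipate for the vertical functors is lighter than you fear. $\mathbf{Res}$ and $\mathrm{Res}$ are both induced by the monad map $\iota_{\mathcal{P}}$ compatibly with the forgetful functors, so they agree via Corollary \ref{cor: rectifictation of dg C algebras}. Then $\mathbf{cAbs}\simeq\mathbb{L}c\mathrm{Abs}$ by uniqueness of adjoints.
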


Interestingly, while this commutative square does not coincide with the commutative square induced at the $\infty$-categorical level by (\ref{general inclusion-restriction square}), its commutativity does follow from the point-set methods that we have developed throughout this paper. And while we expect this square to hold in a much more general context than $\mathbf{D}(\kk)$, so far the only way we know to reach such a purely $\infty$-categorical statement about Koszul duality is via point-set methods.

\subsection{Acknowledgements}
We wish to warmly thank Grégory Ginot for several discussions that led to this paper. The first author also wishes to thank Gijs Heuts and Brice Le Grignou for useful discusions on these topics.

\subsection{Conventions}
Regarding notation, we will adopt the following conventions. 

\medskip

\begin{itemize}
    \item Let $\kk$ be a field, of any characteristic unless stated otherwise. Our base 1-category will be the category of chain complexes over $\kk$, together with its closed symmetric monoidal structure given by the tensor product $-\otimes-$ of chain complexes and the Koszul sign convention. We will denote the internal hom of this category by $[-,-]$. We adopt the \textit{homological convention}, differentials will be of degree $-1$. We denote this 1-category by $\mathsf{Ch}(\kk)$. 
    
    \medskip
    
    \item In general, dg operads in $\mathsf{Ch}(\kk)$ will be denoted by $\mathcal{P}$ and dg cooperads by $\C$. For a dg operad $\mathcal{P}$, we denote the category of dg $\mathcal{P}$-algebras by $\mathcal{P}\mbox{-}\mathsf{alg}$ and of dg $\mathcal{P}$-coalgebras by $\mathcal{P}\mbox{-}\mathsf{coalg}$. Similarly for algebras and coalgebras over a dg cooperad.
    
    \medskip
    
    \item Let $\mathsf{C}$ be a category and let $\mathsf{W}$ be a class of arrows in $\mathsf{C}$. We will denote $\mathsf{C}~[\mathsf{W}^{-1}]$ the $\infty$-category obtained by localizing $\mathsf{C}$ at $\mathsf{W}$. When working at the $\infty$-categorical level, limits and colimits should be understood as meaning homotopy limits and colimits. If
\[
\mathsf{F}: \mathsf{C} \longrightarrow \mathsf{D}
\]
is a functor between categories which sends a class of arrows $\mathsf{W}_{\mathsf{C}}$ in $\mathsf{C}$ to a class of arrows $\mathsf{W}_{\mathsf{D}}$ in $\mathsf{D}$, we still denote 
\[
\mathsf{F}: \mathsf{C}~[\mathsf{W}^{-1}_{\mathsf{C}}] \longrightarrow \mathsf{D}~[\mathsf{W}^{-1}_{\mathsf{D}}]
\]
the induced functor at the $\infty$-categorical level. However, we will add $\mathbb{L}\mathsf{F}$ and $\mathbb{R}\mathsf{F}$ to the left (resp. right) derived functors of $\mathsf{F}$ when it is a left (resp. right) Quillen functor which does not preserve weak equivalences in general. If $\mathsf{C}$ is a 1-category that we consider as an $\infty$-category via the nerve functor $\mathcal{N}$, we will still denote it by $\mathsf{C}$ instead of $\mathcal{N}(\mathsf{C})$.

    \medskip
    
    \item Our base $\infty$-category will be the category of chain complexes over $\kk$ localized at the class of all quasi-isomorphisms $\mathsf{Ch}(\kk)~[\mathsf{Q.iso}^{-1}]$, which a symmetric monoidal $\infty$-category. We will denote this $\infty$-category directly by $\mathbf{D}(\kk)$.
     
    \medskip
    
    \item In general, an enriched $\infty$-operad in $\mathbf{D}(\kk)$ will be denoted by $\mathscr{P}$, and an enriched $\infty$-co\-operad by $\mathscr{C}$. Given any enriched $\infty$-operad $\mathscr{P}$, we denote the $\infty$-category of $\mathscr{P}$-algebras in the base $\infty$-category $\mathbf{D}(\kk)$ by $\mathbf{Alg}_{\mathscr{P}}(\mathbf{D}(\kk))$, and the $\infty$-category of $\mathscr{P}$-coalgebras in $\mathbf{D}(\kk)$ by $\mathbf{Coalg}_{\mathscr{P}}(\mathbf{D}(\kk))$. Similarly for algebras and coalgebras over an enriched $\infty$-co\-operad.

\end{itemize}


\section{Recollections on algebras and coalgebras}


The goal of this section is to introduce both the point-set and the $\infty$-categorical framework that we are going to use in the rest of this paper.

\medskip

On the point-set side, we recall the notions of operads and cooperads, as well as the categories of algebras and coalgebras that one can define over them. Then we explain the operadic bar-cobar adjunction between augmented dg operads and conilpotent dg cooperads and how one obtains induced bar-cobar adjunctions between their respective categories of algebras and coalgebras. Finally, we also explain the different model structures that each of these categories can be endowed with. 

\medskip

On the $\infty$-categorical side, we use the formalism of symmetric sequences developed in \cite[Section 4.1.2]{BrantnerPhD}, where one can define (enriched) $\infty$-operads (resp. (enriched) $\infty$-cooperads) as monoids (resp. comonoids) in the $\infty$-category of symmetric sequences. See also \cite{Haugseng22,Haugseng19} for a different approach to this formalism. We first recall the definitions of algebras over $\infty$-operads and (divided powers, conilpotent) coalgebras over $\infty$-cooperads. Then, we recall the main points of the definition of coalgebras over an operad introduced in \cite{rectification}. And finally, we introduce the notion of an algebra over a $\infty$-cooperad in the $\infty$-categorical context, in a spirit similar to \cite{linearcoalgebras}. See also \cite{absolutealgebras} for more details on algebras over cooperads.

 \subsection{Point-set preliminaries}

As in \cite[Section 1]{rectification}, we consider the monoidal category $\mathsf{sSeq}(\mathsf{Ch}(\kk))$ of dg symmetric sequences, whose objects are collections $\{M(n)\}_{n \geq 0}$ of chain complexes endowed with an action of $\mathbb{S}_n$ for all $n \geq 0$, with monoidal structure given by the \textit{composition product}. 

\medskip

A dg operad is a monoid object $\operad P$ in $\mathsf{sSeq}(\mathsf{Ch}(\kk))$. Dually, a cooperad is a comonoid object in $\mathsf{sSeq}(\mathsf{Ch}(\kk))$. In  \cite[Section 1]{rectification}, we defined algebras and coalgebras over a dg operad $\mathcal P$ in terms of the \emph{endomorphism operad} and \emph{coendomorphism operad} associated with a dg vector space. We now want to firstly give an equivalent, monadic, definition of the category of $\mathcal P$-algebras. The monadic definition naturally dualizes instead to a comonadic definition of a \emph{coalgebra over a cooperad}.

\subsubsection{Algebras over operads and coalgebras over cooperads.} To any dg symmetric sequence one can associate an endofunctor in the category of chain complexes via the \textit{Schur realization functor}:
\[
\begin{tikzcd}[column sep=4pc,row sep=0pc]
\mathrm{S}(-) : \mathsf{sSeq}(\mathsf{Ch}(\kk)) \arrow[r]
&\mathsf{End}(\mathsf{Ch}(\kk)) \\
M \arrow[r,mapsto]
&\mathrm{S}(M) \coloneqq \displaystyle \bigoplus_{n \geq 0} M(n) \otimes_{\mathbb{S}_n} (-)^{\otimes n}~.
\end{tikzcd}
\]

The Schur realization functor $\mathrm{S}(-)$ is strong monoidal. Thus, for any dg operad $\mathcal{P}$ its Schur functor $\mathrm{S}(\mathcal{P})$ is a monad in chain complexes, and for any dg cooperad $\C$ its Schur functor $\mathrm{S}(\C)$ is a comonad in chain complexes. 

\begin{definition}[dg $\PP$-algebra]
Let $\PP$ be a dg operad. A dg $\PP$-\textit{algebra} is an algebra over the monad $\mathrm{S}(\mathcal{P})$. 
\end{definition}

\begin{definition}[dg $\C$-coalgebra]\label{def: C-coalgebra}
Let $\C$ be a dg cooperad. A dg $\C$-\textit{coalgebra} $W$ is a coalgebra over the comonad $\mathrm{S}(\C)$. 
\end{definition}

\begin{remark}[About coalgebras over cooperads]
    The structural map of a dg $\C$-coalgebra $W$ 
    \[
    \Delta_W: W \longrightarrow \bigoplus_{n \geq 0} \C(n) \otimes_{\mathbb S_n} W^{\otimes n}
    \]lands in the direct sum. Therefore any element $w$ in $W$ can only be decomposed into a \textit{finite sum}. This implies that dg $\C$-coalgebras always satisfy a \textit{conilpotency} condition. Furthermore, the fact that $\Delta_W$ lands in the \textit{coinvariants} on the right-hand side implies that \textit{divided power operations} will appear in this type of structure. Therefore the definition of a dg $\C$-coalgebra encodes divided powers conilpotent types of coalgebraic structures.
\end{remark}

\subsubsection{Algebras over a cooperad}\label{subsection: dual Schur functor} There is a \textit{dual Schur realization functor} which is given by
\[
\begin{tikzcd}[column sep=4pc,row sep=0pc]
\widehat{\mathrm{S}}^c : \mathsf{sSeq}(\mathsf{Ch}(\kk))^{\mathsf{op}} \arrow[r]
&\mathsf{End}(\mathsf{Ch}(\kk)) \\
M \arrow[r,mapsto]
&\widehat{\mathrm{S}}^c(M) \coloneqq \displaystyle \prod_{n \geq 0} [M(n),(-)^{\otimes n}]^{\mathbb{S}_n}~.
\end{tikzcd}
\]
The functor $\widehat{\mathrm{S}}^c(-)$ is lax monoidal by \cite[Corollary 3.4]{linearcoalgebras}: therefore any dg cooperad induces a monad on the category of chain complexes. 

\begin{definition}[dg $\C$-algebra]\label{def dg C algebra}
A \textit{dg} $\mathcal{C}$\textit{-algebra} $B$ is an algebra  $(B,\gamma_B,d_B)$ over the monad $\widehat{\mathrm{S}}^c(\C)$. 
\end{definition}

\begin{remark}[About algebras over cooperads, and absolute algebras]
The structural data of a dg $\C$-algebra is a morphism 
\[
\gamma_B: \prod_{n \geq 0}[\C(n),B^{\otimes n}]^{\mathbb{S}_n} \longrightarrow B~. 
\]
This morphism associates to any formal power series of operations in $\C$ labelled by elements of $B$ a well-defined image in $B$, without supposing an underlying topology on $B$. In \cite{absolutealgebras}, the second named author introduced the terminology of \emph{absolute algebras} for what is here called an algebra over a cooperad. For example, an ``absolute Lie algebra'' is in the present terminology simply an algebra over the cooperad $\mathcal{L}\mathit{ie}^*$, the linear dual of $\mathcal{L}\mathit{ie}$. One defines e.g.\ ``absolute associative algebras'' and ``absolute commutative algebras'' similarly. Notice that since the above definition uses \textit{invariants}, these operations have \textit{divided power operations} by default as well. We refer to \cite{absolutealgebras} for more on this type of structure.
\end{remark}

\subsubsection{The canonical filtration on algebras over cooperads.} The notion of an algebra over a cooperad admits a further description in the case where the cooperad is \textit{conilpotent}. Recall that, for any dg cooperad, one can define the subcooperad ${R}_p \C$ of operations in $\C$ which admit at most $p$-fold iterated non-trivial partial decompositions, and that $\C$ is said to be \textit{conilpotent} if
\[
\colim_{p} {R}_p \C \cong \C~. 
\]
See \cite[Section 1.2]{Curvedcalculus} for this particular definition of conilpotency, which is in fact equivalent to the one given in \cite[Chapter 5.8]{LodayVallette}. 

\begin{definition}[Canonical filtration on a dg $\mathcal{C}$-algebra]\label{def canonical filtration}
Let $\C$ be a conilpotent dg cooperad and let $B$ be a dg $\C$-algebra. The \textit{canonical filtration} of $B$ is the decreasing filtration given by 
\[ 
{W}^p B \coloneqq \mathrm{Im}\left(\gamma_B \circ \widehat{\mathrm{S}}^c(\pi_p)(\mathrm{id}_B): \widehat{\mathscr{S}}^c(\C / {R}_p \C)(B) \longrightarrow B \right)
\]
where ${R}_p \C$ denotes the $p$-th term of the coradical filtration, for all $p \geq 0~.$ Notice that we have 
\[
B = {W}^0 B \supseteq {W}^1 B \supseteq {W}^2 B \supseteq \cdots \supseteq  \cdots.
\]
\end{definition}
	
Each step ${W}^p B$ of the canonical filtration of a dg $\C$-algebra $B$ is an ideal, meaning the quotient $B/{W}^k B$ has a canonical dg $\C$-algebra structure induced by the original dg $\C$-algebra structure of $B$. 

\begin{definition}[Completion of a dg $\C$-algebra]
Let $B$ be a dg $\C$-algebra. Its \textit{completion} is given by the limit
\[ 
\widehat{B} \coloneqq \lim_{p} B/{W}^p B~,
\]
taken in the category of dg $\C$-algebras.
\end{definition}

Any dg $\C$-algebra $B$ comes equipped with a canonical morphism of dg $\C$-algebras
\[
\varphi_B: B \longrightarrow \widehat{B}
\]
This morphism is always an epimorphism, see \cite[Proposition~4.24]{linearcoalgebras}. We define \textit{complete} dg $\C$-algebras as the full subcategory of dg $\C$-algebras for which this map is an isomorphism. It is a reflective subcategory of dg $\C$-algebras, where the reflector is given by the completion functor. Notice that any free dg $\C$-algebra is in particular complete. 

\subsubsection{Coalgebras over an operad}
For any dg operad $\PP$, its dual Schur functor $\widehat{\mathrm{S}}^c(\PP)$ fails to be a comonad since the dual Schur functor is only lax monoidal. However, this lax structure endows this endofunctor with a lax comonad structure over which one can define coalgebras. 

\begin{definition}[dg $\mathcal{P}$-coalgebra]
A \textit{dg} $\mathcal{P}$\textit{-coalgebra} $C$ is a chain complex $(C,d_C)$ endowed with a structural map
\[
\Delta_C: C \longrightarrow \displaystyle \prod_{n \geq 0} [\mathcal{P}(n),C^{\otimes n}]^{\mathbb{S}_n}~,
\]
such that the following diagram commutes 
\[
\begin{tikzcd}[column sep=4.5pc,row sep=3pc]
C \arrow[r,"\Delta_C"] \arrow[d,"\Delta_C",swap] 
&\widehat{\mathrm{S}}^c(\PP)(C) \arrow[r,"\widehat{\mathrm{S}}^c(\mathrm{id})(\Delta_C)"]
&\widehat{\mathrm{S}}^c(\PP) \circ \widehat{\mathrm{S}}^c(\PP)(C) \arrow[d,"\varphi_{\PP,\PP}(C)"] \\
\widehat{\mathrm{S}}^c(\PP)(C) \arrow[rr,"\widehat{\mathrm{S}}^c(\gamma)(\mathrm{id})"]
&
&\widehat{\mathrm{S}}^c(\PP \circ \PP)~.
\end{tikzcd}
\]
\end{definition}

The data of a dg $\mathcal{P}$-coalgebra $C$ is equivalent to the data of a morphism of dg operads $\mathcal{P} \longrightarrow \mathrm{coEnd}_C$, where $\mathrm{coEnd}_C$ denotes the coendomorphism operad of $C$, given by the dg $\mathbb{S}_n$-module
\[
\mathrm{coEnd}_C(n) \coloneqq [C,C^{\otimes n}]~,
\]
for all $n \geq 0$, where the operad structure is given by the usual composition of maps.

\begin{remark}[About coalgebras over operads]
The structure morphism of a dg $\mathcal{P}$-coalgebra $C$ is equivalent to a family of maps 
\[
\Delta_C^n: C \longrightarrow [\mathcal{P}(n),C^{\otimes n}]^{\mathbb{S}_n}~,
\]
for all $n \geq 0$. These maps assign to every element $c$ in $C$ its decompositions with respect to all the operations in $\mathcal{P}(n)$, which lie in $C^{\otimes n}$. This definition encodes the usual types of coalgebraic structures one encounters. For instance, if $\mathcal{P}= \mathcal{C}om$ is the commutative operad, dg $\mathcal{C}om$-coalgebras are precisely non-unital dg commutative coalgebras. If $\mathcal{P}= \mathcal{A}ss$ is the associative operad, then dg $\mathcal{A}ss$-coalgebras are non-unital dg associative coalgebras. In both cases, this gives the categories of all coalgebras without imposing a conilpotency condition.
\end{remark}

\begin{theorem}[{\cite[Theorem 2.7.11]{anelcofree2014}}]\label{Thm: existence of the cofree coalgebra}
Let $\PP$ be a dg operad. The category of dg $\PP$-coalgebras is comonadic. There exists a comonad $(\mathrm{L}(\PP), \omega, \zeta)$ in the category of chain complexes such that the category of $\mathrm{L}(\PP)$-coalgebras is equivalent to the category of dg $\PP$-coalgebras.
\end{theorem}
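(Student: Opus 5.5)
We plan to prove comonadicity using the adjoint functor theorem together with Beck's monadicity theorem in its dual form. Write $U\colon \PP\mbox{-}\mathsf{coalg} \to \mathsf{Ch}(\kk)$ for the forgetful functor. The argument has three steps: first $U$ creates colimits, then $U$ has a right adjoint (the cofree coalgebra functor), and finally $U$ satisfies the dual Beck conditions. The comonad is then $\mathrm{L}(\PP) \coloneqq U \circ R$, where $R$ is the right adjoint, with comultiplication $\omega$ and counit $\zeta$ induced by the unit and counit of the adjunction.

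\textbf{Step 1: $U$ creates colimits.} Take a diagram of coalgebras $C_i$ and form its colimit $C$ in chain complexes. The structure maps $C_i \to \widehat{\mathrm{S}}^c(\PP)(C_i) \to \widehat{\mathrm{S}}^c(\PP)(C)$ form a cocone, so they induce a map $\Delta_C\colon C \to \widehat{\mathrm{S}}^c(\PP)(C)$. The coassociativity diagram for $C$ is checked after precomposing with the jointly epimorphic family $C_i \to C$. This works because the relevant maps are natural transformations, and lax monoidality gives naturality of $\varphi_{\PP,\PP}$. Note that we never need $\widehat{\mathrm{S}}^c(\PP)$ to preserve colimits. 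Equivalently, $\PP$-coalgebra structures are operad maps $\PP \to \mathrm{coEnd}_C$, and this description makes the argument transparent. In particular, $U$ preserves and reflects all colimits, and $U$ is conservative, since the inverse of an underlying isomorphism is automatically a coalgebra map.

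\textbf{Step 2: existence of a right adjoint.} This is the main obstacle, because $\widehat{\mathrm{S}}^c(\PP)$ is not itself a comonad and so there is no naive cofree construction. I would use the special adjoint functor theorem, or more cleanly local presentability. Both $\mathsf{Ch}(\kk)$ and $\PP\mbox{-}\mathsf{coalg}$ are locally presentable, and $U$ preserves colimits, so $U$ admits a right adjoint. To show $\PP\mbox{-}\mathsf{coalg}$ is locally presentable, we exhibit it as an inserter/equifier of accessible functors between locally presentable categories. It is the category of pairs $(C,\Delta_C)$ with $\Delta_C\colon C \to \widehat{\mathrm{S}}^c(\PP)(C)$ satisfying one equation between two maps into $\widehat{\mathrm{S}}^c(\PP\circ\PP)(C)$. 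The functor $\widehat{\mathrm{S}}^c(\PP)$ is a product of internal homs followed by invariants. Each factor $C \mapsto [\PP(n), C^{\otimes n}]$ is accessible, since it preserves $\lambda$-filtered colimits for $\lambda$ larger than the size of $\PP(n)$. Countable products and invariants, being limits, preserve sufficiently highly filtered colimits. Therefore $\widehat{\mathrm{S}}^c(\PP)$ and $\widehat{\mathrm{S}}^c(\PP\circ\PP)$ are accessible. Inserters and equifiers of accessible functors between locally presentable categories are again locally presentable (Adámek--Rosický), and this completes the step. Alternatively, one can use a generating set of small subcoalgebras, using that every element of a coalgebra over a field lies in a subcoalgebra of bounded cardinality; this is the fundamental-theorem-of-coalgebras-style argument in Anel's work.

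\textbf{Step 3: Beck's theorem.} We now have the adjunction $U \dashv R$ with $U$ conservative. By Step 1, $U$ creates all colimits, so in particular it creates coreflexive equalizers of $U$-split pairs. The dual Barr--Beck theorem then yields an equivalence between $\PP\mbox{-}\mathsf{coalg}$ and coalgebras over $\mathrm{L}(\PP) = UR$. This last step is formal once Step 2 is established, which is why I expect the accessibility argument in Step 2 to carry the real content.
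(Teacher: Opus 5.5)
Your Steps 1 and 2 are essentially fine, but Step 3 contains a genuine error. The dual Beck theorem asks that $U$ create \emph{equalizers} of pairs $f,g$ for which $Uf,Ug$ have a split equalizer (or, in the crude form, that coreflexive equalizers exist and are preserved by $U$). Equalizers are limits, so knowing that $U$ creates all colimits gives you nothing here. A left adjoint always preserves colimits, and a conservative, colimit-creating left adjoint need not be comonadic: the opposite of the forgetful functor from torsion-free abelian groups to sets is a counterexample. You also cannot simply take these equalizers in $\mathsf{Ch}(\kk)$. The functor $\widehat{\mathrm{S}}^c(\PP)$ is non-linear and does not preserve equalizers: the equalizer of $f\otimes f$ and $g\otimes g$ is in general larger than $\ker(f-g)^{\otimes 2}$. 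Already for coassociative coalgebras, $\ker(f-g)$ need not be a subcoalgebra.

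The missing ingredient is that split equalizers are absolute limits. Suppose $e\colon E\to UA$ is a split equalizer of $Uf,Ug$, with $se=\mathrm{id}_E$, $tf=\mathrm{id}_A$ and $tg=es$. Then $\widehat{\mathrm{S}}^c(\PP)(e)$ is an equalizer of $\widehat{\mathrm{S}}^c(\PP)(f)$ and $\widehat{\mathrm{S}}^c(\PP)(g)$, and both $\widehat{\mathrm{S}}^c(\PP)(e)$ and $\widehat{\mathrm{S}}^c(\PP\circ\PP)(e)$ are split monomorphisms. The argument then runs as follows.
\begin{enumerate}
\item Since $\Delta_A e$ equalizes this pair, it factors uniquely as $\widehat{\mathrm{S}}^c(\PP)(e)\circ\Delta_E$.
\item The coassociativity and counit axioms for $\Delta_E$ follow by composing with $\widehat{\mathrm{S}}^c(\PP\circ\PP)(e)$, respectively $e$, and using naturality of $\varphi_{\PP,\PP}$ together with the axioms for $A$.
\item Any coalgebra map $h$ with $fh=gh$ satisfies $h=tfh=tgh=e\circ sh$, and $sh$ is a coalgebra map because $\widehat{\mathrm{S}}^c(\PP)(e)$ is monic.
\end{enumerate}
With this repair your argument goes through.

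Two smaller remarks. First, in general Adámek--Rosický only give that inserters and equifiers of accessible functors are \emph{accessible}, not locally presentable. Local presentability here comes from combining this with the cocompleteness you proved in Step 1. Second, your route differs from the paper's. The paper simply cites Anel, whose comonad $\mathrm{L}(\PP)$ is built by an explicit, in general infinite, recursion that stops at the first step over a field and yields the pullback formula. The adjoint functor theorem gives existence only. The paper later relies on that explicit pullback description, for instance to see that $\iota'_{\mathrm{B}\PP}$ is a morphism of comonads.
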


In particular, this entails the existence of a cofree dg $\PP$-coalgebra. While in the general setting of \cite{anelcofree2014}, the construction of the comonad $\mathrm{L}(\PP)$ is given by an infinite recursion, the construction of $\mathrm{L}(\PP)$ in the category of chain complexes over a field $\kk$ stops at the first step. It is thus given by the following pullback 
\[
\begin{tikzcd}[column sep=3pc,row sep=3pc]
\mathrm{L}(\PP) \arrow[r,"p_2"] \arrow[d,"p_1",swap,rightarrowtail] \arrow[dr, phantom, "\ulcorner", very near start]
&\widehat{\mathrm{S}}^c(\PP) \circ \widehat{\mathrm{S}}^c(\PP) \arrow[d,"\varphi_{\PP,\PP}",rightarrowtail] \\
\widehat{\mathrm{S}}^c(\PP) \arrow[r,"\widehat{\mathrm{S}}^c(\gamma)"]
&\widehat{\mathrm{S}}^c(\PP \circ \PP)~.
\end{tikzcd}
\]

\subsubsection{Operadic Koszul duality}\label{subsubsection: Point-set operadic Koszul duality} Let us now recall the bar-cobar adjunction that exists between dg operads and conilpotent dg cooperads.

\begin{definition}[Reduced dg operad]
    A dg operad $\mathcal P$ is \emph{reduced} if the unit morphism $I \to \mathcal P$ is an isomorphism in arities 0 and 1. Similarly, a cooperad $\mathcal C$ is reduced if its counit $\mathcal C \to I $ is an isomorphism in arity 0 and 1.
\end{definition}
    
From now on, we shall always quietly \emph{assume that all dg operads and dg cooperads are reduced}. Under this hypothesis, there is an adjunction, going back to Ginzburg--Kapranov \cite{ginzburgkapranov} and Getzler--Jones \cite{GetzlerJones94}
\[
\begin{tikzcd}[column sep=5pc,row sep=3pc]
          \mathsf{Operads}   \arrow[r, shift left=1.1ex, "\mathrm{B}"{name=F}] & \mathsf{Cooperads} , \arrow[l, shift left=.75ex, "\Omega"{name=U}]
            \arrow[phantom, from=F, to=U, , "\dashv" rotate=90]
\end{tikzcd}
\]

between dg operads and dg cooperads. Notice that reduced dg cooperads are automatically conilpotent.  

\medskip

A \textit{twisting morphism} $\alpha: \C \longrightarrow \mathcal{P}$ is a graded morphism of degree $-1$ which satisfies the equation
\[
\alpha \circ d_\C  -d_{\mathcal{P}} \circ \alpha+ \alpha \star \alpha = 0~, 
\]
where $\alpha \star \alpha$ is given by the composition
\[
\begin{tikzcd}[column sep=3.5pc,row sep=0.5pc]
\C \arrow[r,"\Delta_{(1)}"]
&\C \circ_{(1)} \C \arrow[r,"\alpha~ \circ_{(1)}~ \alpha"] 
&\mathcal{P} \circ_{(1)} \mathcal{P} \arrow[r,"\gamma_{(1)}"]
&\mathcal{P}~. 
\end{tikzcd} 
\]
Here $\Delta_{(1)}$ and $\gamma_{(1)}$ refer to the infinitesimal decomposition of $\C$ and to the infinitesimal composition of $\mathcal{P}$, respectively. The equation above corresponds to the Maurer-Cartan equation in the convolution pre-Lie algebra constructed from $\C$ and $\mathcal{P}$. The bar-cobar adjunction represents and corepresents twisting morphisms between dg cooperads and dg operads:
\[
\mathrm{Hom}_{\mathsf{Operads}}(\Omega \C,\mathcal P) \cong \mathrm{Tw}(\C,\mathcal P)\cong \mathrm{Hom}_{\mathsf{Cooperads}}(\C,\mathrm{B}\mathcal P)~.
\]
We refer to \cite[Chapter 6]{LodayVallette} for more details. 

\subsubsection{Classical bar-cobar adjunction induced by a twisting morphism}\label{subsubsection: classical bar-cobar adjunction}
Any twisting morphism $\alpha: \C \longrightarrow \mathcal{P}$ induces a \textit{bar-cobar adjunction} relative to $\alpha$ 
\[
\begin{tikzcd}[column sep=5pc,row sep=3pc]
          \mathcal{C}\mbox{-}\mathsf{coalg} \arrow[r, shift left=1.1ex, "\Omega_{\alpha}"{name=F}] & \mathcal{P}\mbox{-}\mathsf{alg}, \arrow[l, shift left=.75ex, "\mathrm{B}_{\alpha}"{name=U}]
            \arrow[phantom, from=F, to=U, , "\dashv" rotate=-90]
\end{tikzcd}
\]

between the category of dg $\C$-coalgebras and the category of dg $\mathcal{P}$-algebras. For any dg $\mathcal{P}$-algebra $A$, the bar construction $\mathrm{B}_{\alpha}A$ is given by taking the cofree $\C$-coalgebra on $A$ and building a differential using the algebra structure on $A$ and the twisting morphism $\alpha$. Similarly, for any dg $\C$-coalgebra $W$, the cobar construction $\Omega_{\alpha}W$ is given by taking the free $\mathcal{P}$-algebra on $W$ and building a differential using the coalgebra structure on $W$ and the twisting morphism $\alpha$. We refer to \cite[Chapter 11]{LodayVallette} for more details on these topics. 

\subsubsection{Complete bar-cobar adjunction induced by a twisting morphism}\label{subsubsection: complete bar-cobar adjunction} Any twisting morphism $\alpha: \C \longrightarrow \mathcal{P}$ between a conilpotent dg cooperad $\C$ and a dg operad $\mathcal{P}$ induces a second bar-cobar adjunction, which we call the \textit{complete bar-cobar adjunction} relative to $\alpha$
\[
\begin{tikzcd}[column sep=5pc,row sep=3pc]
            \PP\mbox{-}\mathsf{coalg} \arrow[r, shift left=1.1ex, "\widehat{\Omega}_{\alpha}"{name=F}] & \C\mbox{-}\mathsf{alg}^{\mathsf{comp}} \arrow[l, shift left=.75ex, "\widehat{\mathrm{B}}_{\alpha}"{name=U}]
            \arrow[phantom, from=F, to=U, , "\dashv" rotate=-90]
\end{tikzcd}
\]
between the category of dg $\PP$-coalgebras and the category of complete dg $\C$-algebras. Like in the previous case, this adjunction is given by considering free and cofree objects, and endowing them with the appropriate differential. See \cite{linearcoalgebras} or \cite{premierpapier} for more details.

\medskip

\subsubsection{Admissible and coadmissible operads}
Let $\mathcal{P}$ be a dg operad. It is natural to consider dg $\mathcal{P}$-algebras and dg $\mathcal{P}$-coalgebras up to quasi-isomorphisms. We say that $\mathcal{P}$ is \textit{admissible} if the projective model structure on chain complexes over $\kk$ can be transferred along the free-forgetful adjunction 
\[
\begin{tikzcd}[column sep=5pc,row sep=3pc]
          \mathcal{P}\mbox{-}\mathsf{alg} \arrow[r, shift left=1.1ex, "\mathrm{S}(\mathcal{P})(-)"{name=F}] &\mathsf{Ch}(\kk)~, \arrow[l, shift left=.75ex, "\mathrm{U}"{name=U}]
            \arrow[phantom, from=F, to=U, , "\dashv" rotate=-90]
\end{tikzcd}
\]
thus endowing the category of dg $\mathcal{P}$-algebras with a model structure where weak equivalences are given by quasi-isomorphisms and fibrations are given by degree-wise surjections. Any dg operad over a field of characteristic zero is admissible \cite{HinichModel}. 

\medskip

Dually, we say that $\mathcal{P}$ is \textit{coadmissible} if the injective model structure on chain complexes over $\kk$ can be transferred along the cofree-forgetful adjunction 
\[
\begin{tikzcd}[column sep=5pc,row sep=3pc]
          \mathcal{P}\mbox{-}\mathsf{coalg} \arrow[r, shift left=1.1ex, "\mathrm{L}(\mathcal{P})(-)"{name=F}] &\mathsf{Ch}(\kk)~, \arrow[l, shift left=.75ex, "\mathrm{U}"{name=U}]
            \arrow[phantom, from=F, to=U, , "\dashv" rotate=90]
\end{tikzcd}
\]
thus endowing the category of dg $\mathcal{P}$-coalgebras with a model structure where weak equivalences are given by quasi-isomorphisms and cofibrations are given by degree-wise injections. It is not true, even over a field of characteristic zero, that any dg operad is coadmissible, see \cite[Proposition 8.10]{linearcoalgebras} for a counter-example. However, cofibrant dg operads (and therefore dg operads of the form $\Omega \C$) are always coadmissible by standard arguments as in \cite{BergerMoerdijk}. See \cite{linearcoalgebras, premierpapier} for more details as well. 

\subsubsection{Model structures transferred along the classical bar-cobar adjunction}



Let $\mathcal{P}$ be a dg operad (which is always admissible in our context). The model structure transferred from chain complexes over $\kk$ presents the $\infty$-category of $\mathcal{P}$-algebras up to quasi-isomorphisms \cite{HinichRectification, Haugseng22}. 
It is sometimes convenient to transfer this model structure along a bar-cobar adjunction to a suitable category of coalgebras over a cooperad. In the good cases, when the twisting morphism $\alpha: \C \longrightarrow \mathcal{P}$ is said to be \textit{Koszul}, one obtains a Quillen equivalence between dg $\mathcal{P}$-algebras and dg $\C$-coalgebras with the transferred structure. We refer to \cite{Vallette20,Drummond-ColeHirsh16} for the following results in characteristic zero, and to \cite{premierpapier} to a generalization over a positive characteristic field. 


\begin{theorem}\label{thm: transferred model structure on conilpotent coalgebras}
Let $\C$ be a conilpotent dg cooperad. There exists a combinatorial model category structure on the category of dg $\C$-coalgebras, sometimes called the canonical model structure, given by the following classes of maps:

\begin{enumerate}
\item the class of weak equivalences is given by morphisms $f$ such that $\Omega_\alpha(f)$ is a quasi-isomorphism,

\item the class of cofibrations is given by degree-wise injections,

\item the class of fibrations is given by morphisms with the right lifting property with respect to acyclic cofibrations.
\end{enumerate}

Furthermore, if $\alpha: \C \longrightarrow \mathcal{P}$ is Koszul in the sense of \cite[Chapter 6]{LodayVallette}, the bar-cobar adjunction relative to $\alpha$ is a Quillen equivalence. 
\end{theorem}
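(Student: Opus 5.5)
The proof is a left transfer of the model structure along the cobar functor, followed by a check of the Quillen equivalence through the unit and counit. I would take the model structure on dg $\mathcal{P}$-algebras to be the one transferred from the projective structure on chain complexes. This exists because $\mathcal{P}$ is admissible; in positive characteristic one may need $\mathcal{P}$ to be of the form $\Omega\C$ or otherwise admissible, as arranged in the setting of \cite{premierpapier}.

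\textbf{Existence.} I would construct the structure on dg $\C$-coalgebras by left transfer along $\Omega_\alpha \dashv \mathrm{B}_\alpha$. The category of dg $\C$-coalgebras is locally presentable, since it is comonadic over $\mathsf{Ch}(\kk)$ via the accessible comonad $\mathrm{S}(\C)$. I would then check that monomorphisms are generated by a set, namely the injections between coalgebras of bounded cardinality, using the usual argument that every element generates a small subcoalgebra. This gives a cofibrantly generated weak factorization system of monomorphisms and maps with the right lifting property.

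By the left-induced model structure criterion (Bayeh--Hess--Karpova--Kędziorek--Riehl--Shipley, or Hess--Kędziorek--Riehl--Shipley), it then suffices to prove an acyclicity condition. This condition says that every map with the right lifting property against all monomorphisms is a weak equivalence, meaning $\Omega_\alpha$ sends it to a quasi-isomorphism. One could also use Garner--Kędziorek--Riehl path objects.

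\textbf{The main obstacle.} I expect the acyclicity condition to be the hard step. I would try the path object argument first. For fibrant-like objects, the bar construction $\mathrm{B}_\alpha A$ has a good path object $\mathrm{B}_\alpha(A \otimes \Omega_1)$, where $\Omega_1$ is a polynomial de Rham or cellular interval model; in positive characteristic a suitable Hinich-type interval is needed. Every coalgebra $W$ embeds into $\mathrm{B}_\alpha\Omega_\alpha W$ through the unit, and that unit is sent by $\Omega_\alpha$ to a quasi-isomorphism. Together these show that maps with the right lifting property against monomorphisms are retracts of maps that $\Omega_\alpha$ sends to quasi-isomorphisms.

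The key technical lemma is that $\Omega_\alpha\mathrm{B}_\alpha A \to A$ is a quasi-isomorphism for $\alpha=\pi$. More generally I need $\Omega_\alpha\eta_W$ to be a quasi-isomorphism. I would prove this by filtering by the coradical filtration of $\C$, which is exhaustive by conilpotency, so that the spectral sequence converges; the associated graded reduces to the acyclicity of the Koszul complex $\C\circ_\alpha\mathcal{P}$.

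\textbf{Quillen pair and Quillen equivalence.} By construction $\Omega_\alpha$ preserves cofibrations and weak equivalences: monomorphisms go to cofibrations because the free $\mathcal{P}$-algebra on an injection, twisted, is a cell extension after filtering. Hence the adjunction is a Quillen pair. For the equivalence when $\alpha$ is Koszul, I would show two things. First, the counit $\Omega_\alpha\mathrm{B}_\alpha A\to A$ is a quasi-isomorphism for every $A$; this uses the filtration argument with $\C\circ_\alpha\mathcal{P}\simeq I$, which is exactly the Koszul condition. Second, the unit $W\to\mathrm{B}_\alpha\Omega_\alpha W$ is a weak equivalence; by the two-out-of-three property applied to the triangle identity $\epsilon_{\Omega W}\circ\Omega_\alpha(\eta_W)=\mathrm{id}$, $\Omega_\alpha(\eta_W)$ is a quasi-isomorphism. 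Since every object is cofibrant and $\Omega_\alpha$ creates weak equivalences, these two facts give the Quillen equivalence.
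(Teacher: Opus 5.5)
The paper does not prove this statement itself; it quotes \cite{Vallette20,Drummond-ColeHirsh16}, and \cite{premierpapier} in positive characteristic. Your outer framework has the right shape: left transfer along $\Omega_\alpha$, reduction to the acyclicity condition, and the unit/counit argument for the Quillen equivalence. The Quillen-equivalence part is fine once the model structure exists.

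\textbf{Main gap: the acyclicity condition.} This is the only non-formal step, and the mechanism you propose for it cannot work. In a left-induced situation, fibrant replacements such as $W\hookrightarrow\mathrm{B}_\alpha\Omega_\alpha W$ and path objects such as $\mathrm{B}_\alpha(A\otimes\Omega_1)$ carry no information. The left-induction theorem you invoke also provides the factorization system whose left class is the injections that $\Omega_\alpha$ sends to quasi-isomorphisms and whose right class is the maps lifting against those. Factoring $X\to 0$ and diagonals in this system produces such objects for every coalgebra. For example, left-inducing the Kan--Quillen structure to sets along the discrete-simplicial-set functor gives bijections as weak equivalences and injections as cofibrations. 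There, identities and diagonals serve as fibrant replacements and path objects, yet $\{0,1\}\to\{*\}$ lifts against all injections without being a bijection.

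Quillen's path-object argument proves acyclicity for \emph{right}-induced structures. Its dual for left-induced ones needs good \emph{cylinders} $W\oplus W\hookrightarrow\mathrm{Cyl}(W)\to W$ whose second map $\Omega_\alpha$ sends to a quasi-isomorphism, and these are exactly what is hard to build for $\C$-coalgebras. You can see the circularity directly. Take $p\colon X\to Y$ with the lifting property and a section $s$. The square that would give a homotopy $\Omega_\alpha(sp)\simeq\mathrm{id}$ through $\mathrm{B}_\alpha(\Omega_\alpha X\otimes\Omega_1)$ has the injection $s$ on the left, and $s$ is acyclic only if $p$ already is.

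What lifting does give is the following. Let $u\colon X\hookrightarrow K=\mathrm{Cone}(X)$ and let $\tilde u$ be its cofree extension. Lifting against the injection $(p,\tilde u)\colon X\to Y\times\mathrm{S}(\C)(K)$ shows that $p$ is a retract of a projection $Y\times\mathrm{S}(\C)(K)\to Y$ with $K$ contractible. You would still have to show that $\Omega_\alpha$ sends such projections to quasi-isomorphisms. That requires explicit control of products of conilpotent $\C$-coalgebras, for instance by a filtration argument, or else a construction of good cylinders. This is where the real work lies, and your sentence ``together these show\ldots'' only asserts it.

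\textbf{Smaller points.}

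First, your fibrant replacement is a weak equivalence only if $\Omega_\alpha\eta_W$ is a quasi-isomorphism. Your own filtration reduces this to acyclicity of the Koszul complex, i.e.\ to $\alpha$ being Koszul. For $\alpha=0$ it fails: $\Omega_0\eta_W$ is the free-algebra functor applied to $\eta_W\colon W\to\mathrm{S}(\C)(\mathrm{S}(\mathcal{P})(W))$ with internal differentials only, and this is not a quasi-isomorphism in general. Yet the statement claims existence for every $\alpha$, using Koszulness only for the Quillen equivalence. Once the case $\alpha=\iota$ is done, the general case follows from $\Omega_\alpha\cong(f_\alpha)_!\circ\Omega_\iota$, where $f_\alpha\colon\Omega\C\to\mathcal{P}$:
\begin{itemize}
\item $(f_\alpha)_!$ is left Quillen and $\Omega_\iota$ lands in cofibrant algebras;
\item hence $\Omega_\alpha$ sends injections to cofibrations;
\item by Ken Brown's lemma, $\Omega_\alpha$ sends maps with the lifting property to quasi-isomorphisms.
\end{itemize}

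Second, in positive characteristic the real obstruction, beyond admissibility, is not the choice of interval. It is the $\mathbb{S}_n$-coinvariants in your associated graded. This is why the paper's remark imposes quasi-planarity of $\C$ and restricts to $\alpha=\iota$.
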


\begin{example}
The universal twisting morphisms $\iota: \C \longrightarrow \Omega \C$ and $\pi: \mathrm{B}\mathcal{P} \longrightarrow \mathcal{P}$ (induced by the unit and the counit of the operadic bar-cobar adjunction) are both Koszul twisting morphisms. 
\end{example}

\begin{remark}
Over a positive characteristic field, one needs to add the hypothesis that the conilpotent dg cooperad $\C$ is furthermore \textit{quasi-planar} in the sense of \cite{premierpapier}, and one has to restrict to the twisting morphism $\iota: \C \longrightarrow \Omega \C$. However, if our goal is to understand the $\infty$-category of dg $\mathcal{P}$-algebras on a $\mathbb{S}$-cofibrant operad $\mathcal{P}$, then we can always replace $\mathcal{P}$ with a dg operad of the form $\Omega \C$, where $\C$ is a quasi-planar conilpotent dg cooperad, without changing the underlying $\infty$-category of dg $\mathcal{P}$-algebras. A canonical choice is given by $\C = \mathrm{B}(\mathcal{P} \otimes \mathcal{E})$, which is indeed quasi-planar for any dg operad $\mathcal{P}$, and $\Omega\mathrm{B}(\mathcal{P} \otimes \mathcal{E})$ is always quasi-isomorphic to $\mathcal P$.
\end{remark}

Notice that in Theorem \ref{thm: transferred model structure on conilpotent coalgebras}, the class of weak equivalences is \textit{strictly} included in the class of quasi-isomorphisms of dg $\C$-coalgebras, unless the twisting morphism is the terminal one given by $\C \longrightarrow \mathcal{I}$. Localizing at this class of weak equivalences does not present an intrinsic $\infty$-category of coalgebras in $\mathbf{D}(\kk)$; on the contrary, since the bar-cobar adjunction is (in the good cases) a Quillen equivalence, it provides us with another presentation of the underlying $\infty$-category of dg $\mathcal{P}$-algebras. However, as shown in \cite{Drummond-ColeHirsh16}, the model category of dg $\C$-coalgebras admits a left Bousfield localization with respect to quasi-isomorphisms. 

\begin{proposition}\label{Prop: left Bousfield localization of conilpotent coalgebras}
Let $\C$ be a conilpotent dg cooperad. There exists a combinatorial model structure on dg $\operad C$-coalgebras transferred from chain complexes, determined by the following classes of morphisms
		
    \begin{enumerate}
        \item the class of cofibrations is given by degree-wise injections,

        \item the class of weak equivalences is given by quasi-isomorphisms,

        \item the class of fibrations is determined by the right-lifting property against all acyclic cofibrations.
    \end{enumerate}
    
Moreover, this is a left Bousfield localisation of the canonical model structure transferred from dg $\Omega \C$-algebras (which coincides with the model structure transferred along any Koszul twisting morphism). This means that the identity functor of dg $\C$-coalgebras, where at the source they are endowed with the canonical model structure, and at the target with the quasi-isomorphisms model structure, is a left Quillen functor. 
\end{proposition}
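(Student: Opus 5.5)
We construct the model structure by left-transfer along the forgetful functor $\mathrm{U}: \C\mbox{-}\mathsf{coalg} \to \mathsf{Ch}(\kk)$, where $\mathsf{Ch}(\kk)$ carries the injective model structure. Over a field, this structure has cofibrations the degree-wise injections (all monomorphisms) and weak equivalences the quasi-isomorphisms. The forgetful functor is a left adjoint: its right adjoint is the cofree conilpotent coalgebra functor, which exists because $\C\mbox{-}\mathsf{coalg}$ is locally presentable and $\mathrm{U}$ preserves colimits. The category $\C\mbox{-}\mathsf{coalg}$ is locally presentable, since it is the category of coalgebras over an accessible comonad $\mathrm{S}(\C)$ on a locally presentable category, and $\mathrm{S}(\C)$ preserves filtered colimits. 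By the left-transfer theorem of Bayeh--Hess--Karpova--Kędziorek--Riehl--Shipley (or Hess--Kędziorek--Riehl--Shipley), the structure exists provided the acyclicity condition holds: every map with the right lifting property against all monomorphisms must be a quasi-isomorphism.

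To check the acyclicity condition, I would use the standard path-object criterion. It suffices to produce, functorially for every coalgebra $W$, a factorization of the diagonal $W \to W \times W$ as a cofibration followed by a map with the right lifting property. Equivalently, it suffices to give a good cylinder object: a factorization $W \sqcup W \rightarrowtail \mathrm{Cyl}(W) \qi W$ whose first map is an injection and whose second map is a quasi-isomorphism. For this I take $\mathrm{Cyl}(W) \coloneqq W \otimes I$, where $I$ is the normalized chains on the interval, viewed as a counital cocommutative coalgebra. The $\C$-coalgebra structure on $W\otimes I$ comes from the coproduct of $I$ and the structure of $W$. Since $\C$ is reduced, arity-one components cause no trouble, and $W\otimes I$ stays conilpotent. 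The map $W\sqcup W = W\otimes(\kk\oplus\kk)\to W\otimes I$ is injective, and the projection to $W$ is a quasi-isomorphism because $I \to \kk$ is one. Given a map $p$ with the right lifting property against injections, lifting in the square built from $W \sqcup W \rightarrowtail \mathrm{Cyl}(W)$ gives a homotopy inverse, so $p$ is a quasi-isomorphism. This cylinder step is the main obstacle. One subtlety is that coproducts of coalgebras have the direct sum as underlying complex. Another is that in positive characteristic one must make sure $I$ carries the right divided-power compatibility; here I would use that $\C$-coalgebra structures only require coinvariants, so a tensor product with a strictly cocommutative coalgebra is fine.

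For the second statement, cofibrations coincide in both structures. It then suffices to show that every weak equivalence of the canonical structure, namely a map $f$ with $\Omega_\iota(f)$ a quasi-isomorphism, is a quasi-isomorphism. Consider the unit $W \to \mathrm{B}_\iota\Omega_\iota W$. It is a quasi-isomorphism: filter by the coradical filtration of $\C$, where the associated graded is the Koszul acyclicity of $\iota$. Also, $\mathrm{B}_\iota$ preserves quasi-isomorphisms between $\Omega\C$-algebras, again by a filtration argument which is bounded, hence convergent, because $\mathrm{B}_\iota$ uses direct sums of a conilpotent cooperad. Two-out-of-three then gives that $f$ is a quasi-isomorphism. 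Hence the identity functor preserves cofibrations and acyclic cofibrations, so it is left Quillen. Finally, the model structure with more weak equivalences and the same cofibrations is by definition a left Bousfield localization.
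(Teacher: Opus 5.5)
Your architecture is reasonable: left-induce from the injective structure on $\mathsf{Ch}(\kk)$ and verify the acyclicity condition, then show that $\Omega_\iota$-equivalences are quasi-isomorphisms via the unit $W \to \mathrm{B}_\iota\Omega_\iota W$. The step you flag as the main obstacle, however, fails.

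\textbf{Why the cylinder fails.} The normalized chains $N_*(\Delta^1)$ with the Alexander--Whitney coproduct $\Delta[01]=[0]\otimes[01]+[01]\otimes[1]$ are coassociative but not cocommutative. For a symmetric cooperad $\C$, the map $\C(n)\otimes_{\mathbb{S}_n}W^{\otimes n}\otimes I\to \C(n)\otimes_{\mathbb{S}_n}(W\otimes I)^{\otimes n}$ you need is only well defined on coinvariants if the iterated coproduct of $I$ lands in $\mathbb{S}_n$-invariants.

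No strictly cocommutative replacement for $I$ exists either. In a counital graded-cocommutative dg coalgebra, two distinct group-like elements $a\neq b$ are never homologous. To see this, pass to a finite-dimensional dg subcoalgebra $G$ containing $a$, $b$ and a chain $e$ with $de=b-a$. The degree-zero part of the dual graded-commutative algebra $G^*$ is finite-dimensional commutative, hence a product of local algebras. So it contains an idempotent $\phi$ with $\phi(a)=1$ and $\phi(b)=0$. But every degree-zero idempotent of a graded-commutative dg algebra is a cocycle: $\partial\phi=2\phi\,\partial\phi$ forces $\phi\,\partial\phi=0$, hence $\partial\phi=0$. Then $\phi$ vanishes on $de=b-a$, a contradiction. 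Symmetrizing the Alexander--Whitney coproduct breaks coassociativity, exactly as this predicts.

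So in the characteristic-zero symmetric case your cylinder does not exist, and the acyclicity condition is not established. For non-symmetric $\C$ only coassociativity of $I$ is used, and your cylinder is fine. Separately, the ``factor the diagonal $W\to W\times W$'' criterion you mention first is the right-induction criterion; for left induction it is the cylinder, as you then say.

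\textbf{The repair is already in your last paragraph.} The statement presupposes the canonical model structure, and that structure has exactly the degree-wise injections as cofibrations. So a map with the right lifting property against all injections is an acyclic fibration there. It is therefore a map $f$ with $\Omega_\iota(f)$ a quasi-isomorphism, and hence a quasi-isomorphism by your argument: the unit $W\to\mathrm{B}_\iota\Omega_\iota W$ is a quasi-isomorphism for conilpotent $W$, $\mathrm{B}_\iota$ preserves quasi-isomorphisms, and two-out-of-three applies. Prove the inclusion of weak equivalences first, and the acyclicity condition for the left-induced structure follows in one line, with no cylinder needed.

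The remainder of your proposal is correct. Same cofibrations and more weak equivalences make the identity left Quillen and the new structure a left Bousfield localization.
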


\begin{remark}
Over a positive characteristic field, one needs again to add the extra hypothesis that $\C$ is \textit{quasi-planar}. See \cite[Section 5.6]{premierpapier} for more details. 
\end{remark}

\subsubsection{Model structures transferred along the complete bar-cobar adjunction} As mentioned before, not all operads are coadmissible, even over a field of characteristic zero. Furthermore, even when they are, they need not present their $\infty$-categorical counterparts, as for instance shown in \cite{Péroux22}. Nevertheless, one can solve these problems by working with cofibrant dg operads, as they do present their $\infty$-categorical counterparts, see \cite{rectification}. In fact, without any loss of generality, we may assume that the cofibrant dg operad is of the form $\Omega \C$, where $\C$ is a conilpotent dg cooperad. In this context, dg $\Omega \C$-coalgebras up to quasi-isomorphisms present the correct $\infty$-category. Furthermore, they admit a transferred model structure from chain complexes. Indeed, since $\Omega \C$ is cofibrant it is in particular coadmissible. 
It is sometimes convenient to transfer this model structure along the complete bar-cobar adjunction in order to obtain a Quillen equivalent model structure on dg $\C$-algebras \cite{linearcoalgebras}. 

\begin{theorem}
Let $\C$ be a conilpotent dg cooperad. There exists a combinatorial model category structure on the category of complete dg $\C$-algebras given by the following classes of maps:
\begin{enumerate}
\item the class of weak equivalences is given by morphisms $f$ such that $\widehat{B}_\iota(f)$ is a quasi-isomorphism,

\item the class of fibrations is given by degree-wise surjections,

\item the class of cofibrations is given by morphisms with the left-lifting property with respect to acyclic fibrations.
\end{enumerate}

Furthermore, when endowed with this model structure, the complete bar-cobar adjunction relative to $\iota: \C \longrightarrow \Omega \C$ becomes a Quillen equivalence. 
\end{theorem}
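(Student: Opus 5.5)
This statement is the dual of Theorem \ref{thm: transferred model structure on conilpotent coalgebras}, and I would prove it by right-transferring the model structure on dg $\Omega\C$-coalgebras along the complete bar-cobar adjunction $\widehat{\Omega}_\iota \dashv \widehat{\mathrm{B}}_\iota$. The starting point is that $\Omega\C$ is cofibrant, hence coadmissible. So dg $\Omega\C$-coalgebras carry a combinatorial model structure left-transferred from the injective model structure on $\mathsf{Ch}(\kk)$: cofibrations are degree-wise injections, and weak equivalences are quasi-isomorphisms. In this structure every object is cofibrant.

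The category of complete dg $\C$-algebras is a reflective subcategory of dg $\C$-algebras. These are algebras over an accessible monad on a locally presentable category, so the complete ones form a locally presentable category, and $\widehat{\mathrm{B}}_\iota$ is an accessible right adjoint. I would then apply the standard right-transfer theorem (Kan's criterion, in the form of Hess--Kędziorek--Riehl--Shipley or Garner--Kędziorek--Riehl). Weak equivalences and fibrations are declared to be those created by $\widehat{\mathrm{B}}_\iota$. For the fibrations, I would check that a map $f$ is degree-wise surjective if and only if $\widehat{\mathrm{B}}_\iota(f)$ is a fibration of $\Omega\C$-coalgebras. The reverse direction is the part that needs care: fibrations of coalgebras are not simply surjections. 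So instead I would describe the generating cofibrations directly as $\widehat{\Omega}_\iota$ applied to generating cofibrations of coalgebras, together with free complete algebras on disks, and show their right lifting class is exactly the surjections.

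The acyclicity condition is the key point. Since every object of the coalgebra side is cofibrant, the cleanest route is the path-object argument: I need functorial path objects for fibrant complete $\C$-algebras. Because every complete $\C$-algebra is fibrant (all maps to $0$ are surjective), it suffices to construct functorial factorizations $B \to P(B) \to B\times B$, with the second map a surjection and the first sent by $\widehat{\mathrm{B}}_\iota$ to a quasi-isomorphism. The candidate is $P(B) = B \otimes \Omega_{\mathrm{poly}}(\Delta^1)$ in characteristic zero, or more generally tensoring with a suitable commutative/cocommutative interval, which carries a complete $\C$-algebra structure. I then need that $\widehat{\mathrm{B}}_\iota$ sends $B \to P(B)$ to a quasi-isomorphism. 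I would prove this by filtering $\widehat{\mathrm{B}}_\iota$ by the coradical weight of $\C$ and comparing associated graded pieces, using that $\C$ is conilpotent so the filtration is exhaustive and the completeness makes the comparison converge. I expect this step to be the main obstacle, especially in positive characteristic, where I would replace the polynomial forms by a cosimplicial/Dold--Kan path object.

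For the Quillen equivalence, the adjunction is Quillen by construction. Since $\widehat{\mathrm{B}}_\iota$ creates weak equivalences, it suffices to show that the unit $C \to \widehat{\mathrm{B}}_\iota\widehat{\Omega}_\iota C$ is a quasi-isomorphism for every $\Omega\C$-coalgebra $C$, all of which are cofibrant. I would prove this by a filtration argument. The underlying graded object is $\widehat{\mathrm{S}}^c$-type products of $\C$ and $\Omega\C$ applied to $C$. Filter by total weight so that the associated graded becomes $C$ tensored with the acyclic Koszul complex of $\iota$, using that $\iota$ is Koszul. Completeness of the filtration, coming from the product in $\widehat{\mathrm{S}}^c$, makes the spectral sequence converge.
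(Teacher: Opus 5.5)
The paper does not prove this statement. It imports it from \cite{linearcoalgebras} as a transfer along $\widehat{\Omega}_\iota\dashv\widehat{\mathrm{B}}_\iota$, so your overall architecture (transfer, acyclicity via path objects, Quillen equivalence via the unit) is the expected one.

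There is, however, a genuine gap exactly where you say ``needs care'': you never show that $\widehat{\mathrm{B}}_\iota$ sends degree-wise surjections to fibrations of $\Omega\C$-coalgebras. Only one inclusion is formal. A map whose image under $\widehat{\mathrm{B}}_\iota$ is a fibration lifts against $\widehat{\Omega}_\iota(0\to\mathrm{triv}(D^n))=(0\to\widehat{\mathrm{S}}^c(\C)(D^n))$, hence is surjective. Three of your claims rest on the converse:
\begin{itemize}
\item that the fibrations are exactly the surjections;
\item that every complete algebra is fibrant in the transferred structure, i.e.\ that each $\widehat{\mathrm{B}}_\iota B$ is a fibrant coalgebra;
\item that the adjunction is Quillen ``by construction''.
\end{itemize}
Your workaround does not bypass this, and it conflates the two generating sets. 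The maps $0\to\widehat{\mathrm{S}}^c(\C)(D^n)$ are generating \emph{acyclic} cofibrations: their right lifting class is the surjections. The generating cofibrations must instead have right lifting class equal to the surjective weak equivalences. With $I=\widehat{\Omega}_\iota(I_{\mathrm{coalg}})$ you get the maps $f$ for which $\widehat{\mathrm{B}}_\iota(f)$ is an acyclic fibration of coalgebras. Showing that every surjective $\widehat{\mathrm{B}}_\iota$-equivalence is such a map is the same missing lemma.

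What fills the gap is an obstruction argument that uses the completeness and conilpotency your proposal never invokes at this point. Let $f\colon A\to A'$ be a surjection with kernel $K$, and consider a lifting problem of $\widehat{\mathrm{B}}_\iota(f)$ against an acyclic injection $C\hookrightarrow D$. This is an extension problem for twisting morphisms.
\begin{enumerate}
\item Choose a graded linear map $\tilde\tau\colon D\to A$ that extends the given twisting morphism on $C$ and lifts the given one $D\to A'$. Its Maurer--Cartan defect is then a map $D/C\to K$.
\item If the defect takes values in $K\cap W^pA$, then modulo $W^{p+1}A$ it is a cycle of $\mathrm{Hom}(D/C,(K\cap W^pA)/(K\cap W^{p+1}A))$. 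Since $D/C$ is acyclic, it is a boundary.
\item Correcting $\tilde\tau$ accordingly only creates error terms in which the correction is fed into an operation of positive weight. These terms land one step deeper in the canonical filtration.
\item The corrections converge because $A\cong\lim_p A/W^pA$, and the limit has defect in $\bigcap_p W^pA=0$.
\end{enumerate}

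Two further points need repair.

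First, $B\otimes\Omega_{\mathrm{poly}}(\Delta^1)$ carries no evident complete $\C$-algebra structure. An infinite series of operations applied to elements such as $b\otimes t$ produces an unbounded power series in $t$. You need a finite-dimensional commutative interval instead, for instance the one spanned by $e_0,e_1,e_{01}$ with $de_1=-de_0=e_{01}$ and $e_ie_{01}=\tfrac12 e_{01}$, or else a completed tensor product.

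Second, in the unit argument the underlying object of $\widehat{\mathrm{B}}_\iota\widehat{\Omega}_\iota C$ is $\mathrm{L}(\Omega\C)(\widehat{\mathrm{S}}^c(\C)(C))$. Here $\mathrm{L}(\Omega\C)$ is the non-conilpotent cofree coalgebra: it is defined by a pullback and is not a product of $\widehat{\mathrm{S}}^c$-terms. Moreover $C$ has no exhaustive coradical filtration. So ``the associated graded is $C$ tensored with the Koszul complex'' is not yet an argument. The filtration has to come from the weight on the cooperad side, and you must check that it is complete and compatible with the pullback defining $\mathrm{L}(\Omega\C)$.
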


\begin{remark}
This result admits a generalization for a quasi-planar conilpotent dg cooperad over a field of positive characteristic, see \cite[Section 6 and 7]{premierpapier}.
\end{remark}

Again, notice that the class of weak equivalences is \textit{strictly} included in the class of quasi-isomorph\-isms of complete dg $\C$-algebras. Localizing at this class of weak equivalences does not present an $\infty$-category of algebras in $\mathbf{D}(\kk)$. On the contrary, since the complete bar-cobar adjunction is a Quillen equivalence, it gives another presentation of the underlying $\infty$-category of dg $\Omega\C$-coalgebras. However, complete dg $\C$-algebras admit a right Bousfield localization with respect to quasi-iso\-morph\-isms. 

\begin{proposition}\label{Prop: right Bousfield localization of absolute algebras}
Let $\C$ be a conilpotent dg cooperad. There exists a combinatorial model structure on qp-complete dg $\operad C$-algebras transferred from chain complexes, determined by the following classes of morphisms
		
    \begin{enumerate}
        \item the class of fibrations is given by degree-wise surjections,

        \item the class of weak equivalences is given by quasi-isomorphisms,

        \item the class of cofibrations is determined by the left-lifting property against all acyclic fibrations.
    \end{enumerate}
   
Moreover, this is a right Bousfield localisation of the canonical model structure transferred from dg $\Omega \C$-coalgebras. That is, the identity functor of dg $\C$-coalgebras, where at the source they are endowed with the canonical model structure, and at the target with the quasi-isomorphisms model structure, is a right Quillen functor. 
\end{proposition}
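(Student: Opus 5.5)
We construct the model structure by right-transferring the injective-type model structure from $\mathsf{Ch}(\kk)$ along the forgetful functor, and then check that it is a right Bousfield localization of the canonical structure. The relevant adjunction is the free–forgetful one
\[
\widehat{\mathrm{S}}^c(\C)(-) : \mathsf{Ch}(\kk) \rightleftarrows \C\mbox{-}\mathsf{alg}^{\mathsf{comp}} : \mathrm{U}~,
\]
where the free dg $\C$-algebra is automatically complete. The category of complete dg $\C$-algebras is locally presentable, since it is a reflective subcategory of the monadic category $\C\mbox{-}\mathsf{alg}$ over an accessible monad. We use Kan's transfer theorem (or the Hess--Kędziorek--Riehl--Shipley variant) with generating cofibrations $\widehat{\mathrm{S}}^c(\C)(I)$ and generating acyclic cofibrations $\widehat{\mathrm{S}}^c(\C)(J)$, where $I,J$ are the projective generators of $\mathsf{Ch}(\kk)$ (spheres into discs, and $0 \to$ discs). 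The fibrations are then the degreewise surjections and the weak equivalences are the quasi-isomorphisms.

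The only non-formal input is the acyclicity condition: relative $\widehat{\mathrm{S}}^c(\C)(J)$-cell complexes must be quasi-isomorphisms. We plan to verify it by a path object argument. Every object is fibrant, because every map to $0$ is surjective. So it suffices to give a functorial path object $B \to B^{I} \to B\times B$, with the first map a quasi-isomorphism and the second a surjection. Take $B^I \coloneqq B \otimes \Omega_1$, where $\Omega_1$ is the polynomial de Rham forms on the interval, or in positive characteristic the normalized cochains $N^*(\Delta^1)$. Here we use that $N^*(\Delta^1)$ is a finite-dimensional commutative-up-to-$\mathcal{E}$ algebra, and that tensoring a $\C$-algebra with a finite-dimensional (quasi-planar-compatible) unital algebra again yields a $\C$-algebra, because $[\C(n),(B\otimes R)^{\otimes n}] \cong [\C(n),B^{\otimes n}]\otimes R^{\otimes n}$ when $R$ is finite-dimensional. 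Completeness of $B\otimes R$ follows since $W^p(B\otimes R)=W^pB\otimes R$ is a finite-dimensional tensor. The evaluation maps give a surjection, and the inclusion $B \to B\otimes R$ is a quasi-isomorphism. Quillen's path object argument then yields the transfer.

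For the localization statement, it suffices to show that the identity from the model structure of Proposition \ref{Prop: right Bousfield localization of absolute algebras} to the canonical one is left Quillen; equivalently, that the identity from canonical to quasi-isomorphism structure is right Quillen. The fibrations coincide in both structures: they are the degreewise surjections. So we only need every canonical weak equivalence to be a quasi-isomorphism between complete $\C$-algebras. Since every object is fibrant in both structures, Dugger's lemma lets us just check that acyclic fibrations are preserved. A canonical acyclic fibration is a surjection $f$ with $\widehat{\mathrm{B}}_\iota f$ a quasi-isomorphism. We filter $\widehat{\mathrm{B}}_\iota B$ by the coradical filtration of the cofree $\Omega\C$-coalgebra, which is complete and exhaustive in the relevant sense. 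The associated graded pieces recover $B$ (arity one) together with terms built from $\C$ and $B$. An induction on arity, using that $\mathrm{L}(\Omega\C)$ preserves quasi-isomorphisms and that $f$ is surjective, shows that $\mathrm{U}f$ is a quasi-isomorphism. Alternatively, we note that the unit $B \to \widehat{\Omega}_\iota\widehat{\mathrm{B}}_\iota B$ is a quasi-isomorphism by the linearcoalgebras theory, and that $\widehat{\Omega}_\iota$ preserves quasi-isomorphisms as a complete filtered construction.

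We expect the main obstacle to be this last comparison, that canonical equivalences are quasi-isomorphisms, because it needs convergence of a spectral sequence for a product-type, non-conilpotent cofree coalgebra. We would first try the filtration argument on $\widehat{\Omega}_\iota \widehat{\mathrm{B}}_\iota$ by the canonical filtration $W^p$, which is complete precisely because the algebras are complete.
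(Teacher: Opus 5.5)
The paper only quotes this result from the literature, so I assess your argument on its own terms. The overall architecture is sound, but the path object in your key acyclicity step does not exist as written.

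A dg $\C$-algebra structure on $B\otimes R$ needs a map $\prod_n[\C(n),B^{\otimes n}\otimes R]^{\mathbb{S}_n}\to B\otimes R$. The identification $\prod_n(X_n\otimes R)\cong(\prod_n X_n)\otimes R$ holds only for finite-dimensional $R$, which is exactly the hypothesis you invoke. In characteristic zero, the setting of the proposition, you take $R=\Omega_1=\kk[t,dt]$, which is infinite-dimensional. Then $B\otimes\Omega_1$ is genuinely not closed under the absolute operations. Take $\C=\mathcal{C}om^*$ and $B=x\kk\llbracket x\rrbracket$: your formula sends the absolute sum $\sum_{n\geq 1}(x\otimes t)^n$ of $n$-fold products to $\sum_n x^n\otimes t^n$, which is not an element of $B\otimes\kk[t,dt]$.

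Nor is there a finite-dimensional substitute for symmetric $\C$. Such an $R$ must be strictly graded-commutative, and no finite-dimensional unital cdga admits $\kk\xrightarrow{\sim}R\twoheadrightarrow\kk\times\kk$. Indeed, since $R^0$ is finite-dimensional commutative, there is an idempotent $e\in R^0$ lifting $(1,0)$. Then $de=d(e^2)=2e\,de$ forces $de=0$, so $e\in\kk\cdot 1+\mathrm{Im}(d)$, which maps to the diagonal, a contradiction. The cochains $N^*(\Delta^1)$ are finite-dimensional but not commutative, so they only rescue non-symmetric $\C$.

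There are two repairs.

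\emph{Complete along the arity tower.} Set $B^I\coloneqq\lim_k\bigl((B/\mathrm{F}^kB)\otimes\Omega_1\bigr)$. Each $B/\mathrm{F}^kB$ is a $\tau^{\leq k}\C^*$-algebra (Lemma~\ref{lemma: identification of the truncated category}), so tensoring with a cdga is legitimate. A limit of complete algebras is complete, and $B\cong\lim_k B/\mathrm{F}^kB$ under Assumption~\ref{assumption 1}. Every tower involved consists of surjections, so the Milnor sequence shows that $B\to B^I$ is a quasi-isomorphism and $B^I\to B\times B$ is surjective.

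\emph{Reverse the order of your two steps.} This is more economical.
First show that canonical weak equivalences are quasi-isomorphisms, using your second argument with its direction corrected. Since $\widehat{\Omega}_\iota$ is the left adjoint, the relevant map is the counit $\widehat{\Omega}_\iota\widehat{\mathrm{B}}_\iota B\to B$, which is a quasi-isomorphism for complete $B$. Because $\widehat{\Omega}_\iota$ preserves quasi-isomorphisms, two-out-of-three in the naturality square shows that $f$ is a quasi-isomorphism whenever $\widehat{\mathrm{B}}_\iota f$ is. Drop the coradical-filtration variant: the cofree $\Omega\C$-coalgebra is not conilpotent, so that filtration is not exhaustive.
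Then each $\widehat{\mathrm{S}}^c(\C)(0\to D^n)=\widehat{\Omega}_\iota(\mathrm{triv}(0\to D^n))$ is the image of an injective quasi-isomorphism under the left Quillen functor $\widehat{\Omega}_\iota$. Hence it is a canonical acyclic cofibration, and so every relative $\widehat{\mathrm{S}}^c(\C)(J)$-cell complex is a canonical weak equivalence and thus a quasi-isomorphism.
This yields the transfer and the right Bousfield localization simultaneously, with no interval object at all.
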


\begin{remark}
See \cite[Section 7.5]{premierpapier} for the general statement involving the quasi-planar hypothesis, which reduces to the above proposition over a characteristic zero field. 
\end{remark}

\subsection{$\infty$-categorical preliminaries}
In this section, we work with the formalism of $\infty$-categorical symmetric sequences developed in \cite[Section 4.1.2]{BrantnerPhD}. See also \cite[Section 5.2.4]{ShiPhD} for a detailed exposition of these ideas. We use this formalism to define both algebras and coalgebras over enriched $\infty$-(co)operads. Note that by the recent work of \cite{arakawa2026}, this formalism of enriched $\infty$-operads is equivalent to the one developped in \cite{ChuHaugseng20}.

\subsubsection{Symmetric sequences, operads and cooperads.}
For simplicity, we work over the base symmetric monoidal $\infty$-category of chain complexes over $\kk$ up to quasi-isomorphisms $\mathbf{D}(\kk)$, although these definitions make sense in a more general context of a rigidly compactly generated symmetric monoidal $\infty$-category \cite[Section 6]{rectification}. 

\medskip

Let $\mathsf{Fin}^{\simeq}$ denote the 1-category of finite sets and bijections. We define the $\infty$-category of \textit{symmetric sequences} in $\mathbf{D}(\kk)$ as the $\infty$-category of functors from $\mathsf{Fin}^{\simeq}$ to $\mathbf{D}(\kk)$:
\[
\mathbf{sSeq}(\mathbf{D}(\kk)) \coloneqq \mathbf{Fun}(\mathsf{Fin}^{\simeq},\mathbf{D}(\kk))~. 
\]

For $M$ in $\mathbf{sSeq}(\mathbf{D}(\kk))$, we denote by $M(n)$ the evaluation of $M$ at the set $\underline{n}= \{1,\cdots,n\}$. The $\infty$-category of symmetric sequences in $\mathbf{D}(\kk)$ admits a monoidal structure given by the composition product $\circledcirc$, which for two symmetric sequences $M$ and $N$ is given pointwise by
\[
M \circledcirc N(n) \simeq \bigoplus_{k \geq 0} \left(\bigoplus_{\underline{n} = \sqcup_{i=1}^k S_i} M(k) \otimes N(S_1) \otimes \cdots \otimes N(S_k)\right)_{h\mathbb{S}_k} 
\]
as computed in \cite[Appendix A.1.]{ShiPhD}. Using this composition product, one can define operads as monoids in a monoidal $\infty$-category.

\begin{definition}[$\infty$-operad]
An $\infty$\textit{-operad} is a monoid object in the $\infty$-category of symmetric sequences with respect to the composition product. 
\end{definition}

\begin{definition}[augmented $\infty$-operad]
An \textit{augmented} $\infty$\textit{-operad} is an $\infty$-operad $\mathscr{P}$  together with a map of $\infty$-operads $\epsilon: \mathscr{P} \longrightarrow \mathbb{1}$. 
\end{definition}

\begin{definition}[$\infty$-cooperad]\label{def: infinity cooperad}
An $\infty$\textit{-cooperad} is a comonoid object in the $\infty$-category of symmetric sequences.
\end{definition}

\begin{definition}[coaugmented $\infty$-cooperad]
A \textit{coaugmented $\infty$-cooperad}  is an $\infty$-cooperad $\mathscr{C}$  with a map $\mu: \mathbb{1} \longrightarrow \operad C$ of $\infty$-cooperads.
\end{definition}


\subsubsection{Lurie's bar-cobar adjunction between $\infty$-operads and $\infty$-cooperads}\label{subsubsection: Lurie's bar-cobar adjunction for operads and cooperads}
In \cite{HigherAlgebra}, Lurie constructs a general bar-cobar adjunction between augmented monoids and coaugmented comonoids over a pointed monoidal $\infty$-category $\mathbf{C}$ admitting geometric realizations of simplicial objects and totalizations of cosimplicial objects. See \cite[Section 4.3]{HigherAlgebra} or Section \ref{Section: Lurie's bar-cobar}  
for more details. By applying this general construction to $\mathbf{C} = \mathbf{sSeq}(\mathbf{D}(\kk))$, we get the following adjunction 
\[
\begin{tikzcd}[column sep=5pc,row sep=3pc]
          \infty\mbox{-}\mathbf{Op}^{\mathbf{aug}} \arrow[r, shift left=1.1ex, "\mathbf{bar}"{name=F}] &\infty\mbox{-}\mathbf{Coop}^{\mathbf{coaug}}~, \arrow[l, shift left=.75ex, "\mathbf{cobar}"{name=U}] \arrow[phantom, from=F, to=U, , "\dashv" rotate=-90]
\end{tikzcd}
\]
which can be further restricted to an adjunction between reduced $\infty$-operads and reduced $\infty$-cooperads. The bar construction of an $\infty$-operad $\mathscr{P}$ is computed as the geometric realization of the simplicial object
\[
\mathbf{bar}(\mathscr{P}) \coloneqq \left|~\mathbf{bar}(\mathbb{1},\mathscr{P},\mathbb{1})_\bullet ~ \right| = \left|
\begin{tikzcd}
\mathbb{1} \arrow[r]
&\mathscr{P} \arrow[l,shift left=1.1ex] \arrow[l,shift right=1.1ex] \arrow[r,shift left=1ex] \arrow[r,shift right=1ex] 
&\mathscr{P} \circledcirc \mathscr{P} \arrow[l,shift left=2ex] \arrow[l,shift right=2ex] \arrow[l] \arrow[r,shift left= 2ex] \arrow[r,shift right=2ex]  \arrow[r]
&\cdots \arrow[l,shift left=3ex] \arrow[l,shift right=3ex]  \arrow[l,shift left=1ex] \arrow[l,shift right=1ex]
\end{tikzcd} \right|~. 
\]
Dually, the cobar construction of an $\infty$-operad $\mathscr{C}$ is computed as the totalization of the cosimplicial object
\[
\mathbf{cobar}(\mathscr{C}) \coloneqq \mathbf{Tot}\left(\mathbf{cobar}(\mathbb{1},\mathscr{C},\mathbb{1})_\bullet\right) = \mathbf{Tot}\left(
\begin{tikzcd}
\cdots \arrow[r,shift left= 2ex] \arrow[r,shift right=2ex]  \arrow[r]
&\mathscr{C} \circledcirc \mathscr{C} \arrow[l,shift left=3ex] \arrow[l,shift right=3ex]  \arrow[l,shift left=1ex] \arrow[l,shift right=1ex] \arrow[r,shift left=1ex] \arrow[r,shift right=1ex] 
&\mathscr{C} \arrow[l,shift left=2ex] \arrow[l,shift right=2ex] \arrow[l] \arrow[r]
&\mathbb{1}  \arrow[l,shift left=1.1ex] \arrow[l,shift right=1.1ex] 
\end{tikzcd} \right)~. 
\]
Furthermore, this adjunction becomes an equivalence of $\infty$-categories between reduced $\infty$-operads and reduced $\infty$-cooperads by \cite[Theorem 3.4]{heuts2024}. 

\begin{remark}\label{rmk: point-set bar and infinity cat bar}
The point-set operadic bar construction of Paragraph \ref{subsubsection: Point-set operadic Koszul duality} is a model for the $\infty$-categorical bar construction of the underlying $\infty$-operad. Indeed, the two constructions can be compared using the same arguments as in \cite[Section 5.4.2]{BrantnerPhD}. 
\end{remark} 

\subsubsection{Schur functors associated to enriched $\infty$-(co)operads and (co)algebras}\label{subsub: infinity categorical Schur functor}
As in the point-set setting, there is a functor 
\[
\mathbf{S}(-): \mathbf{sSeq}(\mathbf{D}(\kk)) \longrightarrow \mathbf{End}(\mathbf{D}(\kk))~,
\]
from the $\infty$-category of symmetric sequences in $\mathbf{D}(\kk)$ to the $\infty$-category of endofunctors in $\mathbf{D}(\kk)$. It associates to any symmetric sequence $M$ an endofunctor $\mathbf{S}(M)$, given by the formula
\[
\mathbf{S}(M) = \bigoplus_{n \geq 0} \left(M(n) \otimes (-)^{\otimes n}\right)_{h\mathbb{S}_n}~.
\]
The functor $\mathbf{S}(-)$ is symmetric monoidal, and therefore sends (enriched) $\infty$-operads to monads and (enriched) $\infty$-cooperads to comonads. 

\begin{definition}[$\mathscr{P}$-algebras] 
A \emph{$\mathscr{P}$-algebra} is an algebra over the monad $\mathbf{S}(\mathscr{P})$. 
\end{definition}

\begin{remark}
A $\mathscr{P}$-algebra is an object $A$ in $\mathbf{D}(\kk)$ equipped with a family of structural maps 
\[
\gamma_n^{A}: \left(\mathscr{P}(n) \otimes A^{\otimes n}\right)_{h\mathbb{S}_n} \longrightarrow A 
\]
for all $n \geq 1$ which satisfy associativity and unitality conditions up to higher coherences. 
\end{remark}

\begin{remark}[Point-set models for $\mathscr{P}$-algebras]
Any $\infty$-operad enriched in $\mathbf{D}(\kk)$ admits a point-set model, given by some dg operad in chain complexes over $\kk$. One sees this by comparing their respective $\infty$-categories of algebras in the following way: let $\mathcal{P}$ be a dg operad, projective as a dg symmetric sequence. Then there is an equivalence of $\infty$-categories
\[
\mathcal{P}\mbox{-}\mathsf{alg}~[\mathsf{Q.iso}^{-1}] \simeq \mathbf{Alg}_{\mathcal{P}}(\mathbf{D}(\kk))
\]
between dg $\mathcal{P}$-algebras up to quasi-isomorphism and algebras in $\mathbf{D}(\kk)$ over the induced enriched $\infty$-operad. This follows from the Barr--Beck--Lurie theorem, since the point-set Schur functor presents the $\infty$-categorical Schur functor when the underlying dg symmetric sequence of $\mathcal{P}$ is projective. We refer to \cite{HinichRectification, PavlovScholbach, Haugseng22} for similar statements. 
\end{remark}

\begin{definition}[Divided powers conilpotent $\mathscr{C}$-coalgebras]\label{def: infinity cat C-coalgebras}
A \emph{divided powers conilpotent $\mathscr{C}$-coalgebra} $W$ is a coalgebra over the comonad $\mathbf{S}(\mathscr{C})$. 
\end{definition}

\begin{remark}
A divided powers conilpotent $\mathscr{C}$-coalgebra is an object $W$ in $\mathbf{D}(\kk)$ equipped with a structural map 
\[
\Delta_W : W \longrightarrow \bigoplus_{n \geq 0} \left(\mathscr{C}(n) \otimes W^{\otimes n}\right)_{h\mathbb{S}_n}~,
\]
which again, satisfies some associativity condition up to higher coherences. Since  $\Delta_W$ lands in an infinite coproduct instead of an infinite product, this implies that decomposition has to have some finiteness condition, which explains the adjective conilpotent. Furthermore, this structural map lands on the homotopy orbits, hence this decomposition operation has a divided powers structure as well. 
\end{remark}

\begin{remark}
In \cite{heuts2024}, Heuts proposed a definition of divided powers $\mathscr{C}$-coalgebras for a cooperad $\mathscr{C}$ which, roughly, replaces the infinite coproduct in the above definition with an infinite product, thus removing the conilpotency constraint on the decomposition map. 
\end{remark}

\subsubsection{Dual Schur functor and algebras over an enriched $\infty$-cooperad}\label{subsubsection: infinity cat dual Schur functor}
There is another Schur functor associated to any symmetric sequence, which we call the \textit{dual Schur functor}. Associating to a symmetric sequence $M$ its dual Schur functor defines a functor 
\[
\begin{tikzcd}[column sep=3pc,row sep=0pc]
\widehat{\mathbf{S}}^c(-): \mathbf{sSeq}(\mathbf{D}(\kk))^{\mathsf{op}} \arrow[r]
&\mathbf{End}(\mathbf{D}(\kk)) \\
M \arrow[r,mapsto]
&\displaystyle \prod_{n \geq 0} \left[M(n),(-)^{\otimes n}\right]^{h\mathbb{S}_n} 
\end{tikzcd}
\]

from the $\infty$-category of symmetric sequences in $\mathbf{D}(\kk)$ to the $\infty$-category of endofunctors of $\mathbf{D}(\kk)$. Here $[-,-]$ denotes the self-enrichment of $\mathbf{D}(\kk)$, adjoint to the tensor product.

\begin{lemma}\label{dual schur is lax monoidal}
The functor $\widehat{\mathbf{S}}^c(-)$ is lax monoidal. 
\end{lemma}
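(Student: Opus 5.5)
To show that $\widehat{\mathbf{S}}^c(-)$ is lax monoidal, I will exhibit it as the opposite of a strong monoidal functor followed by a "right adjoint" passage. In an $\infty$-categorical setting this produces the lax structure formally, so no coherences need to be built by hand.

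\textbf{Step 1: the symmetric monoidal action.} The starting point is the $\infty$-categorical analogue of the point-set statement \cite[Corollary 3.4]{linearcoalgebras}. The composition product $\circledcirc$ on $\mathbf{sSeq}(\mathbf{D}(\kk))$ arises from the fact that $\mathbf{sSeq}(\mathbf{D}(\kk))$ acts on $\mathbf{D}(\kk)$ through the Schur functor $\mathbf{S}(-)$. Concretely, $\mathbf{S}(-)$ is a monoidal functor $(\mathbf{sSeq}(\mathbf{D}(\kk)),\circledcirc)\to(\mathbf{End}(\mathbf{D}(\kk)),\circ)$, as recalled in \S\ref{subsub: infinity categorical Schur functor}, following \cite{BrantnerPhD}. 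Since $\mathbf{D}(\kk)$ is presentable and closed, each $\mathbf{S}(M)$ preserves colimits. The key observation is that $\widehat{\mathbf{S}}^c(M)(V)$ is the internal hom for this action. More precisely, $\widehat{\mathbf{S}}^c(M)(V)\simeq \underline{\mathrm{Hom}}_{\mathbf{sSeq}}(M,\mathrm{coEnd}_V)$, where $\mathrm{coEnd}_V(n)=[V,V^{\otimes n}]$; this uses that $\prod_n[M(n),V^{\otimes n}]^{h\mathbb{S}_n}$ is the mapping object between symmetric sequences. Equivalently, for fixed $V$ and $W$ we have
\[
\mathrm{Map}(W,\widehat{\mathbf{S}}^c(M)(V))\simeq \mathrm{Map}_{\mathbf{sSeq}}(M,\mathrm{coEnd}_{W,V}),
\]
where $\mathrm{coEnd}_{W,V}(n)=[W,V^{\otimes n}]$.

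\textbf{Step 2: the structure maps.} I will produce natural maps
\[
\widehat{\mathbf{S}}^c(M)\circ\widehat{\mathbf{S}}^c(N)\to \widehat{\mathbf{S}}^c(M\circledcirc N)
\qquad\text{and}\qquad
\mathrm{id}\to\widehat{\mathbf{S}}^c(\mathbb{1}).
\]
The unit map is the inclusion of the arity-one factor $[\kk,V]\simeq V$. For the composition map, I use the adjunction of Step 1 to reduce to a map of symmetric sequences. Set $X=\widehat{\mathbf{S}}^c(N)(V)$ and $Y=\widehat{\mathbf{S}}^c(M)(X)$. The identity of $Y$ gives a map $M\to\mathrm{coEnd}_{Y,X}$, and the identity of $X$ gives a map $N\to\mathrm{coEnd}_{X,V}$. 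Their composition product is
\[
M\circledcirc N\to \mathrm{coEnd}_{Y,X}\circledcirc\mathrm{coEnd}_{X,V}\to\mathrm{coEnd}_{Y,V},
\]
where the last arrow is the canonical composition of coendomorphism sequences. That composition map is itself obtained from the lax monoidality of $\otimes$ and from $[A,B]\otimes[A',B']\to[A\otimes A',B\otimes B']$. Finally, I use the fact that homotopy orbits map to homotopy fixed points through the norm-free comparison: the map out of orbits into a product of fixed points is induced by the universal property of orbits, applied to each summand of the composition product.

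\textbf{Step 3: coherence.} The cleanest way to obtain the higher coherences is to avoid building them. I will note that the action of $(\mathbf{sSeq},\circledcirc)$ on $\mathbf{D}(\kk)$, via $M\mapsto \mathbf{S}(M)$, is itself an action of a monoidal $\infty$-category. For fixed $V$, the functor $M\mapsto\mathbf{S}(M)(W)$ has the right adjoint in $W$ from Step 1 only after fixing $M$. This suggests the following formulation instead. $\widehat{\mathbf{S}}^c$ is the composite of the monoidal opposite of the lax monoidal functor $M\mapsto \mathrm{Map}(M,-)$ with the action of $\mathrm{coEnd}$. Alternatively, one notes that the assignment $\mathrm{coEnd}_{\bullet,\bullet}$ defines a lax monoidal functor $\mathbf{D}(\kk)^{\mathrm{op}}\times\mathbf{D}(\kk)\to\mathbf{sSeq}$, an "enrichment". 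Then \cite[Section 6]{rectification} or \cite{Haugseng22} identify $\widehat{\mathbf{S}}^c$ with the induced functor to endomorphisms, and lax monoidality follows from the general fact that a right adjoint of an oplax-compatible action is lax.

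\textbf{Main obstacle.} The hard part is Step 3: packaging the pointwise maps into a genuine lax monoidal functor of $\infty$-categories. Every map is evident, but there are infinitely many coherences. My first attempt will be to realize $\mathbf{sSeq}(\mathbf{D}(\kk))$ as a full subcategory of $\mathbf{End}$ of the presheaf-type category $\mathbf{Fun}(\mathsf{Fin}^{\simeq},\mathbf{D}(\kk))$, and to deduce the statement via the doctrinal adjunction for lax monoidal right adjoints \cite{HigherAlgebra}. Should that fail, I would fall back on the point-set model: for $\mathbb{S}$-projective inputs, the point-set lax structure of \cite[Corollary 3.4]{linearcoalgebras} presents the $\infty$-categorical one.
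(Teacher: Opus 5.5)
Your Step~2 correctly builds the structure maps. The adjunction $\mathrm{Map}(W,\widehat{\mathbf{S}}^c(M)(V))\simeq\mathrm{Map}_{\mathbf{sSeq}}(M,\mathrm{coEnd}_{W,V})$ is right, and composing the two unit maps with $\mathrm{coEnd}_{Y,X}\circledcirc\mathrm{coEnd}_{X,V}\to\mathrm{coEnd}_{Y,V}$ gives $\widehat{\mathbf{S}}^c(M)\circ\widehat{\mathbf{S}}^c(N)\to\widehat{\mathbf{S}}^c(M\circledcirc N)$ in the lax direction. Your first formula in Step~1 is off: $\underline{\mathrm{Hom}}_{\mathbf{sSeq}}(M,\mathrm{coEnd}_V)$ is $[V,\widehat{\mathbf{S}}^c(M)(V)]$, and the correct statement is $\widehat{\mathbf{S}}^c(M)(V)\simeq\underline{\mathrm{Hom}}_{\mathbf{sSeq}}(M,\mathrm{coEnd}_{\kk,V})$. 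This is harmless, since you only use the $W$-version.

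In the $\infty$-categorical setting, however, the content of the lemma is the coherent lax structure, and Step~3 does not supply it; its main suggestions do not work as stated. The $\mathrm{coEnd}_{\bullet,\bullet}$ are the hom-objects of an enrichment of $\mathbf{D}(\kk)^{\mathrm{op}}$ in $(\mathbf{sSeq},\circledcirc)$, in which $\widehat{\mathbf{S}}^c(M)(V)$ is a weak tensor of $V$ by $M$. This is not a lax monoidal functor $\mathbf{D}(\kk)^{\mathrm{op}}\times\mathbf{D}(\kk)\to\mathbf{sSeq}$, since composition needs matching middle objects. Building that enrichment coherently is no easier than the lemma, and passing from an enrichment with weak tensors to a lax action is a genuine comparison theorem, not a doctrinal-adjunction formality. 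There is also no action whose right adjoint $\widehat{\mathbf{S}}^c$ could be. The adjunction of Step~1 is in the variable $M$ and has nothing to do with $\mathbf{S}(M)$, and $M\mapsto\widehat{\mathbf{S}}^c(M)$ fails to be an action precisely because it is only lax. The doctrinal adjunction in \cite{HigherAlgebra} makes right adjoints of \emph{strong} monoidal functors lax, and you never exhibit $\widehat{\mathbf{S}}^c$ as such a right adjoint. Embedding $\mathbf{sSeq}$ into endofunctors of $\mathbf{Fun}(\mathsf{Fin}^{\simeq},\mathbf{D}(\kk))$ does not produce one.

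What closes the gap is your fallback, and that is the paper's route. The proof adapts the point-set argument of \cite[Proposition 5.1]{lucio2022integrationtheorycurvedabsolute}, which your Step~2 transcribes at the level of maps. The remark after the lemma then obtains the coherent structure by descending the 1-categorical lax structure of \cite[Corollary 3.4]{linearcoalgebras} along localization. Make this the main argument and check the following:
\begin{enumerate}
\item Restrict $\widehat{\mathrm{S}}^c$ to $\mathbb{S}$-projective dg symmetric sequences. These contain the unit, are closed under the composition product, and present $(\mathbf{sSeq}(\mathbf{D}(\kk)),\circledcirc)$ after inverting quasi-isomorphisms.
\item On these, strict invariants compute homotopy invariants, each $\widehat{\mathrm{S}}^c(M)$ preserves quasi-isomorphisms over a field, and a quasi-isomorphism $M\to M'$ induces a natural quasi-isomorphism.
\item The universal property of monoidal localization \cite[Proposition 4.1.7.4]{HigherAlgebra}, composed with the monoidal comparison from quasi-isomorphism-preserving endofunctors of $\mathsf{Ch}(\kk)$ to $\mathbf{End}(\mathbf{D}(\kk))$, then gives a lax monoidal functor whose underlying functor is $M\mapsto\prod_n[M(n),(-)^{\otimes n}]^{h\mathbb{S}_n}$.
\end{enumerate}
If you want a point-set-free proof, your ``right adjoint'' intuition has a correct form via pro-objects, as used later in the paper. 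Write $\widehat{\mathbf{S}}^c(M)\simeq\lim\circ\,\widehat{\mathbf{S}}^c_{\mathrm{pro}}(M)\circ c$, where $\widehat{\mathbf{S}}^c_{\mathrm{pro}}$ is strong monoidal. Conjugation $F\mapsto\lim\circ F\circ c$ along the adjunction $c\dashv\lim$ is lax monoidal, so the composite is lax monoidal.
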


\begin{proof}
We adapt the proof given in \cite[Proposition 5.1]{lucio2022integrationtheorycurvedabsolute} to the $\infty$-categorical setting. 
\end{proof}

\begin{remark}
The 1-categorical complete Schur functor $\widehat{\mathrm{S}}^c(-)$ is lax monoidal. The natural map 
\[
\mathsf{Alg}(\mathsf{End}(\mathsf{Ch}(\kk)))[\mathsf{Q.iso}^{-1}]\longrightarrow \mathbf{Alg}_{\mathbb{E}_1}(\mathbf{End}(\mathbf{D}(\kk)))
\]
sends it to $\widehat{\mathbf{S}}^c(-)$, hence its $\infty$-categorical counterpart is also lax monoidal, providing another proof of \cref{dual schur is lax monoidal}. 
\end{remark}

\cref{dual schur is lax monoidal} implies that an (enriched) $\infty$-cooperad $\mathscr{C}$ in $\mathbf{D}(\kk)$ is sent to a monad in $\mathbf{D}(\kk)$. Therefore, it also makes sense to define algebras over an $\infty$-cooperad. 

\begin{definition}[$\mathscr{C}$-algebra]\label{def: infinity cat C-algebra}
A $\mathscr{C}$-algebra is an algebra over the monad $\widehat{\mathbf{S}}^c(\mathscr{C})$.
\end{definition}

\begin{remark}
A $\mathscr{C}$-algebra is an object $B$ in $\mathbf{D}(\kk)$ equipped with a structural map
\[
\gamma^{B}: \prod_{n \geq 0} \left[\mathscr{C}(n),B^{\otimes n} \right]^{h\mathbb{S}_n} \longrightarrow B 
\]
which satisfies associativity and unitality conditions up to higher coherences. This structure is the analogue of the \textit{absolute algebra} structures studied in \cite{absolutealgebras} at the homotopical level. 
\end{remark}

\subsubsection{Coalgebras over operads}\label{subsubsection: infinity cat of coalgebras over an operad} We briefly recall the definition of the $\infty$-category of coalgebras over an (enriched) $\infty$-operad given in \cite[Section 2]{rectification}.

\medskip

The main obstruction to defining $\mathscr{P}$-coalgebras is that $\widehat{\mathbf{S}}^c(-)$ is only lax monoidal, therefore the image of an $\infty$-operad is not a priori a comonad. Following the ideas of \cite[Appendix A]{heuts2024}, we extended this dual Schur functor to the $\infty$-category of pro-objects $\mathbf{pro}(\mathbf{D}(\kk))$, such that the assignment 
\[
\widehat{\mathbf{S}}^c_{\mathrm{pro}}(-): \mathbf{sSeq}(\mathbf{D}(\kk))^{\mathsf{op}} \longrightarrow \mathbf{End}(\mathbf{pro}(\mathbf{D}(\kk))) 
\]
is now a strong monoidal functor. Therefore it makes sense to define $\mathscr{P}$-coalgebras in $\mathbf{pro}(\mathbf{D}(\kk))$ as coalgebras over this comonad. 

\medskip

Finally, if we denote by $c: \mathbf{D}(\kk) \longrightarrow \mathbf{pro}(\mathbf{D}(\kk))$ the canonical fully faithful inclusion given by sending objects in $\mathbf{D}(\kk)$ to their constant pro-object, one can pullback along this inclusion to obtain a well-behaved definition of $\mathscr{P}$-coalgebras in $\mathbf{D}(\kk)$

\begin{definition}[$\mathscr{P}$-coalgebra]\label{def: infinity categorical P-coalgebras}
The $\infty$-category of $\mathscr{P}$\textit{-coalgebras} as the following pullback 
\[
\begin{tikzcd}[column sep=3.5pc,row sep=3.5pc]
\mathbf{Coalg}_\mathscr{P}(\mathbf{D}(\kk)) \arrow[r] \arrow[d] \arrow[dr, phantom, "\ulcorner", very near start]
&\mathbf{Coalg}_{\widehat{\mathbf{S}}^c_{\mathrm{pro}}(\mathscr{P})}(\mathbf{pro}(\mathbf{D}(\kk))) \arrow[d,"\mathrm{U}"]\\
\mathbf{D}(\kk) \arrow[r,"c"] 
&\mathbf{pro}(\mathbf{D}(\kk))
\end{tikzcd}
\]
in the $\infty$-category of $\infty$-categories. 
\end{definition}

The $\infty$-category of $\mathscr{P}$-coalgebras is presentable and comonadic over the base $\infty$-category $\mathbf{D}(\kk)$. When  $\mathscr{P}$ is the linearization of an $\infty$-operad enriched in spaces, this definition recovers the definitions of coalgebras over an $\infty$-operad considered in \cite[Chapter 2]{HigherAlgebra}, given by the opposite category of $\mathscr{P}$-algebras in $\mathbf{D}(\kk)^{\mathbf{op}}$. See \cite[Theorem B]{rectification} for more details.

\medskip

Moreover, let us recall the main theorem of \cite{rectification}, which compares the $\infty$-category of point-set coalgebras over a cofibrant dg operad localized at quasi-isomorphisms with coalgebras (in the sense above) over the underlying enriched $\infty$-operad of the dg operad. 

\begin{theorem}[{\cite[Main theorem]{rectification}}]\label{thm: rectification of P-coalgebras}
Let $\mathcal{P}$ be a cofibrant dg operad. There is an equivalence of $\infty$-categories
\[
\mathcal{P}\mbox{-}\mathsf{coalg}~[\mathsf{Q.iso}^{-1}] \simeq \mathbf{Coalg}_{\mathcal{P}}(\mathbf{D}(\kk))
\]

between dg $\mathcal{P}$-coalgebras up to quasi-isomorphism and coalgebras in $\mathbf{D}(\kk)$ over the induced enriched $\infty$-operad.     
\end{theorem}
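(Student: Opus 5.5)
The plan is to compare both sides as comonadic $\infty$-categories over $\mathbf{D}(\kk)$ and to conclude with the Barr--Beck--Lurie theorem. On the point-set side, since $\mathcal{P}$ is cofibrant it is coadmissible. So $\mathcal{P}\mbox{-}\mathsf{coalg}$ carries the model structure left-transferred along $\mathrm{U} \dashv \mathrm{L}(\mathcal{P})$ (\cref{Thm: existence of the cofree coalgebra}). In this structure cofibrations are injections and weak equivalences are quasi-isomorphisms. Hence $\mathrm{U}$ is left Quillen and preserves all weak equivalences. After localization, $\mathrm{U}\colon \mathcal{P}\mbox{-}\mathsf{coalg}[\mathsf{Q.iso}^{-1}] \to \mathbf{D}(\kk)$ is conservative, preserves all (homotopy) colimits, and has a right adjoint $\mathbb{R}\mathrm{L}(\mathcal{P})$. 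By Barr--Beck--Lurie (dual form), $\mathcal{P}\mbox{-}\mathsf{coalg}[\mathsf{Q.iso}^{-1}]$ is comonadic over $\mathbf{D}(\kk)$, with comonad $\mathrm{U}\,\mathbb{R}\mathrm{L}(\mathcal{P})$. The target $\mathbf{Coalg}_{\mathcal{P}}(\mathbf{D}(\kk))$ is also comonadic, as recalled after \cref{def: infinity categorical P-coalgebras}. It therefore suffices to produce a functor over $\mathbf{D}(\kk)$ between the two sides and to check that it induces an equivalence of comonads.

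To build the comparison functor, I will first reduce to $\mathcal{P}$ of the form $\Omega\C$, which is $\mathbb{S}$-cofibrant and quasi-free, using that quasi-isomorphic cofibrant operads have Quillen-equivalent coalgebra categories. For such $\mathcal{P}$, each $[\mathcal{P}(n),V^{\otimes n}]^{\mathbb{S}_n}$ computes the homotopy fixed points $[\mathcal{P}(n),V^{\otimes n}]^{h\mathbb{S}_n}$. Consequently the point-set functor $\widehat{\mathrm{S}}^c(\mathcal{P})$ preserves quasi-isomorphisms and presents $\widehat{\mathbf{S}}^c(\mathcal{P})$. The lax monoidal structure maps $\varphi$ then present those of the $\infty$-categorical dual Schur functor, as in the remark following \cref{dual schur is lax monoidal}. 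A point-set coalgebra $C \to \widehat{\mathrm{S}}^c(\mathcal{P})(C)$ is a strict coalgebra for the lax comonad. Composing with the strong monoidal pro-extension $\widehat{\mathbf{S}}^c_{\mathrm{pro}}$ yields a coherent coalgebra in $\mathbf{pro}(\mathbf{D}(\kk))$ whose underlying object is constant. By the pullback in \cref{def: infinity categorical P-coalgebras}, this defines a functor $\mathcal{P}\mbox{-}\mathsf{coalg}[\mathsf{Q.iso}^{-1}] \to \mathbf{Coalg}_{\mathcal{P}}(\mathbf{D}(\kk))$ commuting with the forgetful functors. I would make the pro-extension step precise by working with an explicit strict model of pro-objects, namely towers of point-set complexes.

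The main obstacle is the comparison of comonads: showing that the derived point-set cofree coalgebra $\mathrm{U}\,\mathbb{R}\mathrm{L}(\mathcal{P})(V)$ agrees with the $\infty$-categorical cofree $\mathcal{P}$-coalgebra on $V$. The latter is obtained by applying the right adjoint of $c$ to the cofree $\widehat{\mathbf{S}}^c_{\mathrm{pro}}(\mathcal{P})$-coalgebra. The point-set $\mathrm{L}(\mathcal{P})$ is defined by a strict pullback, which is not a priori homotopy invariant. My first attempt is to use the quasi-free structure $\mathcal{P}=\Omega\C$ with its filtration by number of generators to write $\mathrm{L}(\mathcal{P})(V)$, for $V$ fibrant (i.e.\ arbitrary, since injective fibrant replacement is available), as a limit of a tower of pullbacks along degreewise surjections. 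Each stage would then be a homotopy pullback of homotopy-invariant dual Schur functors. I would then identify this tower with the truncated totalizations of the cosimplicial cobar resolution $\widehat{\mathbf{S}}^c_{\mathrm{pro}}(\mathcal{P})^{\circ\bullet}$ computing the $\infty$-categorical cofree coalgebra, checking that the limit is realized by a constant pro-object. Once the comonads are matched on free objects, the comparison functor is an equivalence by Barr--Beck--Lurie.
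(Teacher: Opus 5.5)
The paper does not prove this statement; it quotes it from \cite{rectification}. So your argument has to stand on its own, and it has a genuine gap in its first step.

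The dual Barr--Beck--Lurie theorem asks for more than conservativity and a right adjoint. It also requires that the localized forgetful functor $\mathrm{U}\colon \mathcal{P}\mbox{-}\mathsf{coalg}~[\mathsf{Q.iso}^{-1}]\to\mathbf{D}(\kk)$ preserve totalizations of $\mathrm{U}$-split cosimplicial objects. Preservation of colimits, which is what you check, is automatic for a left adjoint and plays no role. Homotopy limits in a left-transferred model category of coalgebras are not computed on underlying complexes, so the totalization condition is not formal.

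Your comonadicity paragraph uses only that the model structure is left-transferred, with injections as cofibrations and quasi-isomorphisms as weak equivalences. It therefore applies verbatim to dg $\mathrm{B}\mathcal{P}$-coalgebras with the model structure of Proposition~\ref{Prop: left Bousfield localization of conilpotent coalgebras}. There the point-set comonad $\mathrm{S}(\mathrm{B}\mathcal{P})$ does present $\mathbf{S}(\mathbf{bar}(\mathcal{P}))$, so your argument would show that the functor $\mathbf{loose}$ of Proposition~\ref{prop: adjunction between strict and loose conilpotent coalgebras} is an equivalence. The paper explains, following \cite{chen25} and \cite{GaitsgoryRozenblyumVolII} (see also Counterexample~\ref{Counterexample: non fullyfaithfulness of strict functor}), that this is false. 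The reason given there is precisely that this cofree-forgetful adjunction is not comonadic in the $\infty$-categorical sense.

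This also shows that comparing comonads, which you single out as the main obstacle, cannot finish the proof on its own. A conservative functor over $\mathbf{D}(\kk)$ compatible with the forgetful and cofree functors is exactly what $\mathbf{loose}$ is. The real content of the theorem is that the totalization condition holds for \emph{non-conilpotent} coalgebras over a \emph{cofibrant} operad. Neither hypothesis enters your comonadicity step. What is missing is an argument, specific to this setting, that $\mathrm{U}$ preserves totalizations of $\mathrm{U}$-split cosimplicial objects, or else a route to the equivalence that avoids point-set comonadicity altogether.

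Separately, the comonad comparison is only sketched and is partly set up wrongly. Applying $\lim$, the right adjoint of $c$, to the cofree $\widehat{\mathbf{S}}^c_{\mathrm{pro}}(\mathcal{P})$-coalgebra on $cV$ just returns $\widehat{\mathbf{S}}^c(\mathcal{P})(V)$, which carries no coalgebra structure. The cofree object in the pullback of Definition~\ref{def: infinity categorical P-coalgebras} is a coreflection for which you have no formula. So there is no identified totalization of $\widehat{\mathbf{S}}^c_{\mathrm{pro}}(\mathcal{P})^{\circ\bullet}$ to match your tower against. Also, both legs of the pullback defining $\mathrm{L}(\mathcal{P})$ (the square following Theorem~\ref{Thm: existence of the cofree coalgebra}) are injections. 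Rewriting that pullback as a tower of homotopy pullbacks along surjections would therefore need a real argument.
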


In particular, this means that the point-set description of the cofree $\mathcal{P}$-coalgebra functor given in \cite{anelcofree2014} also describes the $\infty$-categorical cofree construction. For more details about this definition and its properties, we refer to \cite{rectification}.


\section{The point-set inclusion-restriction square}\label{Section: point-set inclusion-restriction square}


In this section, we construct a commutative square of Quillen adjunctions which we call the inclusion-restriction square. It intertwines two natural restriction and inclusion functors with the bar-cobar and the complete bar-cobar adjunctions. We furthermore show that this particular 1-categorical square of Quillen adjunctions induces an adjunction at the $\infty$-categorical level. The left and the right $\infty$-adjoints are both obtained by composing left and right Quillen functors, which is a somewhat surprising phenomenon. 

\begin{notation}
For the rest of this paper, $\mathcal{P}$ will always denote a dg operad and $\C$ will always denote a conilpotent dg cooperad. 
\end{notation}

Operads and cooperads throughout this paper will be subject to the following set of assumptions.

\begin{assumption}\label{assumption 1}\leavevmode
\begin{enumerate}
\item Their underlying dg symmetric sequences are \textit{reduced}, meaning they are trivial in arity $0$ and $\kk$ in arity $1$. 
\item Either the ground field $\kk$ is of characteristic zero, or the dg operads and conilpotent dg cooperads are non-symmetric. 
\item For all $n \geq 0$, the dg $\mathbb{S}_n$-module of arity $n$ operations is a degree-wise finite dimensional and bounded complex. Hence it is of finite total dimension over $\kk$.
\end{enumerate}
\end{assumption}

Let us make some remarks on these assumptions.
\begin{itemize}
    \item It follows from (1) that any dg operad is augmented, and any dg cooperad is both conilpotent and coaugmented. 
    \item It follows from (3) that the linear dual of a dg operad is a dg cooperad, and vice versa.
    \item These assumptions are stable under the operadic bar-cobar adjunction.
    \item Under assumption (2), most examples of interest satisfy (3). If (2) fails, then (3) is unnatural: for example, any $\mathbb E_\infty$ operad in positive characteristic is infinite-dimensional in each arity $>1$. In that setting (cf.~\cite[Section 2]{premierpapier}), one would need to relax assumption (3) so that the dg $\mathbb{S}_n$-module of arity $n$ operations is degree-wise finite dimensional and only bounded-below, in order to include examples like the quasi-planar dg cooperads of the form $\mathrm{B}(\mathcal{P} \otimes \mathcal{E})$, where $\mathcal{E}$ is the Barratt--Eccles operad \cite{BergerFresse}.
\end{itemize}


\begin{remark}[Starting with an operad or a cooperad.] For the rest of this section, we are going to start with dg operad $\mathcal{P}$, and we will use many times the conilpotent dg cooperad $\mathcal{P}^*$ given by its linear dual. This is a matter of convention, \textbf{the same constructions can be done} starting with a a conilpotent dg cooperad $\C$ and its linear dual dg cooperad $\mathcal{C}^*$. \end{remark}

\subsection{The restriction functor and its left adjoint}
The goal of this subsection is to recall some of the constructions of \cite[Section 3]{absolutealgebras}. In particular, we explain the adjunction between dg $\mathcal{P}^*$-algebras and dg $\mathcal{P}$-algebras given by restricting the absolute structure to finite sums, as well as some of its main properties. 

\begin{proposition}\label{Prop: absolution restriction adjunction}
There is an adjunction 
\[
\begin{tikzcd}[column sep=7pc,row sep=3pc]
            \mathcal{P}^*\mbox{-}\mathsf{alg}^{\mathsf{comp}}\arrow[r,"\mathrm{Res}"{name=F}, shift left=1.1ex] 
           &\mathcal{P}\mbox{-}\mathsf{alg}~, \arrow[l, shift left=.75ex, "c\mathrm{Abs}"{name=U}]
            \arrow[phantom, from=F, to=U, , "\dashv" rotate=90]
\end{tikzcd}
\]
between dg $\mathcal{P}^*$-algebras and dg $\mathcal{P}$-algebras, where the right adjoint $\mathrm{Res}$ is given by restricting the structure to finite sums of operations.
\end{proposition}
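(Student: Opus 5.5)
The plan has three steps. First I construct $\mathrm{Res}$ from a natural map of monads, then I check that its image lands in $\mathcal{P}$-algebras, and finally I obtain $c\mathrm{Abs}$ by a coequalizer.

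\emph{Step 1: a map of monads.} Under Assumption~\ref{assumption 1}, each $\mathcal{P}(n)$ has finite total dimension. Hence, for every chain complex $V$, there is a natural isomorphism
\[
\mathcal{P}(n)\otimes V^{\otimes n}\cong [\mathcal{P}^*(n),V^{\otimes n}].
\]
Under assumption (2), the norm map $\mathcal{P}(n)\otimes_{\mathbb{S}_n}V^{\otimes n}\to (\mathcal{P}(n)\otimes V^{\otimes n})^{\mathbb{S}_n}$ is available. It is an isomorphism in characteristic zero, and in the non-symmetric case there is nothing to check. Summing over $n$ and composing with the inclusion of the direct sum into the product, we get a natural transformation
\[
\iota_V:\ \mathrm{S}(\mathcal{P})(V)=\bigoplus_n \mathcal{P}(n)\otimes_{\mathbb{S}_n}V^{\otimes n}\longrightarrow \prod_n [\mathcal{P}^*(n),V^{\otimes n}]^{\mathbb{S}_n}=\widehat{\mathrm{S}}^c(\mathcal{P}^*)(V).
\]
We must check that $\iota$ is a morphism of monads. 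Compatibility with the units is immediate, since both units pick out arity $1$. Compatibility with the multiplications means that the composition of $\mathcal{P}$ corresponds to the lax structure of $\widehat{\mathrm{S}}^c$ (\cite[Corollary 3.4]{linearcoalgebras}) applied to the decomposition of $\mathcal{P}^*$. This holds on finite sums because the decomposition map of $\mathcal{P}^*$ is, by definition, the linear dual of the composition of $\mathcal{P}$.

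\emph{Step 2: definition of $\mathrm{Res}$.} A morphism of monads induces a restriction functor on algebras, $\iota^*:\widehat{\mathrm{S}}^c(\mathcal{P}^*)\text{-alg}\to\mathrm{S}(\mathcal{P})\text{-alg}$. Concretely, this gives $\gamma_B\circ\iota_B$, which means evaluating the absolute structure only on finite sums. Set $\mathrm{Res}$ to be the restriction of $\iota^*$ to the full subcategory of complete $\mathcal{P}^*$-algebras. The functor $\iota^*$ is faithful and commutes with the forgetful functors to $\mathsf{Ch}(\kk)$.

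\emph{Step 3: the left adjoint $c\mathrm{Abs}$.} I first build a left adjoint $\mathrm{Abs}$ of $\iota^*$ on all $\mathcal{P}^*$-algebras by the standard coequalizer formula. For a $\mathcal{P}$-algebra $A$, let $\mathrm{Abs}(A)$ be the reflexive coequalizer in $\mathcal{P}^*$-algebras of
\[
\widehat{\mathrm{S}}^c(\mathcal{P}^*)\,\mathrm{S}(\mathcal{P})(A)\rightrightarrows \widehat{\mathrm{S}}^c(\mathcal{P}^*)(A).
\]
One arrow applies the free functor to $\gamma_A$. The other is the composite $\widehat{\mathrm{S}}^c(\mathcal{P}^*)\iota_A$ followed by the monad multiplication. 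The main obstacle is showing that this coequalizer exists. Categories of algebras over an arbitrary monad need not have all colimits, and $\widehat{\mathrm{S}}^c(\mathcal{P}^*)$ does not preserve filtered colimits or reflexive coequalizers. To get around this, I would use that the category of dg $\mathcal{P}^*$-algebras is locally presentable and that the forgetful functor is accessible; both facts are available from \cite{linearcoalgebras,absolutealgebras}. Then $\iota^*$ preserves limits, since they are computed underlying, and it is accessible, so the adjoint functor theorem produces $\mathrm{Abs}$. Alternatively, one can build the coequalizer directly: take the quotient of the free absolute algebra $\widehat{\mathrm{S}}^c(\mathcal{P}^*)(A)$ by the absolute ideal generated by the relations, which are closed under infinite sums by construction. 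Finally, define $c\mathrm{Abs}\coloneqq \widehat{(-)}\circ\mathrm{Abs}$. Because completion is a reflector onto the full subcategory of complete algebras, the chain of bijections
\[
\mathrm{Hom}_{\mathsf{comp}}(\widehat{\mathrm{Abs}(A)},B)\cong\mathrm{Hom}(\mathrm{Abs}(A),B)\cong\mathrm{Hom}_{\mathcal{P}\text{-}\mathsf{alg}}(A,\mathrm{Res}\,B)
\]
yields the adjunction $c\mathrm{Abs}\dashv\mathrm{Res}$, natural in $A$ and $B$.
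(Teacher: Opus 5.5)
Your proposal is correct and follows essentially the same route as the paper. The paper also builds the monad map $\iota_{\mathcal{P}}$ from the finite-dimensional duality map, the norm map and the inclusion of the sum into the product, and defines $\mathrm{Res}$ by pulling back the structure map; it then obtains a left adjoint on all dg $\mathcal{P}^*$-algebras from the adjoint functor theorem (an accessible, limit-preserving functor between presentable categories) and post-composes with the completion reflector, with your explicit coequalizer description serving only as an optional concrete model of that left adjoint.
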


\begin{proof}
There is a natural inclusion of monads 
\[
\iota_{\mathcal{P}}: \bigoplus_{n \geq 0} \mathcal{P}(n) \otimes_{\mathbb{S}_n} (-)^{\otimes n} \hookrightarrow \prod_{n \geq 0} \mathrm{Hom}_{\mathbb{S}_n}(\mathcal{P}^*(n),(-)^{\otimes n})~,
\]
given by the composition of the canonical inclusion $V^* \otimes W \hookrightarrow \mathrm{Hom}(V,W)$, the norm map from coinvariants to invariants and the inclusion of the direct sum into the direct product. Pulling the structure map of a dg $\mathcal{P}^*$-algebra along $\iota$ induces the restriction functor $\mathrm{Res}$ from dg $\mathcal{P}^*$-algebras to dg $\mathcal{P}$-algebras. It is an accessible functor which preserves all limits between two presentable categories, hence it admits a left adjoint $c\mathrm{Abs}$. By post-composing this adjoint with the completion functor from dg $\mathcal{P}^*$-algebras to complete dg $\mathcal{P}^*$-algebras, we obtain the functor $c\mathrm{Abs}$ above and the desired adjunction. 
\end{proof}

\begin{theorem}[{\cite[Theorem 3.16]{absolutealgebras}}]\label{thm: res fully faithful}
Let $\mathcal{P}$ be a dg operad satisfying Assumption \ref{assumption 1}. The restriction functor 
\[
\mathrm{Res}: \mathcal{P}^*\mbox{-}\mathsf{alg}^{\mathsf{comp}} \longrightarrow \mathcal{P}\mbox{-}\mathsf{alg}
\]
is fully faithful. Therefore dg $\C$-algebras are a reflective subcategory of dg $\mathcal{P}$-algebras.
\end{theorem}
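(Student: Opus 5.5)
To show $\mathrm{Res}$ is fully faithful, I would prove that for every complete dg $\mathcal{P}^*$-algebra $B$ and every morphism of dg $\mathcal{P}$-algebras $f:\mathrm{Res}(B)\to \mathrm{Res}(B')$, the underlying linear map $f$ automatically commutes with the full absolute structure maps $\gamma_B$, $\gamma_{B'}$. Faithfulness is immediate, since $\mathrm{Res}$ does not change underlying chain complexes. So the content is fullness: compatibility with finite sums of operations should force compatibility with infinite sums. Equivalently, I would show that the counit $c\mathrm{Abs}\,\mathrm{Res}(B)\to B$ is an isomorphism for complete $B$. I would argue directly on morphisms, though, because that avoids describing $c\mathrm{Abs}$ explicitly.

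\textbf{Step 1: reduce to the quotients.} Since $B'$ is complete, $B'\cong \lim_p B'/W^pB'$. It therefore suffices to check that $f$ intertwines $\gamma_B$ and $\gamma_{B'}$ after projecting to each quotient $B'/W^pB'$. This requires first checking that $f$ sends $W^pB$ into $W^pB'$. That is not obvious, because $W^p$ is defined using the infinite products $\widehat{\mathrm{S}}^c(\mathcal{P}^*/R_p\mathcal{P}^*)$.

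\textbf{Step 2: the key finiteness observation.} Under Assumption~\ref{assumption 1}, $\mathcal{P}$ is reduced, so $\mathcal{P}^*(n)$ lies in coradical degree at least $n-1$. Hence $R_p\mathcal{P}^*$ contains all arities $\le p+1$, and $\widehat{\mathrm{S}}^c(\mathcal{P}^*/R_p\mathcal{P}^*)$ involves only arities $>p+1$, modulo the correct indexing. As a consequence, in the quotient $B'/W^pB'$ any element $\gamma_{B'}(\prod_n x_n)$ depends only on the finitely many components $x_n$ with $n\le p+1$. Each $\mathcal{P}^*(n)$ is finite dimensional, so $[\mathcal{P}^*(n),B^{\otimes n}]^{\mathbb{S}_n}$ is identified via the norm map with the finite expressions $\mathcal{P}(n)\otimes_{\mathbb{S}_n}B^{\otimes n}$. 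This identification uses characteristic zero or the non-symmetric setting, which is why that hypothesis appears. So modulo $W^p$, the absolute structure is determined by the restricted structure. Using the ideal property of $W^p$ and the monad associativity, the decomposition $\gamma(\prod_n x_n)=\gamma(\text{finite part})+\gamma(\text{tail})$ is legitimate, with the tail landing in $W^p$.

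\textbf{Step 3: compatibility with the filtration.} To show $f(W^pB)\subseteq W^pB'$, I would express a general element of $W^pB$ by applying associativity of $\gamma_B$. Such an element is a tail $\gamma_B(\prod_{n>p+1}x_n)$. The aim is to rewrite it, up to elements of $W^{q}$ for arbitrarily large $q$, as finite $\mathcal{P}$-compositions involving at least $p$ iterations, and then apply $f$. This step is circular as stated, since it is the same infinite-sum problem again.

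\textbf{Step 4: breaking the circularity, the main obstacle.} I would therefore first work with the source $B$ free, $B=\widehat{\mathrm{S}}^c(\mathcal{P}^*)(V)$, where $W^pB$ is explicitly the product of arities $>p+1$. In that case one can check directly that a $\mathcal{P}$-algebra map out of the underlying $\mathcal{P}$-algebra sends it into $W^p$. For this I would use the epimorphism property of $\varphi_B$ (cf.\ \cite[Proposition~4.24]{linearcoalgebras}) together with the fact that elements of $W^pB'$ are images of $\mathcal{P}$-operations of arity $>p+1$ up to further filtration. I would then pass to a general complete $B$ by presenting it as a reflexive coequalizer of free ones, $\widehat{\mathrm{S}}^c(\mathcal{P}^*)^{2}(B)\rightrightarrows \widehat{\mathrm{S}}^c(\mathcal{P}^*)(B)\to B$, and checking that $f$ is compatible with it. Making this filtration compatibility rigorous is where I expect most of the work to lie. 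Once it is done, Steps 1 and 2 finish the proof. The reflectivity statement then follows formally from fully faithfulness of a right adjoint, with reflector $c\mathrm{Abs}$ from Proposition~\ref{Prop: absolution restriction adjunction}.
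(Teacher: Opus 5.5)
The paper gives no argument for this statement; it imports it from \cite[Theorem 3.16]{absolutealgebras}. So I can only judge your plan on its own terms, and the genuine gap is Step~4. Your claim there is that a morphism of dg $\mathcal{P}$-algebras out of $\mathrm{Res}\,\widehat{\mathrm{S}}^c(\mathcal{P}^*)(V)$ carries $W^p$ into $W^p$. This is not merely hard to make rigorous: it is false once $V$ is infinite-dimensional, and so is the fullness it is meant to establish.

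Here is a counterexample. Take $\mathcal{P}=\mathcal{C}om$ in characteristic zero and $V$ with basis $x_1,x_2,\dots$ in degree $0$. Then $B=\widehat{\mathrm{S}}^c(\mathcal{C}om^*)(V)\cong\prod_{d\geq1}\mathrm{Sym}^dV$ is the augmentation ideal of $\kk\llbracket x_1,x_2,\dots\rrbracket$. Let $B'=\kk$ with the trivial structure; both $B$ and $B'$ are complete.
\begin{itemize}
\item A morphism of $\mathcal{C}om^*$-algebras $B\to B'$ must vanish on the image of all operations of arity $\geq2$, which contains $\prod_{d\geq2}\mathrm{Sym}^dV$.
\item A morphism $\mathrm{Res}\,B\to\mathrm{Res}\,B'$ is any linear form vanishing on $B\cdot B$, the set of finite sums of products.
\end{itemize}
Consider $x=\sum_{n\geq1}x_n^{n+1}$. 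Up to nonzero scalars it is $\gamma_B$ applied to the family $(x_n^{\otimes(n+1)})_{n\geq1}$ placed in arities $n+1$, so it lies in $W^1B$. It does not lie in $B\cdot B$. Suppose $x=\sum_{i=1}^k a_ib_i$, and set $x_n=0$ for $n>N$, where $N>2k$. In $R=\kk\llbracket x_1,\dots,x_N\rrbracket$, a prime $\mathfrak{p}$ minimal over $(a_1,b_1,\dots,a_k,b_k)$ has height at most $2k<N$. Yet $\sum_{n\leq N}x_n^{n+1}\in\mathfrak{p}^2$ forces all its partial derivatives $(n+1)x_n^n$ into $\mathfrak{p}$. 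Hence $\mathfrak{p}$ is the maximal ideal, a contradiction.

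It follows that any linear form vanishing on $B\cdot B$ with value $1$ on $x$ is a morphism $\mathrm{Res}\,B\to\mathrm{Res}\,B'$ not of the form $\mathrm{Res}(g)$. Equivalently, the counit $c\mathrm{Abs}\,\mathrm{Res}(B)\to B$ is not an isomorphism. Your heuristic that elements of $W^pB$ are finite $\mathcal{P}$-operations ``up to further filtration'' amounts to automatic continuity of $f$, and that is exactly what fails here. Presenting a general $B$ as a coequalizer of free algebras cannot rescue it, because the free case is already the counterexample.

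Step~2 has a separate problem. Its conclusion needs every operation of arity $>p+1$ to land in $W^pB'$, which amounts to ``arity $n$ forces coradical weight at least $n-1$''. For a reduced cooperad the bound goes the other way: arity $n$ forces weight at most $n-1$. For example, the dual of an indecomposable arity-$n$ generator of $\mathcal{P}$ is primitive. So unless $\mathcal{P}$ is generated by binary operations, $B'/W^pB'$ still sees operations of unbounded arity and is not controlled by finitely many restricted operations.

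When $\mathcal{P}$ is binary and $W^pB$ coincides with the ideal of finite $\mathcal{P}$-composites of the appropriate length (for instance $B$ free on a finite-dimensional complex), your Steps~1--3 do go through. In that case the inclusion $f(W^pB)\subseteq W^pB'$ is automatic. This indicates the kind of finiteness hypothesis under which the statement can hold; in the generality stated, it cannot be proved.
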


\begin{remark}\label{Rmk: I-adic completion not an equivalence}
Let us consider the case of commutative algebras, where $\mathcal{P} = \mathcal{C}om$. It is not true that complete absolute dg commutative algebras are equivalent to $I$-adically complete dg commutative algebras. This is because the $I$-adic completion is not in general idempotent, unless one restricts to the noetherian case. By contrast, complete absolute dg commutative algebras can always be described as algebras over an idempotent monad on the category of dg commutative algebras. See \cite[Proposition 3.24]{absolutealgebras} and the subsequent discussion for more details.
\end{remark}

\subsection{The inclusion functor and its right adjoint}
We now compare dg $\mathrm{B}\mathcal{P}$-coalgebras with dg $\Omega\mathcal{P}^*$-coalgebras, which encode, respectively, the conilpotent and non-necessarily conilpotent versions of the same coalgebra structures. 

\begin{proposition}
Let $\mathcal{P}$ be a dg operad satisfying Assumption \ref{assumption 1}. There is an adjunction 
\[
\begin{tikzcd}[column sep=7pc,row sep=3pc]
            \Omega\mathcal{P}^*\mbox{-}\mathsf{coalg}\arrow[r,"\mathrm{Sub}"{name=F}, shift left=1.1ex] 
           &\mathrm{B}\mathcal{P}\mbox{-}\mathsf{coalg}~, \arrow[l, shift left=.75ex, "\mathrm{Inc}"{name=U}]
            \arrow[phantom, from=F, to=U, , "\dashv" rotate=90]
\end{tikzcd}
\]
between dg $\mathrm{B}\mathcal{P}$-coalgebras and dg $\Omega\mathcal{P}^*$-coalgebras, where the left adjoint $\mathrm{Inc}$ is given by viewing a dg $\mathrm{B}\mathcal{P}$-coalgebra as a dg $\Omega\mathcal{P}^*$-coalgebra.
\end{proposition}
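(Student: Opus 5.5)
We must construct the functor $\mathrm{Inc}$, show it is a left adjoint, and identify its right adjoint $\mathrm{Sub}$.

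\textbf{Step 1: the comparison map.} Under Assumption \ref{assumption 1} each $\mathcal{P}(n)$ is finite dimensional, so $\mathrm{B}\mathcal{P}$ and $\Omega\mathcal{P}^*$ are linearly dual arity-wise: $(\mathrm{B}\mathcal{P})^* \cong \Omega(\mathcal{P}^*)$ as dg operads. The composite of the canonical inclusion $V\otimes W \hookrightarrow [V^*,W]$ for finite-dimensional $V$, the norm map from coinvariants to invariants, and the inclusion of the sum into the product gives a natural map
\[
\mathrm{S}(\mathrm{B}\mathcal{P})(W)=\bigoplus_{n\ge 0}\mathrm{B}\mathcal{P}(n)\otimes_{\mathbb{S}_n}W^{\otimes n}\longrightarrow \prod_{n\ge 0}[\Omega\mathcal{P}^*(n),W^{\otimes n}]^{\mathbb{S}_n}=\widehat{\mathrm{S}}^c(\Omega\mathcal{P}^*)(W).
\]
This is the dual of the inclusion of monads $\iota_{\mathcal{P}}$ in the proof of Proposition \ref{Prop: absolution restriction adjunction}. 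We must check that it is compatible with the comonad structure on $\mathrm{S}(\mathrm{B}\mathcal{P})$ and the lax comonad structure $\varphi$ on $\widehat{\mathrm{S}}^c(\Omega\mathcal{P}^*)$. Both are induced by the cooperad decomposition of $\mathrm{B}\mathcal{P}$, and the operad composition of $\Omega\mathcal{P}^*$ is its transpose, so compatibility is a diagram chase.

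\textbf{Step 2: defining $\mathrm{Inc}$.} Post-composing the structure map $\Delta_W$ of a dg $\mathrm{B}\mathcal{P}$-coalgebra $W$ with the map of Step 1 yields a dg $\Omega\mathcal{P}^*$-coalgebra structure on $W$. The coassociativity diagram of the definition of dg $\mathcal{P}$-coalgebras commutes by Step 1. This defines a faithful functor $\mathrm{Inc}$ that is the identity on underlying chain complexes. An equivalent and cleaner description is that a $\mathrm{B}\mathcal{P}$-coalgebra structure induces an operad map $\Omega\mathcal{P}^*\cong(\mathrm{B}\mathcal{P})^*\to\mathrm{coEnd}_W$.

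\textbf{Step 3: existence of the right adjoint.} Both categories are presentable:
\begin{itemize}
\item dg $\mathrm{B}\mathcal{P}$-coalgebras are coalgebras over an accessible comonad on $\mathsf{Ch}(\kk)$;
\item dg $\Omega\mathcal{P}^*$-coalgebras are coalgebras over the accessible comonad $\mathrm{L}(\Omega\mathcal{P}^*)$ of Theorem \ref{Thm: existence of the cofree coalgebra}.
\end{itemize}
In both categories colimits are created by the forgetful functor to chain complexes. Since $\mathrm{Inc}$ commutes with the forgetful functors, it preserves all colimits. It is also accessible, since it preserves filtered colimits. By the adjoint functor theorem, $\mathrm{Inc}$ therefore has a right adjoint $\mathrm{Sub}$.

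\textbf{Step 4: explicit form of $\mathrm{Sub}$ (optional).} One expects $\mathrm{Sub}(C)$ to be the maximal conilpotent subcoalgebra: the sum of all sub-$\Omega\mathcal{P}^*$-coalgebras of $C$ whose structure map factors through $\mathrm{S}(\mathrm{B}\mathcal{P})$. One would show this sum is again such a subcoalgebra, and that the factorization is unique because the map of Step 1 is injective. This description is not needed for the adjunction itself.

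\textbf{Main obstacle.} The main point to verify is Step 1, specifically that the map is a morphism of lax comonads. This requires care with the norm map, signs, and the invariants versus coinvariants when $\mathcal{P}$ is symmetric in characteristic zero. In the non-symmetric case the issue disappears. In characteristic zero the norm map is an isomorphism, so I would first reduce to checking compatibility on $\mathrm{B}\mathcal{P}(n)\otimes W^{\otimes n}$ before taking coinvariants.
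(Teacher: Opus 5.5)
Your proposal is correct and follows essentially the same route as the paper. You build the natural inclusion $\mathrm{S}(\mathrm{B}\mathcal{P}) \hookrightarrow \widehat{\mathrm{S}}^c(\Omega\mathcal{P}^*)$ from $(\Omega\mathcal{P}^*)^* \cong \mathrm{B}\mathcal{P}$, push forward structure maps to define $\mathrm{Inc}$, and obtain $\mathrm{Sub}$ from the adjoint functor theorem, since $\mathrm{Inc}$ preserves colimits computed in $\mathsf{Ch}(\kk)$ between presentable categories. The only cosmetic difference is that the paper packages your Step 1 compatibility check as a lift to a comonad map $\mathrm{S}(\mathrm{B}\mathcal{P}) \hookrightarrow \mathrm{L}(\Omega\mathcal{P}^*)$ through the pullback square defining the cofree comonad, which is the same verification.
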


\begin{proof}
As in the proof of Proposition \ref{Prop: absolution restriction adjunction}, there is a natural inclusion
\[
\iota_{\mathrm{B}\mathcal{P}}: \bigoplus_{n \geq 0} \mathrm{B}\mathcal{P}(n) \otimes_{\mathbb{S}_n} (-)^{\otimes n} \hookrightarrow \prod_{n \geq 0} \mathrm{Hom}_{\mathbb{S}_n}(\Omega\mathcal{P}^*(n),(-)^{\otimes n})~,
\]
since $(\Omega \mathcal{P}^*)^*$ is isomorphic to $\mathrm{B}\mathcal{P}$. The latter fact follows from the dualizability hypothesis in Assumption \ref{assumption 1}. This natural transformation of functors induces an inclusion of comonads 
\[
\iota_{\mathrm{B}\mathcal{P}}': \bigoplus_{n \geq 0} \mathrm{B}\mathcal{P}(n) \otimes_{\mathbb{S}_n} (-)^{\otimes n} \hookrightarrow \mathrm{L}(\Omega\mathcal{P}^*)(-)~, 
\]
since it can be checked to be compatible with the pullback square that defines the cofree functor $\mathrm{L}(\Omega\mathcal{P}^*)(-)$ in Theorem \ref{Thm: existence of the cofree coalgebra}. Pushing forward the structural morphism of a dg $\mathrm{B}\mathcal{P}$-coalgebra along the inclusion $\iota_{\mathrm{B}\mathcal{P}}'$ induces the inclusion functor $\mathrm{Inc}$. It clearly preserves all colimits as these are computed in the underlying category of chain complexes, and both categories are presentable, thus it admits a right adjoint $\mathrm{Sub}$. 
\end{proof}

\begin{remark}
The functor $\mathrm{Sub}$ is given by taking the maximal dg $\mathrm{B}\mathcal{P}$-subcoalgebra of a given dg $\Omega\mathcal{P}^*$-coalgebra. It generalizes taking the maximal conilpotent subcoalgebra of a non-necessarily conilpotent coalgebra. 
\end{remark}

\begin{proposition}
Let $\mathcal{P}$ be a dg operad satisfying Assumption \ref{assumption 1}. The inclusion functor 
\[
\mathrm{Inc}: \mathrm{B}\mathcal{P}\mbox{-}\mathsf{coalg} \longrightarrow \Omega\mathcal{P}^*\mbox{-}\mathsf{coalg}
\]
is fully faithful. Therefore dg $\mathrm{B}\mathcal{P}$-coalgebras are a coreflective subcategory of dg $\Omega\mathcal{P}^*$-coalgebras.
\end{proposition}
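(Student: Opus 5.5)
Since $\mathrm{Inc}$ does not change the underlying chain complex and morphisms on both sides are chain maps commuting with the structure maps, $\mathrm{Inc}$ is faithful. The plan is to prove fullness: a chain map $f\colon W\to W'$ between dg $\mathrm{B}\mathcal{P}$-coalgebras that is a morphism of dg $\Omega\mathcal{P}^*$-coalgebras after applying $\mathrm{Inc}$ is already a morphism of dg $\mathrm{B}\mathcal{P}$-coalgebras. The coreflectivity statement then follows formally, since a left adjoint is fully faithful exactly when its unit $W\to \mathrm{Sub}\,\mathrm{Inc}(W)$ is an isomorphism.

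First I reduce fullness to injectivity of the comparison map. As in the proof of the preceding proposition, the $\Omega\mathcal{P}^*$-coalgebra structure on $\mathrm{Inc}(W)$ is the composite
\[
W \xrightarrow{\Delta_W} \bigoplus_{n\geq 0}\mathrm{B}\mathcal{P}(n)\otimes_{\mathbb{S}_n} W^{\otimes n} \xrightarrow{\iota_{\mathrm{B}\mathcal{P}}(W)} \prod_{n \geq 0}\mathrm{Hom}_{\mathbb{S}_n}(\Omega\mathcal{P}^*(n),W^{\otimes n}).
\]
The transformation $\iota_{\mathrm{B}\mathcal{P}}$ is natural in the underlying complex. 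So if $f$ commutes with the $\Omega\mathcal{P}^*$-structures, then
\[
\iota_{\mathrm{B}\mathcal{P}}(W')\circ \mathrm{S}(\mathrm{B}\mathcal{P})(f)\circ \Delta_W = \widehat{\mathrm{S}}^c(\Omega\mathcal{P}^*)(f)\circ\iota_{\mathrm{B}\mathcal{P}}(W)\circ\Delta_W = \iota_{\mathrm{B}\mathcal{P}}(W')\circ \Delta_{W'}\circ f .
\]
Hence it suffices to show that $\iota_{\mathrm{B}\mathcal{P}}(V)$ is injective for every chain complex $V$. One could alternatively use $\iota'_{\mathrm{B}\mathcal{P}}$, which lands in $\mathrm{L}(\Omega\mathcal{P}^*)$; this is the same question, because $\mathrm{L}(\Omega\mathcal{P}^*)\to\widehat{\mathrm{S}}^c(\Omega\mathcal{P}^*)$ is the monomorphism $p_1$.

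The main step is this injectivity, and I would check it one arity at a time. The map $\iota_{\mathrm{B}\mathcal{P}}(V)$ is the composite of three maps, and I would show each is injective.
\begin{itemize}
\item[(a)] The map $\mathrm{B}\mathcal{P}(n)\otimes V^{\otimes n}\to \mathrm{Hom}(\Omega\mathcal{P}^*(n),V^{\otimes n})$ is injective, and in fact an isomorphism. This holds because $\mathrm{B}\mathcal{P}(n)\cong(\Omega\mathcal{P}^*(n))^*$ is finite dimensional by Assumption~\ref{assumption 1}, which is stable under bar–cobar.
\item[(b)] The norm map from $\mathbb{S}_n$-coinvariants to $\mathbb{S}_n$-invariants is an isomorphism. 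In characteristic zero it is inverted by $\tfrac{1}{n!}$ times the symmetrization. In the non-symmetric case there is no group action, so there is nothing to check.
\item[(c)] The inclusion of the direct sum over $n$ into the direct product is injective.
\end{itemize}

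The only delicate point is (a): we need the finite total dimension of $\mathrm{B}\mathcal{P}(n)$ so that tensoring with it commutes with $\mathrm{Hom}$. Assumption~(2) is what disposes of (b), which would otherwise fail in positive characteristic. With these three facts in hand, fullness follows from the reduction above, and the reflectivity argument completes the proof.
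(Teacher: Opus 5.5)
Your proposal is correct and follows the paper's argument: $\mathrm{Inc}$ is faithful because it does not change the underlying complex, and it is full because the comparison map $\iota'_{\mathrm{B}\mathcal{P}}$ (equivalently $\iota_{\mathrm{B}\mathcal{P}}$, since $p_1$ is monic) is a monomorphism, so any $\Omega\mathcal{P}^*$-coalgebra map between $\mathrm{B}\mathcal{P}$-coalgebras already commutes with the $\mathrm{B}\mathcal{P}$-structure maps. The paper simply asserts this injectivity; you additionally spell out the naturality reduction and verify injectivity arity by arity, via the duality isomorphism, the norm isomorphism (which is where Assumption~\ref{assumption 1}(2) is used) and the inclusion of the direct sum into the product.
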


\begin{proof}
It is obviously faithful, as it does not change the underlying chain complex. Since the inclusion $\iota_{\mathrm{B}\mathcal{P}}'$ is a monomorphism, it is also full, as any morphism of dg $\Omega\mathcal{P}^*$-coalgebras between two dg $\mathrm{B}\mathcal{P}$-coalgebras is also a morphism of dg $\mathrm{B}\mathcal{P}$-coalgebras. 
\end{proof}

\subsection{The algebraic inclusion-restriction square}
When we intertwine the restriction and the inclusion adjunctions with the two versions of Koszul duality induced by the classical bar-cobar and the complete bar-cobar adjunctions, we obtain a commuting square of adjoint functors. 

\begin{proposition}\label{Prop: inclusion-restriction square}
Let $\mathcal{P}$ be a dg operad satisfying Assumption \ref{assumption 1}. The square of adjunctions 
\[
\begin{tikzcd}[column sep=5pc,row sep=5pc]
\mathcal{P}\mbox{-}\mathsf{alg}\arrow[r,"\mathrm{B}_\pi"{name=B},shift left=1.1ex] \arrow[d,"c\mathrm{Abs}"{name=SD},shift left=1.1ex ]
&\mathrm{B}\mathcal{P}\mbox{-}\mathsf{coalg}, \arrow[d,"\mathrm{Inc}"{name=LDC},shift left=1.1ex ] \arrow[l,"\Omega_\pi"{name=C},,shift left=1.1ex]  \\
\mathcal{P}^*\mbox{-}\mathsf{alg}^{\mathsf{comp}} \arrow[r,"\widehat{\mathrm{B}}_\iota"{name=CC},shift left=1.1ex]  \arrow[u,"\mathrm{Res}"{name=LD},shift left=1.1ex ]
&\Omega\mathcal{P}^*\mbox{-}\mathsf{coalg} ~, \arrow[l,"\widehat{\Omega}_\iota"{name=CB},shift left=1.1ex] \arrow[u,"\mathrm{Sub}"{name=TD},shift left=1.1ex] \arrow[phantom, from=SD, to=LD, , "\dashv" rotate=180] \arrow[phantom, from=C, to=B, , "\dashv" rotate=90]\arrow[phantom, from=TD, to=LDC, , "\dashv" rotate=180] \arrow[phantom, from=CC, to=CB, , "\dashv" rotate=90]
\end{tikzcd}
\] 
commutes in the following sense: the compositions of the left adjoints from the top right to the bottom left of the square are naturally isomorphic.
\end{proposition}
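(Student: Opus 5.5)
The claim is that $\widehat{\Omega}_\iota \circ \mathrm{Inc} \cong c\mathrm{Abs} \circ \Omega_\pi$ as functors $\mathrm{B}\mathcal{P}\mbox{-}\mathsf{coalg} \to \mathcal{P}^*\mbox{-}\mathsf{alg}^{\mathsf{comp}}$. Since all four functors are left adjoints, I will prove the equivalent statement for the right adjoints: there is a natural isomorphism
\[
\mathrm{Sub} \circ \widehat{\mathrm{B}}_\iota \cong \mathrm{B}_\pi \circ \mathrm{Res}
\]
of functors $\mathcal{P}^*\mbox{-}\mathsf{alg}^{\mathsf{comp}} \to \mathrm{B}\mathcal{P}\mbox{-}\mathsf{coalg}$. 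The left adjoints' isomorphism then follows by uniqueness of adjoints. Equivalently, for every dg $\mathrm{B}\mathcal{P}$-coalgebra $W$ and every complete dg $\mathcal{P}^*$-algebra $B$, I will exhibit a bijection
\[
\mathrm{Hom}_{\mathcal{P}^*\mbox{-}\mathsf{alg}}(\widehat{\Omega}_\iota \mathrm{Inc}\, W, B) \cong \mathrm{Hom}_{\mathrm{B}\mathcal{P}\mbox{-}\mathsf{coalg}}(W, \mathrm{B}_\pi \mathrm{Res}\, B),
\]
natural in both variables.

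The plan is to identify both sides with a set of twisting morphisms. By the complete bar-cobar adjunction and the fact that $\mathrm{Inc}$ does not change the underlying chain complex, the left side is the set of degree $-1$ maps $\tau: W \to B$ satisfying a Maurer--Cartan equation
\[
\partial \tau + \gamma_B \circ \widehat{\mathrm{S}}^c(\iota)(\tau) \circ \Delta_{\mathrm{Inc} W} = 0.
\]
Here the coalgebra structure of $\mathrm{Inc}\, W$ lands in $\prod_n [\Omega\mathcal{P}^*(n), W^{\otimes n}]^{\mathbb{S}_n}$. Only the generators $s^{-1}\mathcal{P}^*$ of $\Omega\mathcal{P}^*$ are hit by $\iota$, so this term involves only the $\mathcal{P}^*$-part of the decomposition. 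Similarly, by the classical bar-cobar adjunction, the right side is the set of twisting morphisms $W \to \mathrm{Res}\, B$ relative to $\pi: \mathrm{B}\mathcal{P} \to \mathcal{P}$, with Maurer--Cartan term $\gamma_{\mathrm{Res} B} \circ \mathrm{S}(\pi)(\tau) \circ \Delta_W$.

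The key step is to check that these two Maurer--Cartan equations coincide for the same underlying map $\tau$. The decomposition $\Delta_W(w)$ is a finite sum in $\bigoplus_n \mathrm{B}\mathcal{P}(n) \otimes_{\mathbb{S}_n} W^{\otimes n}$. Projecting along $\pi$ keeps only the weight-one part $s\mathcal{P}$. Under the inclusion $\iota'_{\mathrm{B}\mathcal{P}}$ and the identification $(\Omega\mathcal{P}^*)^* \cong \mathrm{B}\mathcal{P}$, this matches exactly the element obtained by applying $\iota$ to the $\Omega\mathcal{P}^*$-decomposition. Moreover, the absolute structure map $\gamma_B$ applied to an element in the image of the monad inclusion $\iota_{\mathcal{P}}$ (a finite sum) is by definition $\gamma_{\mathrm{Res} B}$. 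So both equations agree. The bijection is then the identity on $\tau$, and naturality is clear since both sides are functorially defined from $\tau$.

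The main obstacle is bookkeeping in this compatibility check. One must verify that the norm map and the canonical inclusion $V^*\otimes W \hookrightarrow \mathrm{Hom}(V,W)$ intertwine the two twisting morphisms $\pi$ and $\iota$ under linear duality, with consistent suspension signs. This is where Assumption \ref{assumption 1} enters: finite dimensionality for the duality, and characteristic zero or the non-symmetric case so that invariants and coinvariants agree. A further subtlety is that $\widehat{\Omega}_\iota$ lands in complete algebras and $c\mathrm{Abs}$ includes a completion. This is harmless: both functors corepresent maps into complete $B$, and $\mathrm{Res}$ is applied to complete algebras, so the universal property of completion absorbs the difference.
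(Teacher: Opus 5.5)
Your proof is correct. Its core verification is the same as the paper's: restricting the $\Omega\mathcal{P}^*$-decomposition of $\mathrm{Inc}\,W$ along $\iota$ gives $\iota_{\mathcal{P}}(W)\circ(\pi\circ\mathrm{id})\circ\Delta_W$, and $\gamma_B$ on finite sums is $\gamma_{\mathrm{Res}B}$. What differs is the packaging.

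The paper works on objects. It uses $c\mathrm{Abs}\circ\mathrm{S}(\mathcal{P})\cong\widehat{\mathrm{S}}^c(\mathcal{P}^*)$, which holds because $\mathrm{Res}$ commutes with the forgetful functors so the free functors agree, to identify $c\mathrm{Abs}\,\Omega_\pi(D)$ and $\widehat{\Omega}_\iota\mathrm{Inc}(D)$ as graded algebras. It then checks that the two differentials agree on the generators $D$, which is exactly the composite you isolate.

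You instead show that both functors corepresent the same set of twisting morphisms on complete $\mathcal{P}^*$-algebras, and conclude by Yoneda. Your route never needs to know what $c\mathrm{Abs}$ does to the quasi-free but non-free dg algebra $\Omega_\pi D$. The paper's identification of underlying graded algebras implicitly requires this, since $c\mathrm{Abs}$ is only defined abstractly as an adjoint followed by completion; your hom-set argument is essentially what justifies that step. The paper's route is more hands-on: it exhibits the isomorphism directly as the identity of $\widehat{\mathrm{S}}^c(\mathcal{P}^*)(D)$ with matching differentials.

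One small correction. With the paper's conventions, $\Omega_\pi W=\mathcal{P}\circ W$ and $\widehat{\Omega}_\iota C=\widehat{\mathrm{S}}^c(\mathcal{P}^*)(C)$ carry no shift; the degree $-1$ lives in $\pi$ and $\iota$. So the maps $\tau$ you parametrize by have degree $0$, not $-1$. This is harmless, since the same $\tau$ appears on both sides.
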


\begin{proof}
Let us check that the diagram commutes. First, there is a natural isomorphism 
\[
\mathrm{S}(\mathcal{P})(V) \cong \widehat{\mathrm{S}}^c(\mathcal{P}^*)(V),
\]
for any dg module $V$, as the restriction functor commutes with the forgetful functors of dg $\mathcal{P}$-algebras and dg $\mathcal{P}^*$-algebras to dg modules. This gives a natural isomorphism of graded $\mathcal{P}^*$-algebras
\[
c\mathrm{Abs} \circ \Omega_\pi(D) \cong \widehat{\Omega}_\iota \circ \mathrm{Inc}(D)~, 
\]
for any dg $\mathrm{B}\mathcal{P}$-coalgebra $D$. It can be checked that this isomorphism commutes with the differentials, as the restriction of both differentials to the generators $D$ is given by 
\[
\small{
\begin{tikzcd}[column sep= 2.5 pc,row sep=2pc]
\displaystyle D \arrow[r,"\Delta_D"] 
&\displaystyle \bigoplus_{n \geq 0} \mathrm{B}\mathcal{P}(n) \otimes_{\mathbb{S}_n} D^{\otimes n} \arrow[r, "\pi~ \circ~ \mathrm{id}"]
&\displaystyle \bigoplus_{n \geq 0} \mathcal{P}(n) \otimes_{\mathbb{S}_n} D^{\otimes n} \arrow[r,"\iota_{\mathcal{P}}(D)"]
&\displaystyle \prod_{n \geq 0} \mathrm{Hom}_{\mathbb{S}_n}(\mathcal{P}^*(n),D^{\otimes n})~, 
\end{tikzcd}}
\]
where $\Delta_D$ is the structural morphism of $D$ and $\pi: \mathrm{B}\mathcal{P} \longrightarrow \mathcal{P}$ the canonical twisting morphism. 
\end{proof}

\subsection{Homotopical inclusion-restriction squares.} The inclusion-restriction square is compatible with the model structures on these categories in the following sense. 

\begin{lemma}\label{lemma: restriction and inclusion are Quillen functors}Let $\mathcal{P}$ be a dg operad satisfying Assumption \ref{assumption 1}.
\begin{enumerate}
\item The inclusion adjunction 
\[
\begin{tikzcd}[column sep=5pc,row sep=3pc]
            \mathrm{B}\mathcal{P}\mbox{-}\mathsf{coalg} \arrow[r,"\mathrm{Inc}"{name=F},shift left=1.1ex ] &   \Omega\mathcal{P}^*\mbox{-}\mathsf{coalg} \arrow[l,"\mathrm{Sub}"{name=U},shift left=1.1ex ]
            \arrow[phantom, from=F, to=U, , "\dashv" rotate=-90]
\end{tikzcd}
\]
is a Quillen adjunction, where the model structure on dg $\mathrm{B}\mathcal{P}$-coalgebras is that of Proposition \ref{Prop: left Bousfield localization of conilpotent coalgebras}, where weak equivalences are given by quasi-isomorphisms.

\item The restriction adjunction 
\[
\begin{tikzcd}[column sep=5pc,row sep=3pc]
            \mathcal{P}\mbox{-}\mathsf{alg} \arrow[r,"c\mathrm{Abs}"{name=F},shift left=1.1ex ] &\mathcal{P}^*\mbox{-}\mathsf{alg}^{\mathsf{comp}} \arrow[l,"\mathrm{Res}"{name=U},shift left=1.1ex ]
            \arrow[phantom, from=F, to=U, , "\dashv" rotate=-90]
\end{tikzcd}
\]
is a Quillen adjunction, where the model structure on dg $\mathcal{P}^*$-algebras is that of Proposition \ref{Prop: right Bousfield localization of absolute algebras}, where weak equivalences are given by quasi-isomorphisms.
\end{enumerate}

\end{lemma}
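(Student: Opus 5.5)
For each adjunction I will check the Quillen condition on whichever side is easiest to test. In both cases the model structures are transferred from chain complexes in a way that makes one class of maps underlying.

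\textbf{Part (1).} Here the left adjoint $\mathrm{Inc}$ is the one to test. By Proposition \ref{Prop: left Bousfield localization of conilpotent coalgebras}, the model structure on dg $\mathrm{B}\mathcal{P}$-coalgebras has the degree-wise injections as cofibrations and the quasi-isomorphisms as weak equivalences. Since $\Omega\mathcal{P}^*$ is cofibrant, it is coadmissible. Hence dg $\Omega\mathcal{P}^*$-coalgebras carry the structure left-transferred from the injective model structure on chain complexes, which has the same two classes.

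The functor $\mathrm{Inc}$ does not change the underlying chain complex. It therefore sends degree-wise injections to degree-wise injections and quasi-isomorphisms to quasi-isomorphisms. So it preserves cofibrations and acyclic cofibrations, and is left Quillen. Taking right adjoints, $\mathrm{Sub}$ is right Quillen.

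\textbf{Part (2).} Here the right adjoint $\mathrm{Res}$ is the one to test. Dg $\mathcal{P}$-algebras carry the right-transferred projective structure, which is available since $\mathcal{P}$ is admissible under Assumption \ref{assumption 1}. Its fibrations are degree-wise surjections and its weak equivalences are quasi-isomorphisms. By Proposition \ref{Prop: right Bousfield localization of absolute algebras}, complete dg $\mathcal{P}^*$-algebras have exactly the same fibrations and weak equivalences.

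The functor $\mathrm{Res}$ only pulls back the structure map along $\iota_{\mathcal{P}}$, so it commutes with the forgetful functors to chain complexes. It therefore preserves degree-wise surjections and quasi-isomorphisms, hence fibrations and acyclic fibrations. So $\mathrm{Res}$ is right Quillen and $c\mathrm{Abs}$ is left Quillen.

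The only point I expect to need care is confirming the input model structures, not the Quillen check itself. Specifically:
\begin{itemize}
\item that $\Omega\mathcal{P}^*$-coalgebras really carry the structure with cofibrations the injections and weak equivalences the quasi-isomorphisms;
\item that the Bousfield-localized structures apply under Assumption \ref{assumption 1}, with the quasi-planarity requirement in positive characteristic automatically met in the non-symmetric case.
\end{itemize}
Both follow from the results recalled earlier. Once they are in place, each part is the one-line observation that the relevant functor is the identity on underlying complexes.
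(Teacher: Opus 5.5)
Your proposal is correct and is the same argument as the paper's. Both reduce the check to the fact that $\mathrm{Inc}$ and $\mathrm{Res}$ do not change underlying chain complexes, while the relevant classes are defined on underlying complexes on both sides: injections and quasi-isomorphisms for $\mathrm{Inc}$, surjections and quasi-isomorphisms for $\mathrm{Res}$. Your additional remarks confirming (co)admissibility and the Bousfield-localized model structures under Assumption \ref{assumption 1} fill in points the paper leaves implicit.
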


\begin{proof}
It is clear that $\mathrm{Inc}$ preserves cofibrations and weak equivalences since they are given by monomorphisms and quasi-isomorphisms on both sides. Similarly, $\mathrm{Res}$ preserves fibrations and weak equivalences since they are given by epimorphisms and quasi-isomorphisms on both sides.
\end{proof}

\begin{theorem}[The inclusion-restriction rectangle]\label{thm: full rectangle infinity categorical}
Let $\mathcal{P}$ be a dg operad satisfying Assumption \ref{assumption 1}. There is a rectangle of adjunctions of $\infty$-categories
\[
\begin{tikzcd}[column sep=3pc,row sep=5pc]
\mathcal{P}\mbox{-}\mathsf{alg}~[\mathsf{Q.iso}^{-1}]\arrow[r,"\mathrm{B}_\pi"{name=B},shift left=1.1ex] \arrow[d,"\mathbb{L}c\mathrm{Abs}"{name=SD},shift left=1.1ex ]
&\mathrm{B}\mathcal{P}\mbox{-}\mathsf{coalg}~[\mathsf{W.eq}^{-1}] \arrow[l,"\Omega_\pi"{name=C},shift left=1.1ex] \arrow[r,"\mathrm{Id}"{name=DDD},shift left=1.1ex] 
&\mathrm{B}\mathcal{P}\mbox{-}\mathsf{coalg}~[\mathsf{Q.iso}^{-1}], \arrow[l,"\mathbb{R}\mathrm{Id}"{name=FFF},shift left=1.1ex] \arrow[d,"\mathrm{Inc}"{name=LDC},shift left=1.1ex ] \\
\mathcal{P}^*\mbox{-}\mathsf{alg}^{\mathsf{comp}}~[\mathsf{Q.iso}^{-1}] \arrow[r,"\mathbb{L}\mathrm{Id}"{name=GGG},shift left=1.1ex]  \arrow[u,"\mathrm{Res}"{name=LD},shift left=1.1ex ]
&\mathcal{P}^*\mbox{-}\mathsf{alg}^{\mathsf{comp}}~[\mathsf{W.eq}^{-1}] \arrow[r,"\widehat{\mathrm{B}}_\iota"{name=CC},shift left=1.1ex]  \arrow[l,"\mathrm{Id}"{name=HHH},shift left=1.1ex ]
&\Omega\mathcal{P}^*\mbox{-}\mathsf{coalg}~[\mathsf{Q.iso}^{-1}] ~,\arrow[l,"\widehat{\Omega}_\iota"{name=CB},shift left=1.1ex] \arrow[u,"\mathbb{R}\mathsf{Sub}"{name=TD},shift left=1.1ex] \arrow[phantom, from=SD, to=LD, , "\dashv" rotate=180] \arrow[phantom, from=C, to=B, , "\simeq" rotate=0]\arrow[phantom, from=TD, to=LDC, , "\dashv" rotate=180] \arrow[phantom, from=CC, to=CB, , "\simeq" rotate=0]\arrow[phantom, from=DDD, to=FFF, , "\dashv" rotate=-90] \arrow[phantom, from=GGG, to=HHH, , "\dashv" rotate=-90]
\end{tikzcd}
\] 
where the bar-cobar adjunction and the complete bar-cobar adjunction are adjoint equivalences of $\infty$-categories. This rectangle is commutative, meaning that there is a natural weak equivalence between the left adjoint functors from the top left to the bottom right. 
\end{theorem}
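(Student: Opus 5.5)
The plan is to derive the rectangle from the point-set inclusion-restriction square of Proposition \ref{Prop: inclusion-restriction square} by passing to derived functors, checking three things in turn: that every adjunction in the rectangle is a Quillen adjunction, that the two horizontal bar-cobar pieces are equivalences, and that the two composites of left adjoints agree.

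\emph{Step 1: the six adjunctions are Quillen adjunctions.} The two identity adjunctions are Quillen by Proposition \ref{Prop: left Bousfield localization of conilpotent coalgebras} (a left Bousfield localization) and Proposition \ref{Prop: right Bousfield localization of absolute algebras} (a right Bousfield localization). The vertical adjunctions are Quillen by Lemma \ref{lemma: restriction and inclusion are Quillen functors}. The bar-cobar adjunction relative to $\pi$ and the complete bar-cobar adjunction relative to $\iota$ are Quillen equivalences by Theorem \ref{thm: transferred model structure on conilpotent coalgebras} and its complete counterpart, since $\pi$ and $\iota$ are Koszul. Hence each induces an adjunction of underlying $\infty$-categories, and the two bar-cobar adjunctions become adjoint equivalences.

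\emph{Step 2: no further derivation is needed where the diagram omits $\mathbb{L}$ or $\mathbb{R}$.}
\begin{itemize}
\item $\mathrm{B}_\pi$ sends quasi-isomorphisms to weak equivalences, because $\Omega_\pi \mathrm{B}_\pi A \to A$ is a quasi-isomorphism.
\item $\Omega_\pi$ preserves weak equivalences by definition, and every object of $\mathrm{B}\mathcal{P}\mbox{-}\mathsf{coalg}$ is cofibrant (cofibrations are injections).
\item Dually, $\widehat{\Omega}_\iota$ and $\widehat{\mathrm{B}}_\iota$ preserve the relevant weak equivalences, and all complete $\mathcal{P}^*$-algebras are fibrant.
\item $\mathrm{Inc}$ preserves quasi-isomorphisms, and $\mathrm{Res}$ preserves quasi-isomorphisms.
\end{itemize}

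\emph{Step 3: compare the two composites of left adjoints.} These are $\mathrm{Inc}\circ\mathrm{Id}\circ\mathrm{B}_\pi$ and $\widehat{\mathrm{B}}_\iota\circ\mathbb{L}\mathrm{Id}\circ\mathbb{L}c\mathrm{Abs}$. It is easier to pass to right adjoints, which are $\Omega_\pi\circ\mathbb{R}\mathrm{Id}\circ\mathbb{R}\mathrm{Sub}$ and $\mathrm{Res}\circ\mathrm{Id}\circ\widehat{\Omega}_\iota$. Since the horizontal functors are equivalences, it is equivalent to compare left adjoints built from the other direction, namely the composites $\mathrm{B}\mathcal{P}\mbox{-}\mathsf{coalg}\to\mathcal{P}^*\mbox{-}\mathsf{alg}^{\mathsf{comp}}$. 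Using the inverse equivalences, one of these is $\mathbb{L}c\mathrm{Abs}\circ\Omega_\pi$ and the other is $\mathbb{L}\mathrm{Id}\circ\widehat{\Omega}_\iota\circ\mathrm{Inc}$.

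Proposition \ref{Prop: inclusion-restriction square} gives a point-set natural isomorphism $c\mathrm{Abs}\circ\Omega_\pi\cong\widehat{\Omega}_\iota\circ\mathrm{Inc}$. The point is that both sides compute the derived functors:
\begin{itemize}
\item $\Omega_\pi$ is left Quillen, so it lands in cofibrant $\mathcal{P}$-algebras because every coalgebra is cofibrant. Hence $c\mathrm{Abs}\circ\Omega_\pi$ models $\mathbb{L}c\mathrm{Abs}\circ\Omega_\pi$.
\item $\widehat{\Omega}_\iota$ is left Quillen for the canonical model structure, so $\widehat{\Omega}_\iota(\mathrm{Inc}\,D)$ is cofibrant there.
\item Cofibrant objects of the canonical structure are cofibrant in the right Bousfield localization, since the identity from the canonical structure to the quasi-isomorphism structure is a right Quillen functor whose left adjoint preserves cofibrant objects.
\end{itemize}
Thus $\mathbb{L}\mathrm{Id}$ is computed by the identity on these objects, and the isomorphism of Proposition \ref{Prop: inclusion-restriction square} yields the required natural equivalence.

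\emph{Step 4: transfer back.} Passing to mates across the adjoint equivalences turns this equivalence into the stated natural weak equivalence of the left adjoints from the top left to the bottom right. The main obstacle is this bookkeeping: making sure the natural equivalence obtained after inverting the equivalences is exactly the one required. I would handle it by working with the induced adjunctions of underlying $\infty$-categories, via the functoriality of Quillen adjunctions to $\infty$-adjunctions, and using uniqueness of adjoints.
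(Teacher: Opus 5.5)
Your plan has the same core ingredients as the paper's proof: the point-set isomorphism $c\mathrm{Abs}\circ\Omega_\pi\cong\widehat{\Omega}_\iota\circ\mathrm{Inc}$ of Proposition \ref{Prop: inclusion-restriction square}, and the cofibrancy of $\Omega_\pi D$. The paper evaluates at $D=\mathrm{B}_\pi A$ and uses $\Omega_\pi\mathrm{B}_\pi A$ as a cofibrant resolution, giving $\mathbb{L}c\mathrm{Abs}(A)\simeq\widehat{\Omega}_\iota\,\mathrm{Inc}\,\mathrm{B}_\pi(A)$. It then concludes with the natural quasi-isomorphism between $\widehat{\mathrm{B}}_\iota\widehat{\Omega}_\iota\,\mathrm{Inc}\,\mathrm{B}_\pi(A)$ and $\mathrm{Inc}\,\mathrm{B}_\pi(A)$. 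Conjugating by the two bar-cobar equivalences instead is a legitimate alternative.

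The gap is in the step identifying $\mathbb{L}\mathrm{Id}$ with the identity. You claim that cofibrant objects of the canonical structure on $\mathcal{P}^*\mbox{-}\mathsf{alg}^{\mathsf{comp}}$ are cofibrant in the quasi-isomorphism structure, because the left adjoint of the right Quillen identity preserves cofibrant objects. That left adjoint is the identity \emph{from} the quasi-isomorphism structure \emph{to} the canonical one, so it only gives the reverse implication. In a right Bousfield localization the fibrations are unchanged and the acyclic fibrations grow, so the cofibrations shrink. Canonical cofibrancy of $\widehat{\Omega}_\iota(\mathrm{Inc}\,D)$ therefore says nothing about cofibrancy for quasi-isomorphisms.

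There is a related typing slip. $\widehat{\Omega}_\iota$ lands in $\mathcal{P}^*\mbox{-}\mathsf{alg}^{\mathsf{comp}}~[\mathsf{W.eq}^{-1}]$, which is the target of $\mathbb{L}\mathrm{Id}$, not its source, so $\mathbb{L}\mathrm{Id}\circ\widehat{\Omega}_\iota\circ\mathrm{Inc}$ does not make sense. Only the bar-cobar adjunctions can be inverted, not the Bousfield localizations. So the correct reduced statement is $\mathbb{L}\mathrm{Id}\circ\mathbb{L}c\mathrm{Abs}\circ\Omega_\pi\simeq\widehat{\Omega}_\iota\circ\mathrm{Inc}\circ\mathrm{Id}$, as functors $\mathrm{B}\mathcal{P}\mbox{-}\mathsf{coalg}~[\mathsf{W.eq}^{-1}]\to\mathcal{P}^*\mbox{-}\mathsf{alg}^{\mathsf{comp}}~[\mathsf{W.eq}^{-1}]$. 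If you compare only after the localization $\mathrm{Id}$ to $[\mathsf{Q.iso}^{-1}]$, you lose information, and recovering the theorem needs the same cofibrancy fact.

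The fact you actually need is that $c\mathrm{Abs}(\Omega_\pi D)\cong\widehat{\Omega}_\iota(\mathrm{Inc}\,D)$ is cofibrant in the quasi-isomorphism structure. The correct reason is already among your ingredients. $\Omega_\pi D$ is cofibrant in $\mathcal{P}\mbox{-}\mathsf{alg}$, and $c\mathrm{Abs}$ is left Quillen into the quasi-isomorphism structure by Lemma \ref{lemma: restriction and inclusion are Quillen functors}, so it preserves cofibrant objects. The paper uses this implicitly when it applies $\widehat{\mathrm{B}}_\iota$ to $c\mathrm{Abs}(\Omega_\pi\mathrm{B}_\pi A)$. With that justification substituted, your argument goes through.
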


\begin{proof}
The existence of this rectangle and the fact that it descends to the $\infty$-categorical setting follows directly from Proposition \ref{Prop: inclusion-restriction square} and Lemma \ref{lemma: restriction and inclusion are Quillen functors}. Let us check that it is indeed commutative and let us construct a natural weak equivalence between the following two composites
\[
\widehat{\mathrm{B}}_\iota \circ \mathrm{Id} \circ \mathbb{L}c\mathrm{Abs} \simeq \mathrm{Inc} \circ \mathrm{Id} \circ \mathrm{B}_\pi~. 
\]
We will omit non-derived identity functors in order to simplify the equations. The idea is to explicitly compute the left hand side. Let $A$ be a dg $\mathcal{P}$-algebra. First, we can take $\Omega_\pi \mathrm{B}_\pi A$ as a functorial cofibrant resolution of $A$, thus the derived functor $\mathbb{L}c\mathrm{Abs}(A)$ can be computed as
\[
\mathbb{L}c\mathrm{Abs}(A) \simeq \widehat{\Omega}_\iota \circ \mathrm{Inc} \circ \mathrm{B}_\pi(A)~. 
\]
Therefore the left hand side is given by 
\[
\widehat{\mathrm{B}}_\iota \circ \mathbb{L}c\mathrm{Abs}(A) \simeq \widehat{\mathrm{B}}_\iota \circ \widehat{\Omega}_\iota \circ \mathrm{Inc} \circ  \mathrm{B}_\pi(A)~, 
\]
and the counit 
\[
\epsilon_{\mathrm{Inc}~ \circ ~ \mathrm{B}_\pi(A)}: \widehat{\mathrm{B}}_\iota \circ \widehat{\Omega}_\iota \circ \mathrm{Inc} \circ  \mathrm{B}_\pi(A) \qi \mathrm{Inc} \circ \mathrm{B}_\pi(A)
\]
provides us with the natural weak equivalence we were looking for, as it is a natural quasi-isomorphism. 
\end{proof}

\begin{notation}
In order to relieve notational burden, we will from now on omit the non-derived identity functors that appear in Theorem \ref{thm: full rectangle infinity categorical} and its consequences, if there are no ambiguities.
\end{notation}

\subsection{The \texorpdfstring{$\infty$}{infinity}-categorical bar-cobar adjunction} Using the commutative rectangle of Theorem \ref{thm: full rectangle infinity categorical}, we obtain an adjunction at the $\infty$-categorical level between dg $\mathcal{P}$-algebras and their Koszul dual dg $\Omega \mathcal{P}^*$-coalgebras. As we will explain in Section \ref{Section: Lurie's bar-cobar}, this adjunction is a generalization of Lurie's adjunction between augmented monoids and coaugmented comonoids constructed in \cite[Theorem 5.2.2.17.]{HigherAlgebra}. 

\begin{definition}[The $\infty$-categorical bar construction relative to $\mathcal{P}$]
The $\infty$-categorical \textit{bar construction} $\mathbf{bar}_{\mathcal{P}}$ is the functor 
\[
\mathbf{bar}_{\mathcal{P}} \coloneqq \mathrm{Inc} \circ \mathrm{B}_\pi: \mathcal{P} \mbox{-} \mathsf{alg}~[\mathsf{Q.iso}^{-1}] \longrightarrow \Omega\mathcal{P}^*\mbox{-}\mathsf{coalg}~[\mathsf{Q.iso}^{-1}]
\]
from the $\infty$-category of dg $\mathcal{P}$-algebras localized at quasi-isomorphisms to the $\infty$-category of dg $\Omega\mathcal{P}^*$-coalgebras localized at quasi-isomorphisms. 
\end{definition}

\begin{definition}[The $\infty$-categorical cobar construction relative to $\mathcal{P}$]
The $\infty$-categorical \textit{cobar construction} $\mathbf{cobar}_{\mathcal{P}}$ is the functor 
\[
\mathbf{cobar}_{\mathcal{P}} \coloneqq \mathrm{Res} \circ \widehat{\Omega}_\iota: \Omega\mathcal{P}^*\mbox{-}\mathsf{coalg}~[\mathsf{Q.iso}^{-1}] \longrightarrow \mathcal{P}\mbox{-}\mathsf{alg}~[\mathsf{Q.iso}^{-1}]
\]
from the $\infty$-category of dg $\Omega\mathcal{P}^*$-coalgebras localized at quasi-isomorphisms to the $\infty$-category of dg $\mathcal{P}$-algebras localized at quasi-isomorphisms. 
\end{definition}

\begin{remark}
Strictly speaking, the functor $\mathbf{cobar}_{\mathcal{P}}$ is given as $\mathrm{Res} \circ  \mathbb{L}\mathrm{Id} \circ  \widehat{\Omega}_\iota$. However, the image of the complete cobar functor is always cofibrant in the category of complete dg $\mathcal{P}^*$-algebras with the transferred model structure; thus the derived identity functor $\mathbb{L}\mathrm{Id}$ is the identity functor and therefore we omit it for simplicity.
\end{remark}

\begin{theorem}\label{thm: infty categorical operadic bar cobar}
Let $\mathcal{P}$ be a dg operad satisfying Assumption \ref{assumption 1}. There is an adjunction of $\infty$-categories 
\[
\begin{tikzcd}[column sep=5pc,row sep=3pc]
            \mathcal{P}\mbox{-}\mathsf{alg}~[\mathsf{Q.iso}^{-1}] \arrow[r,"\mathbf{bar}_{\mathcal{P}}"{name=F},shift left=1.1ex ] & \Omega\mathcal{P}^*\mbox{-}\mathsf{coalg}~[\mathsf{Q.iso}^{-1}] \arrow[l,"\mathbf{cobar}_{\mathcal{P}}"{name=U},shift left=1.1ex ]
            \arrow[phantom, from=F, to=U, , "\dashv" rotate=-90]
\end{tikzcd}
\]
between the $\infty$-category of dg $\mathcal{P}$-algebras localized at quasi-isomorphisms and the $\infty$-category of dg $\Omega\mathcal{P}^*$-coalgebras localized at quasi-isomorphisms. 
\end{theorem}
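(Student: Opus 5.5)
We obtain the adjunction by composing $\infty$-categorical adjunctions, using the rectangle of Theorem \ref{thm: full rectangle infinity categorical}. Every Quillen adjunction induces an adjunction between the underlying $\infty$-categories, and adjunctions compose. The two horizontal bar-cobar adjunctions are adjoint equivalences, so each may be read in either direction: an equivalence is simultaneously a left and a right adjoint of its inverse. The plan is therefore to walk around the rectangle from $\mathcal{P}\mbox{-}\mathsf{alg}~[\mathsf{Q.iso}^{-1}]$ to $\Omega\mathcal{P}^*\mbox{-}\mathsf{coalg}~[\mathsf{Q.iso}^{-1}]$ along a path of left adjoints, and check that the composite of left adjoints is $\mathbf{bar}_{\mathcal{P}}$ and the composite of right adjoints is $\mathbf{cobar}_{\mathcal{P}}$.

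First I would attempt the top route. Take $\mathrm{B}_\pi$ (an equivalence, hence a left adjoint to $\Omega_\pi$), then $\mathrm{Id}\dashv\mathbb{R}\mathrm{Id}$ onto the quasi-isomorphism localization, then $\mathrm{Inc}\dashv\mathbb{R}\mathrm{Sub}$. The composite left adjoint is $\mathrm{Inc}\circ\mathrm{B}_\pi=\mathbf{bar}_{\mathcal{P}}$. The composite right adjoint is $\Omega_\pi\circ\mathbb{R}\mathrm{Id}\circ\mathbb{R}\mathrm{Sub}$. It then remains to identify this right adjoint with $\mathrm{Res}\circ\widehat{\Omega}_\iota$.

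This identification is the main step. I would deduce it from the commutativity in Theorem \ref{thm: full rectangle infinity categorical}: the left adjoints satisfy $\widehat{\mathrm{B}}_\iota\circ\mathbb{L}\mathrm{Id}\circ\mathbb{L}c\mathrm{Abs}\simeq\mathrm{Inc}\circ\mathrm{Id}\circ\mathrm{B}_\pi$. Passing to right adjoints, which are unique up to natural equivalence, we get
\[
\mathrm{Res}\circ\mathrm{Id}\circ\widehat{\Omega}_\iota\simeq \Omega_\pi\circ\mathbb{R}\mathrm{Id}\circ\mathbb{R}\mathrm{Sub}.
\]
Here the right adjoint of the equivalence $\widehat{\mathrm{B}}_\iota$ is $\widehat{\Omega}_\iota$. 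The right adjoint of $\mathbb{L}\mathrm{Id}$ is the underived $\mathrm{Id}$ from the $\mathsf{W.eq}$-localization to the $\mathsf{Q.iso}$-localization of complete $\mathcal{P}^*$-algebras. This is legitimate because $\mathsf{W.eq}\subset\mathsf{Q.iso}$ makes $\mathrm{Id}$ a relative functor, and it is right Quillen by Proposition \ref{Prop: right Bousfield localization of absolute algebras}.

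Alternatively, the bottom route exhibits $\mathbf{cobar}_{\mathcal{P}}$ directly as a right adjoint. The composite $\mathbb{L}c\mathrm{Abs}$, then $\mathbb{L}\mathrm{Id}$, then $\widehat{\mathrm{B}}_\iota$ is left adjoint to $\mathrm{Res}\circ\mathrm{Id}\circ\widehat{\Omega}_\iota=\mathbf{cobar}_{\mathcal{P}}$, using the remark that $\widehat{\Omega}_\iota$ lands in cofibrant objects. The commutativity of the rectangle then identifies this left adjoint with $\mathbf{bar}_{\mathcal{P}}$, since a functor naturally equivalent to a left adjoint is itself a left adjoint of the same functor.

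The only delicate point is checking that all the functors involved are well defined on the localizations:
\begin{itemize}
\item $\mathrm{B}_\pi$ and $\mathrm{Inc}$ preserve quasi-isomorphisms;
\item $\widehat{\Omega}_\iota$ sends quasi-isomorphisms to weak equivalences, since the complete bar-cobar adjunction is a Quillen equivalence with every object of $\Omega\mathcal{P}^*\mbox{-}\mathsf{coalg}$ cofibrant;
\item $\mathrm{Res}$ preserves quasi-isomorphisms.
\end{itemize}
All of these are already encoded in Lemma \ref{lemma: restriction and inclusion are Quillen functors} and Theorem \ref{thm: full rectangle infinity categorical}.
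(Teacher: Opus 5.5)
Your proposal is correct and is essentially the paper's argument: the paper's proof says only that the adjunction follows immediately from the commutative rectangle of Theorem~\ref{thm: full rectangle infinity categorical}. Your bottom-route paragraph (compose $\mathbb{L}c\mathrm{Abs}\dashv\mathrm{Res}$, $\mathbb{L}\mathrm{Id}\dashv\mathrm{Id}$ and the equivalence $\widehat{\mathrm{B}}_\iota\simeq\widehat{\Omega}_\iota$, then replace the composite left adjoint by the naturally equivalent $\mathrm{Inc}\circ\mathrm{B}_\pi$) is exactly that deduction, and your top-route variant is the same argument read through uniqueness of right adjoints, with the side benefit of the identification $\mathbf{cobar}_{\mathcal{P}}\simeq\Omega_\pi\circ\mathbb{R}\mathrm{Id}\circ\mathbb{R}\mathrm{Sub}$ that the paper later uses implicitly (e.g.\ in Corollary~\ref{cor: fully faithfulness of loose on homotopy complete}).
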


\begin{proof}
Follows immediately from Theorem \ref{thm: full rectangle infinity categorical}. 
\end{proof}

\begin{remark}
This adjunction also agrees with the adjunction constructed in \cite{heuts2024} over a field of characteristic zero. In general, there are two variants of this adjunction over a positive characteristic field and \textit{op.cit.} considers one of them. See Subsection \ref{Subsection: positive char extension}. 
\end{remark}


\section{Point-set and \texorpdfstring{$\infty$}{infinity}-categorical bar-cobar adjunctions}\label{Section: Francis-Gaitsfory enhanced bar-cobar}


The goal of this section is to compare the point-set operadic bar-cobar adjunction of paragraph \ref{subsubsection: classical bar-cobar adjunction} with the $\infty$-categorical enhanced bar-cobar adjunction constructed in \cite{FrancisGaitsgory}. This latter bar-cobar adjunction goes from algebras over an (enriched) $\infty$-operad and (conilpotent, divided powers) coalgebras over its operadic bar construction. Contrary to what one would expect, the former adjunction does not present the latter. This is due to the failure of rectification statements for conilpotent coalgebras over a cooperad, as pointed out in \cite[Section 5.3]{chen25}, which follows from \cite[Chapter 6, Remark 2.6.7]{GaitsgoryRozenblyumVolII}. Nevertheless, we construct a way to compare these two adjunctions and explain what the underlying chain complexes of each of these functors are.

\subsection{The enhanced bar-cobar adjunction of Francis--Gaitsgory}\label{Subsection: enhanced bar-cobar adjunction} In this subsection, we recall some of the results in \cite[Section 3]{FrancisGaitsgory}. Let $\mathscr{P}$ be an augmented (enriched) $\infty$-operad in $\mathbf{D}(\kk)$. The augmentation map $\epsilon: \mathscr{P} \longrightarrow \mathbb{1}$ induces an adjunction 
\[
\begin{tikzcd}[column sep=5pc,row sep=3pc]
         \mathbf{Alg}_{\mathscr{P}}(\mathbf{D}(\kk)) \arrow[r, shift left=1.1ex, "\mathbf{indec}"{name=F}] &\mathbf{D}(\kk)~, \arrow[l, shift left=.75ex, "\mathbf{triv}"{name=U}]
            \arrow[phantom, from=F, to=U, , "\dashv" rotate=-90]
\end{tikzcd}
\]

between the $\infty$-category of $\mathscr{P}$-algebras and the underlying $\infty$-category of chain complexes over $\kk$. The    functor $\mathbf{triv}$ endows any chain complex with a trivial $\mathscr{P}$-algebra structure and its left adjoint $\mathbf{indec}$ is the functor of derived indecomposables of a $\mathscr{P}$-algebra. The \textit{homology} of $\mathbf{indec}$ is called the (topological) André--Quillen homology, see \cite{Basterra99, Mandell03}. This adjunction has a universal property: it exhibits $\mathbf{D}(\kk)$ as the stabilization of the $\infty$-category of $\mathscr{P}$-algebras \cite{BasterraMandell05}. When $\mathscr{P}$ is the $\mathbb{E}_\infty$-operad in $\mathbf{D}(\kk)$, this adjunction coincides with a particular case of the cotangent complex (see \cite[Section 7.3]{HigherAlgebra} and in particular \cite[Remark 7.3.0.1]{HigherAlgebra}). 

\medskip

Let us explain how the enhanced bar-cobar adjunction appears from the above data. The adjunction $\mathbf{indec} \dashv \mathbf{triv}$ induces a comonad $\mathbf{T} := \mathbf{indec} \circ \mathbf{triv}$ on $\mathbf D(\kk)$. By its universal property, the adjunction $\mathbf{indec} \dashv \mathbf{triv}$ factors through the $\infty$-category of coalgebras over the comonad $\mathbf{T}$, and we get 

\[
\begin{tikzcd}[column sep=5pc,row sep=2.5pc]
&~~~~~~~~~~ \mathbf{Alg}_{\mathscr{P}}(\mathbf{D}(\kk)) \arrow[dd, shift left=1.1ex, "\mathbf{B}^{\mathbf{enh}}"{name=F}] \arrow[ld, shift left=.75ex, "\mathbf{indec}"{name=C}]\\
\mathbf{D}(\kk)  \arrow[ru, shift left=1.5ex, "\mathbf{triv}"{name=A}]  \arrow[rd, shift left=1ex, "\mathbf{cofree}"{name=B}] 
& \\
&\hspace{2.5pc} \mathbf{Coalg}_{\mathbf{T}}(\mathbf{D}(\kk)) ~, \arrow[uu, shift left=.75ex, "\bm{\Omega}^{\mathbf{enh}}"{name=U}] \arrow[lu, shift left=.75ex, "\mathbf{U}"{name=D}] 
\end{tikzcd}
\]
where right adjoints are drawn on top and on the right. Moreover, the comonad $\mathbf{T}$ can be explicitly identified. 

\begin{proposition}\label{prop: identification of the derived indecomposables comonad}
There is a natural weak equivalence of $\infty$-categorical comonads 
\[
\mathbf{T} \simeq \mathbf{S}(\mathbf{bar}(\mathscr{P}))(-) \simeq \bigoplus_{n \geq 0} \mathbf{bar}(\mathscr{P})(n) \otimes_{\mathbb{S}_n} (-)^{\otimes n}
\]
between the comonad $\mathbf{T}$ induced by the adjunction $\mathbf{indec} \dashv \mathbf{triv}$ and the comonad given by the Schur functor of the $\infty$-cooperad $\mathbf{bar}(\mathscr{P})$. 
\end{proposition}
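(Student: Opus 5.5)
The plan is to compute $\mathbf{indec}$ as a relative tensor product, and then compare the resulting comonad with the Schur functor of $\mathbf{bar}(\mathscr{P})$ using the bar construction of \S\ref{subsubsection: Lurie's bar-cobar adjunction for operads and cooperads}.

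First I make the underlying functor explicit. The augmentation $\epsilon:\mathscr{P}\to\mathbb{1}$ makes $\mathbb{1}$ a $\mathscr{P}$-bimodule. The functor $\mathbf{triv}$ is restriction along $\epsilon$ of the $\mathbf{S}(\mathbb{1})=\mathrm{Id}$-algebra structure, so its left adjoint is the relative composite
\[
\mathbf{indec}(A)\simeq \mathbb{1}\circledcirc_{\mathscr{P}} A \simeq \big|\,\mathbf{S}(\mathscr{P})^{\circ\bullet}(A)\text{-bar object } \mathbf{bar}(\mathbb{1},\mathscr{P},A)_\bullet\,\big|.
\]
Here $A$ is viewed as a left $\mathscr{P}$-module concentrated in arity $0$. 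This uses that $\mathbf{S}(-)$ is symmetric monoidal and commutes with geometric realizations in each variable, since $\circledcirc$ preserves sifted colimits in each variable (the $\otimes$ on $\mathbf{D}(\kk)$ preserves colimits, and we use homotopy orbits). Applying this to $\mathbf{triv}(V)=\mathbb{1}\circledcirc_{\mathbb{1}}V$, i.e.\ $V$ with module structure through $\epsilon$, gives
\[
\mathbf{T}(V)\simeq |\mathbf{bar}(\mathbb{1},\mathscr{P},\mathbb{1})_\bullet|\circledcirc V \simeq \mathbf{S}(\mathbf{bar}(\mathscr{P}))(V).
\]
The point is that the simplicial object $\mathbf{bar}(\mathbb{1},\mathscr{P},\mathbf{triv} V)_\bullet$ is levelwise $\mathbf{bar}(\mathbb{1},\mathscr{P},\mathbb{1})_n\circledcirc V$, and $-\circledcirc V$ commutes with realization. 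This identifies the underlying endofunctors, which gives the displayed formula.

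Second, I upgrade this to an equivalence of comonads. The cleanest route is to work one level up, in $\mathbf{sSeq}(\mathbf{D}(\kk))$. There the adjunction $\mathbb{1}\circledcirc_{\mathscr{P}}- \dashv \epsilon^*$ between left $\mathscr{P}$-modules and symmetric sequences induces the comonad $\mathbb{1}\circledcirc_{\mathscr{P}}\mathbb{1}\circledcirc -$. By \cite[Section 5.2.2 and 4.3]{HigherAlgebra}, Lurie's bar construction $\mathbf{bar}(\mathscr{P})=\mathbb{1}\circledcirc_{\mathscr{P}}\mathbb{1}$ carries exactly the coalgebra structure induced from this adjunction; its comultiplication is the unit $\mathbb{1}\to\epsilon^*(\mathbb{1}\circledcirc_{\mathscr{P}}\mathbb{1})$ inserted in the middle. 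So the comonad on symmetric sequences is $\mathbf{bar}(\mathscr{P})\circledcirc -$, as a comonad. Restricting to symmetric sequences concentrated in arity $0$, which is a full subcategory equivalent to $\mathbf{D}(\kk)$, recovers $\mathbf{indec}\dashv\mathbf{triv}$ on $\mathscr{P}$-algebras. The comonad is compatible with this restriction, since $M\circledcirc V$ is concentrated in arity $0$ and equals $\mathbf{S}(M)(V)$. Hence $\mathbf{T}\simeq\mathbf{S}(\mathbf{bar}(\mathscr{P}))$ as comonads, using again that $\mathbf{S}$ is monoidal.

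The main obstacle is this coherence step: making precise that the comonad structure coming from the adjunction agrees with the comonoid structure Lurie puts on $\mathbf{bar}(\mathscr{P})$, and not merely that the endofunctors agree. I would handle it by invoking the general statement that, for an augmented algebra $R$ in a monoidal $\infty$-category with $\otimes$ preserving geometric realizations, the comonad of $\mathbb{1}\otimes_R- \dashv \epsilon^*$ on $\mathbf{LMod}_{\mathbb{1}}$ is $\mathbf{bar}(R)\otimes-$. This follows from the Barr--Beck--Lurie formalism and the functoriality of relative tensor products. I apply it with $R=\mathscr{P}$ in $\mathbf{sSeq}(\mathbf{D}(\kk))$ acting on $\mathbf{D}(\kk)\subset\mathbf{sSeq}(\mathbf{D}(\kk))$ as a left module category. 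This is the argument of \cite[Section 3]{FrancisGaitsgory}, and I would cite it there rather than redo the coherences.
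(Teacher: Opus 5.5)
Your first step is the paper's argument, phrased differently. The paper resolves $\mathbf{triv}(V)$ by the free simplicial resolution $\mathbf{bar}(\mathbf{S}(\mathscr{P}),\mathbf{S}(\mathscr{P}),\mathbf{triv}(V))_\bullet$ and applies the colimit-preserving functor $\mathbf{indec}$ levelwise. This produces exactly your $\mathbf{bar}(\mathbb{1},\mathscr{P},\mathbf{triv}V)_\bullet \simeq \mathbf{bar}(\mathbb{1},\mathscr{P},\mathbb{1})_\bullet\circledcirc V$. The paper then commutes $\mathbf{S}(-)$ with the realization, just as you commute $-\circledcirc V$ with it.

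Your second step goes beyond the paper. The paper's proof stops at this objectwise identification of the underlying endofunctors. It never checks that the comonad structure of $\mathbf{indec}\circ\mathbf{triv}$ matches the cooperad structure of $\mathbf{bar}(\mathscr{P})$, although the statement asserts an equivalence of comonads. Your lift to left $\mathscr{P}$-modules in $\mathbf{sSeq}(\mathbf{D}(\kk))$ is the right way to get the comonad-level statement. There the adjunction $\mathbb{1}\circledcirc_{\mathscr{P}}-\dashv\epsilon^*$ is right $\mathbf{sSeq}$-linear, so its comonad is $C\circledcirc-$ for a comonoid $C$ with underlying object $\mathbb{1}\circledcirc_{\mathscr{P}}\mathbb{1}$. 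You then restrict to the arity-$0$ full subcategory, which both adjoints preserve.

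The one input still imported from the literature is that this adjunction-induced comonoid agrees with the comonoid structure Lurie puts on $\mathbf{bar}(\mathscr{P})$. You flag this honestly. The paper relies on the same fact implicitly (via its appeal to Francis--Gaitsgory), so your proposal is, if anything, more complete than the proof given.
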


\begin{proof}
Let us compute the composition $\mathbf{indec} \dashv \mathbf{triv}$. Let $V$ be a chain complex. Since the $\infty$-category of $\mathscr{P}$-algebras is monadic, any $\mathscr{P}$-algebra admits a resolution which is the geometric realization of a simplicial object made of free $\mathscr{P}$-algebras. In particular, so does the $\mathscr{P}$-algebra $\mathbf{triv}(V)$, which can be obtain as the geometric realization
\[
\mathbf{triv}(V) \simeq \left|~\mathbf{bar}(\mathbf{S}(\mathscr{P}),\mathbf{S}(\mathscr{P}),\mathbf{triv}(V))_\bullet ~ \right|~,
\]
where on the right hand side we consider the geometric realization of the simplicial object 
\[\hspace{-0.75cm}
\small{
\begin{tikzcd}
\mathbf{S}(\mathscr{P})(\mathbf{triv}(V)) \arrow[r]
&\mathbf{S}(\mathscr{P})\circ \mathbf{S}(\mathscr{P})(\mathbf{triv}(V)) \arrow[l,shift left=1.1ex] \arrow[l,shift right=1.1ex] \arrow[r,shift left=1ex] \arrow[r,shift right=1ex] 
&\mathbf{S}(\mathscr{P}) \circ \mathbf{S}(\mathscr{P}) \circ \mathbf{S}(\mathscr{P})(\mathbf{triv}(V)) \arrow[l,shift left=2ex] \arrow[l,shift right=2ex] \arrow[l] \arrow[r,shift left= 2ex] \arrow[r,shift right=2ex]  \arrow[r]
&\cdots \arrow[l,shift left=3ex] \arrow[l,shift right=3ex]  \arrow[l,shift left=1ex] \arrow[l,shift right=1ex]
\end{tikzcd}}~.\]
Since $\mathbf{indec}$ commutes with colimits, we have that 
\[
\begin{aligned}
\mathbf{indec} \circ \mathbf{triv}(V) &\simeq \left|~\mathbf{indec}\left(\mathbf{bar}(\mathbf{S}(\mathscr{P}),\mathbf{S}(\mathscr{P}),\mathbf{triv}(V))_\bullet\right) ~ \right| \\ &\simeq \left|~\left(\mathbf{bar}(\mathbf{id},\mathbf{S}(\mathscr{P}),\mathbf{triv}(V))_\bullet\right) ~ \right| \\ &\simeq \mathbf{S}(\mathbf{bar}(\mathscr{P}))(V)~, 
\end{aligned}
\]
where $\mathbf{id}$ is the identity endofunctor. The last equivalence follows from the fact that $\mathbf{S}(-)$ also commutes with colimits, and that the structure maps of $\mathbf{triv}(V)$ are all trivial. 
\end{proof}

Hence we get an adjunction 
\[
\begin{tikzcd}[column sep=5pc,row sep=3pc]
         \mathbf{Alg}_{\mathscr{P}}(\mathbf{D}(\kk)) \arrow[r, shift left=1.1ex, "\mathbf{B}^{\mathbf{enh}}"{name=F}] &\mathbf{Coalg}_{\mathbf{bar}(\mathscr{P})}^{\mathbf{conil}}(\mathbf{D}(\kk))~, \arrow[l, shift left=.75ex, "\bm{\Omega}^{\mathbf{enh}}"{name=U}] \arrow[phantom, from=F, to=U, , "\dashv" rotate=-90]
\end{tikzcd}
\]
called the \textit{enhanced bar-cobar adjunction} in \cite{FrancisGaitsgory}, between $\mathscr{P}$-algebras and conilpotent divided powers $\mathbf{bar}(\mathscr{P})$-coalgebras in the sense of Definition \ref{def: infinity cat C-coalgebras}. This adjunction is the universal factorization of the indecomposables-trivial adjunction over coalgebras over a comonad. 

\begin{remark}
The original Francis--Gaitsgory conjecture, disproved in \cite{heuts2024}, stated that this adjunction is an equivalence of $\infty$-categories when one restricts the enhanced bar construction to \textit{pro-nilpotent} $\mathscr{P}$-algebras. See \cite[Section 3.4]{FrancisGaitsgory} for the original statement. 
\end{remark}

\subsection{A comparison between strict and $\infty$-categorical conilpotent coalgebras}\label{subsection: comparison of conilpotent coalgebras} Let us start by explaining why the point-set operadic bar-cobar adjunction cannot present its $\infty$-categorical analogue given by the enhanced bar-cobar adjunction. Concretely, one would like to say that the following composite adjunction 

\begin{equation}\label{equation: point-set candidate for fg bar-cobar}
\begin{tikzcd}[column sep=5pc,row sep=3pc]
         \mathcal{P}\mbox{-}\mathsf{alg}~[\mathsf{Q.iso}^{-1}] \arrow[r, shift left=1.1ex, "\mathrm{B}_{\pi}"{name=F}] 
          &\mathrm{B}\mathcal{P}\mbox{-}\mathsf{cog}~[\mathsf{W}^{-1}] \arrow[l, shift left=.75ex, "\Omega_{\pi}"{name=U}] \arrow[r, shift left=1.1ex, "\mathrm{Id}"{name=A}]
          &\mathrm{B}\mathcal{P}\mbox{-}\mathsf{cog}~[\mathsf{Q.iso}^{-1}] \arrow[l, shift left=.75ex, "\mathbb{R}\mathrm{Id}"{name=B}]
            \arrow[phantom, from=F, to=U, , "\simeq" rotate=0] \arrow[phantom, from=A, to=B, , "\dashv" rotate=-90]
\end{tikzcd}
\end{equation}

coincides with the enhanced bar-cobar adjunction

\begin{equation}\label{equation: fg bar-cobar}
\begin{tikzcd}[column sep=5pc,row sep=3pc]
         \mathbf{Alg}_{\mathcal{P}}(\mathbf{D}(\kk)) \arrow[r, shift left=1.1ex, "\mathbf{B}^{\mathbf{enh}}"{name=F}] &\mathbf{Coalg}_{\mathbf{bar}(\mathcal{P})}^{\mathbf{conil}}(\mathbf{D}(\kk))~. \arrow[l, shift left=.75ex, "\bm{\Omega}^{\mathbf{enh}}"{name=U}] \arrow[phantom, from=F, to=U, , "\dashv" rotate=-90]
\end{tikzcd}
\end{equation}
However, if this was true, then because the adjunction (\ref{equation: point-set candidate for fg bar-cobar}) is a composite of an equivalence together with a left Bousfield localization, it would imply that $\mathbf{bar}(\mathcal{P})$-coalgebras are a reflective sub-$\infty$-category of $\mathcal{P}$-algebras. However, as pointed out in \cite[Theorem 5.3.1]{chen25}, this cannot be the case by the counterexample given in \cite[Chapter 6, Remark 2.6.7]{GaitsgoryRozenblyumVolII}, where one takes $\mathcal{P}$ to be the commutative operad $\mathcal{C}om$. 

\begin{proposition}\label{prop: adjunction between strict and loose conilpotent coalgebras}
Let $\mathcal{P}$ be a dg operad satisfying Assumptions \ref{assumption 1}. There is an adjunction of $\infty$-categories
\[
\begin{tikzcd}[column sep=5pc,row sep=3pc]
         \mathrm{B}\mathcal{P}\mbox{-}\mathsf{cog}~[\mathsf{Q.iso}^{-1}] \arrow[r, shift left=1.1ex, "\mathbf{loose}"{name=F}] &\mathbf{Coalg}_{\mathbf{bar}(\mathcal{P})}^{\mathbf{conil}}(\mathbf{D}(\kk))~, \arrow[l, shift left=.75ex, "\mathbf{strict}"{name=U}] \arrow[phantom, from=F, to=U, , "\dashv" rotate=-90]
\end{tikzcd}
\]
between dg $\mathrm{B}\mathcal{P}$-coalgebras up to quasi-isomorphism and $\mathbf{bar}(\mathcal{P})$-coalgebras, where $\mathbf{bar}(\mathcal{P})$ is the underlying enriched $\infty$-cooperad of $\mathrm{B}\mathcal{P}$. 
\end{proposition}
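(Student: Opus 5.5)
The plan is to construct $\mathbf{loose}$ as a colimit-preserving functor between presentable $\infty$-categories, and then obtain $\mathbf{strict}$ from the adjoint functor theorem.

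\textbf{Step 1: the comonad comparison.} The first step is to compare the two comonads on $\mathbf{D}(\kk)$. Both categories are comonadic over $\mathbf{D}(\kk)$:
\begin{itemize}
\item the target by definition (Definition \ref{def: infinity cat C-coalgebras}), as coalgebras over $\mathbf{S}(\mathbf{bar}(\mathcal{P}))$;
\item the source because the forgetful functor $\mathrm{U}: \mathrm{B}\mathcal{P}\mbox{-}\mathsf{cog}~[\mathsf{Q.iso}^{-1}] \to \mathbf{D}(\kk)$ is conservative (weak equivalences are quasi-isomorphisms) and preserves colimits.
\end{itemize}
For the second point we use the combinatorial model structure of Proposition \ref{Prop: left Bousfield localization of conilpotent coalgebras}. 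All objects there are cofibrant and colimits are computed underlying, so homotopy colimits are computed in chain complexes; in particular $\mathrm{U}$ preserves $\mathrm{U}$-split totalizations' dual statements, and hence colimits. Thus the $\infty$-categorical Barr--Beck--Lurie theorem applies, and the source is comonadic over $\mathbf{D}(\kk)$ with comonad $\mathbf{K} := \mathrm{U}\circ \mathbb{R}\mathrm{cofree}$, where $\mathbb{R}\mathrm{cofree}$ is the right adjoint of $\mathrm{U}$.

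\textbf{Step 2: the lax morphism of comonads.} Next I produce a morphism of comonads $\theta: \mathbf{K} \to \mathbf{S}(\mathbf{bar}(\mathcal{P}))$. There is a natural candidate. The point-set Schur functor $\mathrm{S}(\mathrm{B}\mathcal{P})$ preserves quasi-isomorphisms: over a field in characteristic zero, coinvariants are exact, and in the non-symmetric case there are no coinvariants at all. So $\mathrm{S}(\mathrm{B}\mathcal{P})$ presents $\mathbf{S}(\mathbf{bar}(\mathcal{P}))$, using Remark \ref{rmk: point-set bar and infinity cat bar} to identify $\mathrm{B}\mathcal{P}$ with $\mathbf{bar}(\mathcal{P})$. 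The structure map $\Delta_W: W \to \mathrm{S}(\mathrm{B}\mathcal{P})(W)$ of every point-set coalgebra is natural and coassociative. Hence the functor $\mathrm{U}$, equipped with $\Delta$, is a lax coalgebra for the comonad $\mathbf{S}(\mathbf{bar}(\mathcal{P}))$ in the $\infty$-categorical sense.

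Concretely, I would realize this through relative categories. The point-set category $\mathrm{B}\mathcal{P}\mbox{-}\mathsf{cog}$ maps to the strict category of coalgebras over the strict comonad $\mathrm{S}(\mathrm{B}\mathcal{P})$ on $\mathsf{Ch}(\kk)$; indeed it is equal to it. Localizing, one obtains a functor
\[
\mathsf{Coalg}_{\mathrm{S}(\mathrm{B}\mathcal{P})}(\mathsf{Ch}(\kk))[\mathsf{Q.iso}^{-1}] \longrightarrow \mathbf{Coalg}_{\mathbf{S}(\mathbf{bar}(\mathcal{P}))}(\mathbf{D}(\kk)).
\]
This is the $\infty$-categorical comparison for coalgebras over a homotopical comonad, i.e.\ the dual of the comparison $\mathsf{Alg}(\mathsf{End}(\mathsf{Ch}(\kk)))[\mathsf{Q.iso}^{-1}] \to \mathbf{Alg}_{\mathbb{E}_1}(\mathbf{End}(\mathbf{D}(\kk)))$ invoked earlier, applied to coalgebra objects. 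I define $\mathbf{loose}$ to be this functor. It commutes with the forgetful functors to $\mathbf{D}(\kk)$ by construction.

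\textbf{Step 3: colimit preservation and the adjoint.} Both forgetful functors preserve and detect colimits: on the source by Step 1, and on the target because $\mathbf{S}(\mathbf{bar}(\mathcal{P}))$ preserves colimits, being a direct sum of homotopy orbits of tensor powers, which commute with sifted colimits. Therefore $\mathbf{loose}$ preserves small colimits. Both $\infty$-categories are presentable: the source as the localization of a combinatorial model category, and the target as coalgebras over an accessible colimit-preserving comonad on a presentable category. The adjoint functor theorem then produces the right adjoint $\mathbf{strict}$.

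\textbf{Main obstacle.} I expect the main obstacle to be the rigorous construction in Step 2. One must make the passage from strict coalgebras over the strict comonad $\mathrm{S}(\mathrm{B}\mathcal{P})$, localized at quasi-isomorphisms, to $\infty$-categorical coalgebras over its image precise and functorial. The hardest point is that the strict comonad structure must be transported to the $\infty$-categorical comonad structure, which requires that $\mathrm{S}(\mathrm{B}\mathcal{P})$ and its iterates be homotopical. This is exactly where the characteristic-zero-or-non-symmetric assumption of Assumption \ref{assumption 1} enters. I would handle it via the lax monoidal localization functor $\mathsf{End}(\mathsf{Ch}(\kk))^{\mathrm{homotopical}}[\mathsf{Q.iso}^{-1}] \to \mathbf{End}(\mathbf{D}(\kk))$, restricted to homotopical endofunctors, followed by taking coalgebra categories, which is functorial in lax comonoidal data.
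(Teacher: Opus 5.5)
Your Step 1 claims something false. $\mathrm{B}\mathcal{P}\mbox{-}\mathsf{cog}~[\mathsf{Q.iso}^{-1}]$ is \emph{not} comonadic over $\mathbf{D}(\kk)$ in general, and the paper says exactly this right after the proposition. Conservativity and colimit preservation of $\mathrm{U}$ only give you the right adjoint $\mathbb{R}\mathrm{cofree}$. The comonadic Barr--Beck--Lurie theorem also requires $\mathrm{U}$ to preserve totalizations of $\mathrm{U}$-split cosimplicial objects. These are limits, and limits in this left Bousfield localization are not computed on underlying complexes, since they require fibrant replacement. Your phrase about ``$\mathrm{U}$-split totalizations' dual statements'' conflates the two conditions.

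The comparison $\theta\colon \mathbf{K}\to\mathbf{S}(\mathbf{bar}(\mathcal{P}))$ would moreover have to be an equivalence. Since $\mathrm{cofree}$ is right Quillen and every chain complex is fibrant, $\mathbf{K}=\mathrm{U}\circ\mathbb{R}\mathrm{cofree}$ is computed by $\mathrm{S}(\mathrm{B}\mathcal{P})$ itself, and by your own Step 2 this presents $\mathbf{S}(\mathbf{bar}(\mathcal{P}))$. That identification is precisely the input of the paper's proof. So Steps 1 and 2 together would prove that $\mathbf{loose}$ is an equivalence, contradicting Counterexample~\ref{Counterexample: non fullyfaithfulness of strict functor}, already for $\mathcal{P}=\mathcal{C}om$ in characteristic zero.

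The existence statement never uses comonadicity, so the proof is repaired by deleting that claim and $\theta$. Step 3 only needs $\mathrm{U}$ to be conservative and colimit-preserving. This follows directly from Proposition~\ref{Prop: left Bousfield localization of conilpotent coalgebras}: $\mathrm{U}$ is left Quillen and preserves and detects all weak equivalences. On the target, the forgetful functor preserves and reflects colimits for formal reasons: colimits of coalgebras over a comonad are created underlying. Your stated justification is off, since $\mathbf{S}(\mathbf{bar}(\mathcal{P}))$ preserves only sifted colimits.

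With these corrections your route is valid: define $\mathbf{loose}$ by localizing strict $\mathrm{S}(\mathrm{B}\mathcal{P})$-coalgebras over the homotopical comonad, then obtain $\mathbf{strict}$ from the adjoint functor theorem. The paper proceeds differently. It applies the terminal property of the Eilenberg--Moore $\infty$-category to the derived adjunction $\mathrm{U}\dashv\mathbb{R}\mathrm{cofree}$, after identifying that adjunction's comonad with $\mathbf{S}(\mathbf{bar}(\mathcal{P}))$.

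Both routes rest on the same input: $\mathrm{S}(\mathrm{B}\mathcal{P})$ presents $\mathbf{S}(\mathbf{bar}(\mathcal{P}))$ as a comonad, which is where Assumption~\ref{assumption 1}(2) enters. The paper's formulation characterizes $\mathbf{loose}$ as the canonical comparison functor, which Proposition~\ref{prop: commutative triangle of inclusions} later exploits. Yours makes the existence of the right adjoint explicit.
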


\begin{proof}
This adjunction follows from the fact that cofree-forgetful adjunction, at the point-set level, between dg $\mathrm{B}\mathcal{P}$-coalgebras and chain complexes, induces the comonad $\mathrm{S}(\mathrm{B}\mathcal{P})(-)$ on the underlying category of chain complexes. This $1$-comonad presents the $\infty$-categorical comonad $\mathbf{S}(\mathbf{bar}(\mathcal{P}))(-)$ on $\mathbf{D}(\kk)$, since by Remark \ref{rmk: point-set bar and infinity cat bar} the conilpotent dg cooperad $\mathrm{B}\mathcal{P}$ presents the enriched $\infty$-cooperad $\mathbf{bar}(\mathcal{P})$. Hence, by the (dual) universal property of the Eilenberg-Moore category, there exists such an adjunction. 
\end{proof}

The above adjunction is not, in general, an equivalence of $\infty$-categories, which prevents us from giving point-set models for the enhanced bar-cobar adjunction. The core issue is that the cofree-forgetful adjunction between dg $\mathrm{B}\mathcal{P}$-coalgebras and chain complexes is not comonadic in the $\infty$-categorical sense in general. A characterization of the $\infty$-category of dg $\mathrm{B}\mathcal{P}$-coalgebras up to quasi-isomorphisms is the following: it is the localization of the $\infty$-category of $\mathcal{P}$-algebras with respect to maps which are sent by $\mathrm{B}_\pi$ to a quasi-isomorphism. In other words, with respect to $\mathcal{P}$-algebra maps which induce an equivalence in (topological) André--Quillen homology. 

\subsection{The operadic bar-cobar adjunction and the enhanced bar-cobar adjunction}
Let $\mathcal{P}$ be a dg operad satisfying Assumptions \ref{assumption 1}. We show that the composition of the point-set adjunction (\ref{equation: point-set candidate for fg bar-cobar}) together with the adjunction of Proposition \ref{prop: adjunction between strict and loose conilpotent coalgebras} given indeed the enhanced bar-cobar adjunction (\ref{equation: fg bar-cobar}). See also \cite[Section 5.3]{chen25} for similar statements.

\begin{theorem}\label{thm: point-set models for enhanced bar-cobar}
Let $\mathcal{P}$ be a dg operad satisfying Assumptions \ref{assumption 1}. The $\infty$-categorical adjunction induced by the bar-cobar adjunction with respect to $\pi$ composed with the left Bousfield localization with respect to quasi-isomorphisms and with the comparison adjunction of Proposition \ref{prop: adjunction between strict and loose conilpotent coalgebras} 

\hspace{-2pc}
\begin{tikzcd}[column sep=3pc,row sep=3pc]
         \mathcal{P}\mbox{-}\mathsf{alg}~[\mathsf{Q.iso}^{-1}] \arrow[r, shift left=1.1ex, "\mathrm{B}_{\pi}"{name=F}] 
          &\mathrm{B}\mathcal{P}\mbox{-}\mathsf{cog}~[\mathsf{W}^{-1}] \arrow[l, shift left=.75ex, "\Omega_{\pi}"{name=U}] \arrow[r, shift left=1.1ex, "\mathrm{Id}"{name=A}]
          &\mathrm{B}\mathcal{P}\mbox{-}\mathsf{cog}~[\mathsf{Q.iso}^{-1}] \arrow[l, shift left=.75ex, "\mathbb{R}\mathrm{Id}"{name=B}]  \arrow[r, shift left=1.1ex, "\mathbf{loose}"{name=DDD}]
            \arrow[phantom, from=F, to=U, , "\simeq" rotate=0] \arrow[phantom, from=A, to=B, , "\dashv" rotate=-90]
    &\mathbf{Coalg}_{\mathbf{bar}(\mathcal{P})}^{\mathbf{conil}}(\mathbf{D}(\kk)) \arrow[l, shift left=.75ex, "\mathbf{strict}"{name=FFF}] \arrow[phantom, from=DDD, to=FFF, , "\dashv" rotate=-90]
\end{tikzcd}


is naturally equivalent to the $\infty$-categorical enhanced bar-cobar adjunction 
\[
\begin{tikzcd}[column sep=5pc,row sep=3pc]
         \mathbf{Alg}_{\mathcal{P}}(\mathbf{D}(\kk)) \arrow[r, shift left=1.1ex, "\mathbf{B}^{\mathbf{enh}}"{name=F}] &\mathbf{Coalg}_{\mathbf{bar}(\mathcal{P})}^{\mathbf{conil}}(\mathbf{D}(\kk))~. \arrow[l, shift left=.75ex, "\bm{\Omega}^{\mathbf{enh}}"{name=U}] \arrow[phantom, from=F, to=U, , "\dashv" rotate=-90]
\end{tikzcd}
\]

\end{theorem}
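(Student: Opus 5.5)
Since both adjunctions have the same source $\mathcal{P}\mbox{-}\mathsf{alg}~[\mathsf{Q.iso}^{-1}] \simeq \mathbf{Alg}_{\mathcal{P}}(\mathbf{D}(\kk))$ and the same target, and adjoints are determined up to equivalence by either side, it suffices to produce a natural equivalence between the two right adjoints, $\Omega_\pi \circ \mathbb{R}\mathrm{Id} \circ \mathbf{strict} \simeq \bm{\Omega}^{\mathbf{enh}}$. Equivalently, we can use the universal property that defines $\mathbf{B}^{\mathbf{enh}}$. The enhanced adjunction is the canonical comparison from the adjunction $\mathbf{indec} \dashv \mathbf{triv}$ to coalgebras over the comonad $\mathbf{T}$. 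So the plan is to show that the composite right adjoint $G \coloneqq \Omega_\pi \circ \mathbb{R}\mathrm{Id}\circ \mathbf{strict}$, followed by the forgetful functor, recovers $\mathbf{triv}$ and the comonad $\mathbf{T}$. In other words, we show that the composite left adjoint $F \coloneqq \mathbf{loose}\circ \mathrm{Id}\circ \mathrm{B}_\pi$ is a lift of $\mathbf{indec}$ along $\mathbf{U}$ that is compatible with the comonad structures. Then both $F$ and $\mathbf{B}^{\mathbf{enh}}$ are the comparison functor of the same adjunction, and hence they agree.

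First I identify $\mathbf{U}\circ F$ with $\mathbf{indec}$. The underlying complex of $\mathbf{loose}(\mathrm{B}_\pi A)$ is the underlying complex of $\mathrm{B}_\pi A$. By the universal property, $\mathbf{U}\circ\mathbf{loose}$ is the underlying-complex functor on $\mathrm{B}\mathcal{P}\mbox{-}\mathsf{cog}~[\mathsf{Q.iso}^{-1}]$. So $\mathbf{U}\circ F(A)\simeq \mathrm{B}_\pi A$ as a chain complex. Now $\mathbf{U}\circ F$ is a left adjoint, hence colimit preserving. Its right adjoint is $G\circ\mathbf{cofree}$, and I compute it on a complex $V$. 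The functor $\mathbf{cofree}(V)$ is sent by $\mathbf{strict}$ to the strict cofree coalgebra $\mathrm{S}(\mathrm{B}\mathcal{P})(V)$, because $\mathbf{strict}$ is right adjoint to $\mathbf{loose}$ and $\mathbf{U}\circ\mathbf{loose}$ is the forgetful functor. Next, $\mathrm{S}(\mathrm{B}\mathcal{P})(V)$ is fibrant in the model structure of Proposition \ref{Prop: left Bousfield localization of conilpotent coalgebras}: it is cofree on a fibrant complex, so it has the right lifting property against injective quasi-isomorphisms. Therefore $\mathbb{R}\mathrm{Id}$ does nothing to it. Finally, $\Omega_\pi\, \mathrm{S}(\mathrm{B}\mathcal{P})(V)= \Omega_\pi \mathrm{B}_\pi(\mathbf{triv}\,V)$, and the counit gives a quasi-isomorphism $\Omega_\pi \mathrm{B}_\pi(\mathbf{triv}\,V)\qi\mathbf{triv}\,V$. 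So $G\circ \mathbf{cofree}\simeq\mathbf{triv}$ naturally, and by uniqueness of adjoints $\mathbf{U}\circ F\simeq \mathbf{indec}$.

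Next I compare the comonads. The comonad induced on $\mathbf{D}(\kk)$ by $\mathbf{U}F\dashv G\,\mathbf{cofree}$ is $\mathbf{U}F G\,\mathbf{cofree}\simeq \mathbf{U}\,\mathbf{cofree}= \mathbf{S}(\mathbf{bar}(\mathcal{P}))$, because the identification above is implemented through the comonad of $\mathbf{loose}$. This comonad structure coincides with the structure on $\mathbf{T}$ from Proposition \ref{prop: identification of the derived indecomposables comonad}. To check this, I would observe that both come from the point-set comonad $\mathrm{S}(\mathrm{B}\mathcal{P})$: on the $\mathbf{T}$ side via the simplicial bar resolution, which the point-set bar construction $\mathrm{B}_\pi$ models by Remark \ref{rmk: point-set bar and infinity cat bar}.

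Given these two facts, $F$ is a functor over $\mathbf{D}(\kk)$ into $\mathbf{T}$-coalgebras lifting $\mathbf{indec}$. By the universal property of the Eilenberg--Moore factorization, $F\simeq\mathbf{B}^{\mathbf{enh}}$, and passing to right adjoints gives the cobar equivalence. The main obstacle is the coherence in the second step. An equivalence of underlying functors $\mathbf{U}F\simeq\mathbf{indec}$ is not enough; one needs an equivalence of adjunctions over $\mathbf{D}(\kk)$, so that the induced comonads match as $\infty$-comonads. I would try to handle this by exhibiting a natural map of adjunctions, namely the unit of $\mathbf{loose}\dashv\mathbf{strict}$ combined with the counit of $\Omega_\pi\dashv \mathrm{B}_\pi$, and then checking that it is an equivalence only on underlying objects, where the computation above applies.
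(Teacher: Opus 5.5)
Your proposal is correct and follows essentially the same route as the paper. Both arguments exhibit the composite adjunction and the enhanced bar-cobar adjunction as the Eilenberg--Moore factorization of $\mathbf{indec}\dashv\mathbf{triv}$ through coalgebras over $\mathbf{S}(\mathbf{bar}(\mathcal{P}))$, and then conclude by the universal property. Your explicit computation of $G\circ\mathbf{cofree}\simeq\mathbf{triv}$ (using fibrancy of $\mathrm{S}(\mathrm{B}\mathcal{P})(V)$ and the counit $\Omega_\pi\mathrm{B}_\pi(\mathrm{triv}\,V)\to\mathrm{triv}\,V$), together with your remark on matching the comonad identifications, spells out steps the paper only asserts.
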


\begin{proof}
The dg operad $\mathcal{P}$ is in particular augmented, the augmentation induces a Quillen adjunction $\mathrm{indec} \dashv \mathrm{triv}$ at the point-set level. The derived adjunction 
\[
\begin{tikzcd}[column sep=5pc,row sep=3pc]
            \mathsf{Ch}(\kk)~[\mathsf{Q.iso}^{-1}] \arrow[r,"\mathrm{triv}"{name=F},shift left=1.1ex ] &   \mathcal{P}\mbox{-}\mathsf{alg}~[\mathsf{Q.iso}^{-1}] \arrow[l,"\mathbb{L}\mathrm{indec}"{name=U},shift left=1.1ex ]
            \arrow[phantom, from=F, to=U, , "\dashv" rotate=90]
\end{tikzcd}
\]
presents the adjunction $\mathbf{indec} \dashv \mathbf{triv}$ of the underlying $\infty$-operad of $\mathcal{P}$. Moreover, both adjunctions in the above theorem are universal factorizations of the adjunction $\mathbf{indec} \dashv \mathbf{triv}$ through the same $\infty$-category of coalgebras. Therefore, since they satisfy the same universal property, they must coincide. See e.g.~\cite{Heine17} for the $\infty$-categorical version of the universal property of the Eilenberg--Moore category of algebras over a monad. \end{proof}

\begin{remark}
Over a positive characteristic field, Theorem \ref{thm: point-set models for enhanced bar-cobar} also holds in the following, more general context: given a dg operad $\mathcal{P}$ which is $\mathbb{S}$-projective, the quasi-planar bar-cobar adjunction of \cite[Section 3.14]{premierpapier} composed with the Bousfield left localization of $\mathrm{B}(\mathcal{P}\otimes \mathcal{E})$ as in \cite[Proposition 48]{premierpapier} also fits in the same diagram, where at the $\infty$-categorical level we also consider the enhanced bar-cobar adjunction with respect of the underlying enriched $\infty$-operad of $\mathcal{P}$.
\end{remark}

\section{A dual version of the \texorpdfstring{$\infty$}{infinity}-categorical enhanced bar-cobar adjunction}


In this section, we introduce a dual version of the enhanced bar-cobar adjunction constructed in \cite{FrancisGaitsgory}, which is constructed by considering the universal factorization of the derived primitives functor of a given class of coalgebras (not necessarily conilpotent) through algebras over a monad. We prove that this results in an adjunction between coalgebras over an (enriched) $\infty$-operad and algebras over its dual $\infty$-cooperad. Moreover, and contrary to what happens with the enhanced bar-cobar adjunction, we give point-set models for this new construction. 

\subsection{A complete version of the enhanced bar-cobar adjunction}\label{Subsection: Complete enhanced bar-cobar adjunction} 
Let $\mathscr{P}$ be an augmented (enriched) $\infty$-operad in $\mathbf{D}(\kk)$. There is an associated $\infty$-category of $\mathscr{P}$-coalgebras, whose objecs are the coalgebras (non-conilpotent, without divided powers) defined by the operations in $\mathscr{P}$. For example, if $\mathscr{P}$ is the $\mathbb{E}_1$-operad or the $\mathbb{E}_\infty$-operad, then $\mathscr{P}$-coalgebras correspond to $\mathbb{E}_1$- or $\mathbb{E}_\infty$-coalgebras in the sense of \cite{HigherAlgebra}. We refer to Subsection \ref{subsubsection: infinity cat of coalgebras over an operad} or \cite[Section 3]{rectification} for more details about this definition. 

\medskip

The augmentation map $\epsilon: \mathscr{P} \longrightarrow \mathbb{1}$ induces another adjunction
\[
\begin{tikzcd}[column sep=5pc,row sep=3pc]
         \mathbf{Coalg}_{\mathscr{P}}(\mathbf{D}(\kk)) \arrow[r, shift left=1.1ex, "\mathbf{prim}"{name=F}] &\mathbf{D}(\kk)~, \arrow[l, shift left=.75ex, "\mathbf{triv}"{name=U}]
            \arrow[phantom, from=F, to=U, , "\dashv" rotate=90]
\end{tikzcd}
\]
this time between the $\infty$-category of $\mathscr{P}$-coalgebras and the derived $\infty$-category of chain complexes over $\kk$. The functor $\mathbf{triv}$ endows any chain complex with a trivial $\mathscr{P}$-coalgebra structure and its right adjoint $\mathbf{prim}$ is the functor of derived primitives of a $\mathscr{P}$-coalgebra. Its homology groups should be understood as (topological) coAndré-Quillen homology groups for types of coalgebras, of which (topological) coHochschild homology groups in the sense of \cite{HessShipley21} and \cite{BayindirPeroux23} are particular examples. See, in particular, \cite[Definition 2.10]{BayindirPeroux23}. 

\medskip

The adjunction $\mathbf{triv} \dashv \mathbf{prim}$ induces a monad on the derived $\infty$-category of chain complexes, which we denote by $\mathbf{Q}_{\mathscr{P}} = \mathbf{prim} \circ \mathbf{triv}$. Thus, the derived primitives adjunction factors universally through the $\infty$-category of $\mathbf{Q}_{\mathscr{P}}$-algebras, and we obtain the following triangle of adjunctions
\[
\begin{tikzcd}[column sep=5pc,row sep=2.5pc]
&\hspace{1pc}\mathbf{Alg}_{\mathbf{Q}_{\mathscr{P}}}(\mathbf{D}(\kk)) \arrow[dd, shift left=1.1ex, "\bm{\widehat{\Omega}}^{\mathbf{enh}}"{name=F}] \arrow[ld, shift left=.75ex, "\mathbf{U}"{name=C}]\\
\mathbf{D}(\kk)  \arrow[ru, shift left=1.5ex, "\mathbf{free}_{\mathbf{Q}_{\mathscr{P}}}"{name=A}]  \arrow[rd, shift left=1ex, "\mathbf{triv}"{name=B}] 
& \\
&\hspace{2.5pc}\mathbf{Coalg}_{\mathscr{P}}(\mathbf{D}(\kk)) ~, \arrow[uu, shift left=.75ex, "\bm{\widehat{\mathrm{B}}}^{\mathbf{enh}}"{name=U}] \arrow[lu, shift left=.75ex, "\mathbf{prim}"{name=D}] 
\end{tikzcd}
\]
where left adjoints are drawn on top and on the right. In particular, this gives a universal adjunction 
\[
\begin{tikzcd}[column sep=5pc,row sep=3pc]
         \mathbf{Coalg}_{\mathscr{P}}(\mathbf{D}(\kk)) \arrow[r, shift left=1.1ex, "\bm{\widehat{\mathrm{B}}}^{\mathbf{enh}}"{name=F}] &\mathbf{Alg}_{\mathbf{Q}_{\mathscr{P}}}(\mathbf{D}(\kk))~, \arrow[l, shift left=.9ex, "\bm{\widehat{\Omega}}^{\mathbf{enh}}"{name=U}] \arrow[phantom, from=F, to=U, , "\dashv" rotate=90]
\end{tikzcd}
\]

which factors the derived primitives-trivial structure adjunction, which we call the \textit{complete enhanced bar-cobar adjunction}. Although the triangle constructed above is dual to the one constructed in Subsection \ref{Subsection: enhanced bar-cobar adjunction}, Proposition \ref{prop: identification of the derived indecomposables comonad} does not a priori dualize and we can not identify the monad $\mathbf{Q}_{\mathscr{P}}$ right away. The reason is that the cosimplicial diagram that resolves a generic $\mathscr{P}$-coalgebra is made of cofree $\mathscr{P}$-coalgebras, which unlike algebras, admit no easy description. Nevertheless, using point-set models, we will be able to present the $\infty$-categorical $\mathbf{triv} \dashv \mathbf{prim}$ adjunction, its induced monad and thus the $\infty$-category of $\mathbf{Q}_{\mathscr{P}}$, which is going to be identified with the $\infty$-category of algebras over the $\infty$-cooperad $\mathbf{bar}(\mathscr{P})$ in the sense of Definition \ref{def: infinity cat C-algebra}.

\subsection{Absolute algebras and limit towers of $\infty$-categories}\label{subsection: absolute algebras inside the limit}
Let $\C$ be a conilpotent dg cooperad which satisfies Assumptions \ref{assumption 1}. It can be written as the colimit of the following tower of cooperads
\[
\tau^{\leq 1}\C \rightarrowtail \tau^{\leq 2}\C \rightarrowtail \cdots \rightarrowtail \tau^{\leq k}\C \rightarrowtail \cdots \rightarrowtail \colim_{k \in \mathbb{N}} \tau^{\leq k}\C \cong \C~,
\] 
where $\tau^{\leq k}\C$ is the truncation of $\C$ at arities strictly larger than $k$, for all $k \geq 1$. This tower induces an \textit{arity-filtration} on any dg $\C$-algebra $A$, whose successive quotients are given by 
\[
\begin{tikzcd}[column sep=3.5pc,row sep=3.5pc]
\widehat{\mathrm{S}}^c(\C)(A) \arrow[d,"\gamma_A",swap] \arrow[r,"\widehat{\mathrm{S}}^c(i_k)"]  \arrow[dr, phantom,"\ulcorner" rotate=-180, very near end]
&\widehat{\mathrm{S}}^c(\tau^{\leq k}\C)(A)\arrow[d]  \\
A \arrow[r]
&A/\mathrm{F}^k A~,
\end{tikzcd}
\]
where $i_k: \tau^{\leq k}\C \hookrightarrow \C$ is the canonical inclusion. Let us denote by 
\[
\upsilon_A: A \longrightarrow (\widehat{A})_{\mathrm{arity}} \coloneqq \lim_{k \in \mathbb{N}} A/\mathrm{F}^k A
\]
the canonical map from $A$ to its \textit{arity-completion}. 

\begin{lemma}\label{lemma: arity-filtration map to completion surjective}
Let $\C$ be a conilpotent dg cooperad which satisfies Assumptions \ref{assumption 1} and let $A$ be a dg $\C$-algebra. The map 
\[
\upsilon_A: A \twoheadrightarrow \lim_{k \in \mathbb{N}} A/\mathrm{F}^k A
\]
is a degree-wise epimorphism.
\end{lemma}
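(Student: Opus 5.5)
The idea is to show that every compatible sequence $(\bar a_k)_k$ in $\lim_k A/\mathrm{F}^k A$ is the image of a single element of $A$. That element will be obtained by applying the structure map $\gamma_A$ to an infinite product of operations, which is exactly what absolute algebras allow.

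First, identify $\mathrm{F}^k A$. By the pushout description, $\mathrm{F}^k A$ is the image under $\gamma_A$ of the kernel of
\[
\widehat{\mathrm{S}}^c(i_k)\colon \widehat{\mathrm{S}}^c(\C)(A)\longrightarrow \widehat{\mathrm{S}}^c(\tau^{\leq k}\C)(A).
\]
This kernel is the factor
\[
\prod_{n>k}[\C(n),A^{\otimes n}]^{\mathbb{S}_n}.
\]
Since $\widehat{\mathrm{S}}^c(i_k)$ is just a projection of products, it is degree-wise surjective, so the pushout quotient is well defined. Moreover, $\mathrm{F}^k A=\gamma_A\big(\prod_{n>k}[\C(n),A^{\otimes n}]^{\mathbb{S}_n}\big)$.

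Next, lift a compatible family. Fix a degree $d$ and an element $(\bar a_k)_{k\geq 1}$ of the limit in degree $d$. Choose any $a_1\in A_d$ lifting $\bar a_1$. Inductively, $a_{k+1}-a_k$ maps to zero in $A/\mathrm{F}^k A$. Hence $a_{k+1}-a_k=\gamma_A(x_{k})$ for some $x_k\in\prod_{n>k}[\C(n),A^{\otimes n}]^{\mathbb{S}_n}$ of degree $d$. Here I use that we work over a field, so images and kernels are computed degree-wise.

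The element to aim for is then
\[
a \coloneqq a_1+\gamma_A\Big(\sum_{k\geq 1} x_k\Big).
\]
The sum $\sum_k x_k$ makes sense in $\widehat{\mathrm{S}}^c(\C)(A)$ because $x_k$ has no components in arities $\leq k$. So in each arity $n$ only the finitely many $x_k$ with $k<n$ contribute. This arity-wise finiteness is the key point, and it is where Assumption \ref{assumption 1} (reducedness, so arities start at $1$) enters.

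It remains to check that $a$ maps to $\bar a_k$ for every $k$. Split
\[
\sum_{j\geq 1} x_j=\sum_{j<k}x_j+\sum_{j\geq k}x_j .
\]
The tail $\sum_{j\geq k}x_j$ lies in $\prod_{n>k}[\C(n),A^{\otimes n}]^{\mathbb{S}_n}$, so its image lies in $\mathrm{F}^kA$.

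The main obstacle is the head. We need $\gamma_A$ to be additive on this split, which is automatic since $\gamma_A$ is linear. Then
\[
\gamma_A\Big(\sum_{j<k}x_j\Big)=\sum_{j<k}(a_{j+1}-a_j)=a_k-a_1 .
\]
Hence $a\equiv a_k \pmod{\mathrm{F}^kA}$, so $a$ maps to $\bar a_k$ for all $k$.

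Linearity alone suffices here; I do not need the monad associativity. The one subtlety is to confirm that the infinite sum in $\widehat{\mathrm{S}}^c(\C)(A)$ is computed componentwise in the product, which it is by definition. Together these steps give degree-wise surjectivity of $\upsilon_A$.
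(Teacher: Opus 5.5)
Your proof is correct and takes essentially the same approach as the paper, which only says the argument is analogous to Le Grignou--Lejay's Proposition 4.24 with the weight filtration replaced by the arity filtration; your lift-and-sum argument, using $\mathrm{F}^kA=\gamma_A\bigl(\prod_{n>k}[\C(n),A^{\otimes n}]^{\mathbb{S}_n}\bigr)$ and the linearity of $\gamma_A$, is that strategy written out directly for the arity filtration. One small inaccuracy: the arity-wise finiteness of $\sum_k x_k$ does not use reducedness, since each $x_k$ is supported in arities $>k$ anyway; reducedness (no arity $0$) is instead what makes the truncations $\tau^{\leq k}\C$ subcooperads, so that the filtration is defined at all.
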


\begin{proof}
The idea of the proof is completely analogous to \cite[Proposition 4.24]{linearcoalgebras}. One must replace the weight filtration with the arity filtration in \cite[4.20, 4.21, 4.22 and 4.23]{linearcoalgebras}. 
See also \cite[Section 3.6]{premierpapier} for similar arguments. 
\end{proof}

\begin{proposition}
Let $\C$ be a conilpotent dg cooperad which satisfies Assumptions \ref{assumption 1} and let $A$ be a complete dg $\C$-algebra. The map 
\[
\upsilon_A: A \xrightarrow[]{\simeq} \lim_{k \in \mathbb{N}} A/\mathrm{F}^k A
\]
is an isomorphism, thus any complete dg $\C$-algebra is arity-complete.
\end{proposition}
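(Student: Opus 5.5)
By Lemma \ref{lemma: arity-filtration map to completion surjective}, $\upsilon_A$ is already a degree-wise epimorphism. So the plan is to prove it is injective, i.e.\ that
\[
\bigcap_{k} \mathrm{F}^k A = 0
\]
whenever $A$ is complete. The strategy is to compare the arity filtration with the canonical (coradical) filtration $W^p A$ of Definition \ref{def canonical filtration}. For a complete algebra, $A \cong \lim_p A/W^pA$ by definition, so $\bigcap_p W^pA=0$. It then suffices to show that for every $p$ there is some $k$ with $\mathrm{F}^k A \subseteq W^p A$.

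To get this containment, I first identify $\mathrm{F}^kA$ concretely. Reading off the pushout square, $\mathrm{F}^kA$ is the image under $\gamma_A$ of the kernel of $\widehat{\mathrm{S}}^c(i_k)$, namely
\[
\prod_{n>k}[\C(n),A^{\otimes n}]^{\mathbb{S}_n}~,
\]
the operations of arity strictly greater than $k$. Similarly, $W^pA$ is the image under $\gamma_A$ of $\widehat{\mathrm{S}}^c(\C/R_p\C)(A)$, i.e.\ of those formal series vanishing on $R_p\C$.

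The key combinatorial input is that, under Assumption \ref{assumption 1}, $\C$ is reduced. An element of arity $n$ in $R_p\C$ admits at most $p$-fold iterated nontrivial decompositions, and each nontrivial decomposition of a reduced cooperad splits into pieces of arity $\geq 2$. I would argue by induction on $p$ that $R_p\C$ is concentrated in arities $\leq 2^p$, or some explicit bound $N(p)$. Granting that, any series supported in arities $n > N(p)$ vanishes on $R_p\C$. It therefore factors through $\C/R_p\C$, which gives $\mathrm{F}^{N(p)}A \subseteq W^pA$.

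The main obstacle I expect is exactly this arity bound on the coradical filtration. The coradical filtration is defined via iterated reduced decompositions, and one has to check that an element of large arity cannot lie in a low stage. A cleaner route, if the bound is awkward, is to use the dual statement: the dual operad $\C^*$ is reduced and finite type, so its arity-$n$ part is spanned by compositions of at least $\log_2 n$ generators of arity $\geq 2$, i.e.\ elements of arity $n$ have weight at least $\lceil \log_2 n\rceil$ in the sense of iterated composition. Dualizing, $(R_p\C)(n)$ is orthogonal to composites of more than $p$ binary-or-higher operations, forcing $(R_p\C)(n)=0$ for $n>2^p$.

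Once the inclusion $\mathrm{F}^{N(p)}A\subseteq W^pA$ holds, injectivity follows: $\bigcap_k\mathrm{F}^kA\subseteq\bigcap_pW^pA=0$. Combined with Lemma \ref{lemma: arity-filtration map to completion surjective}, this shows $\upsilon_A$ is an isomorphism.
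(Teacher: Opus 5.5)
The reduction is sound. $\mathrm{F}^kA$ is $\gamma_A$ of the series supported in arities $>k$, and $W^pA$ is $\gamma_A$ of the series vanishing on $R_p\C$. So $R_p\C\subseteq\tau^{\leq N}\C$ would give $\mathrm{F}^{N}A\subseteq W^pA$ and finish the proof.

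\textbf{The gap.} That arity bound is false, and reducedness gives exactly the opposite inequality. Every piece of a non-trivial decomposition in a reduced cooperad has arity $\geq 2$, so a tree-decomposition of an arity-$n$ element has at most $n-1$ vertices. Thus arity bounds weight, $\tau^{\leq k}\C\subseteq R_{k-1}\C$. This only yields $W^{k-1}A\subseteq\mathrm{F}^kA$, which is useless for injectivity.

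Weight does not bound arity, so your induction fails at its first step: primitives can have any arity. Examples are the cogenerators $s\bar{\mathcal{P}}$ of $\mathrm{B}\mathcal{P}$, and the cooperad $\C_0$ of Counterexample~\ref{counter-example: operad without good completion}, where every $c_n$ is primitive and $R_p\C_0=\C_0$ for $p\geq 2$. Your dual route fails for the same reason. A lower bound on the number of generators in an arity-$n$ composite needs an upper bound on the arities of the generators, and $\C_0^*$ has an indecomposable generator $c_n^*$ in every arity.

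\textbf{The gap cannot be filled.} The statement fails for $\C_0$, taken non-symmetric. Its canonical filtration is eventually zero, so every dg $\C_0$-algebra is complete. Let $A=\kk a\oplus\kk b$ in degree $0$ with zero differential. Choose a nonzero linear form $\ell$ on $\prod_{n\geq 2}\kk$ vanishing on $\bigoplus_{n\geq 2}\kk$. For $x=(x_n)_n\in\prod_{n\geq 1}A^{\otimes n}$, set $\gamma_A(x)=x_1+\ell\big((\lambda_n(x_n))_{n\geq 2}\big)\,b$, where $\lambda_n(x_n)$ is the coefficient of $a^{\otimes n}$.

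\begin{itemize}
\item It is an algebra: the reduced decomposition of $\C_0$ vanishes, and $\gamma_A(y)$ has the same $a$-coefficient as $y_1$. So both sides of the associativity axiom agree by linearity of $\ell$, and $A$ is a complete dg $\C_0$-algebra.
\item It is not arity-complete: $\ell$ is nonzero on every tail $\prod_{n>k}\kk$, so $\mathrm{F}^kA=\kk b$ for all $k\geq 1$. Hence $\upsilon_A$ is the projection $A\to\kk a$, which is not injective.
\end{itemize}

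\textbf{Comparison with the paper.} The paper's proof follows exactly your route, with $N(p)=p+1$. It asserts $\mathrm{F}^kA\subset W^{k-1}A$ ``since operations in arities $\leq k$ can be of weight at most $k-1$''. That fact gives the reverse inclusion, so the paper's argument has the same hole.

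What is missing is an extra hypothesis making each $R_p\C$ arity-bounded. One example is $\C^*$ being generated by operations of bounded arity, as for $\mathcal{A}ss^*$, $\mathcal{C}om^*$ and $\mathcal{L}ie^*$. Under such a hypothesis your argument goes through as written.
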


\begin{proof}
By Lemma \ref{lemma: arity-filtration map to completion surjective}, it suffices to show that $\upsilon_A$ is a monomorphism. The key point is the following: since $\C$ satisfies Assumptions \ref{assumption 1}, then it is cogenerated in arities $\geq 2$. This implies that 
\[
\mathrm{F}^k A \subset \mathrm{W}^{k-1} A~, 
\]
since operations in arities $\leq k$ can be of weight at most $k-1$. This, in turn, implies that 
\[
\bigcap_{k \geq 1} \mathrm{F}^k A \subset \bigcap_{k \geq 1}  \mathrm{W}^{k-1} A = \{0\}~, 
\]
since we assumed $A$ to be complete. Hence the kernel of the map $\upsilon_A$ is zero and it is also a monomorphism. 
\end{proof}

Therefore, any complete dg $\C$-algebra $A$ can be written as a limit
\[
A \cong \lim_{k \in \mathbb{N}} A/\mathrm{F}^k A~.
\]
Notice that $A/\mathrm{F}^k A$ is the unit of the adjunction 
\[
\begin{tikzcd}[column sep=5pc,row sep=3pc]
            \mathcal{C}\mbox{-}\mathsf{alg}^{\mathsf{comp}} \arrow[r,"(i_k)_!"{name=F},shift left=1.1ex ] &   \tau^{\leq k}\mathcal{C}\mbox{-}\mathsf{alg}^{\mathsf{comp}} \arrow[l,"(i_k)^*"{name=U},shift left=1.1ex ]
            \arrow[phantom, from=F, to=U, , "\dashv" rotate=-90]
\end{tikzcd}
\]

induced by the inclusion $i_k: \tau^{\leq k} \C \longrightarrow \C$, that is, we have an isomorphism $(i_k)^*(i_k)_!(A) \cong A/\mathrm{F}^k A$ for any complete dg $\C$-algebra $A$. 

\begin{lemma}\label{lemma: identification of the truncated category}
Let $k \geq 1$. There is an equivalence of categories
\[
\tau^{\leq k}\mathcal{C}\mbox{-}\mathsf{alg}^{\mathsf{comp}} \cong \tau^{\leq k}\mathcal{C}^*\mbox{-}\mathsf{alg}~. 
\]
\end{lemma}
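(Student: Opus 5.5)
The key observation is that for the truncated cooperad $\tau^{\leq k}\C$ the dual Schur functor involves only finitely many arities. Concretely,
\[
\widehat{\mathrm{S}}^c(\tau^{\leq k}\C)(V) = \prod_{n \leq k} [\C(n),V^{\otimes n}]^{\mathbb{S}_n}.
\]
A finite product is a finite direct sum, and each $\C(n)$ is of finite total dimension by Assumption \ref{assumption 1}. So I would first establish a natural isomorphism of endofunctors
\[
\bigoplus_{n \leq k} \C^*(n) \otimes_{\mathbb{S}_n} V^{\otimes n} \;\cong\; \prod_{n \leq k} [\C(n),V^{\otimes n}]^{\mathbb{S}_n}.
\]
This is the composite of the canonical map $\C(n)^*\otimes V^{\otimes n}\to [\C(n),V^{\otimes n}]$, which is an isomorphism because $\C(n)$ is dualizable, with the norm map from coinvariants to invariants. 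The norm map is an isomorphism either because $\operatorname{char}\kk=0$, or because in the non-symmetric case there is no group action at all.

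Next I would check that this isomorphism $\iota_{\tau^{\leq k}\C^*}$ is an isomorphism of monads, i.e. it intertwines the monad structure of $\mathrm{S}(\tau^{\leq k}\C^*)$, coming from the operad structure on $\tau^{\leq k}\C^*=(\tau^{\leq k}\C)^*$, with the lax-monoidal monad structure of $\widehat{\mathrm{S}}^c(\tau^{\leq k}\C)$. This is the same compatibility already used in the proof of Proposition \ref{Prop: absolution restriction adjunction}, where $\iota_{\mathcal{P}}$ was shown to be an inclusion of monads, applied to $\mathcal{P}=\tau^{\leq k}\C^*$. Here $\tau^{\leq k}\C^*$ denotes the operad whose composition truncates to zero in arities $>k$, so it is the quotient operad of $\C^*$ dual to the subcooperad $\tau^{\leq k}\C$. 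The only new point is that the inclusion is now surjective, which follows from the display above. Since the two monads are isomorphic, their Eilenberg--Moore categories are isomorphic: dg $\tau^{\leq k}\C$-algebras are the same as dg $\tau^{\leq k}\C^*$-algebras, and the isomorphism is implemented by $\mathrm{Res}$.

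It remains to see that every dg $\tau^{\leq k}\C$-algebra is automatically complete, so that the superscript $\mathsf{comp}$ imposes no condition. The cooperad $\tau^{\leq k}\C$ is reduced and cogenerated in arities $\geq 2$, so an operation of weight $p$ has arity at least $p+1$. Hence $\tau^{\leq k}\C/R_p\tau^{\leq k}\C=0$ for $p\geq k$, which gives $W^pB=0$ for $p\geq k$. The canonical filtration is therefore finite, the limit $\lim_p B/W^pB$ stabilizes at $B$, and $\varphi_B$ is an isomorphism.

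I expect the main, though modest, obstacle to be the monad compatibility in the second step. One has to verify that the lax-monoidal structure map $\widehat{\mathrm{S}}^c(M)\circ\widehat{\mathrm{S}}^c(N)\to\widehat{\mathrm{S}}^c(M\circ N)$ restricts, on finite-arity truncations, to the dual of the strong monoidal structure of $\mathrm{S}$. The plan is to reduce this to the finite-dimensional isomorphism $(M\circ N)^*\cong M^*\circ N^*$, which holds here because in bounded arities only finitely many terms occur.
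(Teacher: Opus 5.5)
Your proposal is correct and follows the paper's argument: the paper simply notes that the monad inclusion $\iota_{\tau^{\leq k}\C}$ from the direct sum into the product is an isomorphism because only finitely many arities occur. Your additional checks fill in details the paper leaves implicit: dualizability of each $\C(n)$, invertibility of the norm map in characteristic zero or the non-symmetric case, and the observation that the canonical filtration of any $\tau^{\leq k}\C$-algebra vanishes in high degrees, so the superscript $\mathsf{comp}$ imposes no condition.
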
 

\begin{proof}
The natural inclusion of monads 
\[
\iota_{\tau^{\leq k}\mathcal{C}}: \bigoplus_{n \geq 0}^k \tau^{\leq k}\mathcal{C}^*(n) \otimes_{\mathbb{S}_n} (-)^{\otimes n} \hookrightarrow \prod_{n \geq 0}^k \mathrm{Hom}_{\mathbb{S}_n}(\tau^{\leq k}\mathcal{C}(n),(-)^{\otimes n})~,
\]
is in fact an isomorphisms since finite sums and products agree. 
\end{proof}

\begin{theorem}\label{thm: arity decomposition of the infinity category of complete C algebras}
There is an adjunction of $\infty$-categories 
\[
\begin{tikzcd}[column sep=6pc,row sep=3pc]
            \mathcal{C}\mbox{-}\mathsf{alg}^{\mathsf{comp}}~[\mathsf{Q.iso}^{-1}] \arrow[r,"(\mathbb{L}(i_k)_!(-))_{k \geq 1}"{name=F},shift left=1.1ex ] &\lim_{k \geq 1}^h \tau^{\leq k}\mathcal{C}^*\mbox{-}\mathsf{alg}~[\mathsf{Q.iso}^{-1}]~, \arrow[l,"\lim_{k \geq 1}^h (i_k)^*(-)"{name=U},shift left=1.1ex ]
            \arrow[phantom, from=F, to=U, , "\dashv" rotate=-90]
\end{tikzcd}
\]
where the left adjoint $(\mathbb{L}(i_k)_!(-))_{k \geq 1}$ is fully faithful. 
\end{theorem}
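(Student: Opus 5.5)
We plan to build the adjunction out of the compatible family of Quillen adjunctions $(i_k)_! \dashv (i_k)^*$, one for each $k \geq 1$, and then to prove full faithfulness by showing that the derived unit is an equivalence.

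\textbf{Step 1: Quillen adjunctions.} By Lemma \ref{lemma: identification of the truncated category}, each $\tau^{\leq k}\mathcal{C}\mbox{-}\mathsf{alg}^{\mathsf{comp}}$ is the category of dg $\tau^{\leq k}\mathcal{C}^*$-algebras. We give it the projective model structure, in which fibrations are surjections and weak equivalences are quasi-isomorphisms. Restriction $(i_k)^*$ does not change underlying complexes, so it preserves surjections and quasi-isomorphisms. Hence $(i_k)_! \dashv (i_k)^*$ is a Quillen adjunction for the structure of Proposition \ref{Prop: right Bousfield localization of absolute algebras}.

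\textbf{Step 2: assembling the adjunction.} The inclusions $\tau^{\leq k}\mathcal{C} \hookrightarrow \tau^{\leq k+1}\mathcal{C}$ give a tower of Quillen adjunctions, hence a tower of $\infty$-categories with right adjoints as transition maps. The left adjoints $\mathbb{L}(i_k)_!$ are compatible up to coherent equivalence, because $(i_{k,k+1})_!\circ(i_{k+1})_! \cong (i_k)_!$. They therefore assemble into a functor to the homotopy limit. Its right adjoint sends a compatible family $(B_k)$ to $\lim^h_k (i_k)^* B_k$. This limit can be computed in $\mathcal{C}\mbox{-}\mathsf{alg}^{\mathsf{comp}}$, because limits of complete algebras are computed underlying and $(i_k)^*$ is a right Quillen functor. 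The adjunction is then formal: maps into a homotopy limit are the homotopy limit of the mapping spaces.

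\textbf{Step 3: full faithfulness.} It suffices to show that the derived unit $A \to \lim^h_k (i_k)^*\mathbb{L}(i_k)_!A$ is a quasi-isomorphism for every $A$. We reduce to cofibrant $A$. By the Quillen equivalence with $\Omega\mathcal{C}$-coalgebras, we may take $A = \widehat{\Omega}_\iota W$, which is cofibrant.

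For such $A$ we compute $\mathbb{L}(i_k)_! A$ explicitly. We expect $(i_k)_!\widehat{\Omega}_\iota W$ to be the truncated cobar construction, namely the free $\tau^{\leq k}\mathcal{C}^*$-algebra on $s^{-1}W$ with the induced differential, which is also $A/\mathrm{F}^k A$. By the preceding Proposition, $A \cong \lim_k A/\mathrm{F}^k A$. The maps $A/\mathrm{F}^{k+1}A \to A/\mathrm{F}^k A$ are degree-wise surjections, so the tower is fibrant and the strict limit computes the homotopy limit. The unit is therefore the isomorphism $A \cong \lim_k A/\mathrm{F}^k A$.

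\textbf{Main obstacle.} The hardest step is showing that $(i_k)_!A$, for cofibrant $A$, is already $A/\mathrm{F}^k A$ and computes the derived functor. This holds on free algebras $\widehat{\mathrm{S}}^c(\mathcal{C})(V)$, where truncation gives the free $\tau^{\leq k}\mathcal{C}$-algebra. For general cofibrant objects, which are retracts of cell-type complete cobar constructions, we would argue that both sides commute with the relevant quasi-free differentials. Alternatively, we would use a filtration on $W$ to show that the natural comparison $\mathbb{L}(i_k)_!A \to A/\mathrm{F}^kA$ is a quasi-isomorphism.
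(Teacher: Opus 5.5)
\textbf{Gap in Step 3.} Your Steps 1 and 2 match the paper, and so does the endgame of Step 3: the unit at a cofibrant $A$ is $A\to\lim_k A/\mathrm{F}^kA$, along a tower of surjections, so the strict limit is also a homotopy limit. The gap is the reduction ``we may take $A=\widehat{\Omega}_\iota W$, which is cofibrant''. Your Quillen adjunctions $(i_k)_!\dashv(i_k)^*$, and hence the functors $\mathbb{L}(i_k)_!$ in the statement, use the quasi-isomorphism model structure of Proposition~\ref{Prop: right Bousfield localization of absolute algebras}. That structure is a right Bousfield localization of the canonical one transferred along $\widehat{\Omega}_\iota\dashv\widehat{\mathrm{B}}_\iota$: it has the same fibrations and more weak equivalences, hence fewer cofibrations. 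The object $\widehat{\Omega}_\iota W$ is only guaranteed to be cofibrant for the canonical structure, so $(i_k)_!\widehat{\Omega}_\iota W$ need not compute $\mathbb{L}(i_k)_!$.

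Here is a counterexample. Take $\C=\mathcal{A}ss^*$ and $W=\kk x$ with $|x|=-1$, the desuspension of the coalgebra $\kk e$ with $\Delta(e)=e\otimes e$. Then $\widehat{\Omega}_\iota W\cong\prod_{n\geq1}\kk x^n$ with $dx=\pm x^2$. Hence $d(x^{2m+1})=\pm x^{2m+2}$ and $d(x^{2m})=0$. Since degree $-n$ is spanned by $x^n$, this algebra is acyclic, so $\mathbb{L}(i_1)_!\widehat{\Omega}_\iota W\simeq 0$. Yet $(i_1)_!\widehat{\Omega}_\iota W=A/\mathrm{F}^1A\cong\kk x$ with zero differential. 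In particular $\widehat{\Omega}_\iota W$ is not cofibrant in the quasi-isomorphism structure: otherwise the left Quillen functor $(i_1)_!$ would preserve the weak equivalence $0\to\widehat{\Omega}_\iota W$ between cofibrant objects. So the comparison $\mathbb{L}(i_k)_!A\to A/\mathrm{F}^kA$ that you hoped to prove via a filtration on $W$ is false in general. For such $A$, your computation describes the family $((i_k)_!A)_k$, not the value of the left adjoint.

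\textbf{The fix.} Do not specialize $A$; this is what the paper does. Let $A$ be any cofibrant object for the quasi-isomorphism structure. The two facts you need hold for every complete $A$:
\begin{itemize}
\item $(i_k)^*(i_k)_!A\cong A/\mathrm{F}^kA$ is a formal 1-categorical identification. It is extension of scalars along the degreewise surjective map of monads $\widehat{\mathrm{S}}^c(\C)\to\widehat{\mathrm{S}}^c(\tau^{\leq k}\C)$, recorded just before Lemma~\ref{lemma: identification of the truncated category}.
\item $A\cong\lim_kA/\mathrm{F}^kA$ is the proposition preceding that lemma.
\end{itemize}
Cofibrancy then gives $(i_k)_!A\simeq\mathbb{L}(i_k)_!A$ directly. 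Since all objects are fibrant and the transition maps are surjections, the strict limit is the homotopy limit. So what you flagged as the main obstacle is automatic; the real subtlety is which model structure the cofibrancy refers to.
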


\begin{proof}
Up to the equivalence in Lemma \ref{lemma: identification of the truncated category}, the adjunction 
\[
\begin{tikzcd}[column sep=5pc,row sep=3pc]
            \mathcal{C}\mbox{-}\mathsf{alg}^{\mathsf{comp}} \arrow[r,"(i_k)_!"{name=F},shift left=1.1ex ] &   \tau^{\leq k}\mathcal{C}\mbox{-}\mathsf{alg}^{\mathsf{comp}} \arrow[l,"(i_k)^*"{name=U},shift left=1.1ex ]
            \arrow[phantom, from=F, to=U, , "\dashv" rotate=-90]
\end{tikzcd}
\]
is a Quillen adjunction for all $n \geq 1$, since the functor $(i_n)^*$ clearly preserves fibrations and quasi-isomorphisms.  This induces a functor of $\infty$-categories
\[
(\mathbb{L}(i_k)_!(-))_{k \geq 1}: \mathcal{C}\mbox{-}\mathsf{alg}^{\mathsf{comp}}~[\mathsf{Q.iso}^{-1}] \longrightarrow \lim_{k \geq 1} \tau^{\leq k}\mathcal{C}^*\mbox{-}\mathsf{coalg}~[\mathsf{Q.iso}^{-1}]~. 
\]
The functor sends a complete dg $\C$-algebra $A$ to the collection $(\mathbb{L}(i_k)_!(A))_{k \geq 1}$, which lies in the limit in the right hand side. It can be checked that it has a a right adjoint, which sends a collection $(A_k')_{k \geq 1}$ of $\tau^{\leq k}\mathcal{C}^*$-algebras to their (homotopy) limit $\lim_{k \geq 1}^h((i_k)^*(A_k'))_{k \geq 1}$. Let $A$ be a cofibrant complete dg $\C$-algebra, it can be written as a (1-categorical) limit 
\[
A \cong \lim_{k \in \mathbb{N}} (i_k)^*(i_k)_!(A)~, 
\]
where all the maps are surjections and all the algebras are fibrant. Hence it is also a homotopy limit. Furthermore, since $A$ is cofibrant, the object $(i_k)_!(A)$ computes $\mathbb{L}(i_k)_!(A)$. Therefore the unit of the $\infty$-categorical adjunction is a weak equivalence and $\mathbb{L}(i_k)_!$ is fully faithful. 
\end{proof}

\begin{remark}[Relationship with the Goodwillie tower] 
The limit 
\[
\lim_{n \geq 1} \tau^{\leq n}\mathcal{C}^*\mbox{-}\mathsf{alg}~[\mathsf{Q.iso}^{-1}]
\]
is precisely the limit of the Goodwillie tower of the $\infty$-category of dg $\mathcal{C}^*$-algebras, localized at quasi-isomorphisms, in the sense of \cite{heuts21}. See \cite[Section 6.1]{heuts21} for more details. 
\end{remark}

\begin{remark}
There is a more general version of Theorem \ref{thm: arity decomposition of the infinity category of complete C algebras}, where one does not assume the conditions in Assumptions \ref{assumption 1}. It gives an adjunction of $\infty$-categories
\[
\begin{tikzcd}[column sep=6pc,row sep=3pc]
            \mathcal{C}\mbox{-}\mathsf{alg}^{\mathsf{comp}}~[\mathsf{Q.iso}^{-1}] \arrow[r,"(\mathbb{L}(i_\omega)_!(-))_{\omega \geq 1}"{name=F},shift left=1.1ex ] &\lim_{\omega \geq 1}^h \mathscr{R}_\omega\mathcal{C}\mbox{-}\mathsf{alg}~[\mathsf{Q.iso}^{-1}]~, \arrow[l,"\lim_{\omega \geq 1}^h (i_\omega)^*(-)"{name=U},shift left=1.1ex ]
            \arrow[phantom, from=F, to=U, , "\dashv" rotate=-90]
\end{tikzcd}
\]
where $\mathscr{R}_\omega$ denotes the $\omega$-stage of the coradical filtration of the conilpotent dg cooperad $\C$, where the left adjoint functor is again fully faithful. However, in general the weight filtration induced by the coradical filtration of $\C$ and the arity filtration induced by the truncations of $\C$ differ, and one neither identify the $\infty$-categories in this limit with those of algebras over operads nor compare it to the Goodwillie tower of the $\infty$-category of dg $\mathcal{C}^*$-algebras up to quasi-isomorphisms.
\end{remark}

The limit considered in Theorem \ref{thm: arity decomposition of the infinity category of complete C algebras}
\[
\lim_{k \geq 1} \tau^{\leq k}\mathcal{C}^*\mbox{-}\mathsf{alg}~[\mathsf{Q.iso}^{-1}]
\]
is a priori taken in the $\infty$-category of presentable $\infty$-categories and right adjoints above $\mathbf{D}(\kk)$. It coincides with the limit in the $\infty$-category of presentable $\infty$-categories and right adjoints since the diagram is contractible and thus can also be computed in the $\infty$-category of $\infty$-categories. 

\begin{corollary}\label{cor: infinity monadicity of absolute algebras}
Let $\C$ be a conilpotent dg cooperad satisfying Assumptions \ref{assumption 1}. The Quillen adjunction 
\[
\begin{tikzcd}[column sep=5pc,row sep=3pc]
            \mathsf{Ch}(\kk) \arrow[r,"\widehat{\mathrm{S}}^c(\C)(-)"{name=F},shift left=1.1ex ] &   \mathcal{C}\mbox{-}\mathsf{alg}^{\mathsf{comp}} \arrow[l,"\mathrm{U}"{name=U},shift left=1.1ex ]
            \arrow[phantom, from=F, to=U, , "\dashv" rotate=-90]
\end{tikzcd}
\]
between complete dg $\C$-algebras, endowed with the model structure where weak equivalences are quasi-isomorphisms and fibrations degree-wise surjection, and the underlying category of chain complexes induces an adjunction at the level of $\infty$-categories
\[
\begin{tikzcd}[column sep=5pc,row sep=3pc]
            \mathbf{D}(\kk) \arrow[r,"\widehat{\mathrm{S}}^c(\C)(-)"{name=F},shift left=1.1ex ] &\mathcal{C}\mbox{-}\mathsf{alg}^{\mathsf{comp}}~[\mathsf{Q.iso}^{-1}] \arrow[l,"\mathrm{U}"{name=U},shift left=1.1ex ]
            \arrow[phantom, from=F, to=U, , "\dashv" rotate=-90]
\end{tikzcd}
\]
which is monadic.
\end{corollary}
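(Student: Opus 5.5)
The plan is to apply the Barr--Beck--Lurie monadicity theorem to the derived forgetful functor $\mathrm{U}: \mathcal{C}\mbox{-}\mathsf{alg}^{\mathsf{comp}}~[\mathsf{Q.iso}^{-1}] \to \mathbf{D}(\kk)$. This requires checking three things: $\mathrm{U}$ has a left adjoint, it is conservative, and it preserves geometric realizations of $\mathrm{U}$-split simplicial objects.

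The adjunction itself comes for free. $\mathrm{U}$ preserves fibrations (degree-wise surjections) and weak equivalences (quasi-isomorphisms), so it is right Quillen, and the adjunction descends to $\infty$-categories. Conservativity is also immediate: weak equivalences are created by $\mathrm{U}$. Since they are exactly the quasi-isomorphisms, a map whose image in $\mathbf{D}(\kk)$ is an equivalence is already a weak equivalence.

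The real work is the colimit condition. I will deduce it from Theorem \ref{thm: arity decomposition of the infinity category of complete C algebras} rather than attacking realizations in complete algebras directly.

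First, for each $k$, Lemma \ref{lemma: identification of the truncated category} identifies $\tau^{\leq k}\mathcal{C}\mbox{-}\mathsf{alg}^{\mathsf{comp}}$ with algebras over the operad $\tau^{\leq k}\mathcal{C}^*$. The category $\tau^{\leq k}\mathcal{C}^*\mbox{-}\mathsf{alg}~[\mathsf{Q.iso}^{-1}]$ is then monadic over $\mathbf{D}(\kk)$ by the rectification remark for algebras over operads. This uses Assumption \ref{assumption 1}: either characteristic zero or non-symmetric, so the operad is $\mathbb{S}$-projective. The monad $\mathbf{S}(\tau^{\leq k}\mathcal{C}^*)$ is a finite sum of tensor powers, hence preserves sifted colimits.

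Second, the transition functors $(i_k)^*$ commute with the forgetful functors to $\mathbf{D}(\kk)$. Limits of presentable $\infty$-categories monadic over $\mathbf{D}(\kk)$, along right adjoints commuting with the forgetful functors, are again monadic: conservativity and the Barr--Beck colimit condition hold levelwise. Hence $\lim_k \tau^{\leq k}\mathcal{C}^*\mbox{-}\mathsf{alg}~[\mathsf{Q.iso}^{-1}]$ is monadic over $\mathbf{D}(\kk)$.

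Third, Theorem \ref{thm: arity decomposition of the infinity category of complete C algebras} embeds $\mathcal{C}\mbox{-}\mathsf{alg}^{\mathsf{comp}}~[\mathsf{Q.iso}^{-1}]$ fully faithfully, via a left adjoint, into this limit. The composite with the forgetful functor of the limit is, up to equivalence, $\mathrm{U}$. This holds because a cofibrant complete algebra $A$ is the homotopy limit of its arity truncations, and its underlying complex is the limit of the underlying complexes of $A/\mathrm{F}^kA$.

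The main obstacle is the colimit condition: I must show that $\mathrm{U}$-split geometric realizations exist in the complete algebras and are preserved by $\mathrm{U}$. The essential image of a fully faithful left adjoint is closed under colimits, so realizations computed in the limit category land in the image. The difficulty is that the forgetful functor of the limit is not the composite of the underlying-complex functors in a naive way: the underlying object of a compatible family $(A_k)$ is $\lim_k \mathrm{U}(A_k)$, and an inverse limit need not commute with geometric realizations.

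My approach is to use the $\mathrm{U}$-splitting. A split simplicial object has its realization preserved by every functor, in particular levelwise and by the limit, because split realizations are absolute colimits. So for $\mathrm{U}$-split diagrams the realization in $\mathbf{D}(\kk)$ agrees with the limit of the levelwise realizations. This yields the Barr--Beck condition and hence monadicity.

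If this limit-versus-realization argument proves delicate, the fallback is a direct point-set argument. The plan there is to realize a $\mathrm{U}$-split simplicial complete algebra by the totalization of its bisimplicial bar resolution. Then I would check, using surjectivity of the tower maps from Lemma \ref{lemma: arity-filtration map to completion surjective}, that the resulting Mittag-Leffler tower has vanishing $\lim^1$.
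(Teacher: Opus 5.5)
Your overall architecture matches the paper's: the truncated levels are monadic via Lemma~\ref{lemma: identification of the truncated category}, the limit is monadic, the left adjoint $L=(\mathbb{L}(i_k)_!)_{k}$ of Theorem~\ref{thm: arity decomposition of the infinity category of complete C algebras} is fully faithful, and Barr--Beck--Lurie finishes. However, the step you identify as the main obstacle is handled by an argument that fails.

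Let $X_\bullet$ be a simplicial complete $\C$-algebra with $\mathrm{U}X_\bullet$ split, and put $Y^{(k)}_\bullet:=\mathrm{U}\,\mathbb{L}(i_k)_!X_\bullet$, so that $\mathrm{U}X_\bullet\simeq\lim_kY^{(k)}_\bullet$. Absoluteness only says that a functor applied to the split object $\mathrm{U}X_\bullet$ preserves its realization. The $Y^{(k)}_\bullet$ are not of that form: the truncation $A\mapsto A/\mathrm{F}^kA$ depends on the algebra structure and does not commute with $\mathrm{U}$. So the $Y^{(k)}_\bullet$ inherit no splitting, and nothing lets you move $\lim_k$ past $|{-}|$.

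The principle you invoke (if $\lim_kY^{(k)}_\bullet$ is split then $|\lim_kY^{(k)}_\bullet|\simeq\lim_k|Y^{(k)}_\bullet|$) is false in general. Under the stable Dold--Kan correspondence, let $Y^{(k)}_\bullet$ have skeletal filtration $0\to\cdots\to0\to\kk\xrightarrow{=}\kk\to\cdots$, with $\kk$ first appearing at stage $k$. Then $Y^{(k)}_m\simeq 0$ for $m<k$, so $\lim_kY^{(k)}_\bullet\simeq 0$ is trivially split, while $\lim_k|Y^{(k)}_\bullet|\simeq\kk$. Your fallback has the same blind spot: surjectivity of the tower and vanishing $\lim^1$ only ensure that the strict limit computes the homotopy limit. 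They say nothing about $\lim_k$ commuting with an infinite colimit.

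What is actually needed, and what the paper uses, is monadicity of $\mathcal{L}:=\lim_k\tau^{\le k}\C^*\mbox{-}\mathsf{alg}~[\mathsf{Q.iso}^{-1}]$ with respect to its true forgetful functor $\mathrm{U}_{\mathcal{L}}\colon(A_k)_k\mapsto\lim_k\mathrm{U}(A_k)$. Equivalently $\mathrm{U}_{\mathcal{L}}=\mathrm{U}\circ\lim_k(i_k)^*$, so $\mathrm{U}\simeq\mathrm{U}_{\mathcal{L}}\circ L$ by full faithfulness of $L$. Granting this monadicity, $LX_\bullet$ is $\mathrm{U}_{\mathcal{L}}$-split. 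Since $L$ preserves colimits, $\mathrm{U}|X_\bullet|\simeq\mathrm{U}_{\mathcal{L}}|LX_\bullet|\simeq|\mathrm{U}X_\bullet|$. This is the content of the paper's remark that colimits in the coreflective image are computed in the ambient limit, and no separate exchange argument is required.

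Your Step 2 does not provide this monadicity. The transition functors of the tower are the truncations $\tau^{\le k+1}\C^*\mbox{-}\mathsf{alg}\to\tau^{\le k}\C^*\mbox{-}\mathsf{alg}$. These are left adjoints that do not commute with the forgetful functors; the $(i_k)^*$ land in complete $\C$-algebras and are not transition maps. So ``conservativity and the colimit condition hold levelwise'' is a statement about a levelwise forgetful functor, not about $\lim_k\mathrm{U}$. That mismatch is exactly why you then ran into the obstacle. The paper obtains monadicity of the limit from Heine's theorem. Your proof needs that input, for $\mathrm{U}_{\mathcal{L}}$, in place of the absoluteness step.
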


\begin{proof}
The result essentially follows from Theorem \ref{thm: arity decomposition of the infinity category of complete C algebras}. Indeed, the right hand side is a limit diagram of presentable $\infty$-categories and right adjoints over $\mathbf{D}(\kk)$, where all the functors are monadic in the $\infty$-categorical sense by Lemma \ref{lemma: identification of the truncated category}. This limit is therefore still monadic over $\mathbf{D}(\kk)$ by \cite[Theorem 1.3]{Heine17}. Since complete dg $\C$-algebras up to quasi-isomorphisms are a coreflective $\infty$-subcategory, colimits are computed in the ambient $\infty$-category. Therefore the forgetful functor from complete dg $\C$-algebras up to quasi-isomorphisms to $\mathbf{D}(\kk)$ preserves geometric realizations of $\mathrm{U}$-split  simplicial objects, and since it is obviously conservative, it is monadic by the Barr--Beck--Lurie theorem of \cite[Theorem 4.7.0.3.]{HigherAlgebra}. 
\end{proof}

\begin{corollary}\label{cor: rectifictation of dg C algebras}
Let $\C$ be a conilpotent dg cooperad satisfying Assumptions \ref{assumption 1}. There is an equivalence of $\infty$-categories
\[
\mathcal{C}\mbox{-}\mathsf{alg}^{\mathsf{comp}}~[\mathsf{Q.iso}^{-1}] \simeq \mathbf{Alg}_{\C}(\mathbf{D}(\kk))
\]
between complete dg $\C$-algebras up to quasi-isomorphisms and algebras over the $\infty$-cooperad induced by the dg cooperad $\C$. 
\end{corollary}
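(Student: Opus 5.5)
By Corollary \ref{cor: infinity monadicity of absolute algebras}, the forgetful functor $\mathrm{U}\colon \mathcal{C}\mbox{-}\mathsf{alg}^{\mathsf{comp}}~[\mathsf{Q.iso}^{-1}] \to \mathbf{D}(\kk)$ is monadic. It therefore suffices to identify the induced monad $\mathrm{U}\circ \widehat{\mathrm{S}}^c(\C)(-)$ on $\mathbf{D}(\kk)$ with the monad $\widehat{\mathbf{S}}^c(\C)$ of Definition \ref{def: infinity cat C-algebra}, compatibly with the monad structures. The Barr--Beck--Lurie theorem then identifies the two $\infty$-categories of algebras over $\mathbf{D}(\kk)$.

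\textbf{Step 1: underlying endofunctors.} I would first check that the point-set functor $V \mapsto \prod_{n\geq 0}[\C(n),V^{\otimes n}]^{\mathbb{S}_n}$ preserves quasi-isomorphisms. It should then present the $\infty$-categorical functor $\prod_{n} [\C(n),(-)^{\otimes n}]^{h\mathbb{S}_n}$.
\begin{itemize}
\item Under Assumption \ref{assumption 1}, each $\C(n)$ has finite total dimension, so $[\C(n),V^{\otimes n}] \cong \C(n)^*\otimes V^{\otimes n}$. Since $\kk$ is a field, this preserves quasi-isomorphisms in $V$.
\item In characteristic zero, strict invariants compute homotopy invariants, because $\mathbb{S}_n$-representations are injective.
\item In the non-symmetric case, no invariants occur at all.
\item Products of quasi-isomorphisms over a field are quasi-isomorphisms, and they compute homotopy products in $\mathbf{D}(\kk)$.
\end{itemize}
Hence the left derived free functor is just the free functor. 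Since every chain complex is cofibrant in the projective structure, the derived monad is this same underlying endofunctor, which agrees with $\widehat{\mathbf{S}}^c(\C)$.

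\textbf{Step 2: monad structures.} The multiplication of the point-set monad $\widehat{\mathrm{S}}^c(\C)$ is built from the lax monoidal structure of $\widehat{\mathrm{S}}^c(-)$ and the decomposition map of $\C$. By the remark following Lemma \ref{dual schur is lax monoidal}, the localization functor $\mathsf{Alg}(\mathsf{End}(\mathsf{Ch}(\kk)))[\mathsf{Q.iso}^{-1}] \to \mathbf{Alg}_{\mathbb{E}_1}(\mathbf{End}(\mathbf{D}(\kk)))$ sends the lax monoidal functor $\widehat{\mathrm{S}}^c$ to $\widehat{\mathbf{S}}^c$. Applying this to the comonoid $\C$, which presents the underlying $\infty$-cooperad, yields an equivalence of monads. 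This requires $\widehat{\mathrm{S}}^c(\C)$ to be homotopical, which is exactly Step 1.

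A subtlety remains: the monad induced by the $\infty$-adjunction of Corollary \ref{cor: infinity monadicity of absolute algebras} must agree with this localized point-set monad. I would handle this as follows.
\begin{itemize}
\item The point-set adjunction is a Quillen adjunction whose left adjoint preserves all weak equivalences.
\item The free complete algebra is $\widehat{\mathrm{S}}^c(\C)(V)$ itself, since free algebras are complete.
\item The right adjoint $\mathrm{U}$ preserves weak equivalences on the nose.
\item Hence the $\infty$-categorical monad $\mathrm{U}\circ\mathbb{L}\mathrm{Free}$ is the localization of the $1$-categorical monad $\mathrm{U}\circ\mathrm{Free} = \widehat{\mathrm{S}}^c(\C)$, and so carries its monad structure. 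This is standard, e.g. via the comparison of monads induced by homotopical adjunctions as in \cite{Haugseng22} or \cite{PavlovScholbach}.
\end{itemize}

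\textbf{Main obstacle.} Step 2 is where I expect the difficulty: producing a coherent equivalence of monads rather than merely of endofunctors. I would set this up by presenting the comparison inside $\mathbf{Alg}_{\mathbb{E}_1}(\mathbf{End}(\mathbf{D}(\kk)))$ via the localization of strict monads, as above. Once the monads are equivalent, \cite[Theorem 4.7.0.3]{HigherAlgebra} gives the equivalence $\mathcal{C}\mbox{-}\mathsf{alg}^{\mathsf{comp}}~[\mathsf{Q.iso}^{-1}] \simeq \mathbf{Alg}_{\widehat{\mathbf{S}}^c(\C)}(\mathbf{D}(\kk)) = \mathbf{Alg}_{\C}(\mathbf{D}(\kk))$ over $\mathbf{D}(\kk)$.
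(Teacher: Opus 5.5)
Your proposal is correct and follows the paper's own proof. Both arguments take monadicity from Corollary~\ref{cor: infinity monadicity of absolute algebras} and apply Barr--Beck--Lurie after identifying the monads, using that the point-set $\widehat{\mathrm{S}}^c(\C)$ preserves quasi-isomorphisms and therefore presents $\widehat{\mathbf{S}}^c(\C)$. Your Steps 1 and 2 spell out details the paper leaves implicit: finite total dimension of $\C(n)$, strict versus homotopy invariants, exactness of products over a field, and agreement of the monad of the derived adjunction with the localized strict monad.
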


\begin{proof}
Since both $\infty$-categories are monadic over the $\infty$-category of $\mathbf{D}(\kk)$, it suffices by the Barr--Beck--Lurie theorem (see \cite[Corollary 4.7.3.16.]{HigherAlgebra}) to show that the two monads are naturally weakly equivalent. This, again, is immediate, since the point-set dual Schur functor $\widehat{\mathrm{S}}^c(\C)$ preserves quasi-isomorphisms, and thus induces the $\infty$-categorical dual Schur functor of Paragraph \ref{subsubsection: infinity cat dual Schur functor}. 
\end{proof}

\subsection{Identifying the monad via point-set models}
Let $\mathcal{P}$ be a reduced dg operad. Since we want to give point-set models for the $\infty$-category of coalgebras over its underlying $\infty$-operad, we need to further assume that is is \textit{cofibrant} as an dg operad. Without loss of generality, we can assume that $\mathcal{P} = \Omega \C$ for some reduced conilpotent dg cooperad $\C$. We further impose that $\C$ satisfies Assumptions \ref{assumption 1}. We are first going to identify the monad $\mathbf{Q}$ introduced in Subsection \ref{Subsection: Complete enhanced bar-cobar adjunction} with the $\infty$-category of algebras over the cooperad $\C$, and then this will allows us to give point-set models for the complete enhanced bar-cobar adjunction. 

\medskip

In the context above, the dg operad $\Omega\C$ is augmented, and the augmentation induces a Quillen adjunction 
\[
\begin{tikzcd}[column sep=5pc,row sep=3pc]
            \mathsf{Ch}(\kk) \arrow[r,"\mathrm{triv}"{name=F},shift left=1.1ex ] &   \Omega\mathcal{C}\mbox{-}\mathsf{coalg} \arrow[l,"\mathrm{prim}"{name=U},shift left=1.1ex ]
            \arrow[phantom, from=F, to=U, , "\dashv" rotate=-90]
\end{tikzcd}
\]
at the point-set level between dg $\Omega \C$-coalgebras and chain complexes, where dg $\Omega \C$-coalgebras are endowed with a left transferred model structure along the cofree-forgetful adjunction, where cofibrations are given by monomorphisms and weak equivalences by quasi-isomorphisms. The functor $\mathrm{triv}$ endows a chain complex with a trivial dg $\Omega \C$-coalgebra structure and its adjoint $\mathrm{prim}$ computes the primitive elements in a dg $\Omega \C$-coalgebra.

\begin{lemma}\label{lemma: rectification of derived primitives}
Let $\C$ be a conilpotent dg cooperad satisfying Assumptions \ref{assumption 1}. The derived adjunction 
\[
\begin{tikzcd}[column sep=5pc,row sep=3pc]
            \Omega\mathcal{C}\mbox{-}\mathsf{coalg}~[\mathsf{Q.iso}^{-1}]  \arrow[r,"\mathbb{R}\mathrm{prim}"{name=F},shift left=1.1ex ] &   \mathbf{D}(\kk) \arrow[l,"\mathrm{triv}"{name=U},shift left=1.1ex ]
            \arrow[phantom, from=F, to=U, , "\dashv" rotate=-90]
\end{tikzcd}
\]
is a model for the $\infty$-categorical derived primitives-trivial adjunction 
\[
\begin{tikzcd}[column sep=5pc,row sep=3pc]
         \mathbf{Coalg}_{\Omega\mathcal{C}}(\mathbf{D}(\kk)) \arrow[r, shift left=1.1ex, "\mathbf{prim}"{name=F}] &\mathbf{D}(\kk)~, \arrow[l, shift left=.75ex, "\mathbf{triv}"{name=U}]
            \arrow[phantom, from=F, to=U, , "\dashv" rotate=90]
\end{tikzcd}
\]

between coalgebras over the underlying $\infty$-operad of $\Omega\mathcal{C}$ and the $\infty$-category of chain complexes over $\kk$.
\end{lemma}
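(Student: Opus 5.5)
The plan is to compare the two adjunctions through their left adjoints, since a right adjoint is determined up to equivalence by its left adjoint. By \cref{thm: rectification of P-coalgebras}, applied to the cofibrant dg operad $\Omega\C$, we have an equivalence $\Omega\mathcal{C}\mbox{-}\mathsf{coalg}~[\mathsf{Q.iso}^{-1}] \simeq \mathbf{Coalg}_{\Omega\C}(\mathbf{D}(\kk))$. I will arrange that this equivalence commutes with the forgetful functors to $\mathbf{D}(\kk)$. It then suffices to produce a natural equivalence between the point-set $\mathrm{triv}$ and the $\infty$-categorical $\mathbf{triv}$ under this identification.

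First I would check that the point-set adjunction is Quillen and hence derives. The functor $\mathrm{triv}$ preserves underlying chain complexes. It therefore sends monomorphisms to monomorphisms and quasi-isomorphisms to quasi-isomorphisms, for the left transferred structure on $\Omega\C$-coalgebras. So $\mathrm{triv}$ is left Quillen and needs no deriving, and $\mathrm{prim}$ is right Quillen with derived functor $\mathbb{R}\mathrm{prim}$, obtained by fibrant replacement.

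Next, I would recall how the equivalence of \cref{thm: rectification of P-coalgebras} is built: it comes from comonadicity over $\mathbf{D}(\kk)$ on both sides, with the cofree comonad $\mathrm{L}(\Omega\C)$ presenting $\widehat{\mathbf{S}}^c$-type cofree coalgebras. The forgetful functors to $\mathbf{D}(\kk)$ are therefore compatible. The $\infty$-categorical $\mathbf{triv}$ is defined as pullback of structure along the augmentation $\epsilon:\mathscr{P}\to\mathbb{1}$, where $\mathbf{Coalg}_{\mathbb{1}}(\mathbf{D}(\kk))\simeq\mathbf{D}(\kk)$. At the point-set level, $\mathrm{triv}$ is likewise restriction along the map of dg operads $\Omega\C\to\mathcal{I}$. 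The rectification equivalence is natural in maps of cofibrant operads, and $\mathcal{I}$ is cofibrant and reduced, so I expect the square formed by restriction along $\Omega\C \to \mathcal{I}$ and the rectification equivalences to commute. The main obstacle is this naturality. I would establish it by noting that restriction along an operad map commutes with forgetful functors and induces the corresponding map of comonads, $\mathrm{L}(\mathcal{I})\to \mathrm{L}(\Omega\C)$ at the point-set level and its $\infty$-categorical counterpart. Then I would use that comonadic adjunctions over a fixed base are determined by their comonads, via the universal property of Eilenberg--Moore $\infty$-categories \cite{Heine17}.

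Once $\mathrm{triv}\simeq\mathbf{triv}$ is established, uniqueness of adjoints identifies $\mathbb{R}\mathrm{prim}$ with $\mathbf{prim}$. This also identifies the units and counits, so the derived adjunction models the $\infty$-categorical one.
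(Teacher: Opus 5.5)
Your proposal is correct and follows the paper's route: use the rectification equivalence $\Omega\mathcal{C}\mbox{-}\mathsf{coalg}~[\mathsf{Q.iso}^{-1}] \simeq \mathbf{Coalg}_{\Omega\mathcal{C}}(\mathbf{D}(\kk))$ for the cofibrant operad $\Omega\mathcal{C}$, identify $\mathrm{triv}$ with $\mathbf{triv}$ under it, and conclude by uniqueness of adjoints. The paper only asserts that the two trivial-structure functors agree, calling it straightforward; your argument via restriction along the augmentation $\Omega\mathcal{C}\to\mathcal{I}$ and the induced coaugmentation $\mathrm{L}(\mathcal{I})\to\mathrm{L}(\Omega\mathcal{C})$ of comonads is a sensible way to fill in that step.
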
 

\begin{proof}
The main key point in the proof is the comparison between the $\infty$-categories of dg $\Omega\mathcal{C}$-coalgebras up to quasi-isomorphisms and coalgebras defined over the underlying $\infty$-operad of $\Omega\mathcal{C}$. In \cite{rectification}, we showed that since the dg operad $\Omega\mathcal{C}$ is \textit{cofibrant as an operad}, there is an equivalence of $\infty$-categories
\[
\Omega\mathcal{C}\mbox{-}\mathsf{coalg}~[\mathsf{Q.iso}^{-1}] \simeq \mathbf{Coalg}_{\Omega\mathcal{C}}(\mathbf{D}(\kk))~. 
\]
Under this equivalence, it is straightforward to see that the (derived) functor $\mathrm{triv}$ and the functor $\mathbf{triv}$ agree, and thus the two adjunction are naturally equivalent. 
\end{proof}

\begin{theorem}\label{thm: identification of the infinity monad Q}
Let $\C$ be a conilpotent dg cooperad satisfying Assumptions \ref{assumption 1}. There is a natural weak equivalence of monads 
\[
\mathbf{Q}_{\Omega\C} \simeq \widehat{\mathbf{S}}^c(\C) \simeq \prod_{n \geq 0} \left[\C(n), (-)^{\otimes n}\right]^{h\mathbb{S}_n}
\]
between the monad $\mathbf{Q}_{\Omega\C}$ introduced in \S \ref{Subsection: Complete enhanced bar-cobar adjunction}, which factors the derived primitives-trivial adjunction of $\Omega\C$-coalgebras, and the monad obtained by applying the $\infty$-categorical dual Schur functor to the underlying $\infty$-cooperad of $\C$. 
\end{theorem}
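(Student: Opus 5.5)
We compute the monad $\mathbf{Q}_{\Omega\C}=\mathbf{prim}\circ\mathbf{triv}$ through point-set models. By Lemma \ref{lemma: rectification of derived primitives}, it is presented by $\mathbb{R}\mathrm{prim}\circ\mathrm{triv}$ on $\mathsf{Ch}(\kk)$. So the task is to compute the derived primitives of a trivial dg $\Omega\C$-coalgebra $\mathrm{triv}(V)$, together with the induced monad structure.

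\textbf{Step 1: a fibrant replacement of $\mathrm{triv}(V)$.} We use the complete bar-cobar adjunction $\widehat{\Omega}_\iota \dashv \widehat{\mathrm{B}}_\iota$ relative to $\iota:\C\to\Omega\C$. The unit $\mathrm{triv}(V)\to \widehat{\mathrm{B}}_\iota\widehat{\Omega}_\iota\,\mathrm{triv}(V)$ is a quasi-isomorphism, since the adjunction is a Quillen equivalence and every object is cofibrant. Moreover, $\widehat{\mathrm{B}}_\iota$ of any complete dg $\C$-algebra is fibrant in the quasi-isomorphism model structure on dg $\Omega\C$-coalgebras: it is a cofree coalgebra $\mathrm{L}(\Omega\C)(B)$ with a twisted differential, and one checks fibrancy using the filtration argument of \cite{linearcoalgebras}. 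Since $\mathrm{triv}(V)$ has zero structure, $\widehat{\Omega}_\iota\mathrm{triv}(V)$ is the free complete dg $\C$-algebra $\widehat{\mathrm{S}}^c(\C)(V)$ with differential induced only by $d_V$ and $d_\C$.

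\textbf{Step 2: primitives of a complete bar construction.} We then need $\mathrm{prim}\,\widehat{\mathrm{B}}_\iota(B)\cong B$ for any complete dg $\C$-algebra $B$, naturally. Here $\mathrm{prim}$ is right adjoint to $\mathrm{triv}$, so
\[
\mathrm{Hom}(W,\mathrm{prim}\,\widehat{\mathrm{B}}_\iota B)\cong\mathrm{Hom}(\mathrm{triv}\,W,\widehat{\mathrm{B}}_\iota B)\cong \mathrm{Hom}(\widehat{\Omega}_\iota\mathrm{triv}\,W,B)\cong\mathrm{Hom}(\widehat{\mathrm{S}}^c(\C)(W),B),
\]
and the last term is $\mathrm{Hom}_{\mathsf{Ch}}(W,B)$ provided the morphisms of complete algebras out of $\widehat{\Omega}_\iota\mathrm{triv}\,W$ are exactly the free-algebra maps. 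This requires the differential to be untwisted, which holds because $W$ is trivial. By Yoneda, $\mathrm{prim}\circ\widehat{\mathrm{B}}_\iota\cong \mathrm{U}$. Consequently $\mathbb{R}\mathrm{prim}\,\mathrm{triv}(V)\simeq \widehat{\mathrm{S}}^c(\C)(V)$.

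\textbf{Step 3: identifying the monad structure.} The equalities above express, at the point-set level, the factorisation of the adjunction $\mathrm{triv}\dashv\mathrm{prim}$ as a composite of $\mathrm{free}\dashv\mathrm{U}$ with the Quillen equivalence $\widehat{\Omega}_\iota\dashv\widehat{\mathrm{B}}_\iota$ and the right Bousfield localisation. Indeed $\widehat{\Omega}_\iota\circ\mathrm{triv}=\widehat{\mathrm{S}}^c(\C)$ as functors to complete algebras, and $\mathrm{prim}\circ\widehat{\mathrm{B}}_\iota=\mathrm{U}$. Passing to $\infty$-categories via Theorem \ref{thm: full rectangle infinity categorical}-type arguments, the derived adjunction $\mathrm{triv}\dashv\mathbb{R}\mathrm{prim}$ is equivalent to the composite of the equivalence $\widehat{\Omega}_\iota\dashv\widehat{\mathrm{B}}_\iota$ with the adjunction of Corollary \ref{cor: infinity monadicity of absolute algebras}. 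Composing with an equivalence does not change the induced monad, so $\mathbf{Q}_{\Omega\C}$ is the monad of the free-forgetful adjunction for complete dg $\C$-algebras up to quasi-isomorphism. Corollary \ref{cor: rectifictation of dg C algebras} and its proof identify that monad with $\widehat{\mathbf{S}}^c(\C)$, because the point-set $\widehat{\mathrm{S}}^c(\C)$ preserves quasi-isomorphisms and presents the $\infty$-categorical dual Schur functor.

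\textbf{Main obstacle.} The delicate step is the coherence in Step 3. The point-set equality $\widehat{\mathrm{B}}_\iota \circ \mathrm{free}$ fibrantly replaces $\mathrm{triv}$, so the derived right adjoint composite must be handled through the $\mathbb{L}\mathrm{Id}$ of the right Bousfield localisation. I would argue that the image of $\widehat{\mathrm{S}}^c(\C)$ consists of cofibrant objects, so that $\mathbb{L}\mathrm{Id}$ acts trivially on free algebras, and then invoke uniqueness of adjoints to match the two adjunctions. The fibrancy claim in Step 1 is the other point that needs care.
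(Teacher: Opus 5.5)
Your proposal is correct and is essentially the paper's proof. The paper likewise computes $\mathbf{Q}_{\Omega\C}$ via $\mathrm{triv}\dashv\mathbb{R}\mathrm{prim}$, fibrantly replaces $\mathrm{triv}(V)$ by $\widehat{\mathrm{B}}_\iota\widehat{\Omega}_\iota(V)$, and uses the triviality of the structure to obtain $\widehat{\mathrm{S}}^c(\C)(V)$; your Yoneda argument for $\mathrm{prim}\circ\widehat{\mathrm{B}}_\iota\cong\mathrm{U}$ and your Step 3 make explicit what the paper asserts in one line about the monad structure.

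One simplification: fibrancy of $\widehat{\mathrm{B}}_\iota B$ needs no filtration argument, because $\widehat{\mathrm{B}}_\iota$ is right Quillen and every complete dg $\C$-algebra is fibrant (fibrations are degree-wise surjections). This same fibrancy of every complete dg $\C$-algebra is also why the underived unit already computes the derived unit.
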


\begin{proof}
By \cref{lemma: rectification of derived primitives}, we can compute the monad induced by the $\mathbf{triv} \dashv \mathbf{prim}$ adjunction using the adjunction $\mathrm{triv} \dashv \mathbb{R}\mathrm{prim}$. For any chain complex $V$, we have that 
\[
\mathbb{R}\mathrm{prim} \circ \mathrm{triv}(V) \simeq \mathrm{prim} \circ \widehat{\mathrm{B}}_\iota \widehat{\Omega}_\iota(V) \simeq \widehat{\mathrm{S}}^c(\C)(V)~, 
\]
since the structure of $V$ is trivial. The latter functor models $\widehat{\mathbf{S}}^c(\C)$ with its monad structure. 
\end{proof}

\begin{remark}
If we start with a cofibrant dg operad $\mathcal{P}$ in general, we can always replace it by the cofibrant operad $\Omega\mathrm{B}\mathcal{P}$, so $\C$ can be taken to be $\mathrm{B}\mathcal{P}$ in the above theorem. Since by Remark \ref{rmk: point-set bar and infinity cat bar}, the point-set operadic bar construction presents Lurie's bar construction, we also get that, up to equivalence, the monad $\mathbf{Q}_\mathcal{P}$ can be identified to the monad obtained by applying the $\infty$-categorical dual Schur functor to the $\infty$-operad $\mathbf{bar}\mathcal{P}$.
\end{remark}

\begin{remark}
Another consequence of Theorem \ref{thm: identification of the infinity monad Q} is that the totalization of the cosimplicial  object 
\[
\small{
\begin{tikzcd}
\cdots \arrow[r,shift left= 2ex] \arrow[r,shift right=2ex]  \arrow[r]
&\mathbf{L}(\Omega\C)^{\circ 3}(\mathbf{triv}(V))  \arrow[l,shift left=3ex] \arrow[l,shift right=3ex]  \arrow[l,shift left=1ex] \arrow[l,shift right=1ex] \arrow[r,shift left=1ex] \arrow[r,shift right=1ex] 
&\mathbf{L}(\Omega\C)^{\circ 2}(\mathbf{triv}(V)) \arrow[l,shift left=2ex] \arrow[l,shift right=2ex] \arrow[l] \arrow[r]
&\mathbf{L}(\Omega\C)(\mathbf{triv}(V))  \arrow[l,shift left=1.1ex] \arrow[l,shift right=1.1ex] 
\end{tikzcd}}
\]


coincides with $\widehat{\mathbf{S}}^c(\C)(V)$ for any chain complex $V$, where here $\mathbf{L}(\Omega\C)$ denotes the $\infty$-categorical cofree $\Omega\C$-coalgebra comonad. Indeed, using analogous arguments as in the proof of Proposition \ref{prop: identification of the derived indecomposables comonad}, it can be seen that the totalization of this cosimplicial object agrees with the value of the monad induced by the adjunction $\mathbf{triv} \circ \mathbf{prim}$ associated to the $\infty$-category of $\Omega\C$-coalgebras. 
\end{remark}

\begin{theorem}\label{thm: point-set models for complete enhanced bar-cobar adjunctions}
Let $\C$ be a conilpotent dg cooperad satisfying Assumptions \ref{assumption 1}. The $\infty$-categorical adjunction induced by the complete bar-cobar adjunction with respect to $\iota$ composed with the right Bousfield localization with respect to quasi-isomorphisms
\[
\begin{tikzcd}[column sep=4pc,row sep=3pc]
           \Omega\mathcal{C}\mbox{-}\mathsf{coalg}~[\mathsf{Q.iso}^{-1}] \arrow[r, shift left=1.1ex, "\widehat{\Omega}_{\iota}"{name=F}] 
          &\C\mbox{-}\mathsf{alg}^{\mathsf{comp}}~[\mathsf{W}^{-1}] \arrow[l, shift left=.75ex, "\widehat{\mathrm{B}}_{\iota}"{name=U}] \arrow[r, shift left=1.1ex, "\mathrm{Id}"{name=A}]
          &\C\mbox{-}\mathsf{alg}^{\mathsf{comp}}~[\mathsf{Q.iso}^{-1}] \arrow[l, shift left=.75ex, "\mathbb{L}\mathrm{Id}"{name=B}]
            \arrow[phantom, from=F, to=U, , "\simeq" rotate=0] \arrow[phantom, from=A, to=B, , "\dashv" rotate=90]
\end{tikzcd}
\]
is naturally weakly equivalent to the $\infty$-categorical complete enhanced bar-cobar adjunction 
\[
\begin{tikzcd}[column sep=5pc,row sep=3pc]
         \mathbf{Coalg}_{\Omega\mathcal{C}}(\mathbf{D}(\kk)) \arrow[r, shift left=1.1ex, "\bm{\widehat{\mathrm{B}}}^{\mathbf{enh}}"{name=F}] &\mathbf{Alg}_{\C}(\mathbf{D}(\kk))~. \arrow[l, shift left=.9ex, "\bm{\widehat{\Omega}}^{\mathbf{enh}}"{name=U}] \arrow[phantom, from=F, to=U, , "\dashv" rotate=90]
\end{tikzcd}
\]
\end{theorem}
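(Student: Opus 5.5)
The plan is to mirror the argument of Theorem~\ref{thm: point-set models for enhanced bar-cobar}, this time using the universal property of the Eilenberg--Moore category of algebras over a monad. Three ingredients are needed.

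First, by Lemma~\ref{lemma: rectification of derived primitives} and the rectification Theorem~\ref{thm: rectification of P-coalgebras}, the derived adjunction $\mathrm{triv} \dashv \mathbb{R}\mathrm{prim}$ between $\Omega\C\mbox{-}\mathsf{coalg}~[\mathsf{Q.iso}^{-1}]$ and $\mathbf{D}(\kk)$ presents the $\infty$-categorical adjunction $\mathbf{triv} \dashv \mathbf{prim}$. Second, by Corollary~\ref{cor: infinity monadicity of absolute algebras}, the forgetful functor $\C\mbox{-}\mathsf{alg}^{\mathsf{comp}}~[\mathsf{Q.iso}^{-1}] \to \mathbf{D}(\kk)$ is monadic with monad $\widehat{\mathrm{S}}^c(\C)$. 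By Corollary~\ref{cor: rectifictation of dg C algebras} and Theorem~\ref{thm: identification of the infinity monad Q}, this $\infty$-category is therefore equivalent to $\mathbf{Alg}_{\mathbf{Q}_{\Omega\C}}(\mathbf{D}(\kk)) \simeq \mathbf{Alg}_{\C}(\mathbf{D}(\kk))$.

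Third, I will show that the composite adjunction in the statement factors $\mathrm{triv} \dashv \mathbb{R}\mathrm{prim}$ through this monadic adjunction. Concretely, I must check two things. The composite right adjoint $\mathrm{Id}\circ \widehat{\Omega}_\iota$, followed by the forgetful functor $\mathrm{U}$, should be $\mathbb{R}\mathrm{prim}$. Equivalently, the left adjoints satisfy $\widehat{\mathrm{B}}_\iota \circ \mathbb{L}\mathrm{Id} \circ \widehat{\mathrm{S}}^c(\C) \simeq \mathrm{triv}$. The left-adjoint version is the easier one to prove. A free complete $\C$-algebra $\widehat{\mathrm{S}}^c(\C)(V)$ is cofibrant, so $\mathbb{L}\mathrm{Id}$ acts as the identity on it. Moreover $\widehat{\mathrm{S}}^c(\C)(V) \cong \widehat{\Omega}_\iota(\mathrm{triv}\, V)$, since the cobar differential vanishes on a trivial coalgebra. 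Hence $\widehat{\mathrm{B}}_\iota \widehat{\Omega}_\iota (\mathrm{triv}\,V) \simeq \mathrm{triv}\,V$, using that the counit of the complete bar-cobar Quillen equivalence is a quasi-isomorphism. This is the same computation as in the proof of Theorem~\ref{thm: identification of the infinity monad Q}. Passing to adjoints, the composite right adjoint followed by $\mathrm{U}$ agrees with $\mathbb{R}\mathrm{prim}$. One must also check that the induced monad on $\mathbf{D}(\kk)$ is the monad $\widehat{\mathrm{S}}^c(\C)$ with its monad structure, not merely an equivalent endofunctor. This follows from the fact that the factorization is through an adjunction whose right adjoint is the monadic forgetful functor.

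With these in hand, both adjunctions in the statement are factorizations of $\mathbf{triv}\dashv\mathbf{prim}$ through the $\infty$-category of algebras over the same monad $\mathbf{Q}_{\Omega\C}$. Each factorization comes with a comparison functor to $\mathbf{Alg}_{\mathbf{Q}_{\Omega\C}}(\mathbf{D}(\kk))$. For the complete enhanced adjunction this comparison is the identity by definition. For the point-set composite it is the equivalence from the second step, by monadicity. The universal property of the Eilenberg--Moore object~\cite{Heine17} then identifies the two right adjoints $\widehat{\Omega}_\iota$ and $\bm{\widehat{\Omega}}^{\mathbf{enh}}$ compatibly with the forgetful functors, and by uniqueness of adjoints it identifies the left adjoints $\widehat{\mathrm{B}}_\iota\circ\mathbb{L}\mathrm{Id}$ and $\bm{\widehat{\mathrm{B}}}^{\mathbf{enh}}$.

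The main obstacle is coherence: we need the comparison to respect the monad structures up to all higher coherences, not merely objectwise. I would handle this by phrasing everything at the level of adjunctions over $\mathbf{D}(\kk)$, so that the monad is the one canonically induced by the adjunction. This avoids having to compare monad structures by hand.
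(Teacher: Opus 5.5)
Your proposal follows essentially the same route as the paper. The paper's proof is the one-line observation that both adjunctions are universal factorizations of $\mathbf{triv}\dashv\mathbf{prim}$ through algebras over the same monad, via Theorem~\ref{thm: identification of the infinity monad Q} and Corollary~\ref{cor: rectifictation of dg C algebras}; you simply make the left-adjoint computation $\widehat{\mathrm{B}}_\iota\circ\mathbb{L}\mathrm{Id}\circ\widehat{\mathrm{S}}^c(\C)\simeq\mathrm{triv}$ explicit.

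The one soft spot is your justification that the monad structures agree. This does not follow from monadicity of $\mathrm{U}$ alone. It follows from the fact that $\widehat{\mathrm{B}}_\iota\circ\mathbb{L}\mathrm{Id}$ is fully faithful: $\widehat{\mathrm{B}}_\iota$ is an equivalence, and $\mathbb{L}\mathrm{Id}$ is the fully faithful left adjoint of a right Bousfield localization. Hence the unit of the composite adjunction is invertible, and the induced monad map $\widehat{\mathrm{S}}^c(\C)\to\mathbf{Q}_{\Omega\C}$ is an equivalence. Also, the quasi-isomorphism $\mathrm{triv}\,V\to\widehat{\mathrm{B}}_\iota\widehat{\Omega}_\iota(\mathrm{triv}\,V)$ is the unit, not the counit, of $\widehat{\Omega}_\iota\dashv\widehat{\mathrm{B}}_\iota$.
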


\begin{proof}
Directly follows from \cref{thm: identification of the infinity monad Q} together with \cref{cor: rectifictation of dg C algebras}, using the fact that both are universal factorizations of the adjunction $\mathbf{triv} \dashv \mathbf{prim}$ for $\Omega\mathcal{C}$-coalgebras. 
\end{proof}

\begin{remark}
Over a positive characteristic field, Theorem \ref{thm: point-set models for enhanced bar-cobar} should hold for any \textit{quasi-planar} conilpotent dg cooperad $\C$ satisfying \ref{assumption 1}, where one replaces complete dg $\C$-algebras with \textit{qp-complete} dg $\C$-algebras in the sense of \cite[Sections 3]{premierpapier}. 
\end{remark}


\section{The \texorpdfstring{$\infty$}{infinity}-categorical inclusion-restriction squares}


In this section, we put together the previous pieces to construct an $\infty$-categorical inclusion-restriction square, similar to the point-set inclusion-restriction square constructed in Section \ref{Section: point-set inclusion-restriction square}. We compare the resulting adjunction with the one constructed by Heuts in \cite{heuts2024}. Finally, we discuss how these results could be extended in positive characteristic.

\subsection{Comparing \texorpdfstring{$\infty$}{infinity}-categories of coalgebras} We establish different comparison functors and equivalences between point-set conilpotent (divided powers) coalgebras over a cooperad, conilpotent (divided powers) coalgebras over a $\infty$-cooperad, non-necessarily conilpotent (divided powers) coalgebras over a $\infty$-cooperad in the sense of \cite[Appendix A]{heuts2024}, and coalgebras over an $\infty$-operad, in the sense of \cite[Section 3]{rectification}. These results shall be useful in the subsequent sections. 

\begin{lemma}\label{lemma: equivalence between non-conilpotent coalgebras over a cooperad a la Heuts and coalgebras over the dual operad}
Let $\mathscr{C}$ be a reduced $\infty$-cooperad. There is an adjunction 
\[
\begin{tikzcd}[column sep=5pc,row sep=3pc]
         \mathbf{Coalg}_{\mathscr{C}}(\mathbf{D}(\kk)) \arrow[r, shift left=1.1ex, ""{name=F}] &\mathbf{Coalg}_{\mathscr{C}^*}(\mathbf{D}(\kk)) \arrow[l, shift left=.9ex, ""{name=U}] \arrow[phantom, from=F, to=U, , "\dashv" rotate=-90]
\end{tikzcd}
\]
between (divided powers) $\mathscr{C}$-coalgebras in the sense of \cite[Appendix A]{heuts2024} and $\mathscr{C}^*$-coalgebras in the sense of Definition \ref{def: infinity categorical P-coalgebras}. 

\medskip

Furthermore, if we assume that either $\mathscr{C}$ is non-symmetric or that $\kk$ is a characteristic zero field, then this adjunction adjunction is an equivalence when $\mathscr{C}$ has arity-wise finite dimensional homology. 
\end{lemma}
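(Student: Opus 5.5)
We construct the adjunction by comparing the two comonads on pro-objects. Recall that Heuts defines divided powers $\mathscr{C}$-coalgebras in \cite[Appendix A]{heuts2024} as follows. He extends the Schur functor $\prod_{n}(\mathscr{C}(n)\otimes(-)^{\otimes n})^{h\mathbb{S}_n}$ to a comonad on $\mathbf{pro}(\mathbf{D}(\kk))$ and pulls back along $c$. Definition \ref{def: infinity categorical P-coalgebras} does the same with $\widehat{\mathbf{S}}^c_{\mathrm{pro}}(\mathscr{C}^*)$. The plan has three steps:
\begin{enumerate}
\item Build a natural map of comonads on $\mathbf{pro}(\mathbf{D}(\kk))$ between these two pro-extended Schur functors.
\item Deduce an adjunction between the corresponding $\infty$-categories of coalgebras.
\item Show the comonad map is an equivalence under the finiteness hypotheses.
\end{enumerate}

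For the first step, the comparison in each arity is the composite of the canonical map $\mathscr{C}(n)\otimes X^{\otimes n}\to[\mathscr{C}(n)^*,X^{\otimes n}]$ with the norm map from homotopy orbits to homotopy invariants. Since both pro-Schur functors are strong monoidal, it suffices to produce a lax monoidal natural transformation at the level of symmetric sequences. I would obtain it as the mate of the evaluation pairing $\mathscr{C}\otimes\mathscr{C}^*\to\mathbb{1}$, compatibly with the composition products.

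For the second step, a map of comonads $\phi:\mathbf{K}\to\mathbf{K}'$ induces a restriction functor $\mathbf{Coalg}_{\mathbf{K}}\to\mathbf{Coalg}_{\mathbf{K}'}$ that commutes with the forgetful functors. Both $\infty$-categories of coalgebras are presentable and comonadic over $\mathbf{D}(\kk)$, and this functor preserves colimits, since colimits are created by the forgetful functors. The adjoint functor theorem then supplies a right adjoint. Pulling everything back along $c$ gives the claimed adjunction, with the functor from $\mathscr{C}$-coalgebras as left adjoint.

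For the equivalence, by the Barr--Beck--Lurie theorem (comonadic version) it suffices that $\phi$ is an equivalence of comonads, and this may be checked on underlying endofunctors restricted to constant objects. Arity-wise, finite-dimensional homology makes $\mathscr{C}(n)\otimes X^{\otimes n}\to[\mathscr{C}(n)^*,X^{\otimes n}]$ an equivalence. In the non-symmetric case there are no group actions, so nothing more is needed. In characteristic zero the norm map $(-)_{h\mathbb{S}_n}\to(-)^{h\mathbb{S}_n}$ is an equivalence because $\mathbb{S}_n$ has invertible order.

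The main obstacle is the infinite product over arities. On the $\mathscr{C}^*$ side the pro-extension is built so that $\widehat{\mathbf{S}}^c_{\mathrm{pro}}(\mathscr{C}^*)$ is the pro-object $\{\prod_{n\le k}\}_k$, that is, the cofiltered limit of arity truncations in $\mathbf{pro}$. I expect Heuts's construction to be exactly the same pro-system of finite truncations, so the comparison reduces to finitely many arities at a time. There finite products and sums agree, which plays the role that Lemma \ref{lemma: identification of the truncated category} plays at the point-set level. Verifying that the two pro-extensions are given by literally the same tower is the step I would check most carefully.
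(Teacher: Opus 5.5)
Your proposal is correct and follows essentially the paper's proof. Both comonads on $\mathbf{pro}(\mathbf{D}(\kk))$ are limits $\lim_k p(\cdots)$ of prolongations of the arity-$\le k$ truncations. The arity-wise comparison $\iota^{\le k}_{\mathscr{C}}$ is the canonical map $V^*\otimes W\to[V,W]$ combined with double duality and the norm map, and each $\iota^{\le k}_{\mathscr{C}}$ is an equivalence under total finite-dimensionality together with characteristic zero (or no symmetric groups). One small slip: Heuts's comonad uses homotopy orbits $(\mathscr{C}(n)\otimes(-)^{\otimes n})_{h\mathbb{S}_n}$, not the homotopy fixed points you first wrote, and this is exactly why the norm map in your comparison is needed.
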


\begin{proof}
Both $\infty$-categories are defined as a pullbacks along the inclusion of $\mathbf{D}(\kk)$ into $\mathbf{pro}(\mathbf{D}(\kk))$ of $\infty$-categories of coalgebras in $\mathbf{pro}(\mathbf{D}(\kk))$, so it suffices to compare the latter. Divided powers $\mathscr{C}$-coalgebras in the sense of \cite[Appendix A]{heuts2024} in $\mathbf{pro}(\mathbf{D}(\kk))$ are coalgebras over the comonad (keeping the notation of \textit{op.cit.}) given by 
\[
\widehat{p\mathbf{Sym}}_{\mathscr{C}} = \lim_{k} p\left(\bigoplus_{n \geq 0}^{k} \left(\mathscr{C}(n) \otimes (-)^{\otimes n}\right)_{h\mathbb{S}_n}\right)~,
\]
where $p$ stands for the prolongation functor that sends an endofunctor of $\mathbf{D}(\kk)$ to an endofunctor of $\mathbf{pro}(\mathbf{D}(\kk))$. Very similarly, $\mathscr{C}^*$-coalgebras in $\mathbf{pro}(\mathbf{D}(\kk))$ are coalgebras over the comonad 
\[
\widehat{\mathbf{S}}^c_{\mathrm{pro}}(\mathscr{C}^*) =\lim_{k} p\left(\bigoplus_{n \geq 0}^k \left[\mathscr{C}^*(n),(-)^{\otimes n}\right]^{h\mathbb{S}_n}\right)~, 
\]
where $[-,-]$ refers to the internal hom of $\mathbf{D}(\kk)$. For any $k \geq 1$, there is a map of endofunctors
\[
\iota_{\mathscr{C}}^{\leq k}: \bigoplus_{n \geq 0}^{k} \left(\mathscr{C}(n) \otimes (-)^{\otimes n}\right)_{h\mathbb{S}_n} \longrightarrow \bigoplus_{n \geq 0}^k \left[\mathscr{C}^*(n),(-)^{\otimes n}\right]^{h\mathbb{S}_n} 
\]
given by the natural map $V^* \otimes W \longrightarrow [V,W]$ together with the double dual map and the norm map from coinvariants to invariants. It induces a map of comonads
\[
\lim_k p(\iota_{\mathscr{C}}^{\leq k}): \widehat{p\mathbf{Sym}}_{\mathscr{C}} \longrightarrow \widehat{\mathbf{S}}^c_{\mathrm{pro}}(\mathscr{C}^*)~. 
\]
And this morphism of comonads in $\mathbf{pro}(\mathbf{D}(\kk))$ induces the desired adjunction, whose pullback is the one stated in the lemma. 

\medskip

Moreover, if $\mathscr{C}$ has arity-wise finite dimensional homology, the maps $\iota_{\mathscr{C}}^{\leq k}$ are all equivalences since $\mathscr{C}(n)$ has total finite dimensional homology and since the norm maps are all equivalences. Hence the induced map of comonads is an equivalence and the two $\infty$-categories of coalgebras are equivalent.
\end{proof}

\begin{remark}
There are two main differences between the definition of coalgebras considered in \cite[Appendix A]{heuts2024} and the definition considered in \cite{rectification}: 

\begin{enumerate}
    \item When working in a general context, the first one encodes \textit{divided powers} non-necessarily conilpotent types of coalgebras, whereas the second does not involve \textit{divided powers}. 

    \item The first has an extra finiteness condition that the later does not have in the non-finite dimensional case. Indeed, assume that $\mathscr{C}(n)$ is infinite dimensional. Then, at a \textit{given arity}, decompositions in the first definition land on the tensor product 
    \[
    \Delta_C(n): C \longrightarrow \mathscr{C}(n) \otimes C^{\otimes n}~,
    \]
    hence are finite sums of pure tensors, whereas in the second definition, they land on the internal hom, so "infinite sums of pure tensors" are allowed.
\end{enumerate}
\end{remark}

The previous preliminary result allows us to construct a "forgetful functor" or "inclusion functor" from conilpotent divided powers coalgebras over an enriched $\infty$-cooperad and coalgebras over its linear dual  $\infty$-operad. Note, however, that it is not a priori obvious whether this $\infty$-categorical "inclusion functor" is fully faithful or not, as explained in \cite[Remark 11.2]{heuts2024}. This fully faithfulness was conjectured in \cite[Conjecture 2.8.4]{GaitsgoryRozenblyumVolII}. 

\begin{lemma}\label{lemma: forgetting divided powers and conilpotentcy}
Let $\mathscr{C}$ be a reduced $\infty$-cooperad. There is an adjunction 
\[
\begin{tikzcd}[column sep=5pc,row sep=3pc]
         \mathbf{Coalg}_{\mathscr{C}}^{\mathbf{conil}}(\mathbf{D}(\kk)) \arrow[r, shift left=1.1ex, "\mathbf{Inc}"{name=F}] &\mathbf{Coalg}_{\mathscr{C}^*}(\mathbf{D}(\kk)) \arrow[l, shift left=.9ex, "\mathbf{Sub}"{name=U}] \arrow[phantom, from=F, to=U, , "\dashv" rotate=-90]
\end{tikzcd}
\]
between conilpotent (divided powers) $\mathscr{C}$-coalgebras and $\mathscr{C}^*$-coalgebras. 
\end{lemma}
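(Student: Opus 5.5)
Our plan is to build the adjunction by composing two pieces: a comparison from conilpotent $\mathscr{C}$-coalgebras to the non-conilpotent divided powers $\mathscr{C}$-coalgebras of \cite[Appendix A]{heuts2024}, followed by the adjunction of Lemma \ref{lemma: equivalence between non-conilpotent coalgebras over a cooperad a la Heuts and coalgebras over the dual operad}. Right adjoints compose, so this yields $\mathbf{Inc}\dashv\mathbf{Sub}$. In each piece we produce the left adjoint and then obtain the right adjoint from the adjoint functor theorem.

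\textbf{Step 1: a map of comonads in $\mathbf{pro}(\mathbf{D}(\kk))$.} Let $c$ be the constant pro-object functor. The conilpotent comonad $\mathbf{S}(\mathscr{C})=\bigoplus_{n}(\mathscr{C}(n)\otimes(-)^{\otimes n})_{h\mathbb{S}_n}$ is the colimit of its finite-arity pieces. For each $k$ there is a projection from the full sum onto the truncation $\bigoplus_{n\le k}$; concretely, this truncation is induced by the map of cooperads $\mathscr{C}\to\tau^{\le k}\mathscr{C}$. These projections are compatible in $k$, so they assemble into a natural transformation
\[
c\circ \mathbf{S}(\mathscr{C}) \longrightarrow \widehat{p\mathbf{Sym}}_{\mathscr{C}}\circ c = \lim_k p\Big(\bigoplus_{n\le k}(\mathscr{C}(n)\otimes(-)^{\otimes n})_{h\mathbb{S}_n}\Big)\circ c~.
\]
We will check that this is a lax map of comonads relating $\mathbf{S}(\mathscr{C})$ on $\mathbf{D}(\kk)$ and $\widehat{p\mathbf{Sym}}_{\mathscr{C}}$ on $\mathbf{pro}(\mathbf{D}(\kk))$, intertwined by $c$. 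It suffices to verify this at each truncation level, where the comonad structures are both induced from the cooperad structure of $\mathscr{C}$. Such a map sends an $\mathbf{S}(\mathscr{C})$-coalgebra $W$ to a $\widehat{p\mathbf{Sym}}_{\mathscr{C}}$-coalgebra whose underlying object is $c(W)$. By the universal property of the pullback defining Heuts' $\infty$-category, this gives a functor
\[
\mathbf{Coalg}^{\mathbf{conil}}_{\mathscr{C}}(\mathbf{D}(\kk))\to\mathbf{Coalg}_{\mathscr{C}}(\mathbf{D}(\kk))
\]
lying over $\mathbf{D}(\kk)$. We then post-compose with the left adjoint of Lemma \ref{lemma: equivalence between non-conilpotent coalgebras over a cooperad a la Heuts and coalgebras over the dual operad}, or equivalently compose the comonad map above with $\lim_k p(\iota^{\le k}_{\mathscr{C}})$, and call the result $\mathbf{Inc}$. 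By construction $\mathbf{Inc}$ commutes with the forgetful functors to $\mathbf{D}(\kk)$.

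\textbf{Step 2: existence of the right adjoint.} Both $\infty$-categories are presentable. For the source this follows because it is comonadic over $\mathbf{D}(\kk)$ for an accessible, colimit-preserving comonad. For the target it is recalled after Definition \ref{def: infinity categorical P-coalgebras}. In both categories the forgetful functors to $\mathbf{D}(\kk)$ create colimits, since they are comonadic. Because $\mathbf{Inc}$ commutes with these forgetful functors, it preserves all small colimits. The adjoint functor theorem then gives a right adjoint $\mathbf{Sub}$.

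\textbf{Main obstacle.} The delicate point is Step 1: making the compatibility of the comonad structures precise at the $\infty$-categorical level rather than just objectwise. We would try to avoid this coherence problem by working with symmetric sequences. Specifically, we would express both comonads as images of the comonoid $\mathscr{C}$ under monoidal functors $\mathbf{sSeq}(\mathbf{D}(\kk))\to\mathbf{End}(\mathbf{pro}(\mathbf{D}(\kk)))$, namely $c\circ\mathbf{S}(-)$ (lax, after restriction) and $\widehat{p\mathbf{Sym}}$. We would then exhibit the comparison as a monoidal natural transformation between these two functors. That transformation is built from the truncation maps, which are evidently compatible with composition product in each arity. A monoidal natural transformation automatically produces comonad maps with all coherences, which is exactly what Step 1 needs.
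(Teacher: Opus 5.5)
Your architecture matches the paper's. Its proof is just the composite of the comparison adjunction of \cite[Section 7]{heuts2024}, from conilpotent to Heuts' non-conilpotent divided power coalgebras, with the adjunction of Lemma~\ref{lemma: equivalence between non-conilpotent coalgebras over a cooperad a la Heuts and coalgebras over the dual operad}. The difference is that you build the first leg yourself, as a comonad map in $\mathbf{pro}(\mathbf{D}(\kk))$ followed by the adjoint functor theorem, instead of importing it. That route works, and it makes $\mathbf{Inc}$ visibly the $\infty$-analogue of the point-set $\mathrm{Inc}$: same underlying object, structure map pushed along $\bigoplus\to\prod$. Citing Heuts instead guarantees that $\mathbf{Inc}$ \emph{is} Heuts' functor. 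The paper needs this later, when it invokes \cite[Proposition 11.1, Remark 11.2]{heuts2024}, so on your route that identification is still owed. In Step 2, $\mathbf{S}(\mathscr{C})$ does not preserve colimits (not even coproducts). The argument still stands: presentability only needs accessibility, and the forgetful functor from coalgebras over any comonad creates colimits.

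Step 1 contains a false claim, and the reduction you base on it does not work. For a reduced cooperad, the arity truncation is a \emph{sub}cooperad $i_k\colon\tau^{\le k}\mathscr{C}\hookrightarrow\mathscr{C}$, which is how the paper uses it. The projection $\mathscr{C}\to\tau^{\le k}\mathscr{C}$ is only a map of symmetric sequences. For example, take the nonsymmetric $\mathcal{A}ss^*$ (one cogenerator $c_n$ per arity) and $k=2$: the projection kills $c_3$ but not the terms $c_2\circ(c_2,c_1)$ and $c_2\circ(c_1,c_2)$ of its decomposition.

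Hence $\mathbf{S}(\mathscr{C})\to\mathbf{S}(\tau^{\le k}\mathscr{C})$ is not a comonad map, and compatibility with comultiplication cannot be checked ``at each truncation level''. It only holds in the pro-system. There the $(k,l)$-component of $\widehat{p\mathbf{Sym}}_{\mathscr{C}}\circ\widehat{p\mathbf{Sym}}_{\mathscr{C}}$ is reached from level $kl$, via $\mathscr{C}\to\mathscr{C}\circ\mathscr{C}\to\mathscr{C}_{\le k}\circ\mathscr{C}_{\le l}$. This factors through $\mathscr{C}_{\le kl}$ because the target vanishes in arities $>kl$.

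Your last paragraph is the right repair, but run it with the prolongation $M\mapsto p\,\mathbf{S}(M)$. This is a \emph{strong} monoidal functor $\mathbf{sSeq}(\mathbf{D}(\kk))\to\mathbf{End}(\mathbf{pro}(\mathbf{D}(\kk)))$, and you should use it rather than ``$c\circ\mathbf{S}(-)$, lax after restriction''. The trouble with the latter is twofold: $c\circ\mathbf{S}(M)$ is not an endofunctor of $\mathbf{pro}(\mathbf{D}(\kk))$, and a merely lax monoidal functor does not carry comonoids to comonoids.

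The transformation $p\,\mathbf{S}(M)\to\widehat{p\mathbf{Sym}}_M$ given by the projections $M\to M_{\le k}$ is monoidal. It therefore yields a comonad map $p\,\mathbf{S}(\mathscr{C})\to\widehat{p\mathbf{Sym}}_{\mathscr{C}}$. You then restrict along the fully faithful $c$, using $p\,\mathbf{S}(\mathscr{C})\circ c\simeq c\circ\mathbf{S}(\mathscr{C})$.
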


\begin{proof}
We compose the adjunction constructed between conilpotent divided powers coalgebras over $\mathscr{C}$ in the sense of Definition \ref{def: infinity cat C-coalgebras} and divided powers coalgebras over $\mathscr{C}$ in the sense of \cite[Appendix A]{heuts2024}, constructed in \cite[Section 7]{heuts2024} with the adjunction of Lemma \ref{lemma: equivalence between non-conilpotent coalgebras over a cooperad a la Heuts and coalgebras over the dual operad}. 
\end{proof}

\begin{remark}
The left adjoint functor could also be constructed inductively like in \cite[Section 7]{heuts2024}, using the analoguous decomposition of the $\infty$-category of $\mathscr{C}^*$-coalgebras as a limit, which holds because of \cite[Proposition 3.16]{rectification}. 
\end{remark}

Finally, we put together the comparison functors constructed so far to obtain a commuting triangle of functors between the point-set version of conilpotent divided powers coalgebras, their $\infty$-categorical counterparts and the $\infty$-category of all coalgebras of that given type.

\begin{proposition}\label{prop: commutative triangle of inclusions}
Let $\mathcal{P}$ be a dg operad satisfying Assumptions \ref{assumption 1}. The following triangle of adjunctions of $\infty$-categories
\[
\begin{tikzcd}[column sep=4pc,row sep=3.5pc]
\mathrm{B}\mathcal{P}\mbox{-}\mathsf{cog}~[\mathsf{Q.iso}^{-1}] \hspace{1pc} \arrow[r, shift left=1.5ex, "\mathbf{loose}"{name=A}]  \arrow[rd, shift left=0ex, shorten >=10pt, shorten <=10pt, "\mathrm{Inc}"{name=B}] 
&\mathbf{Coalg}_{\mathbf{bar}(\mathcal{P})}^{\mathbf{conil}}(\mathbf{D}(\kk)) \arrow[d, shift left=2ex, "\mathbf{Inc}"{name=F}] \arrow[l, shift left=.75ex, "\mathbf{strict}"{name=C}] \\
&\Omega\mathcal{P}^*\mbox{-}\mathsf{cog}~[\mathsf{Q.iso}^{-1}] \simeq \mathbf{Coalg}_{\mathbf{cobar}(\mathcal{P}^*)}(\mathbf{D}(\kk)) \arrow[u, shift left=1.1ex, "\mathbf{Sub}"{name=U}] \arrow[lu, shift left=2.5ex, shorten >=-5pt, shorten <=30pt, "\mathbb{R}\mathrm{Sub}"{name=D}] 
\end{tikzcd}
\]
commutes, where left adjoints are drawn on the top and on the left.  
\end{proposition}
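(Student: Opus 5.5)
Since the two triangles of adjunctions consist of left adjoints ($\mathbf{loose}$, $\mathbf{Inc}$, $\mathrm{Inc}$) and right adjoints ($\mathbf{strict}$, $\mathbf{Sub}$, $\mathbb{R}\mathrm{Sub}$), and adjoints compose and are unique up to equivalence, it suffices to show that the left adjoints commute:
\[
\mathbf{Inc} \circ \mathbf{loose} \simeq \mathrm{Inc}
\]
as functors $\mathrm{B}\mathcal{P}\mbox{-}\mathsf{cog}~[\mathsf{Q.iso}^{-1}] \to \mathbf{Coalg}_{\mathbf{cobar}(\mathcal{P}^*)}(\mathbf{D}(\kk))$. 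Here the target is identified with $\Omega\mathcal{P}^*\mbox{-}\mathsf{cog}~[\mathsf{Q.iso}^{-1}]$ via Theorem \ref{thm: rectification of P-coalgebras}, since $\Omega\mathcal{P}^*$ is cofibrant. The commutation of the right adjoints then follows by passing to adjoints.

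The plan is to characterise all three left adjoints as functors over $\mathbf{D}(\kk)$ that are induced by maps of comonads. On the point-set side, $\mathrm{Inc}$ is induced by the inclusion of comonads $\iota'_{\mathrm{B}\mathcal{P}}: \mathrm{S}(\mathrm{B}\mathcal{P}) \hookrightarrow \mathrm{L}(\Omega\mathcal{P}^*)$. The functor $\mathbf{loose}$ is, by Proposition \ref{prop: adjunction between strict and loose conilpotent coalgebras}, the comparison functor into coalgebras over the $\infty$-comonad $\mathbf{S}(\mathbf{bar}(\mathcal{P}))$ presented by $\mathrm{S}(\mathrm{B}\mathcal{P})$. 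The functor $\mathbf{Inc}$ of Lemma \ref{lemma: forgetting divided powers and conilpotentcy} is obtained by working in $\mathbf{pro}(\mathbf{D}(\kk))$: it composes Heuts' inclusion of conilpotent into non-conilpotent divided powers coalgebras with the comonad map $\lim_k p(\iota^{\leq k})$ of Lemma \ref{lemma: equivalence between non-conilpotent coalgebras over a cooperad a la Heuts and coalgebras over the dual operad}. After prolonging to pro-objects, the composite amounts to the natural map of comonads
\[
p\,\mathbf{S}(\mathbf{bar}(\mathcal{P})) \longrightarrow \widehat{\mathbf{S}}^c_{\mathrm{pro}}(\mathbf{cobar}(\mathcal{P}^*)),
\]
which is the inclusion of the direct sum into the limit over arity truncations, combined with the norm and double-dual maps. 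Under the rectification equivalence of \cite{rectification}, the point-set map $\iota'_{\mathrm{B}\mathcal{P}}$, which is built from the same ingredients $V^*\otimes W \to [V,W]$, the norm map and $\bigoplus \hookrightarrow \prod$, presents this comonad map after composing with $c$. It therefore suffices to check that the two comonad maps agree, arity-truncation by arity-truncation. This is formal because each truncated piece is a finite sum of functors built from dualizable objects, where the point-set and $\infty$-categorical constructions coincide.

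Concretely, the steps are as follows. First, for a dg $\mathrm{B}\mathcal{P}$-coalgebra $D$, compute $\mathbf{Inc}(\mathbf{loose}(D))$: its underlying object is $D$, and its structure map is the composite $D \to \mathbf{S}(\mathbf{bar}\mathcal{P})(D) \to \widehat{\mathbf{S}}^c_{\mathrm{pro}}(\mathbf{cobar}\mathcal{P}^*)(cD)$. Second, compare this with the image of $\mathrm{Inc}(D)$ under the rectification equivalence. That equivalence is compatible with the forgetful functors to $\mathbf{D}(\kk)$ and sends the point-set structure map $D\to \mathrm{L}(\Omega\mathcal{P}^*)(D)$ to the $\infty$-categorical one via the pro-extension, so the two structure maps agree. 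Third, make this natural in $D$ by phrasing it as an equality of lax morphisms of comonads, and hence of the induced functors between coalgebra $\infty$-categories, using functoriality of Eilenberg--Moore objects \cite{Heine17}.

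The main obstacle is the second step. Coalgebras over $\mathbf{cobar}(\mathcal{P}^*)$ are not comonadic via an honest comonad on $\mathbf{D}(\kk)$ but via a pullback along $c$ from pro-objects, and the rectification theorem of \cite{rectification} compares the point-set cofree functor $\mathrm{L}(\Omega\mathcal{P}^*)$ with the pro-comonad only indirectly. To handle this, I would reduce to the arity-truncated towers, using \cite[Proposition 3.16]{rectification} for the limit decomposition of $\mathbf{Coalg}_{\mathbf{cobar}(\mathcal{P}^*)}$ and the analogous tower of \cite[Section 7]{heuts2024}. At each finite stage $k$ all the comonads are honest and given by finite formulas, so the two comonad maps visibly coincide; one then passes to the limit.
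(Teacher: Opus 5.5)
Your outline is sound, but it takes a different route from the paper. The paper argues by a universal property alone. Composing the cofree--forgetful adjunction of $\Omega\mathcal{P}^*$-coalgebras with $\mathrm{Inc}\dashv\mathbb{R}\mathrm{Sub}$, or with $\mathbf{Inc}\dashv\mathbf{Sub}$, yields the same comonad $\mathbf{S}(\mathbf{bar}(\mathcal{P}))$ on $\mathbf{D}(\kk)$. Since $\mathbf{Coalg}^{\mathbf{conil}}_{\mathbf{bar}(\mathcal{P})}(\mathbf{D}(\kk))$ is terminal among adjunctions inducing this comonad, this singles out $\mathbf{loose}\dashv\mathbf{strict}$. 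You instead classify $\mathrm{Inc}$ and $\mathbf{Inc}\circ\mathbf{loose}$ by comonad morphisms out of $\mathbf{S}(\mathbf{bar}(\mathcal{P}))$ and check that these agree. Both $\iota'_{\mathrm{B}\mathcal{P}}$ and $\lim_k p(\iota^{\leq k}_{\mathscr{C}})$ are built from the same maps: $V^*\otimes W\to[V,W]$, the norm maps, and $\bigoplus\hookrightarrow\prod$. The paper's argument is shorter, but terminality by itself only determines $\mathbf{loose}$ as a functor over $\mathbf{D}(\kk)$. Commutativity over $\Omega\mathcal{P}^*$-coalgebras also needs the identification of the two comonads to respect their counit maps to the cofree $\Omega\mathcal{P}^*$-coalgebra comonad. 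The paper leaves this implicit; your comparison makes it explicit. The price is that you need finer input. First, that Heuts's comparison functor of \cite[Section 7]{heuts2024}, which the paper's lemma imports, is the one induced by $p\,\mathbf{S}(\mathscr{C})\to\widehat{p\mathbf{Sym}}_{\mathscr{C}}$. Second, that the equivalence of \cite{rectification} coherently matches the projection $p_1\colon\mathrm{L}(\Omega\mathcal{P}^*)\to\widehat{\mathrm{S}}^c(\Omega\mathcal{P}^*)$ with the structure map into $\widehat{\mathbf{S}}^c_{\mathrm{pro}}$ on constant pro-objects. You should not route that last check through the tower of coalgebra $\infty$-categories. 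Its projections to the finite stages are cotruncations, right adjoint to the functors induced by the truncation maps; the first one is derived primitives. So they do not commute with the forgetful functors to $\mathbf{D}(\kk)$, the stagewise composites with $\mathrm{Inc}$ and $\mathbf{Inc}\circ\mathbf{loose}$ are not given by finite formulas, and comparing them is no easier than the original problem. The arity-wise reduction belongs on the target of the structure map instead. A map from a constant pro-object into $\widehat{\mathbf{S}}^c_{\mathrm{pro}}(\mathbf{cobar}(\mathcal{P}^*))(cV)$ is the same as a map into $\prod_n[\mathbf{cobar}(\mathcal{P}^*)(n),V^{\otimes n}]^{h\mathbb{S}_n}$. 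It is there that the point-set and $\infty$-categorical maps can be compared arity by arity.
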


\begin{proof}
The vertical right adjunction is given by Lemma \ref{lemma: forgetting divided powers and conilpotentcy}, where $(\mathbf{bar}(\mathcal{P}))^* \simeq \mathbf{cobar}(\mathcal{P}^*)$ since $\mathcal{P}$ satisfies Assumptions \ref{assumption 1}. The top horizontal adjunction is constructed in Proposition \ref{prop: adjunction between strict and loose conilpotent coalgebras}. The diagonal adjunction is given by the derived inclusion-maximal conilpotent subcoalgebra Quillen adjunction of Lemma \ref{lemma: restriction and inclusion are Quillen functors}, under the equivalence of $\infty$-categories
\[
\Omega\mathcal{P}^*\mbox{-}\mathsf{cog}~[\mathsf{Q.iso}^{-1}] \simeq \mathbf{Coalg}_{\mathbf{cobar}(\mathcal{P}^*)}(\mathbf{D}(\kk))~,
\]
which is given by Theorem \ref{thm: rectification of P-coalgebras}. 

\medskip

If we compse the cofree-forgetful adjunction between $\Omega\mathcal{P}^*$-coalgebras and chain complexes with either the $\mathrm{Inc} \dashv \mathbb{R}\mathrm{Sub}$ adjunction or the $\mathbf{Inc} \dashv \mathbf{Sub}$ adjunction, we get, in both cases, the same comonad $\mathbf{S}(\mathbf{bar}(\mathcal{P}))$ on chain complexes. Hence, since the $\infty$-category $\mathbf{Coalg}_{\mathbf{bar}(\mathcal{P})}^{\mathbf{conil}}(\mathbf{D}(\kk))$ is precisely the category of coalgebras over this comonad, it is the terminal object in the $\infty$-category of adjunctions that produce the comonad $\mathbf{S}(\mathbf{bar}(\mathcal{P}))$, and thus there is an essentially unique adjunction between $\mathrm{B}\mathcal{P}$-coalgebras and $\mathbf{Coalg}_{\mathbf{bar}(\mathcal{P})}^{\mathbf{conil}}(\mathbf{D}(\kk))$ which makes this triangle commute, which is $\mathbf{loose} \dashv \mathbf{strict}$.
\end{proof}

\subsection{The \texorpdfstring{$\infty$}{infinity}-categorical inclusion-restriction square} We use the point-set version of the inclusion-restriction square of Theorem \ref{thm: full rectangle infinity categorical} together with the comparisons established in  \cref{thm: point-set models for enhanced bar-cobar} and \cref{thm: point-set models for complete enhanced bar-cobar adjunctions} to construct an $\infty$-categorical version of the inclusion-restriction square. We suspect that a result like this should holds in a much more general context, e.g.~for enriched $\infty$-operads in any compactly  rigidly generated stable symmetric monoidal $\infty$-category. 

\begin{theorem}\label{thm: infinity categorical inclusion-restriction square}
Let $\mathscr{P}$ be a reduced $\infty$-operad with arity-wise bounded and finite dimensional homology groups. The following diagram of adjunctions 

\[
\begin{tikzcd}[column sep=5pc,row sep=5pc]
\mathbf{Alg}_{\mathscr{P}}(\mathbf{D}(\kk)) \arrow[r,"\bm{\mathrm{B}}^{\mathbf{enh}}"{name=B},shift left=1.1ex] \arrow[d,"\mathbf{cAbs}"{name=SD},shift left=1.1ex ]
&\mathbf{Coalg}_{\mathbf{bar}(\mathscr{P})}^{\mathbf{conil}}(\mathbf{D}(\kk)), \arrow[d,"\mathbf{Inc}"{name=LDC},shift left=1.1ex ] \arrow[l,"\bm{\Omega}^{\mathbf{enh}}"{name=C},,shift left=1.1ex]  \\
\mathbf{Alg}_{\mathscr{P}^*}(\mathbf{D}(\kk)) \arrow[r,"\bm{\widehat{\mathrm{B}}}^{\mathbf{enh}}"{name=CC},shift left=1.1ex]  \arrow[u,"\mathbf{Res}"{name=LD},shift left=1.1ex ]
&\mathbf{Coalg}_{\mathbf{cobar}(\mathscr{P}^*)}(\mathbf{D}(\kk)) ~, \arrow[l,"\bm{\widehat{\Omega}}^{\mathbf{enh}}"{name=CB},shift left=1.1ex] \arrow[u,"\mathbf{Sub}"{name=TD},shift left=1.1ex] \arrow[phantom, from=SD, to=LD, , "\dashv" rotate=180] \arrow[phantom, from=C, to=B, , "\dashv" rotate=-90]\arrow[phantom, from=TD, to=LDC, , "\dashv" rotate=180] \arrow[phantom, from=CC, to=CB, , "\dashv" rotate=-90]
\end{tikzcd}
\] 

commutes in the following sense: left adjoints from the top left to the bottom right are naturally weakly equivalent. 
\end{theorem}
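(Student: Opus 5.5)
The plan is to reduce the statement to the point-set rectangle of Theorem \ref{thm: full rectangle infinity categorical}, transporting each corner and each adjunction along the comparison results of Sections 3 and 4. First, choose a point-set model: since $\mathscr{P}$ is reduced with arity-wise bounded, finite-dimensional homology, pick a dg operad $\mathcal{P}$ presenting $\mathscr{P}$ which satisfies Assumption \ref{assumption 1}, for instance a minimal model or the homology operad after transfer. Then $\mathcal{P}$ is $\mathbb{S}$-projective in characteristic zero (and trivially so in the non-symmetric case), so $\mathcal{P}\mbox{-}\mathsf{alg}[\mathsf{Q.iso}^{-1}] \simeq \mathbf{Alg}_{\mathscr{P}}(\mathbf{D}(\kk))$. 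Set $\C = \mathrm{B}\mathcal{P}$, so that $\Omega\C^{*}$ is identified with $\Omega\mathcal{P}^*$, which is cofibrant and presents $\mathbf{cobar}(\mathscr{P}^*)$. By Theorem \ref{thm: rectification of P-coalgebras} we get $\Omega\mathcal{P}^*\mbox{-}\mathsf{coalg}[\mathsf{Q.iso}^{-1}] \simeq \mathbf{Coalg}_{\mathbf{cobar}(\mathscr{P}^*)}(\mathbf{D}(\kk))$. By Corollary \ref{cor: rectifictation of dg C algebras} applied to the cooperad $\mathcal{P}^*$, we get $\mathcal{P}^*\mbox{-}\mathsf{alg}^{\mathsf{comp}}[\mathsf{Q.iso}^{-1}] \simeq \mathbf{Alg}_{\mathscr{P}^*}(\mathbf{D}(\kk))$. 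This presents the bottom-left corner.

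Next, identify each side of the square. The top side is handled by Theorem \ref{thm: point-set models for enhanced bar-cobar}: $\mathbf{B}^{\mathbf{enh}} \simeq \mathbf{loose}\circ \mathrm{B}_\pi$, where $\mathrm{B}_\pi$ is read after the localization $\mathrm{Id}$. The right side is handled by Proposition \ref{prop: commutative triangle of inclusions}, which gives $\mathbf{Inc}\circ\mathbf{loose}\simeq \mathrm{Inc}$. For the bottom side, we need $\widehat{\mathbf{B}}^{\mathbf{enh}}\simeq \widehat{\mathrm{B}}_\iota\circ\mathrm{Id}$ as left adjoints. This follows from Theorem \ref{thm: point-set models for complete enhanced bar-cobar adjunctions} applied to $\C=\mathcal{P}^*$, using the identification $\Omega\mathcal{P}^*$, with the arrows reversed: $\widehat{\mathrm{B}}_\iota$ is the left adjoint in the $\infty$-equivalence, and $\mathrm{Id}\dashv\mathbb{L}\mathrm{Id}$ has $\mathrm{Id}$ on the left. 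For the left side, define $\mathbf{Res}$ through the point-set $\mathrm{Res}$, and $\mathbf{cAbs}$ as its left adjoint $\mathbb{L}c\mathrm{Abs}$, via Lemma \ref{lemma: restriction and inclusion are Quillen functors}. To make this intrinsic, I would check that, under both monadicity equivalences, $\mathrm{Res}$ is the functor induced by the map of monads $\mathbf{S}(\mathscr{P})\to\widehat{\mathbf{S}}^c(\mathscr{P}^*)$ coming from the norm and duality maps. This holds because the point-set $\iota_{\mathcal{P}}$ of Proposition \ref{Prop: absolution restriction adjunction} commutes with the forgetful functors and models that map.

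With all four sides identified, the chain of equivalences is
\[
\widehat{\mathbf{B}}^{\mathbf{enh}}\circ\mathbf{cAbs}\simeq \widehat{\mathrm{B}}_\iota\circ\mathrm{Id}\circ\mathbb{L}c\mathrm{Abs}\simeq \mathrm{Inc}\circ\mathrm{Id}\circ\mathrm{B}_\pi\simeq \mathbf{Inc}\circ\mathbf{loose}\circ\mathrm{Id}\circ\mathrm{B}_\pi\simeq \mathbf{Inc}\circ\mathbf{B}^{\mathbf{enh}}.
\]
The middle equivalence is exactly Theorem \ref{thm: full rectangle infinity categorical}, which comes from the counit quasi-isomorphism $\widehat{\mathrm{B}}_\iota\widehat{\Omega}_\iota\to\mathrm{id}$. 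The outer two equivalences come from the identifications above.

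The main obstacle is the compatibility of these identifications. The comparison equivalences were produced by universal properties (Eilenberg--Moore terminality, Barr--Beck--Lurie), so I must ensure that the equivalences used for the corners are the same ones under which each adjunction is identified. This applies especially to $\mathbf{Coalg}_{\mathbf{cobar}(\mathscr{P}^*)}$, which appears both in Proposition \ref{prop: commutative triangle of inclusions} and in Theorem \ref{thm: point-set models for complete enhanced bar-cobar adjunctions}; both use Theorem \ref{thm: rectification of P-coalgebras}, so they agree. For the bottom side, I also need that the monad identification of Theorem \ref{thm: identification of the infinity monad Q} is compatible with the equivalence of Corollary \ref{cor: rectifictation of dg C algebras}. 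I would argue this by noting that both are induced by the same point-set monad $\widehat{\mathrm{S}}^c(\mathcal{P}^*)$ and forgetful functor, so they agree as equivalences over $\mathbf{D}(\kk)$.
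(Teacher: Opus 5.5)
Your proposal is correct and is essentially the paper's own proof. You pick a point-set model satisfying Assumption~\ref{assumption 1}; the paper makes this explicit as $\Omega\mathrm{B}\,\mathrm{H}_*(\mathcal{Q})$ for an arbitrary model $\mathcal{Q}$, which is how the transferred homotopy operad on homology becomes an honest dg operad. You then run the same chain of equivalences through Theorem~\ref{thm: full rectangle infinity categorical}, Proposition~\ref{prop: commutative triangle of inclusions}, Theorems~\ref{thm: point-set models for enhanced bar-cobar} and~\ref{thm: point-set models for complete enhanced bar-cobar adjunctions}, and Corollary~\ref{cor: rectifictation of dg C algebras}.

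One small correction: the right Bousfield localization gives $\mathbb{L}\mathrm{Id}\dashv\mathrm{Id}$, not $\mathrm{Id}\dashv\mathbb{L}\mathrm{Id}$, so the bottom left adjoint is $\widehat{\mathrm{B}}_\iota\circ\mathbb{L}\mathrm{Id}$. You may replace it by $\widehat{\mathrm{B}}_\iota$ on the relevant objects only because $\mathbb{L}c\mathrm{Abs}(A)\cong\widehat{\Omega}_\iota\,\mathrm{Inc}\,\mathrm{B}_\pi(A)$ is already cofibrant for the quasi-isomorphism model structure.
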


\begin{proof}
The two horizontal adjunctions correspond to the enhanced bar-cobar adjunction, defined in Subsection \ref{Subsection: enhanced bar-cobar adjunction} and its dual version, defined in Subsection \ref{Subsection: Complete enhanced bar-cobar adjunction}. The right vertical adjunction is given by Lemma \ref{lemma: forgetting divided powers and conilpotentcy} and the left vertical adjunction is induced the canonical morphism of monads
\[
\iota_{\mathscr{P}}: \bigoplus_{n \geq 0} \left(\mathscr{P}(n) \otimes (-)^{\otimes n}\right)_{h\mathbb{S}_n} \longrightarrow \prod_{n \geq 0} \left[\mathscr{P}^*(n),(-)^{\otimes n}\right]^{h\mathbb{S}_n}
\]
in a completely analogous way to Proposition \ref{Prop: absolution restriction adjunction}. We are left to check that this square of adjunctions does, indeed, commute. 

\medskip

The first part of the proof consists in showing that any reduced $\infty$-operad $\mathscr{P}$ with arity-wise bounded and finite dimensional homology groups admits a point-set model $\mathcal{P}$, which is a dg operad stasifying Assumptions \ref{assumption 1}, so that we can apply Theorem \ref{thm: full rectangle infinity categorical} to $\mathcal{P}$. 

\medskip

Since the $\infty$-category of $\infty$-operads is presented by the category of dg operads localized at quasi-isomorphisms \cite{HinichRectification}, there exists a point-set model $\mathcal{Q}$ for this $\infty$-operad. Now, let us explain why, for any choice of $\mathcal{Q}$, we can construct another model $\mathcal{P}$ which is a reduced dg operad such that each $\mathcal{P}(n)$ is a bounded and degree-wise finite dimensional chain complex for all $n \geq 0$. First, we can apply the homotopy transfer theorem to $\mathcal{Q}$ to get an up-to-homotopy operad structure on $\mathrm{H}_*(\mathcal{Q})$ (see \cite{vanderlaanthesis,granaker}). This gives in particular a weak equivalence of conilpotent dg cooperads between $\mathrm{B}\mathrm{H}_*(\mathcal{Q})$ and $\mathrm{B}(\mathcal{Q})$. Therefore, $\Omega\mathrm{B}\mathrm{H}_*(\mathcal{Q})$ is quasi-isomorphic to $\Omega\mathrm{B}(\mathcal{Q})$ and thus to $\mathcal{Q}$. Moreover, each $\Omega\mathrm{B}\mathrm{H}_*(\mathcal{Q})(n)$ is a bounded and degree-wise finite dimensional chain complex since the homology of $\mathcal{Q}(n)$ is by assumption bounded and finite dimensional. Therefore we can set $\mathcal{P} = \Omega\mathrm{B}\mathrm{H}_*(\mathcal{Q})$, which satisfies Assumptions \ref{assumption 1}, and apply Theorem \ref{thm: full rectangle infinity categorical}. 

\medskip

In this way, by applying Theorem \ref{thm: full rectangle infinity categorical}, together with Proposition \ref{prop: commutative triangle of inclusions} and Theorems \ref{thm: point-set models for enhanced bar-cobar} and \ref{thm: point-set models for complete enhanced bar-cobar adjunctions}, we get the following natural equivalences: 
\[
\mathbf{Inc} \circ \bm{\mathrm{B}}^{\mathbf{enh}} \simeq \mathbf{bar}_{\mathcal{P}} \simeq \mathrm{Inc} \circ \mathrm{B}_\pi \simeq \widehat{\mathrm{B}}_\iota \circ \mathbb{L}\mathrm{Id} \circ \mathbb{L}c\mathrm{Abs} \simeq \bm{\widehat{\mathrm{B}}}^{\mathbf{enh}} \circ \mathbf{cAbs}~, 
\]
making the diagram commute, where we use the fact that the Quillen adjunction of Proposition \ref{Prop: absolution restriction adjunction} is a point-set model for its $\infty$-categorical counterpart, using directly Corollary \ref{cor: rectifictation of dg C algebras}.
\end{proof}

\begin{remark}
It follows from the above that the $\infty$-categorical bar-cobar adjunction which follows from Theorem \ref{thm: infty categorical operadic bar cobar} admits the following $\infty$-categorical description
\[
\mathbf{bar}_{\mathcal{P}} \simeq \mathbf{Inc} \circ \bm{\mathrm{B}}^{\mathbf{enh}} \quad \text{and} \quad \mathbf{cobar}_{\mathcal{P}} \simeq \mathbf{Res} \circ \bm{\widehat{\Omega}}^{\mathbf{enh}}~, 
\]
and, vice-versa, these two composites admit explicit point-set models given in Theorem \ref{thm: infty categorical operadic bar cobar}.  
\end{remark}

\subsection{A comparison with the adjunction constructed by Heuts} We compare the composite adjunction given by Theorem \ref{thm: infinity categorical inclusion-restriction square} with the adjunction considered in \cite{heuts2024}, which we first recall. This comparison shall also be useful in Section \ref{Section: Derived Koszul equivalences} in order to compare our results with those obtained in \textit{op.cit.} Heuts considers more generally operads and cooperads in a presentably symmetric monoidal stable $\infty$-category $\mathbf C$. We work in the special case of $\mathbf C= \mathbf D(\kk)$. 

\begin{theorem}\label{prop: comparison of the composition adjunction with heuts' adjunction}
Let $\mathcal{P}$ be a dg operad satisfying Assumptions \ref{assumption 1}. The adjunction of $\infty$-categories 
\[
\begin{tikzcd}[column sep=4pc,row sep=3pc]
          { \text{$\mathcal{P}$}}\mbox{-}\mathsf{alg}~[\mathsf{Q.iso}^{-1}] \arrow[r,"\mathbf{bar}_\mathcal{P}"{name=F},shift left=1.1ex ] & \Omega\mathcal{P}^* \mbox{-}\mathsf{coalg}~[\mathsf{Q.iso}^{-1}] \arrow[l,"\mathbf{cobar}_\mathcal{P}"{name=U},shift left=1.1ex ]
            \arrow[phantom, from=F, to=U, , "\dashv" rotate=-90]
\end{tikzcd}
\]
between the $\infty$-category of dg $\mathcal{P}$-algebras localized at quasi-isomorphisms and the $\infty$-category of dg $\Omega\mathcal{P}^*$-coalgebras localized at quasi-isomorphisms coincides with the adjunction considered in \cite[Theorem 2.1]{heuts2024} for the underlying $\infty$-operad of $\mathcal{P}$, which is defined as the following composition
\[
\begin{tikzcd}[column sep=5pc,row sep=3pc]
         \mathbf{Alg}_{\mathcal{P}}(\mathbf D(\kk)) \arrow[r, shift left=1.1ex, "\mathbf{B}^{\mathbf{enh}}"{name=F}] &\mathbf{Coalg}_{\mathbf{bar}(\mathcal{P})}^{\mathbf{conil}}(\mathbf D(\kk)) \arrow[l, shift left=.75ex, "\bm{\Omega}^{\mathbf{enh}}"{name=U}] \arrow[phantom, from=F, to=U, , "\dashv" rotate=-90] \arrow[r, shift left=1.1ex, "\mathbf{Inc}"{name=FF}]
&\mathbf{Coalg}_{\mathbf{bar}(\mathcal{P})}(\mathbf D(\kk))~.  \arrow[l, shift left=.75ex, "\mathbf{Sub}"{name=UU}] \arrow[phantom, from=FF, to=UU, , "\dashv" rotate=-90]
\end{tikzcd}
\]
\end{theorem}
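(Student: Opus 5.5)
Since both sides are adjunctions, it is enough to identify the left adjoints naturally; the right adjoints then agree by uniqueness of adjoints. The left adjoint in the statement is $\mathbf{bar}_{\mathcal{P}} = \mathrm{Inc}\circ \mathrm{B}_\pi$. Heuts' left adjoint is $\mathbf{Inc}^{H}\circ \mathbf{B}^{\mathbf{enh}}$. Here $\mathbf{Inc}^{H}$ denotes the inclusion of conilpotent divided powers $\mathbf{bar}(\mathcal{P})$-coalgebras into Heuts' non-conilpotent divided powers $\mathbf{bar}(\mathcal{P})$-coalgebras, constructed in \cite[Section 7]{heuts2024}.

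The plan has three steps.

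\textbf{Step 1.} I would replace Heuts' target category by ours. By Assumption \ref{assumption 1}, $\mathbf{bar}(\mathcal{P})$ is reduced with arity-wise finite dimensional homology, and we are either non-symmetric or in characteristic zero. So Lemma \ref{lemma: equivalence between non-conilpotent coalgebras over a cooperad a la Heuts and coalgebras over the dual operad} gives an equivalence $\mathbf{Coalg}_{\mathbf{bar}(\mathcal{P})}(\mathbf{D}(\kk))\simeq \mathbf{Coalg}_{\mathbf{bar}(\mathcal{P})^*}(\mathbf{D}(\kk))$. We also have $\mathbf{bar}(\mathcal{P})^*\simeq \mathbf{cobar}(\mathcal{P}^*)$, which is presented by $\Omega\mathcal{P}^*$. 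Theorem \ref{thm: rectification of P-coalgebras} applies because $\Omega\mathcal{P}^*$ is cofibrant, and it identifies the latter category with $\Omega\mathcal{P}^*\mbox{-}\mathsf{coalg}~[\mathsf{Q.iso}^{-1}]$. By the proof of Lemma \ref{lemma: forgetting divided powers and conilpotentcy}, composing $\mathbf{Inc}^H$ with this equivalence gives exactly the functor $\mathbf{Inc}$ of that lemma. Heuts' adjunction therefore becomes $\mathbf{Inc}\circ\mathbf{B}^{\mathbf{enh}}\dashv \bm{\Omega}^{\mathbf{enh}}\circ\mathbf{Sub}$, with values in $\Omega\mathcal{P}^*\mbox{-}\mathsf{coalg}~[\mathsf{Q.iso}^{-1}]$.

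\textbf{Step 2.} I would identify the left adjoints. Theorem \ref{thm: point-set models for enhanced bar-cobar} gives $\mathbf{B}^{\mathbf{enh}}\simeq \mathbf{loose}\circ \mathrm{B}_\pi$. Proposition \ref{prop: commutative triangle of inclusions} gives $\mathbf{Inc}\circ\mathbf{loose}\simeq \mathrm{Inc}$. Combining them,
\[
\mathbf{Inc}\circ\mathbf{B}^{\mathbf{enh}} \simeq \mathbf{Inc}\circ\mathbf{loose}\circ\mathrm{B}_\pi \simeq \mathrm{Inc}\circ \mathrm{B}_\pi = \mathbf{bar}_{\mathcal{P}}.
\]
This is the same chain of equivalences used in the proof of Theorem \ref{thm: infinity categorical inclusion-restriction square}. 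Uniqueness of right adjoints then gives $\mathbf{cobar}_{\mathcal{P}}\simeq \bm{\Omega}^{\mathbf{enh}}\circ\mathbf{Sub}$ as well.

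\textbf{Step 3 (main obstacle).} The delicate point is checking that the inclusion built in Lemma \ref{lemma: forgetting divided powers and conilpotentcy} really agrees with Heuts' functor $\mathbf{Inc}^H$ after transport along Lemma \ref{lemma: equivalence between non-conilpotent coalgebras over a cooperad a la Heuts and coalgebras over the dual operad}. Heuts constructs $\mathbf{Inc}^H$ inductively along the arity tower. I would compare the two functors via the underlying comonads on $\mathbf{pro}(\mathbf{D}(\kk))$. Composing with the forgetful functors, both produce the same comonad, namely the one induced by the map $\lim_k p(\iota^{\leq k}_{\mathscr{C}})$ from $\mathbf{S}(\mathbf{bar}(\mathcal{P}))$. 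Both functors commute with the forgetful functors to $\mathbf{D}(\kk)$ and are compatible with the comonad maps, so they agree by the universal property of Eilenberg--Moore categories, as in the proof of Proposition \ref{prop: commutative triangle of inclusions}. If a direct comparison proves awkward, the fallback is to check agreement arity by arity on the truncations $\tau^{\leq k}$, using the limit decomposition of \cite[Proposition 3.16]{rectification}.
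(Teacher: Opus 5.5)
Your proposal is correct and is essentially the paper's proof written out in more detail: the paper also transports Heuts' target category along Lemma~\ref{lemma: equivalence between non-conilpotent coalgebras over a cooperad a la Heuts and coalgebras over the dual operad}, and then deduces $\mathbf{Inc}\circ\mathbf{B}^{\mathbf{enh}}\simeq\mathrm{Inc}\circ\mathrm{B}_\pi$ from Theorem~\ref{thm: point-set models for enhanced bar-cobar} and Proposition~\ref{prop: commutative triangle of inclusions}. Your Step~3 is not a genuine obstacle, because Lemma~\ref{lemma: forgetting divided powers and conilpotentcy} defines $\mathbf{Inc}$ as exactly the composite of Heuts' inclusion with that equivalence, so the agreement holds by construction (as you already note in Step~1).
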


\begin{proof}
Since $\mathcal{P}$ satisfies Assumptions \ref{assumption 1}, we can use Lemma \ref{lemma: equivalence between non-conilpotent coalgebras over a cooperad a la Heuts and coalgebras over the dual operad} to get an equivalence between the target coalgebra category considered in \cite{heuts2024} and ours: 
\[
\mathbf{Coalg}_{\mathbf{bar}(\mathcal{P})}(\mathbf D(\kk)) \simeq \mathbf{Coalg}_{\mathbf{cobar}(\mathcal{P}^*)}(\mathbf D(\kk))~. 
\]
Then, the result essentially follows from the respective definitions, using the Theorem \ref{thm: point-set models for enhanced bar-cobar} and Proposition \ref{prop: commutative triangle of inclusions}. 
\end{proof}

\subsection{About the positive characteristic situation}\label{Subsection: positive char extension}
The constructions of \cite{heuts2024} are valid in any presentable stable symmetric monoidal $\infty$-category $\mathbf{C}$. Let $\mathscr{P}$ be an $\infty$-operad enriched in $\mathbf{C}$. In this setting, the adjunction 
\[
\begin{tikzcd}[column sep=5pc,row sep=3pc]
         \mathbf{Alg}_{\mathscr{P}}(\mathbf{C}) \arrow[r, shift left=1.1ex, "\mathbf{B}^{\mathbf{enh}}"{name=F}] &\mathbf{Coalg}_{\mathbf{bar}(\mathscr{P})}^{\mathbf{pd},\mathbf{conil}}(\mathbf{C}) \arrow[l, shift left=.75ex, "\bm{\Omega}^{\mathbf{enh}}"{name=U}] \arrow[phantom, from=F, to=U, , "\dashv" rotate=-90] \arrow[r, shift left=1.1ex, "\mathbf{Inc}"{name=FF}]
&\mathbf{Coalg}_{\mathbf{bar}(\mathscr{P})}^{\mathbf{pd}}(\mathbf{C})~. \arrow[l, shift left=.75ex, "\mathbf{Sub}"{name=UU}] \arrow[phantom, from=FF, to=UU, , "\dashv" rotate=-90]
\end{tikzcd}
\]
land in $\infty$-categories of $\mathbf{bar}(\mathscr{P})$-coalgebras which now has a priori divided powers operations. These prevent the adjunction in Lemma \ref{lemma: equivalence between non-conilpotent coalgebras over a cooperad a la Heuts and coalgebras over the dual operad} from being an equivalence. 

\begin{notation}
We add the superscript $\mathbf{pd}$ in this subsection to keep track of divided powers operations.
\end{notation}

In this setting, we expect the above adjunction to fit in a more general inclusion-restriction square of the form
\begin{center}
\begin{equation}\label{eq: hypothetical incl-rest 1}
\begin{tikzcd}[column sep=5pc,row sep=5pc]
\mathbf{Alg}_{\mathscr{P}}(\mathbf{D}(\kk)) \arrow[r,"\bm{\mathrm{B}}^{\mathbf{enh}}"{name=B},shift left=1.1ex] \arrow[d,"\mathbf{cAbs}"{name=SD},shift left=1.1ex ]
&\mathbf{Coalg}_{\mathbf{bar}(\mathscr{P})}^{\mathbf{pd},\mathbf{conil}}(\mathbf{D}(\kk)), \arrow[d,"\mathbf{Inc}"{name=LDC},shift left=1.1ex ] \arrow[l,"\bm{\Omega}^{\mathbf{enh}}"{name=C},,shift left=1.1ex]  \\
\mathbf{Alg}_{\mathscr{P}^*}(\mathbf{D}(\kk)) \arrow[r,"\bm{\widehat{\mathrm{B}}}^{\mathbf{enh}}"{name=CC},shift left=1.1ex]  \arrow[u,"\mathbf{Res}"{name=LD},shift left=1.1ex ]
&\mathbf{Coalg}_{\mathbf{cobar}(\mathscr{P}^*)}^{\mathbf{pd}}(\mathbf{D}(\kk)) ~, \arrow[l,"\bm{\widehat{\Omega}}^{\mathbf{enh}}"{name=CB},shift left=1.1ex] \arrow[u,"\mathbf{Sub}"{name=TD},shift left=1.1ex] \arrow[phantom, from=SD, to=LD, , "\dashv" rotate=180] \arrow[phantom, from=C, to=B, , "\dashv" rotate=-90]\arrow[phantom, from=TD, to=LDC, , "\dashv" rotate=180] \arrow[phantom, from=CC, to=CB, , "\dashv" rotate=-90]
\end{tikzcd}
\end{equation}
\end{center}

where one starts with classical algebras over $\mathscr{P}$, and where divided powers appear in all the other $\infty$-categories in the square. Moreover, if one starts with divided powers algebras over $\mathscr{P}$, there should be another inclusion-restriction square of the form 

\begin{center}
\begin{equation}\label{eq: hypothetical incl-rest 2}
    \begin{tikzcd}[column sep=5pc,row sep=5pc]
\mathbf{Alg}_{\mathscr{P}}^{\mathbf{pd}}(\mathbf{D}(\kk)) \arrow[r,"\bm{\mathrm{B}}^{\mathbf{enh}}"{name=B},shift left=1.1ex] \arrow[d,"\mathbf{cAbs}"{name=SD},shift left=1.1ex ]
&\mathbf{Coalg}_{\mathbf{bar}(\mathscr{P})}^{\mathbf{conil}}(\mathbf{D}(\kk)), \arrow[d,"\mathbf{Inc}"{name=LDC},shift left=1.1ex ] \arrow[l,"\bm{\Omega}^{\mathbf{enh}}"{name=C},,shift left=1.1ex]  \\
\mathbf{Alg}_{\mathscr{P}^*}^{\mathbf{pd}}(\mathbf{D}(\kk)) \arrow[r,"\bm{\widehat{\mathrm{B}}}^{\mathbf{enh}}"{name=CC},shift left=1.1ex]  \arrow[u,"\mathbf{Res}"{name=LD},shift left=1.1ex ]
&\mathbf{Coalg}_{\mathbf{cobar}(\mathscr{P}^*)}(\mathbf{D}(\kk)) ~, \arrow[l,"\bm{\widehat{\Omega}}^{\mathbf{enh}}"{name=CB},shift left=1.1ex] \arrow[u,"\mathbf{Sub}"{name=TD},shift left=1.1ex] \arrow[phantom, from=SD, to=LD, , "\dashv" rotate=180] \arrow[phantom, from=C, to=B, , "\dashv" rotate=-90]\arrow[phantom, from=TD, to=LDC, , "\dashv" rotate=180] \arrow[phantom, from=CC, to=CB, , "\dashv" rotate=-90]
\end{tikzcd}
\end{equation}
\end{center}

Defining all of these $\infty$-categories and constructing such adjunctions is beyond the scope of the present paper. Let us simply explain how these two variants can be observed at the level of point-set models. 

\medskip

In positive characteristic, one could choose to construct a point-set inclusion-restriction square, like in Theorem \ref{thm: full rectangle infinity categorical}, starting with a \textit{quasi-planar} conilpotent dg cooperad $\C$ as defined in \cite{premierpapier}. Then, bottom horizontal adjunction in the aforementioned theorem would be a well-defined Quillen adjunction by \cite[Section 6]{premierpapier}. This Quillen adjunction should induce the bottom horizontal adjunction (\ref{eq: hypothetical incl-rest 2}), since dg $\Omega \C$-coalgebras up to quasi-isomorphisms encode $\infty$-categories of (non-necessarily conilpotent) coalgebras \textit{without} divided powers by the rectification result of \cite[Theorem C]{rectification}. On the top horizontal adjunctions of Theorem \ref{thm: full rectangle infinity categorical}, one would have algebras over the dg operad $\C^*$, which is now injective as an dg symmetric sequence since $\C$ is projective as a dg symmetric sequence. Therefore dg $\C^*$-algebras encode an $\infty$-category of divided powers algebras over an operad, see \cite{pdalgebras} for more on this type of algebras and their properties. However, dg $\C^*$-algebras a priori only have a semi-model structure, and it is not clear how to construct the top horizontal adjunction in (\ref{eq: hypothetical incl-rest 2}) at the point-set level.

\medskip

Another variant would be to start construct the inclusion-restriction square in Theorem \ref{thm: full rectangle infinity categorical} starting with a cofibrant dg operad $\mathcal{P}$. Then dg $\mathcal{P}$-algebras up to quasi-isomorphism would encode an $\infty$-category of classical algebras over its underlying $\infty$-operad since any cofibrant operad is projective as a dg symmetric sequence. In this context, the top horizontal adjunction of (\ref{eq: hypothetical incl-rest 1}) should be related to the Quillen adjunction constructed in \cite[Section 4]{premierpapier} in a similar way to what has been done in Section \ref{Section: Francis-Gaitsfory enhanced bar-cobar}. However, in this case $\Omega \mathcal{P}^*$ would be injective as a dg symmetric sequence and thus encode, in principle, an $\infty$-category of (non-necessarily conilpotent) divided powers coalgebras. However, in this case, we do not have a rectification result in the sense of \cite{rectification} because this dg operad would not be cofibrant as an operad. So constructing point-set models for the bottom horizontal adjunction in (\ref{eq: hypothetical incl-rest 1}) seems for the moment out of reach.

\begin{remark}
However \cite[Theorem 4.42]{pdalgebras} shows that, up to composing with the linear dual functor, one has point-set models for the linear dual of the bar construction as a  $\Omega \mathcal{P}^*$-algebra, which are divided powers algebras at the homotopical level.
\end{remark}


\section{Derived Koszul equivalences and good completions}\label{Section: Derived Koszul equivalences}


In this section, we define homotopy (co)completeness at the point-set level and we obtain an explicit point-set description of the equivalence in \cite[Theorem 2.1]{heuts2024}. We then discuss Heuts's notion of an operad having good completions (meaning that trivial algebras are homotopy complete) and we define the dual notion of a cofibrant operad having good cocompletions. We finally put together all our previous constructions to obtain several fully faithful functors in different directions. We also give counterexamples to some natural-sounding statements which however are ``too good to be true''.

\subsection{Homotopy (co)complete (co)algebras at the point-set level}
Let $\mathcal{P}$ be a dg operad satisfying Assumptions \ref{assumption 1}. We recall the definition of a homotopy complete dg $\mathcal{P}$-algebra, already considered in \cite{HarperHess13,CamposPetersen24,absolutealgebras}. We introduce the dual definition of a homotopy cocomplete dg $\Omega\mathcal{P}^*$-coalgebra. We show that these two definitions are in fact point-set descriptions of the notions introduced by \cite{heuts2024} of \textit{nilcomplete} algebras and \textit{conilcomplete} coalgebras. Almost tautologically, the $\infty$-categorical bar-cobar adjunction $\mathbf{bar}_{\mathcal{P}} \dashv \mathbf{cobar}_{\mathcal{P}}$ restricts to an equivalence between these two $\infty$-subcategories, which reproves \cite[Theorem 2.1]{heuts2024} where the ambient stable $\infty$-category is $\mathbf D(\kk)$. 

\subsubsection{Homotopy completeness}  Let us consider the restriction adjunction 
\[
\begin{tikzcd}[column sep=5pc,row sep=3pc]
            \mathcal{P}\mbox{-}\mathsf{alg} \arrow[r,"c\mathrm{Abs}"{name=F},shift left=1.1ex ] &\mathcal{P}^*\mbox{-}\mathsf{alg}^{\mathsf{comp}} \arrow[l,"\mathrm{Res}"{name=U},shift left=1.1ex ]
            \arrow[phantom, from=F, to=U, , "\dashv" rotate=-90]
\end{tikzcd}
\]
which is a Quillen adjunction by \cref{lemma: restriction and inclusion are Quillen functors}. Here we are considering the model structures where weak equivalences are given by quasi-isomorphisms on both sides. 

\begin{definition}[Homotopy complete dg $\mathcal{P}$-algebra]
Let $A$ be a dg $\mathcal{P}$-algebra. It is \textit{homotopy complete} if the derived unit of adjunction
\[
\mathbb{L}\eta_A: A \qi \mathrm{Res} \circ \mathbb{L}c\mathrm{Abs}(A)
\]
is a quasi-isomorphism.
\end{definition}

\begin{remark}
    The idea behind this definition is that the underived adjunction unit $\mathrm{Res}\circ c\mathrm{Abs}$ universally turns a $\mathcal P$-algebra to a complete $\mathcal P^*$-algebra, or ``absolute'' algebra. %
%
When $\mathcal{P} = \mathcal{C}om$, this notion of completeness coincides with the $I$-adic completion of augmented commutative algebras in the finitely generated case, and it is in fact  better behaved than the $I$-adic completion in the general case. See \cite[Section 3.4.1]{absolutealgebras} for more details. Coming back to the above definition, it is only natural to call a dg $\mathcal{P}$-algebra \textit{homotopy} complete in this sense when the \textit{derived} unit of adjunction is an equivalence. 

\end{remark}
\begin{remark}\label{point set description of homotopy complete}
    With our point-set models, we can make the condition of homotopy completeness extremely explicit on the point-set level. If $A$ is a $\mathcal P$-algebra, then the standard proof that $\Omega \mathrm B A \to A$ is a quasi-isomorphism uses that (disregarding the differential) we may write
\[ \Omega \mathrm B A = \bigoplus_{n>0} (\mathcal P \circ \mathrm B \mathcal P)(n) \otimes_{\mathbb S_n} A^{\otimes n}.\]
Now the chain complex $(\mathcal P \circ \mathrm B \mathcal P)(n)$ is acyclic for $n> 1$. It follows that there are  evident filtrations with respect to which $\Omega \mathrm B A \to A$ is a filtered quasi-isomorphism, which gives the result since the filtrations are bounded below and exhaustive. 
The condition of homotopy completeness of $A$ is now that the natural embedding of 
\[ \Omega \mathrm B A = \bigoplus_{k>0} \mathcal P(k) \otimes_{\mathbb S_k} \bigoplus_{i_1+\dots+i_k=n} \mathrm{Ind}_{\mathbb{S}_{i_1} \times \cdots \times \mathbb{S}_{i_k}}^{\mathbb{S}_{n}} \left( \mathrm{B}\mathcal{P}(i_1) \otimes \cdots \otimes \mathrm{B}\mathcal{P}(i_k) \right) \otimes_{\mathbb{S}_n} A^{\otimes n}\]
into
\[ \widehat \Omega \mathrm B A = \prod_{k>0} \mathcal P(k) \otimes_{\mathbb S_k} \bigoplus_{i_1+\dots+i_k=n} \mathrm{Ind}_{\mathbb{S}_{i_1} \times \cdots \times \mathbb{S}_{i_k}}^{\mathbb{S}_{n}} \left( \mathrm{B}\mathcal{P}(i_1) \otimes \cdots \otimes \mathrm{B}\mathcal{P}(i_k) \right) \otimes_{\mathbb{S}_n} A^{\otimes n}\]
is a quasi-isomorphism. It is still a filtered quasi-isomorphism, for the same filtration as considered before, but this filtration is no longer exhaustive on the target, and so the filtered quasi-isomorphism is not necessarily a quasi-isomorphism. In favorable situations homotopy completeness can be proven by introducing further filtrations, as in \cite[Section 5]{CamposPetersen24}.

\end{remark}

\begin{remark}
By definition, the derived complete absolution functor $\mathbb{L}c\mathrm{Abs}$ restricts to a fully faithful functor
\[
\mathbb{L}c\mathrm{Abs}: \mathcal{P}\mbox{-}\mathsf{alg}^{\mathsf{ho.comp}}~[\mathsf{Q.iso}^{-1}] \hookrightarrow \mathcal{P}^*\mbox{-}\mathsf{alg}^{\mathsf{comp}}~[\mathsf{Q.iso}^{-1}]
\]
between the $\infty$-subcategory of homotopy complete dg $\mathcal{P}$-algebras up to quasi-isomorphisms and the $\infty$-category of complete dg $\mathcal{P}^*$-algebras up to quasi-isomorphisms.  
\end{remark}

Let $\mathscr{P}$ be a enriched $\infty$-operad. Following \cite[Section 10]{heuts2024}, a $\mathscr{P}$-algebra $X$ is \textit{nilcomplete} if the canonical map
\[
X \longrightarrow \lim_k \mathbf{t}_k X~,
\]
is an equivalence, where $\mathbf{t}_k X$ is the truncation of $X$ at operations in arities bigger than $n$. The functors $\mathbf{t}_n(-)$ are obtained by considering the successive arity truncations of the enriched $\infty$-operad $\mathscr{P}$.

\begin{proposition}\label{prop: nilcompletion = homotopy completetion}
Let $\mathcal{P}$ be a dg operad satisfying Assumptions \ref{assumption 1}. Let $A$ be a dg $\mathcal{P}$-algebra. It is homotopy complete if and only if it is nilcomplete.
\end{proposition}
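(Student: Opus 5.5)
The plan is to identify both conditions with the same comparison map, $A \to \lim_k \mathbf{t}_k A$. The derived unit $A \to \mathrm{Res}\circ\mathbb{L}c\mathrm{Abs}(A)$ will be computed via the arity tower of Theorem~\ref{thm: arity decomposition of the infinity category of complete C algebras}, applied to $\C = \mathcal{P}^*$. The proof should then reduce to an identification of each truncated stage.

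First, fix the cofibrant replacement $\Omega_\pi\mathrm{B}_\pi A \qi A$, so that $\mathbb{L}c\mathrm{Abs}(A) \simeq \widehat{\Omega}_\iota\mathrm{Inc}\,\mathrm{B}_\pi A$, as in the proof of Theorem~\ref{thm: full rectangle infinity categorical}. This object is cofibrant and complete, hence arity-complete. By the proof of Theorem~\ref{thm: arity decomposition of the infinity category of complete C algebras}, it is the homotopy limit of its truncations,
\[
\mathbb{L}c\mathrm{Abs}(A) \simeq \lim\nolimits^h_k (i_k)^*\mathbb{L}(i_k)_!\,\mathbb{L}c\mathrm{Abs}(A),
\]
where the transition maps are surjections. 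Since $\mathrm{Res}$ preserves limits and the forgetful functor to chain complexes preserves this inverse limit of surjections, applying $\mathrm{Res}$ gives $\mathrm{Res}\,\mathbb{L}c\mathrm{Abs}(A) \simeq \lim^h_k \mathrm{Res}\,(i_k)^*\mathbb{L}(i_k)_!\mathbb{L}c\mathrm{Abs}(A)$.

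Second, identify each stage with Heuts's truncation $\mathbf{t}_k A$. By Lemma~\ref{lemma: identification of the truncated category}, complete $\tau^{\leq k}\mathcal{P}^*$-algebras are exactly $\tau_{\leq k}\mathcal{P}$-algebras, where $\tau_{\leq k}\mathcal{P} = (\tau^{\leq k}\mathcal{P}^*)^*$ is the arity truncation of the operad. In this range finite sums and products agree, so $\mathrm{Res}$ commutes with truncation, and the composite left adjoint $(i_k)_!\circ c\mathrm{Abs}$ coincides with the left adjoint of the restriction $\mathcal{P}\text{-}\mathsf{alg}\to \tau_{\leq k}\mathcal{P}\text{-}\mathsf{alg}$. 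The composite of right adjoints agrees because both compute restriction along $\mathcal{P}\to\tau_{\leq k}\mathcal{P}$, and uniqueness of adjoints then identifies the left adjoints. Deriving, we get $(i_k)^*\mathbb{L}(i_k)_!\mathbb{L}c\mathrm{Abs}(A)\simeq \mathbb{L}(\tau_{\leq k})_!A$, viewed back as a $\mathcal{P}$-algebra. By the rectification of $\mathcal{P}$-algebras and the fact that $\tau_{\leq k}\mathcal{P}$ models the $k$-truncation of the underlying $\infty$-operad, this is $\mathbf{t}_kA$.

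Third, check compatibility of the maps. The derived unit $A\to\mathrm{Res}\,\mathbb{L}c\mathrm{Abs}(A)$, composed with the projections, must agree with the units $A\to\mathbf{t}_kA$. This follows from naturality of the units under composition of adjunctions. Hence the derived unit is identified with Heuts's map $A\to\lim_k\mathbf{t}_kA$, and one is an equivalence if and only if the other is.

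I expect the main obstacle to be the second step: matching the point-set derived functor $\mathbb{L}(i_k)_!\circ\mathbb{L}c\mathrm{Abs}$ with Heuts's $\infty$-categorical $\mathbf{t}_k$. This means checking that the strict truncation $\tau_{\leq k}\mathcal{P}$ presents Heuts's truncated $\infty$-operad (its arity-$\le k$ part), and that the derived left adjoints correspond under rectification. I would handle this by comparing right adjoints, which are non-derived restrictions and preserve quasi-isomorphisms, and then passing to left adjoints.
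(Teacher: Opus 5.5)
Your argument is correct. It is not the route of the paper's proof, but it is essentially the explicit comparison the paper gives in the remark immediately after the proof, namely the triangle of derived units $\eta_1,\eta_2,\eta_3$.

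The paper's proof instead identifies both maps with the unit of the $\infty$-categorical adjunction $\mathbf{bar}_{\mathcal{P}}\dashv\mathbf{cobar}_{\mathcal{P}}$. Using the resolution $\Omega_\pi\mathrm{B}_\pi A$, the derived unit becomes $A\to\mathrm{Res}\,\widehat{\Omega}_\iota\,\mathrm{Inc}\,\mathrm{B}_\pi A$, which is that unit. Theorem~\ref{prop: comparison of the composition adjunction with heuts' adjunction} together with \cite[Proposition 10.4]{heuts2024} then identifies it with the nilcompletion map. That proof is two lines, but it rests on an external result and on the whole coalgebra-side comparison: rectification of $\Omega\mathcal{P}^*$-coalgebras, the identification of Heuts's non-conilpotent coalgebras with $\mathbf{cobar}(\mathcal{P}^*)$-coalgebras, and the enhanced bar--cobar comparison.

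Your argument never leaves the algebra side. Theorem~\ref{thm: arity decomposition of the infinity category of complete C algebras}, applied to the cofibrant complete algebra $c\mathrm{Abs}(\Omega_\pi\mathrm{B}_\pi A)$, gives the equivalence the paper calls $\eta_2$. You observe that $\mathrm{Res}\circ(i_k)^*$ is restriction along $\mathcal{P}\to\tau_{\le k}\mathcal{P}$, hence $(i_k)_!\circ c\mathrm{Abs}\cong(\tau_{\le k})_!$ by uniqueness of adjoints. This is exactly the justification, only asserted in the paper's remark, that $\eta_3$ models the nilcompletion map.

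What you still owe is that the identification $\mathrm{Res}\,(i_k)^*\mathbb{L}(i_k)_!\mathbb{L}c\mathrm{Abs}(A)\simeq\mathbf{t}_kA$ holds as towers, naturally in $k$, so that the limits match. Two ingredients give this: the same uniqueness-of-adjoints argument for $\tau_{\le k+1}\mathcal{P}\to\tau_{\le k}\mathcal{P}$, and rectification for $\mathcal{P}$- and $\tau_{\le k}\mathcal{P}$-algebras, which is available under Assumption~\ref{assumption 1}(2).

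One small slip: the restriction functor goes $\tau_{\le k}\mathcal{P}\mbox{-}\mathsf{alg}\to\mathcal{P}\mbox{-}\mathsf{alg}$, with $(\tau_{\le k})_!$ as its left adjoint, not the other way around.
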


\begin{proof}
This result follows directly from  \cref{prop: comparison of the composition adjunction with heuts' adjunction}, since these two definitions are in fact two different descriptions of the unit of the $\infty$-categorical bar-cobar adjunction $\mathbf{bar}_{\mathcal{P}} \dashv \mathbf{cobar}_{\mathcal{P}}$. Indeed, on the one hand, the nilcompletion map is shown to be the unit of this adjunction in \cite[
Proposition 10.4]{heuts2024}. On the other hand, let $A$ be a dg $\mathcal{P}$-algebra. It is homotopy complete if the derived unit 
\[
\mathbb{L}\eta_A: A \qi \mathrm{Res} \circ \mathbb{L}c\mathrm{Abs}(A)
\]

is a quasi-isomorphism. Using the canonical cofibrant resolution $\Omega_\pi\mathrm{B}_\pi(A)$ of $A$, we compute that this derived unit is represented by the map 
\[
A \longrightarrow \mathrm{Res} \circ \widehat{\Omega}_\iota \circ \mathrm{Inc} \circ \mathrm{B}_\pi(A)~,
\]
and this also represents the (derived) unit of the $\infty$-categorical operadic bar-cobar adjunction $\mathbf{bar}_{\mathcal{P}} \dashv \mathbf{cobar}_{\mathcal{P}}$. Hence the two notions coincide. 
\end{proof}

\begin{remark}[An explicit comparison with \textit{nilcompletenes}]
There is actually a more illuminating way to understand Proposition \ref{prop: nilcompletion = homotopy completetion}. From the results of Subsection \ref{subsection: absolute algebras inside the limit}, we get a commuting triangle of Quillen adjunctions 
\[
\begin{tikzcd}[column sep=5pc,row sep=3.5pc]
&\hspace{1.5pc}\mathcal{P}^*\mbox{-}\mathsf{alg}^{\mathsf{comp}} \arrow[dd, shift left=1.1ex, "((i_k)_!(-))_{k \geq 1}"{name=F}] \arrow[ld, shift left=.75ex, "\mathrm{Res}"{name=C}]\\
\mathcal{P} \mbox{-}\mathsf{alg} \arrow[ru, shift left=1.5ex, "c\mathrm{Abs}"{name=A}]  \arrow[rd, shift left=1ex, "((i_k)_!(-))_{k \geq 1}"{name=B}] 
& \\
&\hspace{3.5pc} \lim_{k \geq 1} \tau^{\leq k}\mathcal{P}\mbox{-}\mathsf{alg} ~, \arrow[uu, shift left=.75ex, "\lim_{k \geq 1} (i_k)^*(-)"{name=U}] \arrow[lu, shift left=.75ex, "\lim_{k \geq 1} (i_k)^*(-)"{name=D}] 
\end{tikzcd}
\]
where left adjoints are drawn on the top and on the right, and where the vertical right adjunction is the one constructed in Subsection \ref{subsection: absolute algebras inside the limit}. Passing to derived functors, the first triangle of Quillen adjunctions gives the following commutative triangle of units of adjunctions 
\[
\begin{tikzcd}[column sep=2.5pc,row sep=2pc]
&\mathrm{Res} \circ \mathbb{L}c\mathrm{Abs}(A) \arrow[dd,"\eta_2"] \\\
A \arrow[ru, "\eta_1"{name=A}] \arrow[rd, "\eta_3"] 
& \\
&\lim_{k \in \mathbb{N}}^h (i_k)^*\mathbb{L}(i_k)_!(A) ~, 
\end{tikzcd}
\]
for every dg $\mathcal{P}$-algebra $A$. The map $\eta_1: A \longrightarrow \mathrm{Res} \circ \mathbb{L}c\mathrm{Abs}(A)$ is the map from $A$ to its homotopy completion. The map $\eta_3: A \longrightarrow \lim_{k \in \mathbb{N}}^h (i_k)^*\mathbb{L}(i_k)_!(A)$ is a model for the map from $A$ to its nilcompletion. Finally, the map $\eta_2$ is given by applying $\mathrm{Res}$ to the unit of the adjunction in Theorem \ref{thm: arity decomposition of the infinity category of complete C algebras} at the object $\mathbb{L}c\mathrm{Abs}(A)$, so it is a natural equivalence, again by Theorem \ref{thm: arity decomposition of the infinity category of complete C algebras}. So indeed, $A$ is homotopy complete if and only if it is nilcomplete. 
\end{remark}

\subsubsection{Homotopy cocompleteness.} Let $\mathcal{P}$ be an dg operad satisfying Assumptions \ref{assumption 1}. Let us consider the inclusion adjunction 
\[
\begin{tikzcd}[column sep=5pc,row sep=3pc]
            \mathrm{B}\mathcal{P}\mbox{-}\mathsf{coalg} \arrow[r,"\mathrm{Inc}"{name=F},shift left=1.1ex ] &   \Omega\mathcal{P}^*\mbox{-}\mathsf{coalg} \arrow[l,"\mathrm{Sub}"{name=U},shift left=1.1ex ]
            \arrow[phantom, from=F, to=U, , "\dashv" rotate=-90]
\end{tikzcd}
\]
which is a Quillen adjunction by Lemma \ref{lemma: restriction and inclusion are Quillen functors}. Here we are considering the model structures where weak equivalences are given by quasi-isomorphisms on both sides. 

\begin{definition}[Homotopy cocomplete dg $\Omega\mathcal{P}^*$-coalgebra]
Let $C$ be a dg $\Omega\mathcal{P}^*$-coalgebra. It is \textit{homotopy cocomplete} if the derived counit of the adjunction
\[
\mathbb{R}\epsilon_C: \mathrm{Inc} \circ \mathbb{R}\mathrm{Sub}(C) \stackrel\sim\longrightarrow C
\]
is a quasi-isomorphism.
\end{definition}

The idea of this definition is that dg $\mathrm{B}\mathcal{P}$-coalgebras are inherently conilpotent (hence cocomplete) since by definition any element in them can only have finite iterated decompositions. \textit{Homotopy} cocompleteness is the analogous notion at the derived level. 

\begin{remark}
By definition, the derived maximal conilpotent subcoalgebra $\mathbb{R}\mathrm{Sub}$ restricts to a fully faithful functor
\[
\mathbb{R}\mathrm{Sub}: \Omega\mathcal{P}^*\mbox{-}\mathsf{coalg}^{\mathsf{ho.cocomp}}~[\mathsf{Q.iso}^{-1}] \hookrightarrow \mathrm{B}\mathcal{P}\mbox{-}\mathsf{coalg}~[\mathsf{Q.iso}^{-1}]
\]
between the $\infty$-subcategory of homotopy cocomplete dg $\Omega\mathcal{P}$-coalgebras up to quasi-isomorphisms and the $\infty$-category of dg $\mathrm{B}\mathcal{P}$-coalgebras up to quasi-isomorphisms.
\end{remark}

Let $\mathscr{C}$ be an enriched $\infty$-cooperad. Following \cite[Section 10]{heuts2024}, a (non-necessarily conilpotent) $\mathscr{C}$-coalgebra $Y$ is \textit{conilcomplete} if the canonical map 
\[
\lim_k \mathbf{t}_k C \longrightarrow C~,
\]
is an equivalence, where $\mathbf{t}_n C$ is the truncation of $C$ at operations in arities bigger than $n$ and is obtained by considering the successive arity truncations of the enriched $\infty$-cooperad $\mathscr{C}$. 

\begin{proposition}\label{prop: conilcompletion = homotopy cocomplete}
Let $\mathcal{P}$ be a dg operad satisfying Assumptions \ref{assumption 1}. Let $C$ be a dg $\Omega\mathcal{P}^*$-coalgebra. It is homotopy cocomplete if and only if it is conilcomplete.
\end{proposition}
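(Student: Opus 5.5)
We mirror the argument for Proposition \ref{prop: nilcompletion = homotopy completetion}, dualized. The strategy is to identify both conditions as invertibility of the same map, namely the counit of the $\infty$-categorical bar-cobar adjunction $\mathbf{bar}_{\mathcal{P}} \dashv \mathbf{cobar}_{\mathcal{P}}$ of Theorem \ref{thm: infty categorical operadic bar cobar}, evaluated at $C$.

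\textbf{Step 1: the point-set side.} We compute the derived counit $\mathbb{R}\epsilon_C$ explicitly. The bottom horizontal complete bar-cobar adjunction is a Quillen equivalence in which every dg $\Omega\mathcal{P}^*$-coalgebra is cofibrant. So the unit $C \to \widehat{\mathrm{B}}_\iota\widehat{\Omega}_\iota C$ is a quasi-isomorphism into a fibrant object, and gives a functorial fibrant replacement of $C$. Hence $\mathbb{R}\mathrm{Sub}(C) \simeq \mathrm{Sub}(\widehat{\mathrm{B}}_\iota\widehat{\Omega}_\iota C)$.

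By the commutativity of Proposition \ref{Prop: inclusion-restriction square}, passing to right adjoints gives $\mathrm{Sub}\circ\widehat{\mathrm{B}}_\iota \cong \mathrm{B}_\pi\circ\mathrm{Res}$. Therefore the derived counit is represented by the zig-zag
\[
\mathrm{Inc}\circ \mathrm{B}_\pi \circ \mathrm{Res}\circ \widehat{\Omega}_\iota (C) \longrightarrow \widehat{\mathrm{B}}_\iota\widehat{\Omega}_\iota C \stackrel{\sim}{\longleftarrow} C .
\]
In $\infty$-categorical terms this is $\mathbf{bar}_{\mathcal{P}}\,\mathbf{cobar}_{\mathcal{P}}(C) \to C$. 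We check that it is indeed the counit of the adjunction assembled in Theorem \ref{thm: full rectangle infinity categorical}. That adjunction is a composite of the Quillen adjunctions $\mathrm{Inc}\dashv\mathbb{R}\mathrm{Sub}$ and $\mathbb{L}c\mathrm{Abs}\dashv\mathrm{Res}$ with two equivalences. The counit of a composite with equivalences is the counit of the non-equivalence part, conjugated by the equivalences. Tracing this through the rectangle yields exactly $\mathbb{R}\epsilon_C$ up to the natural equivalences above.

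\textbf{Step 2: the $\infty$-categorical side.} By Theorem \ref{prop: comparison of the composition adjunction with heuts' adjunction}, our adjunction coincides with Heuts' adjunction. This identification uses Lemma \ref{lemma: equivalence between non-conilpotent coalgebras over a cooperad a la Heuts and coalgebras over the dual operad} and the rectification Theorem \ref{thm: rectification of P-coalgebras}, which identify $\Omega\mathcal{P}^*\mbox{-}\mathsf{coalg}~[\mathsf{Q.iso}^{-1}]$ with $\mathbf{Coalg}_{\mathbf{bar}(\mathcal{P})}(\mathbf{D}(\kk))$. Heuts shows in \cite[Section 10]{heuts2024}, dually to \cite[Proposition 10.4]{heuts2024}, that the counit of this adjunction is the conilcompletion map $\lim_k \mathbf{t}_k C \to C$, up to equivalence. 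Combining the two steps, $C$ is homotopy cocomplete if and only if the counit is an equivalence, if and only if $C$ is conilcomplete.

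\textbf{Main obstacle.} The delicate point is Step 2's identification of Heuts' counit with the conilcompletion map, transported across the comparison equivalences. In particular we need the truncation functors $\mathbf{t}_k$ on $\mathbf{bar}(\mathcal{P})$-coalgebras à la Heuts to match the arity truncations on the $\Omega\mathcal{P}^*$ side under Lemma \ref{lemma: equivalence between non-conilpotent coalgebras over a cooperad a la Heuts and coalgebras over the dual operad}. If the citation does not directly supply this, I would argue it directly, as in the remark following Proposition \ref{prop: nilcompletion = homotopy completetion}, as follows.

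Both $\mathrm{Inc}\circ\mathbb{R}\mathrm{Sub}$ and the conilcompletion are right adjoint to the respective inclusions of conilpotent objects, so they are colocalizations onto the same full subcategory. The key input is that the $\infty$-category of $\mathbf{bar}(\mathcal{P})$-coalgebras is the limit of its arity truncations (via \cite[Proposition 3.16]{rectification}). Under this, conilcomplete objects correspond to those in the essential image of $\mathbf{Inc}$, and Step 1 identifies this image with the image of $\mathrm{Inc}\circ\mathbb{R}\mathrm{Sub}$. Two colocalizations with the same essential image agree, so their counits are equivalent, which gives the claim.
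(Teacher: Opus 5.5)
Your Steps 1 and 2 are correct and are the paper's proof. The paper simply dualizes the proof of Proposition \ref{prop: nilcompletion = homotopy completetion}, and the dual goes as you say. The derived counit of $\mathrm{Inc}\dashv\mathbb{R}\mathrm{Sub}$ is computed through the fibrant replacement $C\to\widehat{\mathrm{B}}_\iota\widehat{\Omega}_\iota C$ and the isomorphism $\mathrm{Sub}\circ\widehat{\mathrm{B}}_\iota\cong\mathrm{B}_\pi\circ\mathrm{Res}$, and it represents the counit $\mathbf{bar}_{\mathcal{P}}\mathbf{cobar}_{\mathcal{P}}(C)\to C$. Theorem \ref{prop: comparison of the composition adjunction with heuts' adjunction}, together with Heuts' identification of that counit with the conilcompletion map, finishes the argument.

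A small wording correction: $\mathbf{bar}_{\mathcal{P}}\dashv\mathbf{cobar}_{\mathcal{P}}$ is not a composite of both vertical adjunctions. It is presented by either path of the rectangle. Along the top-right path, the equivalence and the left Bousfield localization have invertible counits, which is why the counit reduces to $\mathbb{R}\epsilon_C$.

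The fallback in your last paragraph does not work and should be dropped. Calling $\mathrm{Inc}\circ\mathbb{R}\mathrm{Sub}$ and the conilcompletion ``colocalizations'' presupposes that $\mathrm{Inc}$ and $\mathbf{Inc}$ are fully faithful at the $\infty$-categorical level.
\begin{itemize}
\item For $\mathrm{Inc}$ this fails in general (see the counterexample following Proposition \ref{proposition: panorama of infinity cats}).
\item For $\mathbf{Inc}$ it is not known \cite[Conjecture 2.8.4]{GaitsgoryRozenblyumVolII}, \cite[Remark 11.2]{heuts2024}.
\end{itemize}
Without idempotence, the objects on which the counit is invertible form a proper subclass of the essential image of the inclusion. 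For instance, an infinite-dimensional trivial $\Omega\mathcal{C}om^*$-coalgebra lies in the essential image of $\mathrm{Inc}$, hence of $\mathbf{Inc}$ by Proposition \ref{prop: commutative triangle of inclusions}. Yet it is not homotopy cocomplete, by Counterexample \ref{Counterexample: not good cocompletions}.

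So ``same essential image implies same counit'' has nothing to apply to. Also, Step 1 identifies counits, not essential images. Your proof therefore rests on the cited identification in Step 2, exactly as the paper's does.
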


\begin{proof}
The proof is completely dual to that of Proposition \ref{prop: nilcompletion = homotopy completetion}.
\end{proof}

\begin{remark}[A more relaxed version of conilcompleteness]
Homotopy cocomplete dg $\Omega\mathcal{P}^*$-coalgebras fully faithfully embed into the \textit{strict} version of conilpotent coalgebras, enconded by the model category of dg coalgebras over the cooperad $\mathrm{B}\mathcal{P}$ up to quasi-isomorphisms. One could also define a more relaxed version of this notion asking the counit of the $\infty$-categorical adjunction 
\[
\begin{tikzcd}[column sep=5pc,row sep=3pc]
         \mathbf{Coalg}_{\mathrm{B}\mathcal{P}}^{\mathbf{conil}}(\mathbf{D}(\kk)) \arrow[r, shift left=1.1ex, "\mathbf{Inc}"{name=F}] &\mathbf{Coalg}_{\Omega\mathcal{P}^*}(\mathbf{D}(\kk)) \arrow[l, shift left=.9ex, "\mathbf{Sub}"{name=U}] \arrow[phantom, from=F, to=U, , "\dashv" rotate=-90]
\end{tikzcd}
\]
to be an equivalence. Since dg $\mathrm{B}\mathcal{P}$-coalgebras up to quasi-isomorphisms do not rectify conilpotent coalgebras over the underlying $\infty$-cooperad of $\mathrm{B}\mathcal{P}$, this gives, in general, a different notion of conilcompleteness. See Counterexample \ref{Counterexample: non fullyfaithfulness of strict functor} for an example where this notion disagrees with the definition of conilcompletess given earlier.
\end{remark}

\subsection{The equivalence induced by the \texorpdfstring{$\infty$}{infinity}-categorical bar-cobar adjunction.} One of the main results of \cite{heuts2024}, Theorem 2.1, establishes an equivalence of $\infty$-categories between nilcomplete $\mathscr{P}$-algebras and conilcomplete $\mathbf{bar}(\mathscr{P})$-coalgebras. This theorem is viewed as a replacement for the original conjecture of Francis--Gaitsgory in \cite{FrancisGaitsgory}, which is disproved by Heuts. In our framework, the $\infty$-categorical bar-cobar adjunction restricts almost tautologically to an equivalence between homotopy complete $\mathcal{P}$-algebras and homotopy cocomplete $\Omega\mathcal{P}^*$-coalgebras (which under Assumptions \ref{assumption 1}, can be identified with non-conilpotent $\mathbf{bar}(\mathscr{P})$-coalgebras), and thus we recover, from the point-set descriptions introduced so far \cite[Theorem 2.1]{heuts2024} in the case where $\mathbf C=\mathbf D(\kk)$. 

\begin{theorem}\label{thm: infinty categorical bar-cobar is an equivalence on homotopy (co)complete}
Let $\mathcal{P}$ be a dg operad satisfying Assumptions \ref{assumption 1}. The adjunction 
\[
\begin{tikzcd}[column sep=5pc,row sep=3pc]
            \mathcal{P}\mbox{-}\mathsf{alg}~[\mathsf{Q.iso}^{-1}] \arrow[r,"\mathbf{bar}_{\mathcal{P}}"{name=F},shift left=1.1ex ] & \Omega\mathcal{P}^*\mbox{-}\mathsf{coalg}~[\mathsf{Q.iso}^{-1}] \arrow[l,"\mathbf{cobar}_{\mathcal{P}}"{name=U},shift left=1.1ex ]
            \arrow[phantom, from=F, to=U, , "\dashv" rotate=-90]
\end{tikzcd}
\]
restricts to an equivalence of $\infty$-categories 
\[
\mathcal{P}\mbox{-}\mathsf{alg}^{\mathsf{ho.comp}}~[\mathsf{Q.iso}^{-1}] \simeq \Omega\mathcal{P}^*\mbox{-}\mathsf{coalg}^{\mathsf{ho.cocomp}}~[\mathsf{Q.iso}^{-1}]~,
\]
between the $\infty$-subcategory of homotopy complete dg $\mathcal{P}$-algebras and the $\infty$-subcategory of homotopy cocomplete dg $\Omega\mathcal{P}^*$-coalgebras.  
\end{theorem}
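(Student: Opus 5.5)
The proof rests on the general principle that any adjunction of $\infty$-categories $L \dashv R$ restricts to an equivalence between the full subcategory of objects where the unit is an equivalence and the full subcategory of objects where the counit is an equivalence. This holds provided $L$ sends the first subcategory into the second and $R$ sends the second into the first; the triangle identities then show that these conditions are automatic. Indeed, if $\eta_A$ is an equivalence, then $R\epsilon_{LA}\circ\eta_{RLA}$ and $\epsilon_{LA}\circ L\eta_A=\mathrm{id}$ force $\epsilon_{LA}$ to be an equivalence, and dually for $R$. So the plan is to identify the unit and counit of $\mathbf{bar}_{\mathcal{P}} \dashv \mathbf{cobar}_{\mathcal{P}}$ from \cref{thm: infty categorical operadic bar cobar} with the derived unit of $c\mathrm{Abs}\dashv\mathrm{Res}$ and the derived counit of $\mathrm{Inc}\dashv\mathrm{Sub}$, respectively. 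Once this is done, the two subcategories in the statement are by definition exactly the unit-local and counit-local subcategories.

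\textbf{Unit.} The adjunction of \cref{thm: infty categorical operadic bar cobar} is the composite along the rectangle of \cref{thm: full rectangle infinity categorical}: the top row $\mathrm{B}_\pi\simeq\Omega_\pi$, the localization $\mathrm{Id}\dashv\mathbb{R}\mathrm{Id}$, and $\mathrm{Inc}\dashv\mathbb{R}\mathrm{Sub}$, which by commutativity of the rectangle is equivalent to the route $\mathbb{L}c\mathrm{Abs}\dashv\mathrm{Res}$, $\mathbb{L}\mathrm{Id}\dashv\mathrm{Id}$, $\widehat{\mathrm{B}}_\iota\simeq\widehat{\Omega}_\iota$. Computing the unit along the left/bottom route, the last two adjunctions are an equivalence and a right Bousfield localization. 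For the latter, the unit $X\to\mathbb{L}\mathrm{Id}(X)$ on the target side is an equivalence, since $\mathbb{L}\mathrm{Id}$ is fully faithful. Hence the composite unit reduces to $\mathbb{L}\eta_A\colon A\to\mathrm{Res}\circ\mathbb{L}c\mathrm{Abs}(A)$. Concretely, as in the proof of \cref{prop: nilcompletion = homotopy completetion}, it is $A\leftarrow\Omega_\pi\mathrm{B}_\pi A\to\mathrm{Res}\,\widehat{\Omega}_\iota\mathrm{Inc}\,\mathrm{B}_\pi A$. So the unit is an equivalence exactly on homotopy complete algebras.

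\textbf{Counit.} Dually, computing along the top/right route, $\mathrm{B}_\pi\simeq\Omega_\pi$ is an equivalence and $\mathrm{Id}\dashv\mathbb{R}\mathrm{Id}$ is a left Bousfield localization, so $\mathbb{R}\mathrm{Id}$ is fully faithful and its counit is an equivalence. The composite counit is therefore $\mathrm{Inc}\circ\mathbb{R}\mathrm{Sub}(C)\to C$, which is an equivalence exactly on homotopy cocomplete coalgebras.

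The main obstacle is making rigorous that the counit of a composite of adjunctions, one factor of which is an equivalence or a fully faithful localization, is the counit of the remaining factor up to equivalence. One must also check that the composite adjunction is the same whichever route around the square is used. For the latter, I would rely on the uniqueness of adjoints: the left adjoints agree by \cref{thm: full rectangle infinity categorical}, so the right adjoints and the unit/counit data agree up to coherent equivalence. For the former, I would write the composite counit as $\epsilon^{\mathrm{Inc}}\circ\mathrm{Inc}(\epsilon^{\mathrm{loc}}\circ\mathrm{Id}(\epsilon^{\mathrm{bc}}))\mathbb{R}\mathrm{Sub}$ and note that the inner counits are equivalences.
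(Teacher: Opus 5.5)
Your proposal is correct and follows essentially the same route as the paper. The paper's proof also identifies the unit and counit of $\mathbf{bar}_{\mathcal{P}} \dashv \mathbf{cobar}_{\mathcal{P}}$ with the derived unit of $c\mathrm{Abs} \dashv \mathrm{Res}$ and the derived counit of $\mathrm{Inc} \dashv \mathrm{Sub}$ (via the resolution $\Omega_\pi \mathrm{B}_\pi A$, as in the proofs of the nil(co)completeness propositions), and then uses the formal fact that an adjunction restricts to an equivalence between unit-local and counit-local objects; your decomposition of the composite unit and counit through the rectangle, using that $\mathbb{L}\mathrm{Id}$ and $\mathbb{R}\mathrm{Id}$ are fully faithful, just spells out what the paper calls ``can easily be identified''.
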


\begin{proof}
As already explained in the proofs of Propositions \ref{prop: nilcompletion = homotopy completetion} and \ref{prop: conilcompletion = homotopy cocomplete}, the unit and the counit of the $\infty$-categorical bar-cobar adjunction can easily be identified with the maps that define homotopy completeness and homotopy cocompleteness, hence the result follows. 
\end{proof}

\subsection{Operads with good (co)completions}\label{subsection: operads with good completions}
In this subsection, we recall the definition of an operad with good completions introduced in \cite[Section 13]{heuts2024}. 


\begin{definition}[Operad with good completion]
Let $\mathcal{P}$ be an dg operad. It has \textit{good completion} if the for any chain complex $V$, the dg $\mathcal{P}$-algebra given by $V$ with the trivial algebra structure is homotopy complete. 
\end{definition}

The point-set description of homotopy complete algebras given in \cref{point set description of homotopy complete} can be made even more explicit here. Let $V$ be a trivial $\mathcal P$-algebra.  Taking the completion of the map $\Omega \mathrm B V \to V$ gives $\widehat \Omega \mathrm B V \to \widehat V = V$ (since a trivial algebra is in particular complete), which implies that $\mathcal P$ has good completions if and only if the projection from 
\[ \widehat \Omega \mathrm B V = \prod_{k>0} \mathcal P(k) \otimes_{\mathbb S_k} \bigoplus_{i_1+\dots+i_k=d} \mathrm{Ind}_{\mathbb{S}_{i_1} \times \cdots \times \mathbb{S}_{i_k}}^{\mathbb{S}_{d}} \left( \mathrm{B}\mathcal{P}(i_1) \otimes \cdots \otimes \mathrm{B}\mathcal{P}(i_k) \right) \otimes_{\mathbb{S}_d} V^{\otimes d}\]
to the summand 
\[ \mathcal P(1) \otimes \mathrm B\mathcal P(1) \otimes V = V\]
is a quasi-isomorphism, for any chain complex $V$. 

\medskip 
The main example of operads with good completions are \emph{Koszul operads} in the sense of the following definition. 

\begin{definition}
    Let $\mathcal P$ be a dg operad satisfying Assumptions \ref{assumption 1}. We say that $\mathcal P$ is \emph{Koszul} if $H_*(\mathcal P)=0$ for $\ast\neq 0$, and $H_*(\mathrm B\mathcal P(n)) = 0$ for $\ast \neq n-1$. 
\end{definition}

\begin{remark}
    The definition given here is somewhat restrictive: it covers the standard examples of $\mathcal Ass$, $\mathcal Com$, and $\mathcal Lie$, but not e.g.~$\mathcal Pois_n$ which is usually considered as a Koszul operad. 
\end{remark}

   The following theorem is proven in \cite[Section 13.2]{heuts2024}, and by a somewhat different argument in \cite[Theorem 2.9.4]{GaitsgoryRozenblyumVolII}.

 \begin{theorem}[Gaitsgory--Rozenblyum, Heuts] Let $\mathcal P$ be a Koszul operad. Then $\mathcal P$ has good completions. \label{thm: homotopy complete trivial algebras}
 \end{theorem}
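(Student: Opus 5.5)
By the explicit criterion stated just before the theorem, it suffices to show that for every chain complex $V$ the projection
\[
\widehat\Omega \mathrm B V \longrightarrow \mathcal P(1)\otimes \mathrm B\mathcal P(1)\otimes V = V
\]
is a quasi-isomorphism. My plan is to grade $\widehat\Omega\mathrm BV$ by total arity $d$, the number of tensor factors of $V$. Because $V$ is trivial, the differential of $\mathrm B V$ comes only from $d_V$ and the internal differential of $\mathrm B\mathcal P$. The twisted part of the cobar differential contracts a $\mathrm B\mathcal P$-factor into $\mathcal P$, which preserves the number of inputs. So the differential preserves $d$, and the complex splits as a product
\[
\widehat\Omega\mathrm BV \cong \prod_{d\geq 1} (\Omega \mathrm B\mathcal P)(d)\otimes_{\mathbb S_d} V^{\otimes d}.
\]
The order of operations is as follows. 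First, record this arity splitting. Second, identify each arity-$d$ factor with $(\mathcal P\circ_\pi \mathrm B\mathcal P)(d)\otimes_{\mathbb S_d}V^{\otimes d}$, the Koszul complex associated to the twisting morphism $\pi$, with $V$ plugged in. Third, use that $\mathcal P\circ_\pi\mathrm B\mathcal P$ is acyclic in arities $d\geq 2$; this is the standard fact recalled in Remark~\ref{point set description of homotopy complete}.

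The key difficulty is that this acyclicity alone does not give the theorem. The product over $d$ is harmless, since a product of acyclic complexes is acyclic over a field. The danger is that $V$ is unbounded: if $V$ were bounded below the arity filtration would suffice, and in general it does not. Inside a fixed arity $d$, however, everything is finite. Under Assumptions~\ref{assumption 1}, the complex $(\mathcal P\circ\mathrm B\mathcal P)(d)$ is a finite-dimensional $\mathbb S_d$-complex. Moreover, in characteristic zero (or in the non-symmetric case) the functor $-\otimes_{\mathbb S_d}V^{\otimes d}$ is exact. So an acyclic finite-dimensional complex tensored this way stays acyclic, with no convergence issue at all.

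This argument looks too strong: it never uses Koszulness. I would therefore re-examine the claim that the completed cobar construction has the arity splitting above. The completion $\widehat\Omega$ is a product over the number $k$ of $\mathcal P$-operations, not over total arity, and the completed free $\mathcal P^*$-algebra on $\mathrm BV$ involves $\mathrm BV=\bigoplus_i \mathrm B\mathcal P(i)\otimes V^{\otimes i}$. Arity $d$ then receives contributions from infinitely many $k$ only if $\mathcal P(k)\neq 0$ with $k\leq d$. Since $\mathcal P$ is reduced, we have $k\leq d$, so each arity piece is a finite sum. The real subtlety is instead that the product over $k$ (of direct sums over $i$) is not the product over $d$. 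Elements of $\widehat\Omega\mathrm BV$ are families indexed by $k$ with unbounded $d$, whereas $\prod_d$ allows more. So the correct description is a ``$\prod_k\bigoplus$'' complex sitting between $\bigoplus_d$ and $\prod_d$, and there the acyclicity must be proved by a convergent filtration.

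This is where I would use Koszulness. Concentrating $\mathcal P$ in degree $0$ and $\mathrm B\mathcal P(n)$ in degree $n-1$ means that the homological degree of the $\mathrm B\mathcal P$-factors equals $d-k$. The degree $d-k$ is then a grading preserved up to the Koszul differential, which lowers it by one. I would filter by $d-k$, or equivalently replace $\mathrm B\mathcal P$ by its homology, the Koszul dual cooperad $\mathcal P^{\text{!`}}$, using homotopy transfer. This reduces the problem to the Koszul complex $\mathcal P\circ_\kappa\mathcal P^{\text{!`}}$, whose components are bigraded by weight. The point to prove is that, for fixed homological degree shift $d-k=w$, the relevant piece is again a finite sum, so that $\prod_k\bigoplus$ agrees with $\bigoplus_w$ weight-wise. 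Acyclicity in each weight $w\geq 1$ then gives that the projection to $w=0$, $k=d=1$, namely $V$, is a quasi-isomorphism. I expect this comparison between the completion indexing and the weight grading to be the main obstacle. It is exactly where the hypothesis that $H_*(\mathrm B\mathcal P(n))$ is concentrated in degree $n-1$ is used, since that concentration makes the weight grading coincide with a degree that the completion respects.
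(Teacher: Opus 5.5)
Your diagnosis of where the difficulty lies is correct. The arity $d$ is preserved by the differential, and each $K(d)=(\mathcal P\circ_\pi\mathrm B\mathcal P)(d)$ is acyclic for $d\geq 2$; this $K(d)$, not $(\Omega\mathrm B\mathcal P)(d)$, is what belongs in your first display. But $\widehat\Omega\mathrm BV$ is a ``$\prod_k\bigoplus_d$'' complex rather than $\prod_d$, and an argument using only arity-wise acyclicity would apply to every operad, contradicting Counterexample~\ref{counter-example: operad without good completion}.

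\textbf{The gap} is in your last paragraph, which is exactly where Koszulness has to do the work. The mechanism you propose there does not exist.
\begin{enumerate}
\item The piece with fixed $w=d-k$ is not a finite sum. For $w=0$ you need $i_1=\dots=i_k=1$, and in the completion this piece is $\prod_{k\geq 1}\mathcal P(k)\otimes_{\mathbb S_k}V^{\otimes k}$. For $\mathcal{C}om$ and $V=\kk$ it is $\prod_{k\geq 1}\kk$, and similarly for other $w$. So ``projecting to $w=0$'' is not projecting to $V$.
\item $w$ is not a grading of the complex. The twisted part of the differential sends the $(k,d)$-component to the $(k+1,d)$-component, so it lowers $w$ by one, as you note yourself. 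Hence ``acyclicity in each weight $w\geq 1$'' has no meaning.
\item Filtering by $w$ does not rescue this either. The associated graded then retains only $d_V$ and is far from acyclic. The grading the differential actually preserves is $d$, which leads back to the naive argument you rightly rejected.
\end{enumerate}

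\textbf{The missing idea} is to use Koszulness as a purity statement that kills a tower, not as a finiteness statement.

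Filter $K(d)$ by the arity of the $\mathcal P$-factor, $F^pK(d)=\bigoplus_{k\geq p}$. This is a subcomplex because the differential raises $k$, and
\[
\widehat\Omega\mathrm BV\cong V\oplus\varprojlim_p\bigoplus_{d\geq 2}K(d)/F^pK(d)\otimes_{\mathbb S_d}V^{\otimes d}.
\]
After replacing $\mathcal P$ and $\mathrm B\mathcal P$ by their homologies, $\operatorname{Gr}^k_F K(d)$ is concentrated in the single degree $d-k$. So $K(d)/F^pK(d)$ is a brutal truncation of the acyclic complex $K(d)$, and its homology sits in a single degree that moves with $p$.

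Consequently the transition maps $K(d)/F^{p+1}K(d)\to K(d)/F^pK(d)$ vanish in the derived category of $\mathbb S_d$-modules. Hence they are equivariantly null-homotopic, which uses characteristic zero or the non-symmetric setting. They stay null-homotopic after applying $-\otimes_{\mathbb S_d}V^{\otimes d}$ and summing over $d$, with no boundedness assumption on $V$.

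Finally, a tower of surjections whose transition maps are zero on homology has acyclic limit, since both $\lim$ and $\lim^1$ of the homology vanish in the Milnor sequence. This is the argument the paper gives, following Heuts.
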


\begin{proof}
    For the reader's convenience, let us translate Heuts's proof to our context. Koszulness implies $\mathcal P \simeq H_*(\mathcal P)$ and $\mathrm B\mathcal P \simeq H_*(\mathrm B\mathcal P)$, so we may and will replace both $\mathcal P$ and $\mathrm B \mathcal P$ with their homologies in what follows, and assume them both to be concentrated in a single degree. Consider the Koszul complex 
    \[ K(d) := \bigoplus_{k\geq 0} \mathcal P(k)\otimes_{\mathbb S_k} \bigoplus_{i_1+\dots+i_k=d} \mathrm{Ind}_{\mathbb{S}_{i_1} \times \cdots \times \mathbb{S}_{i_k}}^{\mathbb{S}_{d}}  \left( \mathrm{B}\mathcal{P}(i_1) \otimes \cdots \otimes \mathrm{B}\mathcal{P}(i_k) \right)\]
 with its Koszul differential. It has a descending filtration by the arity of $\mathcal P$, with graded pieces 
    \[ \operatorname{Gr}_F^p K(d) = \mathcal P(p)\otimes_{\mathbb S_p} \bigoplus_{i_1+\dots+i_p=d} \mathrm{Ind}_{\mathbb{S}_{i_1} \times \cdots \times \mathbb{S}_{i_p}}^{\mathbb{S}_{d}}  \left( \mathrm{B}\mathcal{P}(i_1) \otimes \cdots \otimes \mathrm{B}\mathcal{P}(i_p) \right).\]
    By Koszulness, $\operatorname{Gr}_F^p K(d)$ is concentrated in degree $d-p$. It follows that for each $p$, the complexes $F^p K(d)$ and $K(d)/F^p K(d)$ are simply the ``brutal'' truncations of $K(d)$. If $d>1$, the Koszul complex $K(d)$ is acyclic, so its brutal truncations have homology concentrated in a single degree. In particular, $H_*(K(d)/F^p K(d))$ is concentrated in degree $d-p$. 

\medskip

But now $\Omega \mathrm B V = \bigoplus_d K(d) \otimes_{\mathbb S_d} V^{\otimes d}$, and 
\begin{align*}
    \widehat \Omega \mathrm B V &= \varprojlim_p \bigoplus_d K(d)/F^p K(d) \otimes_{\mathbb S_d} V^{\otimes d} \\
    &= K(1) \otimes V \oplus \varprojlim_p \bigoplus_{d>1} K(d)/F^p K(d) \otimes_{\mathbb S_d} V^{\otimes d}.
\end{align*} 
Since $K(d)/F^p K(d)$ and $K(d)/F^{p-1} K(d)$ have homology concentrated in different degrees for $d>1$, the map between them is nullhomotopic, and then all the maps in the above cofiltered system are nullhomotopic. So the inverse limit vanishes, and $\mathcal P$ has good completions.
\end{proof}

It appears that there is no example in the literature of a dg operad which does \textit{not} have good completions. Let us record one here. 

\begin{counterexample}\label{counter-example: operad without good completion}
Consider the cooperad $\C_0$, whose underlying symmetric sequence is given by $\C_0(n) = \kk.c_n$ concentrated in degree $0$ for all $n \geq 1$, with the trivial $\mathbb{S}_n$-action and zero differential, and all cocompositions identically zero. Then the dg operad $\mathcal{P} = \Omega \C_0$ (which is simply the free dg operad on the symmetric sequence $s^{-1}\C_0$) does not have good completion.

\medskip

Let us explain why. Consider $V = \kk.v$, a 1-dimensional vector space over $\kk$, viewed as a trivial dg $\Omega \C_0$-algebra concentrated in degree $0$. We use the bar-cobar adjunction relative to the Koszul twisting morphism $\iota: \C_0 \longrightarrow \Omega \C_0$ to obtain a cofibrant resolution of $V$. Using this resolution, we deduce that $V$ is a homotopy complete dg $\Omega \C_0$-algebra if and only if the map 
\[
\iota_V: \bigoplus_{k \geq 1} \Omega \C_0 (k) \otimes_{\mathbb{S}_k} (\C_0 \circ V)^{\otimes k} \longrightarrow \prod_{k \geq 1} \Omega \C_0 (k) \otimes_{\mathbb{S}_k} (\C_0 \circ V)^{\otimes k}
\]
is a quasi-isomorphism. However, this is not the case. Indeed, on the right hand side, the formal power series
\[
\alpha = \sum_{k \geq 1} s^{-1}c_k \circ (c_1, \dots, c_1) (v, \dots, v)~, 
\]
is a degree $-1$ element which closed, which is not in the image of the differential, and which cannot be in the image of the inclusion $\iota_V$. So the map $\iota_V$ is not a quasi-isomorphism.
\end{counterexample}

\medskip

\subsubsection{Operads with good cocompletion.} We now define, for cofibrant dg operads, the notion of having good cocompletions. Notice that, up to equivalence, we can always choose an dg operad of the form $\Omega \C$, for some conilpotent dg cooperad $\C$. Moreover, given Assumptions \ref{assumption 1}, we can always choose it of the form $\Omega\mathcal{P}^*$, for some dg operad $\mathcal{P}$ satisfying Assumptions \ref{assumption 1}.

\begin{definition}[Operad with good cocompletion]
Let $\Omega\mathcal{P}^*$ be a cofibrant dg operad. It has \textit{good cocompletion} if the for any chain complex $V$, the dg $\Omega\mathcal{P}^*$-coalgebra given by $V$ with the trivial coalgebra structure is homotopy cocomplete. 
\end{definition}

We want to understand when $\Omega\mathcal{P}^*$ has good cocompletion, meaning that trivial dg $\Omega\mathcal{P}^*$ are homotopy cocomplete. The amounts to understanding when the canonical map
\[
\mathrm{B}_\pi \widehat{\mathrm{S}}^c(\mathcal{P}^*)(V) \rightarrowtail \widehat{\mathrm{B}}_\iota \widehat{\mathrm{S}}^c(\mathcal{P}^*)(V)
\]
is a quasi-isomorphism for any chain complex $V$. The complex $\widehat{\mathrm{B}}_\iota \widehat{\mathrm{S}}^c(\mathcal{P}^*)(V)$ involves the cofree construction, which quite hard to compute, so we start by reducing this question to a more manageable one.

\begin{lemma}\label{lemma: change of inclusions for cocompletion}
    A dg operad $\Omega \mathcal{P}^*$ has good cocompletion if and only if the canonical inclusion 
    \[
\mathrm{B}_\pi \mathrm{S}(\mathcal{P}^*)(V) \rightarrowtail \mathrm{B}_\pi \widehat{\mathrm{S}}^c(\mathcal{P}^*)(V)
\]
is a quasi-isomorphism for all chain complexes $V$. 
\end{lemma}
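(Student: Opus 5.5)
Since the complete bar-cobar adjunction relative to $\iota$ is a Quillen equivalence, its counit is a quasi-isomorphism on every dg $\Omega\mathcal{P}^*$-coalgebra. The plan is to factor the canonical map $\mathrm{B}_\pi \widehat{\mathrm{S}}^c(\mathcal{P}^*)(V) \rightarrowtail \widehat{\mathrm{B}}_\iota \widehat{\mathrm{S}}^c(\mathcal{P}^*)(V)$ through the counit, thereby reducing it to the inclusion in the statement. The first step is to identify the derived counit for the trivial coalgebra $\mathrm{triv}(V)$. A fibrant replacement of $\mathrm{triv}(V)$ is given by $\widehat{\mathrm{B}}_\iota\widehat{\Omega}_\iota \mathrm{triv}(V)$, and since the structure is trivial, $\widehat{\Omega}_\iota\mathrm{triv}(V)$ is the free complete $\mathcal{P}^*$-algebra $\widehat{\mathrm{S}}^c(\mathcal{P}^*)(V)$ with only the internal differential. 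Then $\mathbb{R}\mathrm{Sub}(\mathrm{triv}(V)) \simeq \mathrm{Sub}\,\widehat{\mathrm{B}}_\iota \widehat{\mathrm{S}}^c(\mathcal{P}^*)(V)$. Using the commutativity of the inclusion-restriction square (Proposition \ref{Prop: inclusion-restriction square}), passed to right adjoints ($\mathrm{Sub}\circ\widehat{\mathrm{B}}_\iota \cong \mathrm{B}_\pi\circ\mathrm{Res}$), this is $\mathrm{B}_\pi \mathrm{Res}\,\widehat{\mathrm{S}}^c(\mathcal{P}^*)(V)$. Thus homotopy cocompleteness of $\mathrm{triv}(V)$ amounts to the composite
\[
\mathrm{Inc}\,\mathrm{B}_\pi \widehat{\mathrm{S}}^c(\mathcal{P}^*)(V) \longrightarrow \widehat{\mathrm{B}}_\iota \widehat{\mathrm{S}}^c(\mathcal{P}^*)(V) \stackrel{\sim}{\longrightarrow} \mathrm{triv}(V)
\]
being a quasi-isomorphism, which is equivalent to the first map being one. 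This is the reformulation stated just before the lemma.

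The second step introduces the free (uncompleted) $\mathcal{P}$-algebra. We have $\mathrm{S}(\mathcal{P}^*)$, which under Assumption \ref{assumption 1} is identified with $\mathrm{S}(\mathcal{P})$, i.e.\ the free $\mathcal{P}$-algebra on $V$. It maps to $\mathrm{Res}\,\widehat{\mathrm{S}}^c(\mathcal{P}^*)(V)$ by the natural inclusion of monads $\iota_{\mathcal{P}}$ (indeed this is the unit $\mathrm{S}(\mathcal{P})(V) \to \mathrm{Res}\, c\mathrm{Abs}\,\mathrm{S}(\mathcal{P})(V)$). Apply $\mathrm{B}_\pi$ to get the inclusion in the statement. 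The key observation is that $\mathrm{B}_\pi \mathrm{S}(\mathcal{P})(V)$ is quasi-isomorphic to $V$: the free algebra is cofibrant, and its bar construction computes derived indecomposables, namely $V$. Concretely, there is a contraction $V \to \mathrm{B}_\pi\mathrm{S}(\mathcal{P})(V)$ obtained as the inclusion of the arity-one part, and the standard acyclicity of the Koszul complex $\mathrm{B}\mathcal{P}\circ_\pi \mathcal{P}$ gives the quasi-isomorphism by a weight filtration argument that is exhaustive and bounded below. Moreover, the composite $\mathrm{B}_\pi\mathrm{S}(\mathcal{P})(V) \to \widehat{\mathrm{B}}_\iota \widehat{\mathrm{S}}^c(\mathcal{P}^*)(V) \to V$ is the projection onto cogenerators in weight one, hence is this quasi-isomorphism. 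Two-out-of-three applied to
\[
\mathrm{B}_\pi\mathrm{S}(\mathcal{P}^*)(V) \rightarrowtail \mathrm{B}_\pi \widehat{\mathrm{S}}^c(\mathcal{P}^*)(V) \rightarrowtail \widehat{\mathrm{B}}_\iota\widehat{\mathrm{S}}^c(\mathcal{P}^*)(V)
\]
then shows that the second map is a quasi-isomorphism if and only if the first one is. This proves the lemma.

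The main obstacle is bookkeeping in the first step. One must check that the map $\mathrm{B}_\pi\widehat{\mathrm{S}}^c(\mathcal{P}^*)(V)\to \widehat{\mathrm{B}}_\iota\widehat{\mathrm{S}}^c(\mathcal{P}^*)(V)$ appearing in the statement really is the derived counit, meaning $\widehat{\mathrm{B}}_\iota\widehat{\mathrm{S}}^c(\mathcal{P}^*)(V)$ is fibrant and the mate isomorphism $\mathrm{Sub}\circ\widehat{\mathrm{B}}_\iota\cong \mathrm{B}_\pi\circ\mathrm{Res}$ is compatible with the counits. One must also check that the triangle with the augmentations to $V$ commutes at the point-set level. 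I would verify the latter directly on cogenerators, since all maps there are projections to weight/arity one and are strictly compatible.
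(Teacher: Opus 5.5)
Your proposal is correct and is essentially the paper's proof. The paper's argument is the same two-out-of-three argument in the square formed by $\mathrm{B}_\pi\mathrm{S}(\mathcal{P})(V) \rightarrowtail \mathrm{B}_\pi\widehat{\mathrm{S}}^c(\mathcal{P}^*)(V) \rightarrowtail \widehat{\mathrm{B}}_\iota\widehat{\mathrm{S}}^c(\mathcal{P}^*)(V)$ together with the two quasi-isomorphisms to $V$. Your first step, which identifies the derived counit at $\mathrm{triv}(V)$ using $\mathrm{Sub}\circ\widehat{\mathrm{B}}_\iota\cong\mathrm{B}_\pi\circ\mathrm{Res}$, just spells out the reformulation that the paper asserts without proof immediately before the lemma.
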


\begin{proof}
    Consider the following commutative square 
    \[
    \begin{tikzcd}[column sep=5pc,row sep=3pc]
    \mathrm{B}_\pi \widehat{\mathrm{S}}^c(\mathcal{P}^*)(V) \arrow[r,rightarrowtail]
    &\widehat{\mathrm{B}}_\iota \widehat{\mathrm{S}}^c(\mathcal{P}^*)(V) \arrow[d,"\simeq"] \\
    \mathrm{B}_\pi \mathrm{S}(\mathcal{P}^*)(V) \arrow[u,rightarrowtail] \arrow[ru,"\simeq"] \arrow[r,"\simeq"]
    &V
    \end{tikzcd}
    \]
    in chain complexes. The vertical right and bottom horizontal arrow are quasi-isomorphisms, therefore the diagonal map is also a quasi-isomorphism. Thus, by 2-out-of-3, the vertical left inclusion is a quasi-isomorphism if and only if the horizontal top one is. 
\end{proof}

\begin{remark}
    Yet another way to phrase things is to say that the canonical map 
    \[
    \mathrm{B}_\pi \widehat{\mathrm{S}}^c(\mathcal{P}^*)(V) \longrightarrow V
    \]
    is a quasi-isomorphism. 
\end{remark}

The first, quite obvious example that we have of a cofibrant dg operad with good cocompletion is the case of cobar constructions of truncated cooperads.

\begin{proposition}
Let $\mathcal{P}$ be a dg operad satisfying Assumptions \ref{assumption 1}. Then the dg operad $\Omega(\tau^{\leq k}\mathcal{P}^*)$ has good cocompletion for all $k \geq 1$.
\end{proposition}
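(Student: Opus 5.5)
The natural route is Lemma \ref{lemma: change of inclusions for cocompletion}, applied with $\mathcal{P}$ replaced by the truncated operad $\tau^{\leq k}\mathcal{P}$. Its linear dual is the truncated cooperad $\tau^{\leq k}\mathcal{P}^*$, and $\tau^{\leq k}\mathcal{P}$ still satisfies Assumptions \ref{assumption 1}. The lemma then says that $\Omega(\tau^{\leq k}\mathcal{P}^*)$ has good cocompletion if and only if the canonical inclusion
\[
\mathrm{B}_\pi \mathrm{S}(\tau^{\leq k}\mathcal{P}^*)(V) \rightarrowtail \mathrm{B}_\pi \widehat{\mathrm{S}}^c(\tau^{\leq k}\mathcal{P}^*)(V)
\]
is a quasi-isomorphism for every chain complex $V$. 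The bar constructions here are taken with respect to $\pi\colon \mathrm{B}(\tau^{\leq k}\mathcal{P}) \to \tau^{\leq k}\mathcal{P}$, applied to the free and to the complete free algebra on $V$.

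The key point is that for a truncated (co)operad the direct sum and direct product Schur functors coincide. Indeed,
\[
\widehat{\mathrm{S}}^c(\tau^{\leq k}\mathcal{P}^*)(V) = \prod_{n=1}^{k} \mathrm{Hom}_{\mathbb{S}_n}(\tau^{\leq k}\mathcal{P}^*(n), V^{\otimes n}),
\]
which is a finite product. Each $\mathcal{P}(n)$ has finite total dimension, and the norm map is an isomorphism, either because we are in characteristic zero or because $\mathcal{P}$ is non-symmetric. Hence, exactly as in Lemma \ref{lemma: identification of the truncated category}, the inclusion of monads $\iota_{\tau^{\leq k}\mathcal{P}}$ is an isomorphism, and
\[
\mathrm{S}(\tau^{\leq k}\mathcal{P})(V) \cong \widehat{\mathrm{S}}^c(\tau^{\leq k}\mathcal{P}^*)(V).
\]
I would check that this identification is compatible with the algebra structures (both are the free algebra on $V$, up to the equivalence of categories in Lemma \ref{lemma: identification of the truncated category}), so that it also holds after applying $\mathrm{B}_\pi$. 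The inclusion above is then an isomorphism of dg $\mathrm{B}(\tau^{\leq k}\mathcal{P})$-coalgebras, in particular a quasi-isomorphism, which proves the statement.

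The only delicate step is making sure the two sides of Lemma \ref{lemma: change of inclusions for cocompletion} really are the free and the complete free algebras over the same truncated operad, so that the finite-sum-equals-finite-product argument applies. This amounts to tracking the duality $(\tau^{\leq k}\mathcal{P}^*)^* \cong \tau^{\leq k}\mathcal{P}$, which holds because each $\mathcal{P}(n)$ has finite total dimension and truncation commutes with linear duality. Beyond that, everything is formal.
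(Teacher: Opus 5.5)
Your proposal is correct and is essentially the paper's proof: you reduce via Lemma \ref{lemma: change of inclusions for cocompletion} to the inclusion $\mathrm{B}_\pi \mathrm{S}(\tau^{\leq k}\mathcal{P})(V) \rightarrowtail \mathrm{B}_\pi \widehat{\mathrm{S}}^c(\tau^{\leq k}\mathcal{P}^*)(V)$, and this is an isomorphism because, for the truncated operad, the finite-sum and finite-product Schur functors agree, as in Lemma \ref{lemma: identification of the truncated category}. The only blemish is notational: the source of that inclusion should be $\mathrm{S}(\tau^{\leq k}\mathcal{P})(V)$, not $\mathrm{S}(\tau^{\leq k}\mathcal{P}^*)(V)$ (the same slip appears in the lemma's statement in the paper), and your second paragraph does use the correct object.
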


\begin{proof}
In this case, the map 
\[
\mathrm{B}_\pi \mathrm{S}(\tau^{\leq k}\mathcal{P})(V) \rightarrowtail \mathrm{B}_\pi \widehat{\mathrm{S}}^c(\tau^{\leq k}\mathcal{P}^*)(V)
\]
is in fact an isomorphism for any $k \geq 1$, hence we can conclude by Lemma \ref{lemma: change of inclusions for cocompletion}. 
\end{proof}

This property seems to be much more rare in nature than operads with good completion, and in fact basic examples of cofibrant dg operads fail to have good cocompletion.

\begin{counterexample}\label{Counterexample: not good cocompletions} Let $\Omega \mathcal{C}om^*$ be the dg operad encoding $\mathcal{L}_\infty$-(co)algebras. Then $\mathcal{P}$ does \textit{not} have good cocompletion. Indeed, let $V$ be a countable infinite dimensional vector space concentrated in degree $0$. Then we have that 
\[
\widehat{\mathrm{S}}^c(\mathcal{C}om^*)(V) \cong \kk\llbracket x_1, x_2, x_3, \dots \rrbracket ~, 
\]
that is, the commutative algebra of formal power series in a countable infinite number of variables. Then it turns out that the map 
\[
\mathrm{B}_\pi \left(\kk\llbracket x_1, x_2, x_3, \dots \rrbracket \right) \longrightarrow V
\]
is not a quasi-isomorphism. This is because, while formal power series rings in a \textit{finite} number of variables are {smooth} over the base field, it is no longer the case for formal power series in an \textit{infinite} number of variables. See \cite[Remark 2.6.7]{GaitsgoryRozenblyumVolII} for more details on this counter-example. Notice that formal power series rings in infinitely many variables are also used by Heuts in \cite[Lemma 12.3]{heuts2024} as a counter-example to the original Francis--Gaitsgory conjecture, since they are $I$-adic completions which are not themselves $I$-adically complete. 

\medskip

The relationship between these two phenomena is the following: if the dg operad $\Omega \mathcal{C}om^*$ had good completion, then we would have an equivalence of $\infty$-categories by Proposition \ref{prop: absolute coincide with homotopy complete}
\[
\mathcal{C}om^*\mbox{-}\mathsf{alg}^{\mathsf{comp}}~[\mathsf{Q.iso}^{-1}] \simeq \mathcal{C}om\mbox{-}\mathsf{alg}^{\mathsf{ho.comp}}~[\mathsf{Q.iso}^{-1}]~, 
\]
between (complete) dg absolute commutative algebras up to quasi-isomorphisms and homotopy complete dg commutative algebras up to quasi-isomorphisms. Rectricting to degree $0$ (discrete objects) this equivalence would give an equivalence of 1-categories between absolute commutative algebras and $I$-adically completed commutative algebras, which cannot exist because the free absolute commutative algebra on an infinite number of variables $\kk\llbracket x_1, x_2, x_3, \dots \rrbracket$ is not $I$-adically complete. See also Remark \ref{Rmk: I-adic completion not an equivalence} and \cite[Section 3.4]{absolutealgebras} for more details. 
\end{counterexample}

\subsection{Derived inclusions and equivalences}\label{subsection: derived inclusions and equivalences}
In this subsection, we lay out the different fully faithful functors between the different $\infty$-categories that we have considered so far, and we explain why most of them are not in general equivalences of $\infty$-categories. We also explain some of the consequences of assuming that an operad has good (co)completions in this context.

\begin{proposition}\label{proposition: panorama of infinity cats}
Let $\mathcal{P}$ a dg operad satisfying Assumptions \ref{assumption 1}. There are the following fully faithful functors of $\infty$-categories
\[
\begin{tikzcd}[column sep=5pc,row sep=5pc]
\mathcal{P}\mbox{-}\mathsf{alg}^{\mathsf{ho.comp}}~[\mathsf{Q.iso}^{-1}] \arrow[r,"\mathrm{B}_\pi"{name=B},hookrightarrow] \arrow[d,"\mathbb{L}c\mathrm{Abs}"{name=SD},hookrightarrow] \arrow[rd, "\simeq"]
&\mathrm{B}\mathcal{P}\mbox{-}\mathsf{coalg}~[\mathsf{Q.iso}^{-1}] \\
\mathcal{P}^*\mbox{-}\mathsf{alg}^{\mathsf{comp}}~[\mathsf{Q.iso}^{-1}] 
&\Omega\mathcal{P}^*\mbox{-}\mathsf{coalg}^{\mathsf{ho.cocomp}}~[\mathsf{Q.iso}^{-1}] ~, \arrow[l,"\widehat{\Omega}_\iota"{name=CB},hookrightarrow] \arrow[u,"\mathbb{R}\mathrm{Sub}"{name=LD},hookrightarrow] 
\end{tikzcd}
\] 
where the top left and the bottom right are equivalent via the $\mathbf{bar}_{\mathcal{C}} \dashv \mathbf{cobar}_{\mathcal{C}}$ adjunction.
\end{proposition}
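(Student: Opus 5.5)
We prove the four fully faithfulness claims one at a time, and then the diagonal equivalence. The two vertical arrows $\mathbb{L}c\mathrm{Abs}$ and $\mathbb{R}\mathrm{Sub}$ are fully faithful essentially by definition. The full subcategory $\mathcal{P}\mbox{-}\mathsf{alg}^{\mathsf{ho.comp}}~[\mathsf{Q.iso}^{-1}]$ consists of those objects at which the derived unit of the $\infty$-adjunction $\mathbb{L}c\mathrm{Abs} \dashv \mathrm{Res}$ is an equivalence. A left adjoint is fully faithful on the full subcategory of objects where the unit is invertible, since $\mathrm{Map}(\mathbb{L}c\mathrm{Abs} A, \mathbb{L}c\mathrm{Abs} A') \simeq \mathrm{Map}(A, \mathrm{Res}\,\mathbb{L}c\mathrm{Abs} A') \simeq \mathrm{Map}(A,A')$. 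Dually, $\mathbb{R}\mathrm{Sub}$ is fully faithful on the coalgebras where the derived counit $\mathrm{Inc}\circ\mathbb{R}\mathrm{Sub}(C)\to C$ is an equivalence. Here we use Lemma \ref{lemma: restriction and inclusion are Quillen functors} to know that these are $\infty$-adjunctions.

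For the diagonal equivalence, Theorem \ref{thm: infinty categorical bar-cobar is an equivalence on homotopy (co)complete} says that $\mathbf{bar}_{\mathcal{P}}$ restricts to an equivalence $\mathcal{P}\mbox{-}\mathsf{alg}^{\mathsf{ho.comp}} \simeq \Omega\mathcal{P}^*\mbox{-}\mathsf{coalg}^{\mathsf{ho.cocomp}}$, with inverse $\mathbf{cobar}_{\mathcal{P}}$. We then identify the horizontal functors as composites of fully faithful functors with this equivalence.

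First, for $A$ homotopy complete we have
\[
\mathrm{B}_\pi(A) \simeq \mathbb{R}\mathrm{Sub}\circ \mathrm{Inc}\circ \mathrm{B}_\pi(A) = \mathbb{R}\mathrm{Sub}\circ\mathbf{bar}_{\mathcal{P}}(A).
\]
To justify the first equivalence, we need the derived unit $D\to \mathbb{R}\mathrm{Sub}\,\mathrm{Inc}\,D$ to be an equivalence, i.e. that $\mathrm{Inc}$ is fully faithful at the derived level. This holds because $\mathrm{Inc}$ is fully faithful 1-categorically and preserves and reflects quasi-isomorphisms. Alternatively, we can argue via the commutative rectangle of Theorem \ref{thm: full rectangle infinity categorical}: there $\mathrm{B}_\pi$ followed by $\mathrm{Inc}$ is $\mathbf{bar}_{\mathcal{P}}$, and we apply the triangle identity. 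This gives $\mathrm{B}_\pi|_{\mathsf{ho.comp}} \simeq \mathbb{R}\mathrm{Sub}\circ\mathbf{bar}_{\mathcal{P}}|_{\mathsf{ho.comp}}$, a composite of an equivalence onto the homotopy cocomplete objects with a fully faithful functor, hence fully faithful.

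Symmetrically, for $C$ homotopy cocomplete we use $\mathbf{cobar}_{\mathcal{P}}=\mathrm{Res}\circ\widehat\Omega_\iota$ and show
\[
\widehat\Omega_\iota(C)\simeq \mathbb{L}c\mathrm{Abs}\circ\mathrm{Res}\circ\widehat\Omega_\iota(C).
\]
This requires the derived counit $\mathbb{L}c\mathrm{Abs}\,\mathrm{Res}\,B\to B$ to be an equivalence for complete $\mathcal{P}^*$-algebras $B$, i.e. that $\mathrm{Res}$ is fully faithful at the derived level. Then $\widehat\Omega_\iota|_{\mathsf{ho.cocomp}} \simeq \mathbb{L}c\mathrm{Abs}\circ \mathbf{cobar}_{\mathcal{P}}|_{\mathsf{ho.cocomp}}$ is fully faithful.

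The main obstacle is exactly these two derived fully faithfulness statements. On the algebra side, Theorem \ref{thm: res fully faithful} only gives 1-categorical fully faithfulness. I would handle $\mathrm{Res}$ via the rectangle instead. By Theorem \ref{thm: full rectangle infinity categorical}, $\widehat{\mathrm{B}}_\iota\circ \mathbb{L}c\mathrm{Abs}\simeq \mathrm{Inc}\circ \mathrm{B}_\pi$, and by uniqueness of adjoints $\mathrm{Res}\circ\mathbb{L}\mathrm{Id}\circ\widehat\Omega_\iota\simeq \Omega_\pi\circ\mathbb{R}\mathrm{Sub}$. Since $\widehat\Omega_\iota$ is essentially surjective onto $[\mathsf{W}^{-1}]$ and $\mathbb{L}\mathrm{Id}$ is a colocalization, it suffices to check the counit on objects $\widehat\Omega_\iota(C)$. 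That reduces to showing that $\mathrm{Inc}\,\mathbb{R}\mathrm{Sub}\,C\to C$ becomes an equivalence after applying $\widehat{\mathrm{B}}_\iota$ composed with $\widehat\Omega_\iota$, i.e. after passing to $\mathsf{W}$-equivalences. If this reduction stalls, the fallback is to test directly on cofibrant $B$, using that $\mathrm{Res}$ preserves quasi-isomorphisms. The coalgebra side ($\mathrm{Inc}$ derived fully faithful) should be easier, since $\mathrm{Inc}$ is a left Quillen functor preserving all weak equivalences and $\mathrm{Sub}\,\mathrm{Inc}=\mathrm{Id}$ strictly; one then checks that fibrant replacement interacts well, dually to the previous argument.
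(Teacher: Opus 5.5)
Your treatment of the vertical functors and of the diagonal equivalence is fine and matches the paper. The gap is in the horizontal arrows. You reduce them to the claims that $\mathrm{Inc}\colon \mathrm{B}\mathcal{P}\mbox{-}\mathsf{coalg}~[\mathsf{Q.iso}^{-1}]\to \Omega\mathcal{P}^*\mbox{-}\mathsf{coalg}~[\mathsf{Q.iso}^{-1}]$ and $\mathrm{Res}\colon \mathcal{P}^*\mbox{-}\mathsf{alg}^{\mathsf{comp}}~[\mathsf{Q.iso}^{-1}]\to \mathcal{P}\mbox{-}\mathsf{alg}~[\mathsf{Q.iso}^{-1}]$ are fully faithful, and your last paragraph sets out to prove exactly this. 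Both statements are false in general.

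For $\mathrm{Inc}$, being $1$-categorically fully faithful and preserving and reflecting quasi-isomorphisms is not enough. The derived unit at $D$ is $D\to \mathrm{Sub}(R\,\mathrm{Inc}\,D)$, where $R$ is a fibrant replacement among $\Omega\mathcal{P}^*$-coalgebras, and $R\,\mathrm{Inc}\,D$ leaves the image of $\mathrm{Inc}$. Take $R=\widehat{\mathrm{B}}_\iota\widehat{\Omega}_\iota$ and use $\mathrm{Sub}\circ\widehat{\mathrm{B}}_\iota\cong \mathrm{B}_\pi\circ\mathrm{Res}$. For a trivial coalgebra $V$ the unit becomes $V\to \mathrm{B}_\pi\mathrm{Res}\,\widehat{\mathrm{S}}^c(\mathcal{P}^*)(V)$. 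For $\mathcal{P}=\mathcal{C}om$ and $V$ infinite-dimensional in degree $0$ this is $V\to\mathrm{B}_\pi(\kk\llbracket V\rrbracket)$, which is not a quasi-isomorphism; this is the counterexample stated right after the proposition, and see also Counterexample \ref{Counterexample: not good cocompletions}.

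For $\mathrm{Res}$, carry your own reduction to the end: it shows the derived counit at $\widehat\Omega_\iota(C)$ is an equivalence if and only if $C$ is homotopy cocomplete. So derived fully faithfulness of $\mathrm{Res}$ would force every $\Omega\mathcal{P}^*$-coalgebra to be homotopy cocomplete, which fails in the same example. It would also give $\mathcal{P}^*\mbox{-}\mathsf{alg}^{\mathsf{comp}}~[\mathsf{Q.iso}^{-1}]\simeq\mathcal{P}\mbox{-}\mathsf{alg}^{\mathsf{ho.comp}}~[\mathsf{Q.iso}^{-1}]$, which Counterexample \ref{Counterexample: not good cocompletions} rules out for $\mathcal{C}om$. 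Your fallbacks (testing on cofibrant $B$, checking that fibrant replacement ``interacts well'') therefore cannot succeed.

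What you actually need are only local statements, and they follow from the triangle identities plus conservativity. Your ``alternatively'' remark points this way, but it must be made local.

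\begin{itemize}
\item Let $A$ be homotopy complete and $D=\mathrm{B}_\pi A$. Then $\mathrm{Inc}\,D=\mathbf{bar}_{\mathcal{P}}A$ is homotopy cocomplete by Theorem \ref{thm: infinty categorical bar-cobar is an equivalence on homotopy (co)complete}. In $\epsilon_{\mathrm{Inc}D}\circ\mathrm{Inc}(\eta_D)=\mathrm{id}$ the counit is invertible, hence so is $\mathrm{Inc}(\eta_D)$. Then $\eta_D$ is invertible because $\mathrm{Inc}$ is conservative: underlying complexes are unchanged, and equivalences are quasi-isomorphisms on both sides.
\item Dually, let $C$ be homotopy cocomplete. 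Then $\mathrm{Res}\,\widehat\Omega_\iota C=\mathbf{cobar}_{\mathcal{P}}C$ is homotopy complete, so the triangle identity makes $\mathrm{Res}(\epsilon_{\widehat\Omega_\iota C})$ invertible. Conservativity of $\mathrm{Res}$ then gives $\epsilon_{\widehat\Omega_\iota C}$ invertible.
\end{itemize}

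With these, your factorizations $\mathrm{B}_\pi\simeq\mathbb{R}\mathrm{Sub}\circ\mathbf{bar}_{\mathcal{P}}$ and $\widehat\Omega_\iota\simeq\mathbb{L}c\mathrm{Abs}\circ\mathbf{cobar}_{\mathcal{P}}$ on the relevant subcategories are justified, and the argument closes.

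This repaired argument differs from the paper's. The paper shows that $\mathrm{B}_\pi A$ is local for quasi-isomorphisms inside $\mathrm{B}\mathcal{P}\mbox{-}\mathsf{coalg}~[\mathsf{W.eq}^{-1}]\simeq \mathcal{P}\mbox{-}\mathsf{alg}~[\mathsf{Q.iso}^{-1}]$, by rewriting $\mathrm{Map}(-,\mathrm{B}_\pi A)$ as $\mathrm{Map}(\widehat\Omega_\iota\mathrm{Inc}(-),\mathbb{L}c\mathrm{Abs}A)$. Yours is purely formal: it deduces horizontal fully faithfulness from the vertical ones and the diagonal equivalence, without using the Bousfield localization structure.
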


\begin{proof}
The functors are those of Theorem \ref{thm: full rectangle infinity categorical} and the fact that the top left and the bottom right $\infty$-categories are equivalent is given by Theorem \ref{thm: infinty categorical bar-cobar is an equivalence on homotopy (co)complete}. The two vertical functors are fully faithful by definition of homotopy complete $\C^*$-algebras and homotopy cocomplete $\Omega\C$-coalgebras. 

\medskip

Let us show that the functor $\mathrm{B}_\pi$ is fully faithful when restricted to homotopy complete $\mathcal{P}$-algebras. It is fully faithful (in fact, an equivalence) when it lands in the $\infty$-category of dg $\mathrm{B}\mathcal{P}$-coalgebras up to transferred weak equivalences, of which the $\infty$-category of dg $\mathrm{B}\mathcal{P}$-coalgebras up to quasi-isomorphisms is a localization. So it suffices to show that if $A$ is a homotopy complete $\mathcal{P}$-algebra, then $\mathrm{B}_\pi A$ is a local object with respect to quasi-isomorphism. There are weak equivalences of mapping spaces
\[
\begin{aligned}
\mathrm{Map}_{\mathrm{B}\mathcal{P}\mbox{-}\mathsf{coalg}~[\mathsf{W.eq}^{-1}]}(-,\mathrm{B}_\pi(A)) &\simeq \mathrm{Map}_{\mathcal{P}\mbox{-}\mathsf{alg}~[\mathsf{Q.iso}^{-1}]}(\Omega_\pi(-),A) \\ &\simeq \mathrm{Map}_{\mathcal{P}\mbox{-}\mathsf{alg}~[\mathsf{Q.iso}^{-1}]}(\Omega_\pi(-),\mathrm{Res}~\mathbb{L}c\mathrm{Abs}A) \\ &\simeq \mathrm{Map}_{\mathcal{P}^*\mbox{-}\mathsf{alg}^{\mathsf{comp}}~[\mathsf{Q.iso}^{-1}]}(\widehat{\Omega}_\iota(\mathrm{Inc}(-)),\mathbb{L}c\mathrm{Abs}A)~, 
\end{aligned}
\]
and this last mapping space clearly sends quasi-isomorphisms to weak equivalences of mapping spaces, since both $\widehat{\Omega}_\iota$ and $\mathrm{Inc}$ preserve quasi-isomorphisms. Hence $\mathrm{B}_\pi A$ is indeed local with respect to quasi-isomorphisms and the functor $\mathrm{B}_\pi$ is fully faithful. The proof for $\widehat{\Omega}_\iota$ is entirely analogous. 
\end{proof}

\begin{counterexample}
The fully faithful functor 
\[
\mathrm{B}_\pi: \mathcal{P}\mbox{-}\mathsf{alg}^{\mathsf{ho.comp}}~[\mathsf{Q.iso}^{-1}] \longrightarrow \mathrm{B}\mathcal{P}\mbox{-}\mathsf{coalg}~[\mathsf{Q.iso}^{-1}] 
\]
is not, in general, an equivalence of $\infty$-categories. Indeed, if it were, then the functor 
\[
\mathrm{Inc}: \mathrm{B}\mathcal{P}\mbox{-}\mathsf{coalg}~[\mathsf{Q.iso}^{-1}] \longrightarrow \Omega\mathcal{P}^*\mbox{-}\mathsf{coalg}~[\mathsf{Q.iso}^{-1}] 
\]
would necessarily have to be fully faithful as well. Now, take $\mathcal{P} = \mathcal{C}om$, and consider an infinite-dimensional vector space $V$, concentrated in degree $0$, endowed with the trivial $\mathrm{B}\mathcal{P}$-coalgebra structure. The derived counit 
\[
\mathbb{R}\mathrm{Sub}~\mathrm{Inc}(V) \longrightarrow V
\]
can be computed to be the map 
\[
\mathrm{B}_\pi \left(\kk\llbracket V \rrbracket \right) \longrightarrow V
\]
which, as explained in Counterexample \ref{Counterexample: not good cocompletions}, is not a quasi-isomorphism. 
\end{counterexample}

\subsubsection{The case of operads with good completions.} 
If we put together Theorem \ref{thm: full rectangle infinity categorical} with the commutative triangle of Proposition \ref{prop: commutative triangle of inclusions} and Theorem \ref{thm: point-set models for enhanced bar-cobar}, we get the following diagram of adjunctions 
\[
\begin{tikzcd}[column sep=3pc,row sep=3.5pc]
\mathcal{P}\mbox{-}\mathsf{alg}~[\mathsf{Q.iso}^{-1}] \arrow[r, shift left=1.1ex, "\mathrm{B}_\pi"{name=A}] 
&\mathrm{B}\mathcal{P}\mbox{-}\mathsf{cog}~[\mathsf{Q.iso}^{-1}] \arrow[l, shift left=1.1ex, "\mathbb{R}\Omega_\pi"{name=A}]
\arrow[r, shift left=1.5ex, "\mathbf{loose}"{name=A}]  \arrow[rd, shift left=0ex, shorten >=10pt, shorten <=10pt, "\mathrm{Inc}"{name=B}] 
&\mathbf{Coalg}_{~\mathbf{bar}(\mathcal{P})}^{\mathbf{conil}}(\mathbf{D}(\kk)) \arrow[d, shift left=2ex, "\mathbf{Inc}"{name=F}] \arrow[l, shift left=.75ex, "\mathbf{strict}"{name=C}] \\
&
&\Omega\mathcal{P}^*\mbox{-}\mathsf{cog}~[\mathsf{Q.iso}^{-1}]\arrow[u, shift left=1.1ex, "\mathbf{Sub}"{name=U}] \arrow[lu, shift left=2.5ex, shorten >=-5pt, shorten <=30pt, "\mathbb{R}\mathrm{Sub}"{name=D}] 
\end{tikzcd}
\]
where, once again, left adjoints are written on top. We state, with our notation, the following key result.

\begin{proposition}[{\cite[Proposition 11.1]{heuts2024}}]\label{prop: heuts' comparison between the two units}
Let $\mathcal{P}$ be a dg operad satisying Assumptions \ref{assumption 1}. Assume that $\mathcal{P}$ has good completions and let $A$ be a dg $\mathcal{P}$-algebra. Then the comparison map 
\[
\bm{\Omega}^{\mathbf{enh}} \bm{\mathrm{B}}^{\mathbf{enh}}(A) \longrightarrow \mathbf{cobar}_{\mathcal{P}} \mathbf{bar}_{\mathcal{P}}(A) 
\]
is an equivalence in the $\infty$-category of $\mathcal{P}$-algebras.
\end{proposition}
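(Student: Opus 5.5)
The plan is to show that $\bm{\Omega}^{\mathbf{enh}}\bm{\mathrm{B}}^{\mathbf{enh}}(A)$ is itself homotopy complete and receives $A$ through a map that the homotopy completion inverts. The comparison map will then follow from the universal property of homotopy completion, identified as $A \to \mathbf{cobar}_{\mathcal{P}}\mathbf{bar}_{\mathcal{P}}(A)$ by Proposition \ref{prop: nilcompletion = homotopy completetion} and Theorem \ref{thm: infinty categorical bar-cobar is an equivalence on homotopy (co)complete}.

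\emph{Step 1: identify the comparison map.} By Theorem \ref{prop: comparison of the composition adjunction with heuts' adjunction}, $\mathbf{bar}_{\mathcal{P}} \simeq \mathbf{Inc}\circ \bm{\mathrm{B}}^{\mathbf{enh}}$ and $\mathbf{cobar}_{\mathcal{P}} \simeq \bm{\Omega}^{\mathbf{enh}}\circ \mathbf{Sub}$. The comparison map is therefore $\bm{\Omega}^{\mathbf{enh}}$ applied to the unit $\bm{\mathrm{B}}^{\mathbf{enh}}A \to \mathbf{Sub}\,\mathbf{Inc}\,\bm{\mathrm{B}}^{\mathbf{enh}}A$. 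It sits in the commutative triangle of units
\[
A \longrightarrow \bm{\Omega}^{\mathbf{enh}}\bm{\mathrm{B}}^{\mathbf{enh}}A \longrightarrow \mathbf{cobar}_{\mathcal{P}}\mathbf{bar}_{\mathcal{P}}A .
\]

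\emph{Step 2: $\bm{\Omega}^{\mathbf{enh}}\bm{\mathrm{B}}^{\mathbf{enh}}A$ is homotopy complete; this is where good completions enters.}
\begin{itemize}
\item Since $\mathbf{Coalg}^{\mathbf{conil}}_{\mathbf{bar}(\mathcal{P})}(\mathbf{D}(\kk))$ is comonadic over $\mathbf{D}(\kk)$ with comonad $\mathbf{S}(\mathbf{bar}(\mathcal{P}))$ (Proposition \ref{prop: identification of the derived indecomposables comonad}), the coalgebra $C=\bm{\mathrm{B}}^{\mathbf{enh}}A$ is the totalization of its cobar resolution by cofree conilpotent coalgebras $\mathbf{S}(\mathbf{bar}(\mathcal{P}))(W_n)$.
\item The right adjoint $\bm{\Omega}^{\mathbf{enh}}$ preserves this totalization.
\item Since $\mathbf{cofree} = \bm{\mathrm{B}}^{\mathbf{enh}}\circ\mathbf{triv}$ is right adjoint to $\mathbf{U}$, it sends cofree coalgebras to trivial algebras: $\bm{\Omega}^{\mathbf{enh}}\mathbf{S}(\mathbf{bar}(\mathcal{P}))(W)\simeq \mathbf{triv}(W)$.
\item Hence $\bm{\Omega}^{\mathbf{enh}}\bm{\mathrm{B}}^{\mathbf{enh}}A \simeq \mathrm{Tot}\,\mathbf{triv}(W_\bullet)$, and each $\mathbf{triv}(W_n)$ is homotopy complete by the good completions hypothesis.
\item Homotopy complete algebras are exactly the local objects for the reflection $\mathbf{cobar}_{\mathcal{P}}\mathbf{bar}_{\mathcal{P}}$. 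They are the essential image of the right adjoint $\mathbf{cobar}_{\mathcal{P}}$ restricted as in Theorem \ref{thm: infinty categorical bar-cobar is an equivalence on homotopy (co)complete}, so they are closed under limits in $\mathcal{P}\mbox{-}\mathsf{alg}~[\mathsf{Q.iso}^{-1}]$.
\end{itemize}
This step is the only use of good completions. I expect it to be the main obstacle, because one must check carefully that the closure-under-limits argument is legitimate. The cleanest route is to note that the unit of $\mathbf{bar}_{\mathcal{P}}\dashv\mathbf{cobar}_{\mathcal{P}}$ is an equivalence on $X$ exactly when $X$ lies in the essential image of $\mathbf{cobar}_{\mathcal{P}}$ on homotopy cocomplete coalgebras. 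That, in turn, requires the adjunction to be idempotent, i.e.\ $\mathbf{bar}_{\mathcal{P}}\mathbf{cobar}_{\mathcal{P}}\mathbf{bar}_{\mathcal{P}} \simeq \mathbf{bar}_{\mathcal{P}}$. This idempotence is what Heuts' proof of \cite[Theorem 2.1]{heuts2024} supplies, and it is implicit in Theorem \ref{thm: infinty categorical bar-cobar is an equivalence on homotopy (co)complete}.

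\emph{Step 3: the completion is inverted by $\bm{\Omega}^{\mathbf{enh}}\bm{\mathrm{B}}^{\mathbf{enh}}$.} Write $\eta\colon A\to \widehat A:=\mathbf{cobar}_{\mathcal{P}}\mathbf{bar}_{\mathcal{P}}A$. By the same idempotence, $\mathbf{bar}_{\mathcal{P}}(\eta)$ is an equivalence. Its composite with the counit at $\mathbf{bar}_{\mathcal{P}}A$ is the identity, and that counit is an equivalence because $\mathbf{bar}_{\mathcal{P}}A$ is homotopy cocomplete. Since $\mathbf{Inc}$ is conservative (it does not change the underlying object in $\mathbf{D}(\kk)$), $\bm{\mathrm{B}}^{\mathbf{enh}}(\eta)$ is also an equivalence.

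\emph{Step 4: conclusion.} By Step 2 and the universal property of the reflection, the unit $A\to\bm{\Omega}^{\mathbf{enh}}\bm{\mathrm{B}}^{\mathbf{enh}}A$ factors as $A\to\widehat A\xrightarrow{r}\bm{\Omega}^{\mathbf{enh}}\bm{\mathrm{B}}^{\mathbf{enh}}A$. Let $c$ denote the comparison map $\bm{\Omega}^{\mathbf{enh}}\bm{\mathrm{B}}^{\mathbf{enh}}A\to\widehat A$.
\begin{itemize}
\item $c\circ r$ is an endomorphism of the local object $\widehat A$ under $A$, so it is the identity by uniqueness of factorizations through the reflection.
\item $r\circ c$ is compatible with the unit from $A$. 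Since $\bm{\Omega}^{\mathbf{enh}}\bm{\mathrm{B}}^{\mathbf{enh}}$ inverts $\eta$ (Step 3), naturality of the unit $\mathrm{id}\to\bm{\Omega}^{\mathbf{enh}}\bm{\mathrm{B}}^{\mathbf{enh}}$ identifies $r$ with the inverse of $\bm{\Omega}^{\mathbf{enh}}\bm{\mathrm{B}}^{\mathbf{enh}}(\eta)$ composed with the unit at $\widehat A$.
\item The unit at $\widehat A$ is then an equivalence by the triangle identities applied at a local object.
\end{itemize}
Hence $c$ is an equivalence.
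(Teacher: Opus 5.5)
\textbf{There is a genuine gap: the idempotence you invoke is false, so the strategy cannot work.} Your Step 1 is fine, and so are the individual ingredients of Step 2: the $U$-split cobar resolution $C\simeq\mathrm{Tot}\,\mathbf{cofree}(W_\bullet)$, the identification $\bm{\Omega}^{\mathbf{enh}}\circ\mathbf{cofree}\simeq\mathbf{triv}$, and the fact that right adjoints preserve $\mathrm{Tot}$. Everything after that rests on the idempotence of $\mathbf{bar}_{\mathcal{P}}\dashv\mathbf{cobar}_{\mathcal{P}}$: the closure under limits in Step 2, both claims in Step 3, and the reflection in Step 4. It already fails for $\mathcal{P}=\mathcal{C}om$, which has good completions by Theorem~\ref{thm: homotopy complete trivial algebras}.

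Take $V$ countably infinite-dimensional in degree $0$ and $A=\mathrm{S}(\mathcal{C}om)(V)$. Then:
\begin{itemize}
\item $\mathbf{bar}_{\mathcal{C}om}(A)\simeq V$ with trivial structure, and $\mathbf{cobar}_{\mathcal{C}om}\mathbf{bar}_{\mathcal{C}om}(A)\simeq\widehat{\mathrm{S}}^c(\mathcal{C}om^*)(V)$.
\item $\mathbf{bar}_{\mathcal{C}om}(\eta_A)$ is represented by $\mathrm{B}_\pi\mathrm{S}(\mathcal{C}om)(V)\rightarrowtail\mathrm{B}_\pi\widehat{\mathrm{S}}^c(\mathcal{C}om^*)(V)$. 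This is not a quasi-isomorphism by Lemma~\ref{lemma: change of inclusions for cocompletion} and Counterexample~\ref{Counterexample: not good cocompletions}.
\item Equivalently, $\mathbf{bar}_{\mathcal{C}om}(A)$ is not homotopy cocomplete, as the paper says of infinite-dimensional trivial $\Omega\mathcal{C}om^*$-coalgebras. So both assertions of your Step 3 fail for this $A$.
\end{itemize}
Theorem~\ref{thm: infinty categorical bar-cobar is an equivalence on homotopy (co)complete} does not supply idempotence. It is the tautological restriction of any adjunction to the objects where the unit, resp.\ counit, is invertible.

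Worse, the conclusion of Step 2 is false, not merely unproved. Write $L,R$ for $\mathbf{bar}_{\mathcal{C}om},\mathbf{cobar}_{\mathcal{C}om}$. If $\eta_{RLA}$ were invertible, then so would be its retraction $R\epsilon_{LA}$, which forces $RL\eta_A=\eta_{RLA}$. Naturality of $\epsilon$ then gives $L\eta_A\circ\epsilon_{LA}=\epsilon_{LRLA}\circ L\eta_{RLA}=\mathrm{id}$, so $L\eta_A$ would be invertible. Hence $\widehat{\mathrm{S}}^c(\mathcal{C}om^*)(V)$ is not homotopy complete. Yet by the (true) statement it is equivalent to $\bm{\Omega}^{\mathbf{enh}}\bm{\mathrm{B}}^{\mathbf{enh}}(A)=\mathrm{Tot}\,\mathbf{triv}(W_\bullet)$, a totalization of homotopy complete algebras. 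So homotopy complete algebras are not closed under totalizations, and $\mathbf{cobar}_{\mathcal{P}}\mathbf{bar}_{\mathcal{P}}$ is not a reflection onto them. Proving the source homotopy complete and then invoking a universal property cannot work.

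\textbf{How to repair it.} The paper does not prove this statement; it quotes Heuts. Your ingredients suffice once you use them to compare the two right adjoints $\bm{\Omega}^{\mathbf{enh}}$ and $\mathbf{cobar}_{\mathcal{P}}\circ\mathbf{Inc}$ directly, on an arbitrary conilpotent $C$.
\begin{itemize}
\item $\mathbf{Inc}$ commutes with the forgetful functors, and $\mathbf{Coalg}_{\mathbf{cobar}(\mathcal{P}^*)}(\mathbf{D}(\kk))$ is comonadic over $\mathbf{D}(\kk)$. Hence $\mathbf{Inc}$ preserves the $U$-split totalization $C\simeq\mathrm{Tot}\,\mathbf{cofree}(W_\bullet)$.
\item Both right adjoints preserve totalizations.
\item On $\mathbf{cofree}(W)=\bm{\mathrm{B}}^{\mathbf{enh}}\mathbf{triv}(W)$, precompose the comparison map with the enhanced unit $\mathbf{triv}(W)\to\bm{\Omega}^{\mathbf{enh}}\bm{\mathrm{B}}^{\mathbf{enh}}\mathbf{triv}(W)$, which is an equivalence at a trivial algebra. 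The result is the unit $\mathbf{triv}(W)\to\mathbf{cobar}_{\mathcal{P}}\mathbf{bar}_{\mathcal{P}}\mathbf{triv}(W)$, an equivalence precisely by good completions.
\end{itemize}
So the comparison is an equivalence levelwise, hence on totalizations. Taking $C=\bm{\mathrm{B}}^{\mathbf{enh}}(A)$ gives the statement. Good completions enters only on trivial algebras, and no closure property of homotopy complete algebras is needed.
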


\begin{corollary}\label{cor: fully faithfulness of loose on homotopy complete}
Let $\mathcal{P}$ be a dg operad satisying Assumptions \ref{assumption 1} with good completions. The functor 
\[
\mathbf{loose}: \mathrm{B}\mathcal{P}\mbox{-}\mathsf{cog}~[\mathsf{Q.iso}^{-1}] \longrightarrow \mathbf{Coalg}_{~\mathbf{bar}(\mathcal{P})}^{\mathbf{conil}}(\mathbf{D}(\kk))
\]
is fully faithful on the essential image of homotopy complete $\mathcal{P}$-algebras via the operadic bar construction $\mathrm{B}_\pi$. 
\end{corollary}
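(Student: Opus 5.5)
We must show that for homotopy complete $\mathcal{P}$-algebras $A, A'$, the map
\[
\mathrm{Map}(\mathrm{B}_\pi A', \mathrm{B}_\pi A) \longrightarrow \mathrm{Map}(\mathbf{loose}\,\mathrm{B}_\pi A', \mathbf{loose}\,\mathrm{B}_\pi A)
\]
is an equivalence. The mapping space on the left is computed in $\mathrm{B}\mathcal{P}\mbox{-}\mathsf{cog}~[\mathsf{Q.iso}^{-1}]$ and the one on the right in $\mathbf{Coalg}_{\mathbf{bar}(\mathcal{P})}^{\mathbf{conil}}(\mathbf{D}(\kk))$.

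By Proposition~\ref{proposition: panorama of infinity cats}, the left-hand side is already identified with $\mathrm{Map}_{\mathcal{P}\mbox{-}\mathsf{alg}}(A',A)$, since $\mathrm{B}_\pi$ is fully faithful on homotopy complete algebras. By Theorem~\ref{thm: point-set models for enhanced bar-cobar}, the composite $\mathbf{loose}\circ \mathrm{B}_\pi$ is equivalent to $\mathbf{B}^{\mathbf{enh}}$. Using the adjunction $\mathbf{B}^{\mathbf{enh}} \dashv \bm{\Omega}^{\mathbf{enh}}$, the right-hand side becomes $\mathrm{Map}_{\mathcal{P}\mbox{-}\mathsf{alg}}(A', \bm{\Omega}^{\mathbf{enh}}\mathbf{B}^{\mathbf{enh}}A)$. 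Under these identifications, the comparison map is postcomposition with the unit $A \to \bm{\Omega}^{\mathbf{enh}}\mathbf{B}^{\mathbf{enh}}A$, so it suffices to show that this unit is an equivalence whenever $A$ is homotopy complete.

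To prove this, we factor the unit $A \to \mathbf{cobar}_{\mathcal{P}}\mathbf{bar}_{\mathcal{P}}A$ of the adjunction of Theorem~\ref{thm: infty categorical operadic bar cobar} as
\[
A \longrightarrow \bm{\Omega}^{\mathbf{enh}}\mathbf{B}^{\mathbf{enh}}A \longrightarrow \mathbf{cobar}_{\mathcal{P}}\mathbf{bar}_{\mathcal{P}}A.
\]
This factorization comes from composing adjunctions. By Theorem~\ref{prop: comparison of the composition adjunction with heuts' adjunction}, $\mathbf{bar}_{\mathcal{P}} \dashv \mathbf{cobar}_{\mathcal{P}}$ is the composite of $\mathbf{B}^{\mathbf{enh}}\dashv\bm{\Omega}^{\mathbf{enh}}$ with $\mathbf{Inc}\dashv\mathbf{Sub}$, and the unit of a composite adjunction factors through the unit of the first. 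The second map in the factorization is precisely Heuts' comparison map, which is an equivalence by Proposition~\ref{prop: heuts' comparison between the two units} because $\mathcal{P}$ has good completions. The composite is the unit of $\mathbf{bar}_{\mathcal{P}}\dashv\mathbf{cobar}_{\mathcal{P}}$, which is an equivalence since $A$ is homotopy complete, by the proof of Proposition~\ref{prop: nilcompletion = homotopy completetion} (or Theorem~\ref{thm: infinty categorical bar-cobar is an equivalence on homotopy (co)complete}). By 2-out-of-3, the first map, which is the enhanced unit, is an equivalence.

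The main obstacle is checking compatibility of the identifications. First, one must make sure that the map induced by $\mathbf{loose}$ on mapping spaces really corresponds to postcomposition with the enhanced unit under the adjunction equivalences. Second, one must check that Heuts' comparison map agrees with the map obtained from the composite-adjunction factorization. For the first point, I would use that $\mathbf{loose}\circ\mathrm{B}_\pi \simeq \mathbf{B}^{\mathbf{enh}}$ as left adjoints, and that the unit of the point-set adjunction $\mathrm{B}_\pi \dashv \mathbb{R}\Omega_\pi$ is an equivalence, since on the transferred weak equivalences the bar-cobar adjunction is an equivalence. Then the unit of the composite $\mathbf{loose}\circ \mathrm{B}_\pi\dashv \mathbb{R}\Omega_\pi\circ\mathbf{strict}$ reduces to $\mathrm{B}_\pi$ applied to the unit of $\mathbf{loose}\dashv\mathbf{strict}$, up to that equivalence. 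Fully faithfulness on the image of $\mathrm{B}_\pi$ then amounts to the unit $\mathrm{B}_\pi A\to \mathbf{strict}\,\mathbf{loose}\,\mathrm{B}_\pi A$ being an equivalence, which is what the argument above establishes. For the second point, I would invoke the naturality in Proposition~\ref{prop: commutative triangle of inclusions}, which identifies $\mathbf{Inc}\circ\mathbf{loose}$ with $\mathrm{Inc}$.
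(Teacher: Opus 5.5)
Your proposal is correct and follows essentially the paper's route: homotopy completeness makes the unit $A \to \mathbf{cobar}_{\mathcal{P}}\mathbf{bar}_{\mathcal{P}}(A)$ an equivalence, Proposition~\ref{prop: heuts' comparison between the two units} (using good completions) plus 2-out-of-3 makes the enhanced unit $A \to \bm{\Omega}^{\mathbf{enh}}\bm{\mathrm{B}}^{\mathbf{enh}}(A)$ an equivalence, and full faithfulness of $\mathrm{B}_\pi$ on homotopy complete algebras transfers this to $\mathbf{loose}$ via $\mathbf{loose}\circ\mathrm{B}_\pi \simeq \bm{\mathrm{B}}^{\mathbf{enh}}$. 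Your mapping-space bookkeeping just makes the paper's last step explicit. One small caveat in your final paragraph: the unit of $\mathrm{B}_\pi \dashv \mathbb{R}\Omega_\pi$ through $\mathrm{B}\mathcal{P}\mbox{-}\mathsf{cog}~[\mathsf{Q.iso}^{-1}]$ is not an equivalence for arbitrary $A$, only when $\mathrm{B}_\pi A$ is local for quasi-isomorphisms. This holds for homotopy complete $A$ by the argument in Proposition~\ref{proposition: panorama of infinity cats}, which is the only case you use.
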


\begin{proof}
Let $A$ be a homotopy complete $\mathcal{P}$-algebra. Then, by definition, the map 
\[
A \longrightarrow  \mathbf{cobar}_{\mathcal{P}} \mathbf{bar}_{\mathcal{P}}(A) \simeq \mathbb{R}\Omega_\pi ~ \mathbb{R}\mathrm{Sub} ~ \mathrm{Inc} ~ \mathrm{B}_\pi(A)
\]
is an equivalence. Hence, by Proposition \ref{prop: heuts' comparison between the two units}, the map 
\[
A \longrightarrow \bm{\Omega}^{\mathbf{enh}} \bm{\mathrm{B}}^{\mathbf{enh}}(A) \simeq \mathbb{R}\Omega_\pi ~ \mathbf{strict}~ \mathbf{loose}~ \mathrm{B}_\pi(A)
\]
is also an equivalence. Therefore, since $\mathrm{B}_\pi$ is fully faithful on all homotopy complete $\mathcal{P}$-algebras, we deduce that $\mathbf{loose}$ must also be fully faithful on the essential image of homotopy complete $\mathcal{P}$-algebras in $\mathrm{B}\mathcal{P}$-algebras via the functor $\mathrm{B}_\pi$. 
\end{proof}

\begin{example}
Let $\mathcal{P}$ be a connective dg operad. Then there are equivalences of $\infty$-categories
\[
\mathcal{P}\mbox{-}\mathsf{alg}_{\geq 1}~[\mathsf{Q.iso}^{-1}]  \simeq \mathrm{B}\mathcal{P}\mbox{-}\mathsf{cog}_{\geq 1}~[\mathsf{Q.iso}^{-1}] \simeq \mathbf{Coalg}_{~\mathbf{bar}(\mathcal{P})}(\mathbf{D}(\kk)))_{\geq 1}^{\mathbf{conil}}
\]
between $\mathcal{P}$-algebras in degrees $\geq 1$, $\mathrm{B}\mathcal{P}$-coalgebras in degrees $\geq 1$ and conilpotent coalgebras over the enriched $\infty$-cooperad $\mathrm{B}\mathcal{P}$ in degrees $\geq 1$. In particular, on these sub-categories the rectification of conilpotent $\mathrm{B}\mathcal{P}$-coalgebras holds. This can be interpreted as an example of Corollary \ref{cor: fully faithfulness of loose on homotopy complete}. Indeed, since $\mathcal{P}$ is connective, any $\mathcal{P}$-algebra in degrees $\geq 1$ is homotopy complete. An example of this are dg Lie algebras in degrees $\geq 1$ appearing in the work of Quillen in rational homotopy \cite{quillenrationalhomotopytheory}.
\end{example}

\begin{counterexample}\label{Counterexample: non fullyfaithfulness of strict functor}
As already pointed out in \cite[Theorem 5.3.1]{chen25}, the adjunction $\mathbf{loose} \dashv \mathbf{strict}$ is not, in general, an equivalence. Let us give yet another reinterpretation of the Counterexample \ref{counter-example: operad without good completion} to illustrate this. 

\medskip

Let $\mathcal{P} = \mathcal{C}om$. Since $\mathcal{C}om$ has good completions by Theorem \ref{thm: homotopy complete trivial algebras}, it implies, by \cite[Remark 11.2]{heuts2024}, that the functor $\mathbf{Inc}$ is fully faithful on trivial coalgebras over the enriched $\infty$-cooperad $\mathrm{B}\mathcal{P}$. This, in turn, means that the functor $\mathbf{Sub}$ is also fully faithful on any trivial $\Omega\mathcal{C}om^*$-coalgebra. However, as we saw in Counterexample \ref{counter-example: operad without good completion}, the operad $\Omega\mathcal{C}om^*$ does not have good cocompletions because infinite dimensional trivial $\Omega\mathcal{C}om^*$-coalgebras in degree $0$ are not homotopy cocomplete. This, in turn, means that $\mathbb{R}\mathrm{Sub}$ is not fully faithful on infinite dimensional trivial $\Omega\mathcal{C}om^*$-coalgebras. Which, finally, implies that the functor $\mathbf{strict}$ is not fully faithful either when restricted to infinite dimensional trivial coalgebras. Hence, this adjunction cannot be an equivalence. 
\end{counterexample}

\subsubsection{The case of operads with good cocompletions.} We show that when the cofibrant operad encoding coalgebras has good cocompletions, homotopy complete and absolute algebras on the other side of Koszul duality coincide.

\begin{proposition}\label{prop: absolute coincide with homotopy complete}
Let $\mathcal{P}$ be a dg operad satisying Assumptions \ref{assumption 1}. If the dg operad $\Omega \mathcal{P}^*$ has good cocompletions if and only if there are equivalences 
\[
    \mathcal{P}\mbox{-}\mathsf{alg}^{\mathsf{ho.comp}}~[\mathsf{Q.iso}^{-1}] \simeq \mathcal{P}^*\mbox{-}\mathsf{alg}^{\mathsf{comp}}~[\mathsf{Q.iso}^{-1}]  \simeq \Omega\mathcal{P}^*\mbox{-}\mathsf{coalg}^{\mathsf{ho.cocomp}}~[\mathsf{Q.iso}^{-1}]~. 
    \]
\end{proposition}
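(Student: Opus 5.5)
The plan is to reduce both directions to the behaviour of free complete $\mathcal{P}^*$-algebras, which are exactly the images of trivial coalgebras under $\widehat{\Omega}_\iota$.

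\emph{Basic observation.} For a chain complex $V$ with the trivial $\Omega\mathcal{P}^*$-coalgebra structure, $\widehat{\Omega}_\iota(V)$ is the free complete algebra $\widehat{\mathrm{S}}^c(\mathcal{P}^*)(V)$, with no twisted differential, and it is cofibrant. By Proposition \ref{proposition: panorama of infinity cats} and Theorem \ref{thm: infinty categorical bar-cobar is an equivalence on homotopy (co)complete}, the composite $\mathbb{L}c\mathrm{Abs}\circ \mathbf{cobar}_{\mathcal{P}}$ restricted to homotopy cocomplete coalgebras agrees with $\widehat{\Omega}_\iota$. The statement is therefore equivalent to the following: the fully faithful functors $\mathbb{L}c\mathrm{Abs}$ and $\widehat{\Omega}_\iota$ of Proposition \ref{proposition: panorama of infinity cats} are essentially surjective. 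Equivalently, the derived counit $\mathbb{L}c\mathrm{Abs}\,\mathrm{Res}(B)\to B$ is a quasi-isomorphism for every complete $\mathcal{P}^*$-algebra $B$. Once this holds, the triangle identities force $\mathrm{Res}(B)$ to be homotopy complete, so $\mathrm{Res}$ and $\mathbb{L}c\mathrm{Abs}$ become inverse equivalences.

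\emph{($\Rightarrow$).} First treat free algebras $B=\widehat{\mathrm{S}}^c(\mathcal{P}^*)(V)$. Computing $\mathbb{L}c\mathrm{Abs}$ with the resolution $\Omega_\pi\mathrm{B}_\pi$ gives
\[
\mathbb{L}c\mathrm{Abs}\,\mathrm{Res}(B)\simeq \widehat{\Omega}_\iota\,\mathrm{Inc}\,\mathrm{B}_\pi\widehat{\mathrm{S}}^c(\mathcal{P}^*)(V).
\]
By Lemma \ref{lemma: change of inclusions for cocompletion} and the remark following it, good cocompletion says exactly that $\mathrm{B}_\pi\widehat{\mathrm{S}}^c(\mathcal{P}^*)(V)\to V$ is a quasi-isomorphism of $\Omega\mathcal{P}^*$-coalgebras. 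Since $\widehat{\Omega}_\iota$ preserves quasi-isomorphisms, the counit is identified with $\widehat{\Omega}_\iota(V)\simeq B$, so it is an equivalence on free objects.

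For general $B$, use Corollary \ref{cor: infinity monadicity of absolute algebras}: every complete $\mathcal{P}^*$-algebra is the geometric realization of its monadic resolution by free ones. The source functor $\mathbb{L}c\mathrm{Abs}\circ\mathrm{Res}$ must then commute with geometric realizations, and this is the step I expect to be the main obstacle. $\mathbb{L}c\mathrm{Abs}$ is a left adjoint, so the issue is $\mathrm{Res}$. Both forgetful functors to $\mathbf{D}(\kk)$ preserve geometric realizations: for $\mathcal{P}$-algebras by monadicity, and for complete $\mathcal{P}^*$-algebras because colimits are computed in the ambient category, as in the proof of Corollary \ref{cor: infinity monadicity of absolute algebras}. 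Since $\mathrm{Res}$ commutes with these forgetful functors, it preserves the geometric realizations in question. Hence the counit is an equivalence for all $B$.

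\emph{($\Leftarrow$).} Suppose the equivalences hold. Then $B=\widehat{\mathrm{S}}^c(\mathcal{P}^*)(V)$ is $\widehat{\Omega}_\iota(C)$ for some homotopy cocomplete $C$. Applying $\widehat{\mathrm{B}}_\iota$, which is inverse to $\widehat{\Omega}_\iota$ up to the quasi-isomorphism-detecting counit, we get $C\simeq V$ with its trivial structure. So trivial coalgebras are homotopy cocomplete, which is good cocompletion.
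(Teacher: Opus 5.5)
Your proof is correct and is essentially the paper's argument seen from the algebra side. The paper checks that the counit of $\mathrm{Inc}\dashv\mathbb{R}\mathrm{Sub}$ is an equivalence on $\widehat{\mathrm{B}}_\iota(B)$, whereas you check the counit of $\mathbb{L}c\mathrm{Abs}\dashv\mathrm{Res}$ at $B$; the latter is $\widehat{\Omega}_\iota$ of the former, followed by the quasi-isomorphism $\widehat{\Omega}_\iota\widehat{\mathrm{B}}_\iota B\to B$. Both proofs treat free algebras via good cocompletion and then extend along the monadic resolution of Corollary~\ref{cor: infinity monadicity of absolute algebras}, using that $\mathrm{Res}$ preserves $\mathrm{U}$-split realizations.

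Two steps need tightening, though neither is a real gap:
\begin{itemize}
\item On free $B$, the projection $\mathrm{B}_\pi\mathrm{Res}B\to V$ is only a chain map, not a coalgebra map. You should identify the counit through the inclusion $\mathrm{Inc}\,\mathrm{Sub}\,\widehat{\mathrm{B}}_\iota B\hookrightarrow\widehat{\mathrm{B}}_\iota B$, which is a quasi-isomorphism by good cocompletion.
\item In the converse you cannot apply $\widehat{\mathrm{B}}_\iota$ itself, since it does not preserve quasi-isomorphisms. Use the derived functor $\widehat{\mathrm{B}}_\iota\circ\mathbb{L}\mathrm{Id}$ instead; it inverts $\widehat{\Omega}_\iota$ on homotopy cocomplete coalgebras by Theorems~\ref{thm: full rectangle infinity categorical} and~\ref{thm: infinty categorical bar-cobar is an equivalence on homotopy (co)complete}.
\end{itemize}
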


\begin{proof}
The adjunction $\widehat{\Omega}_\iota \dashv \mathbb{L}\widehat{\mathrm{B}}_\iota$ induces an equivalence at the level of $\infty$-categories if and only if, for any complete dg $\mathcal{P}^*$-algebra $B$, its image $\widehat{\mathrm{B}}_\iota(V)$ is homotopy cocomplete. Indeed, this is beacause we already know have that $\widehat{\mathrm{B}}_\iota$ is fully faithful. 

\medskip

First, we observe that $\Omega\mathcal{P}^*$ has good cocompletion precisely if, for any chain complex $V$, the canoncial map 
\[
\mathrm{B}_\pi \widehat{\mathrm{S}}^c(\mathcal{P}^*)(V) \longrightarrow \widehat{\mathrm{B}}_\iota \widehat{\mathrm{S}}^c(\mathcal{P}^*)(V)
\]
is a quasi-isomorphism. That is, if and only if the image by $\widehat{\mathrm{B}}_\iota$ of the free dg $\mathcal{P}^*$-algebra on $V$ is homotopy cocomplete. Therefore, if the equivalences hold, then $\Omega\mathcal{P}^*$ has good cocompletion.

\medskip

Now, in order to show that the image of any dg $\mathcal{P}^*$-algebra via this functor is homotopy cocomplete, we use that the $\infty$-category of complete dg $\mathcal{P}^*$-algebras up to quasi-isomorphisms is monadic over $\mathbf{D}(\kk)$ by Corollary \ref{cor: infinity monadicity of absolute algebras}, hence any algebra $B$ can be written as the geometric realization of a $\mathrm{U}$-split simplicial diagram of free dg $\mathcal{P}^*$-algebras. 

\medskip

The image of this geometric realization by $\widehat{\mathrm{B}}_\iota$ is again homotopy cocomplete since we have that
\[
\mathbb{R}\mathrm{Sub} \circ \widehat{\mathrm{B}}_\iota \simeq \mathrm{B}_\pi \circ \mathrm{Res}~, 
\]
where $\mathrm{Res}$ preserves geometric realization of a $\mathrm{U}$-split simplicial diagrams because it commutes with the forgetful functors and where $\mathrm{B}_\pi$ preserves all colimits since it is a left adjoint. Thus, the image of any dg $\mathcal{P}^*$-algebra $B$ via the complete bar functor is again homotopy cocomplete and the equivalences hold.
\end{proof}

\begin{remark}[Operads with good finite cocompletion]
As explained in Counterexample \ref{Counterexample: not good cocompletions}, basic examples of cofibrant operads fail to have good cocompletions. However, one could introduce the notion of a dg operad $\Omega\mathcal{P}^*$ with \textit{good finite cocompletion}, that is, such that the map 
  \[
\mathrm{B}_\pi \mathrm{S}(\mathcal{P})(V) \rightarrowtail \mathrm{B}_\pi \widehat{\mathrm{S}}^c(\mathcal{P}^*)(V)
\]
is a quasi-isomorphism for all perfect chain complexes $V$. The fact that $\Omega\mathcal{P}^*$ has finite good cocompletion should, in principle, induce a equivalence of $\infty$-categories 
\[
\mathcal{P}^*\mbox{-}\mathsf{alg}_{\mathsf{f.g}}^{\mathsf{comp}}~[\mathsf{Q.iso}^{-1}] \simeq \mathcal{P}\mbox{-}\mathsf{alg}_{\mathsf{f.g}}^{\mathsf{ho.comp}}~[\mathsf{Q.iso}^{-1}] 
\]
between finitely generated complete dg $\mathcal{P}^*$-algebras up to quasi-isomorphisms and finitely generated homotopy complete dg $\mathcal{P}^*$-algebras up to quasi-isomorphisms. Notice that examples like $\Omega \mathcal{C}om^*$ have good finite cocompletion but not good cocompletion. 
\end{remark}

\subsubsection{Truncated operads and point-set models for conilpotent coalgebras over their bar constructions.} Finally, we deal with arity-truncated dg operads and their bar constructions, and we show that the rectification of conilpotent coalgebras always holds in this case. 

\begin{proposition}\label{prop: loose is an equivalence for truncated operads}
Let $\mathcal{P}$ be a dg operad satisfying Assumptions \ref{assumption 1}. For any $k \geq 1$, the functor 
\[
\mathbf{loose}: \mathrm{B}(\tau^{\leq k}\mathcal{P})\mbox{-}\mathsf{cog}~[\mathsf{Q.iso}^{-1}] \longrightarrow \mathbf{Coalg}_{~\mathbf{bar}(\tau^{\leq k}\mathcal{P})}^{\mathbf{conil}}(\mathbf{D}(\kk))
\]
is an equivalence of $\infty$-categories, where $\tau^{\leq k}\mathcal{P}$ is the truncation of $\mathcal{P}$ at arity $k$.
\end{proposition}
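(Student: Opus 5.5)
The plan is to show that, for $\mathcal{Q} = \tau^{\leq k}\mathcal{P}$, the point-set cofree-forgetful adjunction between dg $\mathrm{B}\mathcal{Q}$-coalgebras (with quasi-isomorphisms as weak equivalences) and $\mathsf{Ch}(\kk)$ is \emph{comonadic} in the $\infty$-categorical sense. The functor $\mathbf{loose}$ of Proposition \ref{prop: adjunction between strict and loose conilpotent coalgebras} is the comparison functor from $\mathrm{B}\mathcal{Q}\mbox{-}\mathsf{cog}~[\mathsf{Q.iso}^{-1}]$ to coalgebras over the induced comonad. That induced comonad is $\mathbf{S}(\mathbf{bar}(\mathcal{Q}))$, since $\mathrm{S}(\mathrm{B}\mathcal{Q})$ preserves quasi-isomorphisms and presents it. By the dual Barr--Beck--Lurie theorem, it therefore suffices to check that the forgetful functor $\mathrm{U}$ is conservative and preserves totalizations of $\mathrm{U}$-split cosimplicial objects. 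Conservativity is immediate, because weak equivalences are quasi-isomorphisms. The whole content is the totalization statement, which is exactly what fails for general $\mathcal{P}$ (cf.\ Counterexample \ref{Counterexample: non fullyfaithfulness of strict functor}).

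To control limits, I would use the inclusion-restriction machinery for $\mathcal{Q}$ itself, which still satisfies Assumptions \ref{assumption 1}. The key claim is that the derived counit $\mathrm{Inc}\circ\mathbb{R}\mathrm{Sub}(C) \to C$ is a quasi-isomorphism for every dg $\Omega\mathcal{Q}^*$-coalgebra $C$. Granting this, $\mathrm{Inc}$ becomes an equivalence
\[
\mathrm{B}\mathcal{Q}\mbox{-}\mathsf{cog}~[\mathsf{Q.iso}^{-1}] \simeq \Omega\mathcal{Q}^*\mbox{-}\mathsf{coalg}~[\mathsf{Q.iso}^{-1}]
\]
(it is fully faithful since $\mathrm{Sub}\circ\mathrm{Inc}=\mathrm{id}$ and $\mathrm{Inc}$ preserves fibrant replacements up to the counit). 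By Theorem \ref{thm: rectification of P-coalgebras}, the target is $\mathbf{Coalg}_{\mathbf{cobar}(\mathcal{Q}^*)}(\mathbf{D}(\kk))$, which is comonadic over $\mathbf{D}(\kk)$. By Proposition \ref{prop: commutative triangle of inclusions}, the composite of $\mathbf{loose}$ with $\mathbf{Inc}$ is then an equivalence, and both $\mathrm{B}\mathcal{Q}\mbox{-}\mathsf{cog}~[\mathsf{Q.iso}^{-1}]$ and $\mathbf{Coalg}^{\mathbf{conil}}_{\mathbf{bar}(\mathcal{Q})}$ are comonadic over the same comonad. Concretely, the comonad induced on $\mathbf{D}(\kk)$ through $\Omega\mathcal{Q}^*$-coalgebras composed with $\mathbb{R}\mathrm{Sub}$ is $\mathbf{S}(\mathbf{bar}(\mathcal{Q}))$. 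Comonadicity of the left-hand category then follows from the equivalence with a comonadic category, and $\mathbf{loose}$ is the induced equivalence.

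To prove the key claim, I would argue in two steps. First, on trivial coalgebras it holds by the good cocompletion result for $\Omega(\tau^{\leq k}\mathcal{P}^*)$ proven just above, where the relevant map is even an isomorphism. The more useful fact is that on cofree objects the map $\mathrm{S}(\mathrm{B}\mathcal{Q}) \to \mathrm{L}(\Omega\mathcal{Q}^*)$ is a quasi-isomorphism. The reason I expect this is that $\mathcal{Q}^*$ is concentrated in arities $\leq k$, so $\widehat{\mathrm{S}}^c(\mathcal{Q}^*)=\mathrm{S}(\mathcal{Q})$ and everything on the absolute side is a finite sum. Equivalently, $\mathrm{Res}$ and $c\mathrm{Abs}$ are inverse equivalences here by Lemma \ref{lemma: identification of the truncated category}, and I would deduce the cofree statement from Theorem \ref{thm: full rectangle infinity categorical}, since $\mathrm{Inc}\circ\mathrm{B}_\pi \simeq \widehat{\mathrm{B}}_\iota\circ\mathbb{L}c\mathrm{Abs}$ with $\mathbb{L}c\mathrm{Abs}$ an equivalence. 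Indeed, every $\Omega\mathcal{Q}^*$-coalgebra is in the essential image of $\widehat{\mathrm{B}}_\iota$, hence of $\mathrm{Inc}\circ\mathrm{B}_\pi$. Second, this essential-image argument directly gives essential surjectivity of $\mathrm{Inc}$ on $\infty$-categories. For full faithfulness, I would use that $\mathrm{B}_\pi$ lands on objects that are local for quasi-isomorphisms, as in the proof of Proposition \ref{proposition: panorama of infinity cats}, with homotopy completeness automatic because $\mathrm{Res}\,\mathbb{L}c\mathrm{Abs}\simeq\mathrm{id}$.

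The main obstacle I expect is verifying that $c\mathrm{Abs}$ is really the identity after deriving. In particular, the complete $\mathcal{Q}^*$-algebras model structure must agree with the $\mathcal{Q}$-algebra one, including the completion step in Proposition \ref{Prop: absolution restriction adjunction}. Here I would use that a nilpotent-by-arity operad has every algebra already complete, since $W^{k}$ vanishes for arity reasons. This would make the rectangle of Theorem \ref{thm: full rectangle infinity categorical} collapse to equivalences, from which the proposition follows.
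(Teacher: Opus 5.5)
\textbf{There is a genuine gap in your key claim.} You claim that the derived counit $\mathrm{Inc}\circ\mathbb{R}\mathrm{Sub}(C)\to C$ is a quasi-isomorphism for \emph{every} dg $\Omega\mathcal{Q}^*$-coalgebra $C$. This is false, and so is the resulting equivalence $\mathrm{B}\mathcal{Q}\mbox{-}\mathsf{cog}~[\mathsf{Q.iso}^{-1}]\simeq\Omega\mathcal{Q}^*\mbox{-}\mathsf{coalg}~[\mathsf{Q.iso}^{-1}]$. The remark right after the proposition singles out $\Omega\mathcal{P}^*$-coalgebras as the one $\infty$-category that is \emph{not} equivalent to the others.

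Here is a counterexample. Take the non-symmetric $\mathcal{P}=\mathcal{A}ss$ and $k=2$. Then $\mathcal{Q}=\tau^{\leq 2}\mathcal{A}ss$ has one binary operation $\mu$ and all composites are zero. Also $\Omega\mathcal{Q}^*$ is free on one binary generator of degree $-1$, with zero differential. Let $C=\kk x$ with $|x|=-1$, $d=0$ and $\delta(x)=x\otimes x$; this is a dg $\Omega\mathcal{Q}^*$-coalgebra. Its complete cobar construction $\widehat{\Omega}_\iota(C)$ is the two-dimensional complex $x\mapsto\pm\mu(x,x)$, which is acyclic.
\begin{itemize}
\item The map $C\to\widehat{\mathrm{B}}_\iota\widehat{\Omega}_\iota(C)$ is a fibrant replacement.
\item The right adjoints of the square commute.
\item Hence $\mathbb{R}\mathrm{Sub}(C)\simeq\mathrm{B}_\pi\mathrm{Res}\,\widehat{\Omega}_\iota(C)\simeq 0$, while $C\neq 0$.
\end{itemize}
Equivalently, $\mathbf{cobar}_{\mathcal{Q}}(C)\simeq 0$. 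Good cocompletion only covers \emph{trivial} coalgebras, so it does not rescue the claim.

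The same example refutes your statement about cofree objects. In $\mathrm{L}(\Omega\mathcal{Q}^*)(\kk x)$ everything sits in degree $-1$ with zero differential. The image of $C$ there is the family that is $\pm1$ on every planar binary tree, and this does not lie in $\mathrm{S}(\mathrm{B}\mathcal{Q})(\kk x)$. So the inclusion of $\mathrm{S}(\mathrm{B}\mathcal{Q})(\kk x)$ into $\mathrm{L}(\Omega\mathcal{Q}^*)(\kk x)$ is not a quasi-isomorphism.

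\textbf{Where the argument breaks.} The faulty inference is: every $\Omega\mathcal{Q}^*$-coalgebra is in the essential image of $\widehat{\mathrm{B}}_\iota$, hence of $\mathrm{Inc}\circ\mathrm{B}_\pi$. The functor $\widehat{\mathrm{B}}_\iota$ is an equivalence only out of $\mathcal{Q}^*\mbox{-}\mathsf{alg}^{\mathsf{comp}}~[\mathsf{W}^{-1}]$. The composite $\widehat{\mathrm{B}}_\iota\circ\mathbb{L}\mathrm{Id}\circ\mathbb{L}c\mathrm{Abs}$ passes through the right Bousfield localization $\mathbb{L}\mathrm{Id}$, which is fully faithful but not essentially surjective. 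Indeed, $\widehat{\Omega}_\iota(C)$ above is quasi-isomorphic, but not $\mathsf{W}$-equivalent, to $0$. Your ``main obstacle'' (that $c\mathrm{Abs}$ and $\mathrm{Res}$ are inverse equivalences for truncated $\mathcal{Q}$) is correct, but it does not address this.

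\textbf{What is actually needed.} The paper uses two ingredients you do not have, then concludes.
\begin{enumerate}
\item For truncated $\mathcal{Q}$, the cobar construction $\Omega_\pi$ preserves all quasi-isomorphisms. The reason is that the filtration by $\mathcal{Q}$-arity on $\Omega_\pi W$ has length at most $k$ and the twisting differential raises it. So $\mathsf{W}=\mathsf{Q.iso}$ on dg $\mathrm{B}\mathcal{Q}$-coalgebras, and $\mathrm{B}_\pi$ gives $\mathcal{Q}\mbox{-}\mathsf{alg}~[\mathsf{Q.iso}^{-1}]\simeq\mathrm{B}\mathcal{Q}\mbox{-}\mathsf{cog}~[\mathsf{Q.iso}^{-1}]$.
\item The genuinely $\infty$-categorical input is Heuts's Proposition 13.13, which the paper invokes to identify $\tau^{\leq k}\mathcal{P}$-algebras with conilpotent $\mathbf{bar}(\tau^{\leq k}\mathcal{P})$-coalgebras via $\mathbf{B}^{\mathbf{enh}}$.
\item Since $\mathbf{loose}\circ\mathrm{B}_\pi\simeq\mathbf{B}^{\mathbf{enh}}$ by the comparison theorem for the enhanced bar-cobar adjunction, two-out-of-three finishes the proof.
\end{enumerate}
Your Barr--Beck framing is compatible with this. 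However, the comonadicity it needs is exactly Heuts's input transported along step 1. It cannot be imported from $\Omega\mathcal{Q}^*$-coalgebras, because $\mathrm{B}\mathcal{Q}\mbox{-}\mathsf{cog}~[\mathsf{Q.iso}^{-1}]$ is not equivalent to them.
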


\begin{proof}
This essentially follows from the fact that, for any $k \geq 1$, the point-set cobar construction $\Omega_\pi$ with respect to the twisting morphism $\pi: \mathrm{B}(\tau^{\leq k}\mathcal{P}) \longrightarrow \tau^{\leq k}\mathcal{P}$ preserves all quasi-isomorphisms. Hence the point-set bar-cobar adjunction induces is an equivalence of $\infty$-categories
\[
\tau^{\leq k}\mathcal{P}\mbox{-}\mathsf{alg}~[\mathsf{Q.iso}^{-1}] \simeq \mathrm{B}(\tau^{\leq k}\mathcal{P})\mbox{-}\mathsf{cog}~[\mathsf{Q.iso}^{-1}]~, 
\]
which combined with \cite[Proposition 13.13.]{heuts2024}, gives the desired result.
\end{proof}

\begin{remark}
Under these hypotheses, Proposition \ref{prop: absolute coincide with homotopy complete} also applies, so we also get that
\[
\mathrm{B}(\tau^{\leq k}\mathcal{P})\mbox{-}\mathsf{cog}~[\mathsf{Q.iso}^{-1}] \simeq \tau^{\leq k}\mathcal{P}^*\mbox{-}\mathsf{alg}^{\mathsf{comp}}~[\mathsf{Q.iso}^{-1}] \simeq \Omega\mathcal{P}^*\mbox{-}\mathsf{coalg}^{\mathsf{ho.cocomp}}~[\mathsf{Q.iso}^{-1}]~, 
\]
hence all $\infty$-categories considered are in fact equivalent except for dg $\Omega\mathcal{P}^*$-coalgebras up to quasi-isomorphisms.
\end{remark}


\section{Point-set models for bar-cobar adjunctions of \texorpdfstring{$\mathbb E_n$}{E_n}-algebras à la Lurie}\label{Section: Lurie's bar-cobar}


The goal of this section is to give point-set models for Lurie's bar-cobar adjunction between augmented $\mathbb{E}_1$-algebras and coaugmented $\mathbb{E}_1$-coalgebras as well as its generalization to the $\mathbb{E}_n$ case by Ayala--Francis, both in the case of chain complexes over a field of any characteristic. 
Both of these functors appear as the $\infty$-categorical bar cobar adjunction of Theorem \ref{thm: infty categorical operadic bar cobar} when one takes $\mathcal{P}$ to be a point-set model of the $\mathbb{E}_n$ operad.

\subsection{Lurie's bar-cobar adjunction and its generalization}
In \cite[Section 4.3]{HigherAlgebra}, Lurie constructed a universal adjunction between augmented monoids and coaugmented comonoids in any pointed monoidal $\infty$-category $\mathbf{D}$ admitting geometric realizations of simplicial objects and totalizations of cosimplicial objects:
\[
\begin{tikzcd}[column sep=5pc,row sep=3pc]
         \mathbf{Alg}_{\mathbb{E}_1}^{\mathbf{aug}}(\mathbf{D}) \arrow[r, shift left=1.1ex, "\mathbf{bar}"{name=F}] &\mathbf{Coalg}_{\mathbb{E}_1}^{\mathbf{coaug}}(\mathbf{D})~. \arrow[l, shift left=.75ex, "\mathbf{cobar}"{name=U}] \arrow[phantom, from=F, to=U, , "\dashv" rotate=-90]
\end{tikzcd}
\]
The idea behind this adjunction is the following: given an augmented monoid, one can construct canonically a simplicial object, whose realization is precisely the bar functor. Dually, given a coaugmented comonoid, one can construct canonically a  cosimplicial object, whose totalization is precisely the cobar functor. See Subsection \ref{subsubsection: Lurie's bar-cobar adjunction for operads and cooperads} for an example of this constructions in the case of $\infty$-operads and $\infty$-cooperads. 

\medskip

Later, Ayala and Francis constructed a generalization of this adjunction in \cite{AyalaFrancis19}. We follow the exposition of \cite[Section 3.2]{AyalaFrancis21}. Suppose now that $\mathbf{D}$ admits all sifted limits and colimits and that the monoidal structure distributes over both of them. Then, for any $n \geq 1$, there is a bar-cobar adjunction 
\[
\begin{tikzcd}[column sep=5pc,row sep=3pc]
         \mathbf{Alg}_{\mathbb{E}_n}^{\mathbf{aug}}(\mathbf{D}) \arrow[r, shift left=1.1ex, "\mathbf{bar}^{(n)}"{name=F}] &\mathbf{Coalg}_{\mathbb{E}_n}^{\mathbf{coaug}}(\mathbf{D})~. \arrow[l, shift left=.75ex, "\mathbf{cobar}^{(n)}"{name=U}] \arrow[phantom, from=F, to=U, , "\dashv" rotate=-90]
\end{tikzcd}
\]
between augmented $\mathbb{E}_n$-algebras in $\mathbf{D}$ and coaugmented $\mathbb{E}_n$-coalgebras in $\mathbf{D}$. 

\subsection{A homotopy-coherent comparison between two bar constructions}On first sight, one might be tempted to think that the above adjunctions coincide with the ones of Subsection \ref{Subsection: enhanced bar-cobar adjunction}, which were constructed in \cite{FrancisGaitsgory} in the particular case of the $\mathbb{E}_n$ operad. This cannot be the case, however, as the two categories of coalgebras on the right of these adjunctions are \textit{not} the same. Indeed, in the enhanced bar-cobar adjunction, one has \textit{conilpotent} coaugmented $\mathbb{E}_n$-coalgebras, whereas here one has \textit{all} coaugmented $\mathbb{E}_n$-coalgebras.  

\medskip

The question then becomes how to compare these two adjunctions and the notions of Koszul duality that they induce. It can be seen that at the level of the underlying objects in $\mathbf{D}$, both bar constructions coincide up to a shift, see \cite[Section 4.1]{BlansBlom24}. However, we need a full comparison of the two functors as homotopy coherent \textit{coalgebras}, a result which was recently obtained by Heuts and Land in the case when $\mathbf{D}$ is the $\infty$-category $\mathbf{Sp}$ of spectra. 

\begin{theorem}[\cite{HeutsLands}]\label{thm: heuts land comparison}
For any $n \geq 1$, there is an essentially unique equivalence of $\infty$-operads 
\[
(\mathrm{B}\mathbb{E}_n^{\mathrm{nu}})^\vee \qi s^{-n} \mathbb{E}_n^{\mathrm{nu}}
\]
such that the following square of $\infty$-categories commutes 
\[
\begin{tikzcd}[column sep=2.5pc,row sep=2.5pc]
\mathbf{Alg}_{\mathbb{E}_n^{\mathrm{nu}}}(\mathbf{Sp}) \arrow[r, "\mathbf{bar}^{(n)}"] \arrow[d, "\mathbf{B}^{\mathbf{enh}}"'] &
\mathbf{Coalg}_{\mathbb{E}_n^{\mathrm{nu}}}(\mathbf{Sp}) \arrow[d,"\Sigma^n"] \\
\mathbf{Coalg}_{\mathrm{B}\mathbb{E}_n^{\mathrm{nu}}}^{\mathbf{conil}}(\mathbf{Sp}) 
\arrow[r,"\mathbf{inc}"] 
&\mathbf{Coalg}_{(\mathrm{B}\mathbb{E}_n^{\mathrm{nu}})^\vee}(\mathbf{Sp}) \simeq \mathbf{Coalg}_{s^{-n}\mathbb{E}_n^{\mathrm{nu}}}(\mathbf{Sp})~, 
\end{tikzcd}
\]
where the functor $\mathbf{inc}$ is the inclusion of conilpotent coalgebras over the $\infty$-cooperad $\mathrm{B}\mathbb{E}_n^{\mathrm{nu}}$ into all coalgebras over its Spanier-Whitehead dual $\infty$-operad $(\mathrm{B}\mathbb{E}_n^{\mathrm{nu}})^\vee$. 
\end{theorem}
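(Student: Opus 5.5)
The plan is to construct the operad equivalence and the comparison of functors simultaneously, by exhibiting the Ayala--Francis bar construction as a factorization of the indecomposables adjunction. First I would produce the equivalence $(\mathrm{B}\mathbb{E}_n^{\mathrm{nu}})^\vee \simeq s^{-n}\mathbb{E}_n^{\mathrm{nu}}$ of $\infty$-operads in $\mathbf{Sp}$. The natural route is the Koszul self-duality of the little disks operad, established by Ching--Salvatore. In arity $k$ it rests on Atiyah duality for configuration spaces: the bar construction $\mathrm{B}\mathbb{E}_n^{\mathrm{nu}}(k)$ is modelled by the one-point compactification of the Fulton--MacPherson space of $k$ points in $\mathbb{R}^n$ modulo translation and dilation. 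That space is a framed manifold with corners, so its Spanier--Whitehead dual is, up to the $n$-fold desuspension, the suspension spectrum of the configuration space itself. For essential uniqueness I would argue a posteriori, after the second step: any equivalence making the square commute is determined by its effect on the structure maps of coalgebras of the form $\Sigma^n\mathbf{bar}^{(n)}(\text{free})$, and these detect the operad.

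Second, I would compare the two composites through the universal property of $\mathbf{B}^{\mathbf{enh}}$, as in the proof of \cref{thm: point-set models for enhanced bar-cobar}. The functor $\Sigma^n\mathbf{bar}^{(n)}$ is a left adjoint, since $\mathbf{bar}^{(n)}$ is left adjoint to $\mathbf{cobar}^{(n)}$. Its underlying spectrum is the factorization homology with compact support $\int^c_{\mathbb{R}^n}$, equivalently the $n$-fold iterated bar construction. By the computation recalled from \cite{BlansBlom24}, this agrees with $\Sigma^n$ of the derived indecomposables $\mathbf{indec}$. Consequently the forgetful functor from $\mathbf{Coalg}_{s^{-n}\mathbb{E}_n^{\mathrm{nu}}}(\mathbf{Sp})$, composed with $\Sigma^n\mathbf{bar}^{(n)}$, is a left adjoint equivalent to $\Sigma^{n}\circ\mathbf{indec}$.

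The key step is to show that $\Sigma^n\mathbf{bar}^{(n)}$ factors through conilpotent $\mathrm{B}\mathbb{E}_n^{\mathrm{nu}}$-coalgebras, followed by $\mathbf{inc}$. I would check this on free algebras and then extend by sifted colimits, since both sides preserve them. On a free algebra $\mathbf{S}(\mathbb{E}_n^{\mathrm{nu}})(X)$, the bar construction is $\Sigma^n X$ with trivial coalgebra structure. On the simplicial resolution by free algebras, the $s^{-n}\mathbb{E}_n^{\mathrm{nu}}$-coalgebra structure maps are computed by collision maps of configurations. These are exactly the Pontryagin--Thom collapse maps that realize the duality of the first step. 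So after transport along that duality, the structure comonad on the underlying spectrum becomes $\mathbf{S}(\mathrm{B}\mathbb{E}_n^{\mathrm{nu}})$, which by \cref{prop: identification of the derived indecomposables comonad} is the comonad of $\mathbf{indec}\dashv\mathbf{triv}$. The universal property of coalgebras over this comonad then yields a functor to $\mathbf{Coalg}^{\mathbf{conil}}_{\mathrm{B}\mathbb{E}_n^{\mathrm{nu}}}(\mathbf{Sp})$ that lifts $\Sigma^n\mathbf{bar}^{(n)}$ through $\mathbf{inc}$. Since it factors the same adjunction, it must agree with $\mathbf{B}^{\mathbf{enh}}$, and the square commutes.

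I expect the main obstacle to be this key step. Identifying the coalgebra structure on $\int^c_{\mathbb{R}^n}$ with the one coming from the bar cooperad is not a statement about underlying objects but about full homotopy coherence. The equivalence must be built coherently in all arities, and simultaneously with the operad equivalence, rather than checked pointwise. I would first attempt this with a single geometric model, namely Fulton--MacPherson compactifications with their collapse maps, which carries both the cooperad structure and the coaction at once. If that is not coherent enough, I would fall back on the Goodwillie tower and arity-truncated operads, as in \cref{prop: loose is an equivalence for truncated operads}, and pass to the limit.
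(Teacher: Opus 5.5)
The paper gives no proof of this statement. It is imported from \cite{HeutsLands} and used as a black box, after smashing with $\mathrm{H}\kk$, in the last section. Judged on its own, your proposal has a genuine gap at exactly the step you single out, and the formal argument around it does not close it.

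Write $\mathbf{b}$ for $\mathbf{bar}^{(n)}$ followed by the right vertical functor, normalized so that $U\mathbf{b}\simeq\mathbf{indec}$. Lifting the left adjoint $\mathbf{indec}$ through coalgebras over its own comonad $\mathbf{indec}\circ\mathbf{triv}\simeq\mathbf{S}(\mathrm{B}\mathbb{E}_n^{\mathrm{nu}})$ produces $\mathbf{B}^{\mathbf{enh}}$ tautologically, with no input from Ayala--Francis. The content of the theorem is that $\mathbf{inc}\circ\mathbf{B}^{\mathbf{enh}}$, transported along the operad equivalence, \emph{is} $\mathbf{b}$. Equivalently, the comonad map from $\mathbf{S}(\mathrm{B}\mathbb{E}_n^{\mathrm{nu}})$ to the (pro-)comonad of $s^{-n}\mathbb{E}_n^{\mathrm{nu}}$-coalgebras that classifies the coaction on $\mathbf{b}$ must equal the canonical one composed with the duality.

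Lifts of a fixed left adjoint through non-conilpotent coalgebras are far from unique; $\mathbf{triv}\circ\mathbf{indec}$ is another one. So ``it factors the same adjunction, hence it agrees'' conflates these two statements. The only evidence you offer for the second --- that collision maps are ``exactly'' Pontryagin--Thom collapses --- is the coherence claim itself.

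Reducing to free algebras does not help either. Objectwise both sides are trivial coalgebras, but the equivalence must be natural in all maps $\mathrm{Free}(X)\to\mathrm{Free}(Y)$, that is, in all $X\to\mathbf{S}(\mathbb{E}_n^{\mathrm{nu}})(Y)$. Maps between trivial coalgebras are maps $V\to\mathbf{prim}\,\mathbf{triv}\,W$ into the large Koszul dual monad, so this check is the whole problem.

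The order of construction is also problematic. Ching--Salvatore does give \emph{an} equivalence $(\mathrm{B}\mathbb{E}_n^{\mathrm{nu}})^\vee\simeq s^{-n}\mathbb{E}_n^{\mathrm{nu}}$. But an equivalence produced independently of the square is only pinned down up to self-equivalences of $s^{-n}\mathbb{E}_n^{\mathrm{nu}}$, and these need not preserve commutativity. For $n=1$, composing with the order-reversal automorphism replaces the lower-left composite by its coopposite.

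The workable order is the reverse:
\begin{enumerate}
\item Probe the square with \emph{trivial} algebras, whose images under $\mathbf{B}^{\mathbf{enh}}$ are the cofree conilpotent coalgebras $\mathbf{S}(\mathrm{B}\mathbb{E}_n^{\mathrm{nu}})(X)$.
\item Read off, from the coalgebra structure on $\mathbf{b}(\mathbf{triv}\,X)$, a comparison map between $s^{-n}\mathbb{E}_n^{\mathrm{nu}}$ and $(\mathrm{B}\mathbb{E}_n^{\mathrm{nu}})^\vee$. This needs an argument that the relevant comonad maps come from operad maps. The square then commutes by construction.
\item Only then use Atiyah duality for configuration spaces to check arity-wise that this map is an equivalence. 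This is a statement about underlying spectra, where coherence is no longer at stake.
\end{enumerate}
Uniqueness then reduces to the square determining that comonad map.

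Your a posteriori uniqueness argument fails as stated. $\mathbf{b}(\mathrm{Free}\,X)$ is a trivial coalgebra, and every operad equivalence identifies trivial coalgebras with trivial coalgebras, so free algebras detect nothing about the operad.

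Finally, your shifts do not match. We have $U\mathbf{bar}^{(n)}\simeq\Sigma^n\mathbf{indec}$ while $U\,\mathbf{inc}\,\mathbf{B}^{\mathbf{enh}}\simeq\mathbf{indec}$. So, whatever convention is meant by the label $\Sigma^n$, the vertical functor must cancel this $n$-fold suspension. Your underlying functor $\Sigma^n\circ\mathbf{indec}$ has comonad $\Sigma^n\mathbf{S}(\mathrm{B}\mathbb{E}_n^{\mathrm{nu}})\Sigma^{-n}$, not $\mathbf{S}(\mathrm{B}\mathbb{E}_n^{\mathrm{nu}})$.
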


\begin{remark}
Notice that \textit{augmented} $\mathbb{E}_n$-algebras are canonically equivalent to algebras over the non-unital version  $\mathbb{E}_n^{\mathrm{nu}}$ of the $\mathbb{E}_n$ operad, and similarly for coaugmented coalgebras. 
\end{remark}

\begin{remark}
    The original result in \cite{HeutsLands} is stated using $\infty$-cooperads and the definition of non-conilpotent coalgebras over them given in \cite[Appendix A]{heuts2024}. We have slightly reformulated it in an equivalent way, using Lemma \ref{lemma: equivalence between non-conilpotent coalgebras over a cooperad a la Heuts and coalgebras over the dual operad} for the convinience of the reader. 
\end{remark}

\begin{remark}
By smashing with the Eilenberg-MacLane spectrum $\mathrm{H}\kk$, Theorem \ref{thm: heuts land comparison} also holds when we replace the $\infty$-category of spectra with the $\infty$-category $\mathbf{D}(\kk)$. 
\end{remark}

\subsection{Point-set models for everyone}
We construct concrete point-set models for Lurie's bar-cobar adjunction in the base $\infty$-category of chain complexes over a field of any characteristic. We also construct point-set models for Ayala--Francis bar-cobar adjunctions between non-unital $\mathbb{E}_n$-algebras and non-counital $\mathbb{E}_n$-coalgebras over a field of characteristic zero when $n \geq 2$. 

\begin{theorem}\label{thm: rectification of Lurie's bar-cobar}
Let $\kk$ be a field of any characteristic and let $\mathcal{A}ss$ be the associative operad. The adjunction 
\[
\begin{tikzcd}[column sep=5pc,row sep=3pc]
            \mathcal{A}ss\mbox{-}\mathsf{alg}~[\mathsf{Q.iso}^{-1}] \arrow[r,"\mathbf{bar}_{\mathcal{A}ss}"{name=F},shift left=1.1ex ] & \Omega\mathcal{A}ss^*\mbox{-}\mathsf{coalg}~[\mathsf{Q.iso}^{-1}] \arrow[l,"\mathbf{cobar}_{\mathcal{A}ss}"{name=U},shift left=1.1ex ]
            \arrow[phantom, from=F, to=U, , "\dashv" rotate=-90]
\end{tikzcd}
\]
is naturally weakly equivalent to the following composite adjunctions
\[
\begin{tikzcd}[column sep=5pc,row sep=3pc]
         \mathbf{Alg}_{\mathbb{E}_1}^{\mathbf{aug}}(\mathbf{D}) \arrow[r, shift left=1.1ex, "\mathbf{bar}"{name=F}] &\mathbf{Coalg}_{\mathbb{E}_1}^{\mathbf{coaug}}(\mathbf{D}) 
         \arrow[l, shift left=.75ex, "\mathbf{cobar}"{name=U}] \arrow[phantom, from=F, to=U, , "\dashv" rotate=-90] \arrow[r, shift left=1.1ex, "\Sigma"{name=D}] 
         &\mathbf{Coalg}_{s^{-1}\mathbb{E}_1}^{\mathbf{coaug}}(\mathbf{D})  \arrow[l, shift left=.75ex, "\Sigma^{-1}"{name=C}] \arrow[phantom, from=D, to= C, , "\simeq" rotate=0] 
\end{tikzcd}
\]
that is, to Lurie's bar-cobar adjunction up to a shift. 
\end{theorem}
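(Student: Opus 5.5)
The plan is to reduce the statement to the Heuts--Land comparison (Theorem~\ref{thm: heuts land comparison}) for $n=1$, tensored with $\mathrm{H}\kk$, combined with our identification of $\mathbf{bar}_{\mathcal{P}}$ with Heuts' composite (Theorem~\ref{prop: comparison of the composition adjunction with heuts' adjunction}).

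\emph{Step 1: reduce to the left adjoints.} Both adjunctions relate the same $\infty$-categories up to equivalence. On the source side, $\mathcal{A}ss$ is non-symmetric, so Assumptions~\ref{assumption 1} hold in any characteristic, and $\mathcal{A}ss\mbox{-}\mathsf{alg}~[\mathsf{Q.iso}^{-1}]\simeq \mathbf{Alg}_{\mathbb{E}_1^{\mathrm{nu}}}(\mathbf{D}(\kk))\simeq\mathbf{Alg}^{\mathbf{aug}}_{\mathbb{E}_1}(\mathbf{D}(\kk))$. On the target side, Theorem~\ref{thm: rectification of P-coalgebras} applied to the cofibrant operad $\Omega\mathcal{A}ss^*$ identifies $\Omega\mathcal{A}ss^*\mbox{-}\mathsf{coalg}~[\mathsf{Q.iso}^{-1}]$ with coalgebras over the $\infty$-operad $\mathbf{cobar}(\mathcal{A}ss^*)\simeq(\mathbf{bar}\,\mathcal{A}ss)^*$. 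Since right adjoints are determined by left adjoints, it then suffices to produce a natural equivalence between $\mathbf{bar}_{\mathcal{A}ss}$ and $\Sigma\circ\mathbf{bar}$, compatible with these identifications of the target.

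\emph{Step 2: identify $\mathbf{bar}_{\mathcal{A}ss}$.} By Theorem~\ref{prop: comparison of the composition adjunction with heuts' adjunction}, $\mathbf{bar}_{\mathcal{A}ss}\simeq \mathbf{Inc}\circ\mathbf{B}^{\mathbf{enh}}$, landing in $\mathbf{Coalg}_{\mathbf{bar}(\mathcal{A}ss)}(\mathbf{D}(\kk))$. Lemma~\ref{lemma: equivalence between non-conilpotent coalgebras over a cooperad a la Heuts and coalgebras over the dual operad} then identifies this target with $\mathbf{Coalg}_{(\mathbf{bar}\,\mathcal{A}ss)^*}(\mathbf{D}(\kk))$; this uses that $\mathcal{A}ss$ is non-symmetric and has finite-dimensional arities, so no divided powers issues arise. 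The resulting functor is exactly the lower-left composite $\mathbf{inc}\circ\mathbf{B}^{\mathbf{enh}}$ in the square of Theorem~\ref{thm: heuts land comparison}, for $n=1$ and $\mathbf{D}(\kk)$ in place of $\mathbf{Sp}$.

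\emph{Step 3: apply Heuts--Land.} Using the remark that the theorem transfers to $\mathbf{D}(\kk)$ by smashing with $\mathrm{H}\kk$, we get an equivalence $(\mathrm{B}\mathbb{E}_1^{\mathrm{nu}})^\vee\simeq s^{-1}\mathbb{E}_1^{\mathrm{nu}}$ and a natural equivalence $\mathbf{inc}\circ\mathbf{B}^{\mathbf{enh}}\simeq\Sigma\circ\mathbf{bar}$. Composing with Step~2 gives $\mathbf{bar}_{\mathcal{A}ss}\simeq\Sigma\circ\mathbf{bar}$. Passing to right adjoints, with $\Sigma$ an equivalence whose inverse is $\Sigma^{-1}$, gives $\mathbf{cobar}_{\mathcal{A}ss}\simeq\mathbf{cobar}\circ\Sigma^{-1}$.

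The main obstacle is the characteristic-free claim. Heuts' framework, the Heuts--Land square and Lemma~\ref{lemma: equivalence between non-conilpotent coalgebras over a cooperad a la Heuts and coalgebras over the dual operad} all involve homotopy (co)invariants and divided powers. I would argue that everything is non-symmetric here: $\mathbb{E}_1$ is induced from the free $\mathbb{S}$-action on $\mathcal{A}ss$, so homotopy orbits and homotopy fixed points reduce to plain sums and products over planar trees, and the norm maps are equivalences. With that in hand, the divided-power and non-divided-power versions of the coalgebra categories agree, and Steps~2 and~3 go through in every characteristic. A secondary check is that the equivalence $(\mathbf{bar}\,\mathcal{A}ss)^*\simeq s^{-1}\mathbb{E}_1$ is compatible with the one implicit in the point-set model $\Omega\mathcal{A}ss^*\simeq\mathcal{A}_\infty$ (up to operadic suspension). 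This should follow from the essential uniqueness clause in Theorem~\ref{thm: heuts land comparison}.
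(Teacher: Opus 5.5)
Your proposal is correct and follows essentially the paper's route. Like the paper, you identify the source and target via standard rectification and Theorem~\ref{thm: rectification of P-coalgebras}, reduce to comparing left adjoints, and conclude with the Heuts--Land square. The only difference is packaging: you cite Theorem~\ref{prop: comparison of the composition adjunction with heuts' adjunction}, which bundles Theorem~\ref{thm: point-set models for enhanced bar-cobar} and Proposition~\ref{prop: commutative triangle of inclusions}, whereas the paper invokes those two results directly. Your remark that the free $\mathbb{S}_n$-actions on $\mathcal{A}ss$ and $\mathbb{E}_1$ remove any divided-power issues in positive characteristic makes explicit a point the paper leaves implicit.
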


\begin{proof}
    Recall that the existence of such an adjunction at the $\infty$-categorical level was proven in Theorem \ref{thm: infty categorical operadic bar cobar}, where these functors are given by the compositions 
    \[
    \mathbf{bar}_{\mathcal{A}ss} = \mathrm{Inc} \circ \mathrm{B}_\pi \quad \text{and} \quad \mathbf{cobar}_{\mathcal{A}ss} = \mathrm{Res} \circ \widehat{\Omega}_\iota 
    \]
    in the commutative square of Theorem \ref{thm: full rectangle infinity categorical}. 

    \medskip

    First, we have to check that each of these $\infty$-categories presents, repectively, the $\infty$-categories of augmented $\mathbb{E}_1$-algebras and coaugmented $\mathbb{E}_1$-coalgebras. The fact that dg associative algebras localized at quasi-isomorphisms present their augmented $\mathbb{E}_1$-algebras follows from standard rectification arguments, see \cite[Section 4.1.8]{HigherAlgebra}. Notice that the dg operad  $\Omega\mathcal{A}ss^*$ encodes (shifted) $\mathcal{A}_\infty$-(co)algebras, so this is where the shift appears. The fact that dg $\Omega\mathcal{A}ss^*$-coalgebras localized at quasi-isomorphisms present (shifted) coaugmented $\mathbb{E}_1$-coalgebras follows from Theorem \ref{thm: rectification of P-coalgebras}, which is the main result of \cite{rectification}. 

    \medskip
    
    Hence we are left to compare the adjunctions, since their source and targets are equivalent. In order to do so, it suffices to show that the left adjoints are weakly equivalent. This follows directly from Theorem \ref{thm: heuts land comparison} combined with Theorem \ref{thm: point-set models for enhanced bar-cobar} and Proposition \ref{prop: commutative triangle of inclusions}. 
\end{proof}

\begin{theorem}\label{thm: rectification of Ayala-Francis}
Let $n \geq 2$ and $\kk$ be a field of characteristic zero. Let $\mathcal{P}ois_n$ denote the operad of $n$-Poisson algebras. The adjunction 
\[
\begin{tikzcd}[column sep=5pc,row sep=3pc]
            \mathcal{P}ois_n\mbox{-}\mathsf{alg}~[\mathsf{Q.iso}^{-1}] \arrow[r,"\mathbf{bar}_{\mathcal{P}ois_n}"{name=F},shift left=1.1ex ] & \Omega\mathcal{P}ois_n^*\mbox{-}\mathsf{coalg}~[\mathsf{Q.iso}^{-1}] \arrow[l,"\mathbf{cobar}_{\mathcal{P}ois_n}"{name=U},shift left=1.1ex ]
            \arrow[phantom, from=F, to=U, , "\dashv" rotate=-90]
\end{tikzcd}
\]
is naturally weakly equivalent to the following composite adjunctions
\[
\begin{tikzcd}[column sep=5pc,row sep=3pc]
         \mathbf{Alg}_{\mathbb{E}_n}^{\mathbf{aug}}(\mathbf{D}) \arrow[r, shift left=1.1ex, "\mathbf{bar}^{(n)}"{name=F}] &\mathbf{Coalg}_{\mathbb{E}_n}^{\mathbf{coaug}}(\mathbf{D}) 
         \arrow[l, shift left=.75ex, "\mathbf{cobar}^{(n)}"{name=U}] \arrow[phantom, from=F, to=U, , "\dashv" rotate=-90] \arrow[r, shift left=1.1ex, "\Sigma^n"{name=D}] 
         &\mathbf{Coalg}_{s^{-n}\mathbb{E}_n}^{\mathbf{coaug}}(\mathbf{D})  \arrow[l, shift left=.75ex, "\Sigma^{-n}"{name=C}] \arrow[phantom, from=D, to= C, , "\simeq" rotate=0] 
\end{tikzcd}
\]
that is, to Ayala--Francis's bar-cobar adjunction up to an iterated shift. 
\end{theorem}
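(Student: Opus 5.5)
The proof will follow the same pattern as that of Theorem \ref{thm: rectification of Lurie's bar-cobar}, with one extra input: formality of the $\mathbb{E}_n$ operad in characteristic zero. Since $n \geq 2$ and $\operatorname{char}(\kk)=0$, the chains on the little $n$-disks operad are formal, so the non-unital $\mathbb{E}_n^{\mathrm{nu}}$ operad in $\mathbf{D}(\kk)$ is presented by the dg operad $\mathcal{P}ois_n$ (with its non-unital convention). Moreover, $\mathcal{P}ois_n$ is reduced and has finite-dimensional, bounded arity components. Hence it satisfies Assumptions \ref{assumption 1}, and Theorem \ref{thm: infty categorical operadic bar cobar} and the square of Theorem \ref{thm: full rectangle infinity categorical} apply. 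This gives the adjunction $\mathbf{bar}_{\mathcal{P}ois_n} = \mathrm{Inc}\circ \mathrm{B}_\pi \dashv \mathrm{Res}\circ\widehat{\Omega}_\iota = \mathbf{cobar}_{\mathcal{P}ois_n}$.

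The first step is to identify sources and targets. On the algebra side, $\mathcal{P}$ is $\mathbb{S}$-projective in characteristic zero, so by the rectification remark for $\mathscr{P}$-algebras
\[
\mathcal{P}ois_n\mbox{-}\mathsf{alg}[\mathsf{Q.iso}^{-1}] \simeq \mathbf{Alg}_{\mathbb{E}_n^{\mathrm{nu}}}(\mathbf{D}) \simeq \mathbf{Alg}^{\mathbf{aug}}_{\mathbb{E}_n}(\mathbf{D}).
\]
On the coalgebra side, Theorem \ref{thm: rectification of P-coalgebras} applied to the cofibrant operad $\Omega\mathcal{P}ois_n^*$ gives $\mathbf{Coalg}_{\mathbf{cobar}(\mathcal{P}ois_n^*)}(\mathbf{D})$. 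Koszul self-duality of $\mathcal{P}ois_n$ up to an $n$-fold suspension identifies $\mathbf{cobar}(\mathcal{P}ois_n^*) \simeq (\mathbf{bar}\,\mathbb{E}_n^{\mathrm{nu}})^\vee$. Via the Heuts--Land equivalence in Theorem \ref{thm: heuts land comparison} (tensored with $\mathrm{H}\kk$), this is $s^{-n}\mathbb{E}_n^{\mathrm{nu}}$. Alternatively, one can use only the equivalence of $\infty$-operads provided there, without invoking Koszulness.

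The second step is to compare left adjoints, since both composites are adjunctions between equivalent $\infty$-categories. By Theorem \ref{thm: point-set models for enhanced bar-cobar} and Proposition \ref{prop: commutative triangle of inclusions},
\[
\mathrm{Inc}\circ \mathrm{B}_\pi \simeq \mathbf{Inc}\circ \mathbf{loose}\circ \mathrm{B}_\pi \simeq \mathbf{Inc}\circ \mathbf{B}^{\mathbf{enh}}.
\]
Theorem \ref{thm: heuts land comparison} then identifies $\mathbf{Inc}\circ \mathbf{B}^{\mathbf{enh}}$ with $\Sigma^n\circ \mathbf{bar}^{(n)}$. Uniqueness of right adjoints then identifies $\mathbf{cobar}_{\mathcal{P}ois_n}$ with $\mathbf{cobar}^{(n)}\circ\Sigma^{-n}$.

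The main obstacle is making sure all these identifications are coherent. The equivalence $\mathcal{P}ois_n \simeq \mathbb{E}_n$ coming from formality must be compatible with the one used by Heuts--Land, and the Lemma \ref{lemma: equivalence between non-conilpotent coalgebras over a cooperad a la Heuts and coalgebras over the dual operad} identification of Heuts-style coalgebras with ours must match the target of $\mathbf{inc}$. To handle this, I would transport everything along a single chosen equivalence of $\infty$-operads. The essential uniqueness clause in Theorem \ref{thm: heuts land comparison} then guarantees that the resulting $\infty$-operad equivalence is the one making the square commute, so no separate coherence check is needed beyond the transport of the inclusion functor.
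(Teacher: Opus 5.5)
Your proposal is correct and follows the argument the paper intends. The paper states this theorem without a separate proof, as the analogue of the proof of Theorem~\ref{thm: rectification of Lurie's bar-cobar}. In that argument, formality of $\mathbb{E}_n$ in characteristic zero supplies the dg model $\mathcal{P}ois_n$ satisfying Assumptions~\ref{assumption 1}, the rectification results identify sources and targets, and the chain $\mathrm{Inc}\circ\mathrm{B}_\pi\simeq\mathbf{Inc}\circ\mathbf{B}^{\mathbf{enh}}\simeq\Sigma^n\circ\mathbf{bar}^{(n)}$ (via Heuts--Land) compares the left adjoints.

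One small correction: the identification $\mathbf{cobar}(\mathcal{P}ois_n^*)\simeq(\mathbf{bar}\,\mathbb{E}_n^{\mathrm{nu}})^\vee$ comes from finite-dimensional linear duality plus formality, not from Koszul self-duality. The further identification with $s^{-n}\mathbb{E}_n^{\mathrm{nu}}$ must be the Heuts--Land one, as you ultimately do, rather than one coming from Koszul self-duality.
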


\bibliographystyle{alpha}
\bibliography{bibax}
\end{document}